\tikzset{anchorbase/.style={baseline={([yshift=-0.5ex]current bounding 
box.center)}}}
\tikzset{wipe/.style={white,line width=4pt}}
\newcommand{\glmn}{\mathfrak{gl}(m|n)}
\newcommand{\gram}{(a_1,\, \dots, a_m\,|b_1,\,\dots, b_n)}
\newcommand{\even}[1]{#1_{\bar{0}}}
\newcommand{\odd}[1]{#1_{\bar{1}}}
\newcommand{\fg}{\mathfrak{g}}
\newcommand{\fh}{\mathfrak{h}}
\newcommand{\fgl}[2]{\mathfrak{gl}(#1|#2)}
\newcommand{\distinguished}{\Sigma^{\text{dist}}}
\newcommand{\distHW}{{\Lambda^+_{m|n}}}
\newcommand{\antidist}{\Sigma^{\text{anti}}}
\newcommand{\ror}[2]{\varepsilon_{#1}-\delta_{#2}}
\newcommand{\eps}{\varepsilon}
\newcommand{\Z}{\mathbb{Z}}
\newcommand{\comment}[1]{}
\newtheorem{fact}{Fact}[subsection]
\newtheorem{theorem}[fact]{Theorem}
\newtheorem{prop}[fact]{Proposition}
\newtheorem{lemma}[fact]{Lemma}
\newtheorem{cor}[fact]{Corollary}
\newtheorem{conj}[fact]{Conjecture}
\newtheorem{definition}[fact]{Definition}
\newtheorem{notation}[fact]{Notation}
\newtheorem{example}[fact]{Example}
\newtheorem{remark}[fact]{Remark}
\newtheorem{introtheorem}{Theorem}
\newtheorem{introcor}[introtheorem]{Corollary}
\newcommand{\Innas}[1]{\fbox{\tt{\color{magenta}{#1}}}}
\newcommand{\Matan}[1]{\begin{framed}{\tt{\color{orange}{#1}}} \end{framed}}
\title{Weight diagrams of finite dimensional highest weight $\glmn$-modules}
\author{Matan Pinkas}
\address{Matan Pinkas, Department of Mathematics, Ben Gurion University of the Negev, Beer Sheva, Israel.}
\email{Matan.Pinkas@gmail.com}
\date{\today}
\begin{document}

\maketitle

\begin{abstract}
    Many properties of simple finite dimensional $\glmn$-modules may be better understood by assigning weight diagrams to the highest weights with respect to a given base of simple roots. 
    
    In this paper we consider bases that are compatible with the standard Borel subalgebra in $\glmn_{\bar 0} = \mathfrak{gl}(m)\times \mathfrak{gl}(n)$; namely, the bases that differ from the distinguished base $\distinguished$ of simple roots by a sequence of odd reflections. We examine the weight diagrams that arise from the highest weights of a simple module $L(\lambda)$ with respect to such bases.

    Further, we provide combinatorial tools to describe all the weight diagrams of highest weights of $L(\lambda)$ provided only with the the weight diagram of $L(\lambda)$ with respect to distinguished highest weight $\lambda$.
    
    Finally, we study the maximal cardinality of incomparable sets of positive odd roots with respect to $\distinguished$ which are orthogonal to some highest weight of $L(\lambda)$ with respect to a base as above. We provide explicit formulas for this value, connecting it to the combinatorics of the weight diagrams. Based on this study, we respond to the work of M. Gorelik and Th. Heidersdorf by providing a counterexample to the Tail Conjecture appearing in \cite{tailconjpaper}.

\end{abstract}

\section{Introduction}

\subsection{Motivation}

The study of superalgebras and Lie superalgebras is the study of $\Z / 2\Z$-graded objects, with sign rules governing the commutation relations. 

Such objects occur naturally when studying subjects such as exterior algebras of vector spaces, cohomology and tensor categories.

A systematic study of the structure and the representation theory of Lie superalgebras was initiated in the famous papers \cite{kacsuperalgdef, kacclassicalreptheory} by V. Kac.
Since then, the theory of Lie superalgebras has become an active field of study, see for example \cite{repsofsuperalgebras, glmnrep, weightdiagraminvetion,mussonbook, cheng2012dualities} 
and the references therein.

As with classical Lie algebras, to study Lie superalgebras one needs a combinatorial framework including the notions of root systems, bases of simple roots and more. The goal of this paper is to contribute to this framework for the general linear Lie superalgebra.

\subsection{The general linear Lie superalgebra \texorpdfstring{$\glmn$}{}}
Lie superalgebras are a natural generalization of the notion of Lie algebras. 

A vector space $V$ with a $\Z / 2\Z$-grading $V= \even{V} \oplus \odd{V}$ is called a vector superspace. We denote the parity of a homogeneous vector $a\in V_{\bar{i}}$ by $p(a):=i$. 

We define the Lie superalgebra $\mathfrak{gl}(V)$ to be the $\Z / 2\Z$-graded vector space $End(V)$ together with the super-commutator $$[a,b]=ab-(-1)^{p(a) p(b)}ba.$$
Taking the $\mathbb{Z}/2\mathbb{Z}$-graded space $V=\mathbb{C}^m \oplus \mathbb{C}^n$, we denote $\mathfrak{gl}(V)=\glmn$.

Much like the Lie algebra $\mathfrak{gl}(n)$, $\glmn$ has a root system. One can consider different simple bases for this root system and study the highest weights of simple finite-dimensional $\glmn$-modules with respect to these bases.

Since the "super" setting adds parity into the picture, some differences arise. The root system of $\glmn$ has even roots (corresponding to eigenvectors of parity $\overline{0}$) and odd root (corresponding to eigenvectors of parity $\overline{1}$).

Similarly to the theory of semisimple Lie algebras, we may consider the Weyl group corresponding to the Lie superalgebra $\glmn$. This group is isomorphic to $S_m\times S_n$. It is generated by reflections with respect to the even roots of $\glmn$, thus encoding the information about the even part of the Lie superalgebra. 

However, not all the bases of the root system for $\glmn$ are conjugate to one another via the action of the Weyl group. To remedy that, V. Serganova defined in \cite[Appendix]{LSS86} the notion of an odd reflection: that is, a transformation of the set of all roots which behaves as a reflection with respect to an odd root should (See Definition \ref{odd ref def}). more details on odd reflections, see \cite[Sections 1.3, 2.4]{cheng2012dualities} and \cite[Section 3.5]{mussonbook}. We denote by $r_{\alpha}$ the odd reflection with respect to a root $\alpha\in \odd{\Delta}$.

Consider the Cartan subalgebra $\mathfrak{h}$ of $\glmn$. This Lie sub-superalgebra is purely even, and is just the Cartan subalgebra of $\glmn_{\bar 0} = \mathfrak{gl}(m)\times \mathfrak{gl}(n)$. We write the natural basis of the dual space $\mathfrak{h}^*$ (the space of weights) as $\eps_1, \ldots, \eps_m, \delta_1, \ldots, \delta_n$. Then the set of even roots of $\glmn$ is $$\{\eps_i-\eps_j\,|\, 1\leq i\neq j\leq m\}\cup \{ \delta_i-\delta_j\, |\, 1\leq i\neq j\leq n\}$$ and the set of odd roots is $\{\pm (\eps_i-\delta_j)\,|\, 1\leq i\leq m, \, 1\leq j\leq n\}$.

The distinguished base for $\glmn$ is defined to be $$\distinguished:=\{\eps_i-\eps_{i+1}, \,\eps_m- \delta_1, \, \delta_j-\delta_{j+1}\,|\, 1\leq i\leq m-1, \, 1\leq j\leq n-1\}$$

The positive odd roots with respect to $\distinguished$ are of the form $\ror{i}{j}$ for some $i, j$, and we denote $\mathcal{R}:=\{\eps_i-\delta_j\,|\, 1\leq i\leq m, \, 1\leq j\leq n\}$. These will be called {\it right odd roots}.

Any base of $\glmn$ can be obtained from $\distinguished$ by a sequence of even and odd reflections, but the set of all the bases which can be obtained from $\distinguished$ using {\it only odd reflections by right odd roots} is denoted by $\mathbb{S}$. For any base $\Sigma\in \mathbb{S}$, the positive even roots with respect to the base $\Sigma$ are exactly the positive even roots for the base $\distinguished$.

\subsection{Definition of the weight diagram} 
As it turns out, a lot of information regarding a simple module over $\glmn$ may be encoded into what is known as a weight diagram of that module. Let us first explain how to construct this diagram.

Let $\distHW$ denote the set of all integral dominant weights with respect to the distinguished base $\distinguished$. This set is in bijection with the set of isomorphism classes of (finite-dimensional) integrable simple modules over $\glmn$, with the simple module corresponding to $\lambda\in \distHW$ denoted by $L(\lambda)$. 

Let $\lambda\in \distHW$ and let $\Sigma$ be any base of $\glmn$. We consider the positive and the negative roots corresponding to $\Sigma$ and denote by $\rho_{\Sigma}$ half the difference between the sums of the even positive roots and the sums of the odd positive roots. We then denote by $\lambda_{\Sigma}$ the highest weight of $L(\lambda)$ with respect to the base $\Sigma$, and $\overline{\lambda}_{\Sigma}:=\lambda_{\Sigma} + \rho_{\Sigma}$, with $\overline{\lambda}_{\distinguished}$ denoted by $\overline{\lambda}$ for short.

This shifted highest weight $\overline{\lambda}_{\Sigma}$ is then encoded by a very useful combinatorial tool, due to Brundan and Stroppel \cite{weightdiagraminvetion} as well as Musson and Serganova \cite{MS11}. 

The weight diagram $D_{\lambda}^{\Sigma}$ of $\overline{\lambda}_{\Sigma}$ is defined as a function on the integers, whose value in $p\in \mathbb{Z}$ is a finite collection of symbols $\times, >, <, \circ$, subject to the following rules:
\begin{itemize}
    \item Let $k:=\sharp \{i\in \{1,\dots, m\}|(\overline{\lambda}_\Sigma)_i=p\}$, $s:= \sharp \{j\in \{1,\dots, n\}|(\overline{\lambda}_\Sigma)_{-j}=-p\}$. Then $D^\Sigma_\lambda(p)$ contains exactly $\min(k, s)$ symbols $\times$.
    \item If $k>s$ then $D^\Sigma_\lambda(p)$ contains exactly $k-s$ symbols $>$.
      \item If $k<s$ then $D^\Sigma_\lambda(p)$ contains exactly $s-k$ symbols $<$.
      \item $D^\Sigma_\lambda(p)$ contains a (unique) symbol $\circ$ iff it does not contain any other symbol (in such a case, we day that $p$ is an empty position).
\end{itemize}

From the definition of $\Sigma\in \mathbb{S}$, it is easy to see that $|s-k|\leq 1$, so the collection $D^\Sigma_\lambda(p)$ may contain at most one symbol $>$ or $<$. Furthermore, for $\Sigma=\distinguished$, each $D^\Sigma_\lambda(p)$ exactly one symbol (perhaps $\circ$).

    This diagram can by interpreted visually by drawing the symbols corresponding to a $p\in \mathbb{Z}$ at the position $p$ on the real line. 
    
    For example, if we consider $\lambda\in \Lambda^+_{3|3}$ such that $\overline{\lambda}=(4\, 2\, 0\,| 0\, -3\, -5)$. Then the diagram $D^{\distinguished}_\lambda$ is given by 

    \begin{equation*}
    \begin{tikzpicture}
    
        \draw[thick, -] (-6.5 ,0)--(6.5, 0);
        
        \draw[thick, fill=white] (-6,0) circle [radius=0.25cm];
        \node[] at (-6,-0.5) {-1};
        
        \draw[thick, -] (-4.5,0)--(-4.5 + 0.25 ,0.25);
        \draw[thick, -] (-4.5,0)--(-4.5 - 0.25 ,0.25);
        \draw[thick, -] (-4.5,0)--(-4.5 + 0.25 ,-0.25);
        \draw[thick, -] (-4.5,0)--(-4.5 - 0.25 ,-0.25);
        \node[] at (-4.5,-0.5) {0};
        
        \draw[thick, fill=white] (-3,0) circle [radius=0.25cm];
        \node[] at (-3,-0.5) {1};
        
        \draw[thick, -] (-1.5 +0.25, 0)--(-1.5 - 0.25, 0.25);
        \draw[thick, -] (-1.5 +0.25, 0)--(-1.5 - 0.25, -0.25);
        \node[] at (-1.5,-0.5) {2};
        
        \draw[thick, -] (0 -0.25, 0)--(0 + 0.25, 0.25);
        \draw[thick, -] (0 -0.25, 0)--(0 + 0.25, -0.25);
        \node[] at (0,-0.5) {3};
        
        \draw[thick, -] (1.5 +0.25, 0)--(1.5 - 0.25, 0.25);
        \draw[thick, -] (1.5 +0.25, 0)--(1.5 - 0.25, -0.25);
        \node[] at (1.5,-0.5) {4};
        
        \draw[thick, -] (3 -0.25, 0)--(3 + 0.25, 0.25);
        \draw[thick, -] (3 -0.25, 0)--(3 + 0.25, -0.25);
        \node[] at (3,-0.5) {5};
        
        \draw[thick, fill=white] (4.5,0) circle [radius=0.25cm];
        \node[] at (4.5,-0.5) {6};
        
        \draw[thick, fill=white] (6,0) circle [radius=0.25cm];
        \node[] at (6,-0.5) {7};
        
        
        
        
    \end{tikzpicture}
\end{equation*}

    A natural question that arises from the definition of weight diagrams is: given a integrable simple finite dimensional $\glmn$-module $L$, what are all possible weight diagrams of highest weights of $L$ with respect to the different bases; that is, given $\lambda\in \distHW$, can we describe all the possible diagrams $D^{\Sigma}_{\lambda}$? 
    
    This is a complex combinatorial question, but in this paper we attempt to give a satisfactory answer to this question when $\Sigma$ is any base in $\mathbb{S}$.
    
\subsection{Main results}

The partial order on $\mathfrak{h}^*$ given by $\distinguished$ induces a partial order on $\mathcal{R}$: $\ror{i}{j} \le \ror{i'}{j'}$ if $i \ge i'$ and $j \le j'$. A subset of $\mathcal{R}$ is called {\it incomparable} if no two elements of this subset can be compared in the partial order.

Our first result describes the bases in $\mathbb{S}$ via the right odd roots lying in each base.
\begin{introtheorem}[See Lemma \ref{ror are incomparable}, Theorem \ref{B Sigma}]\label{introthrm:1} 

\mbox{}

\begin{enumerate}
    \item  There is a bijection between $\mathbb{S}$ and the collection of incomparable subsets of $\mathcal{R}$. This bijection is given by sending $\Sigma\in \mathbb{S}$ to $\Sigma\cap \mathcal{R}$.
    \item Let $B_\Sigma := \mathcal{R} \setminus \{\alpha \in \mathcal{R} | \; \exists \beta \in \Sigma \text{ so that } \alpha \geq \beta \}$ be the collection of all right odd roots which are not greater than some element of $\Sigma$.
    
    Then $\Sigma$ can be produced from $\distinguished$ by performing odd reflections with respect to every odd root in $B_\Sigma$, in some valid order. 
\end{enumerate}
\end{introtheorem}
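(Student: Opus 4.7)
The plan is to associate to each $\Sigma \in \mathbb{S}$ the set $X_\Sigma := \{\alpha \in \mathcal{R} : -\alpha \in \Delta^+(\Sigma)\}$ of right odd roots that are negative in $\Sigma$. Because each odd reflection used to build a base in $\mathbb{S}$ flips exactly one root of $\mathcal{R}$ from positive to negative and leaves the even positive roots untouched, $\Sigma$ is determined by $X_\Sigma$: its positive odd roots are $(\mathcal{R} \setminus X_\Sigma) \sqcup (-X_\Sigma)$. The goal is then to prove that (i) $X_\Sigma$ is a down-closed subset of $(\mathcal{R}, \le)$, and (ii) $\Sigma \cap \mathcal{R}$ is precisely the set of minimal elements of $\mathcal{R} \setminus X_\Sigma$. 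Once these are in hand, the standard bijection between down-closed subsets of the finite poset $\mathcal{R}$ and its incomparable subsets (via taking minimal elements of the complement) yields part (1), and the identity $X_\Sigma = B_\Sigma$ follows immediately from (ii).

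For (i), suppose $\alpha = \eps_i - \delta_j \in X_\Sigma$ and $\alpha' = \eps_{i'} - \delta_{j'} \in \mathcal{R}$ satisfies $\alpha' \le \alpha$, i.e.\ $i' \ge i$ and $j' \le j$. Then
\[
-\alpha' \; = \; (-\alpha) + (\eps_i - \eps_{i'}) + (\delta_{j'} - \delta_j)
\]
realizes $-\alpha'$ as a sum of the positive root $-\alpha$ of $\Sigma$ and two positive even roots (which are positive in $\Sigma$ as well); since this sum is itself a root, it lies in $\Delta^+(\Sigma)$, so $\alpha' \in X_\Sigma$. For (ii), a positive odd root $\alpha = \eps_i - \delta_j$ of $\Sigma$ can be decomposed as $\beta_1 + \beta_2$ with $\beta_1, \beta_2 \in \Delta^+(\Sigma)$ only in the parity split (even) $+$ (odd); a short index analysis shows the possibilities are $\alpha = (\eps_i - \eps_b) + (\eps_b - \delta_j)$ with $b > i$ or $\alpha = (\delta_a - \delta_j) + (\eps_i - \delta_a)$ with $a < j$, and in each case the odd summand is a strict predecessor of $\alpha$ lying in $\mathcal{R} \setminus X_\Sigma$. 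Conversely, any strict predecessor of $\alpha$ in $\mathcal{R} \setminus X_\Sigma$ gives such a decomposition after passing to a one-step predecessor, which stays in $\mathcal{R} \setminus X_\Sigma$ by the up-closedness dual to (i). Hence $\alpha$ is simple in $\Sigma$ iff it is minimal in $\mathcal{R} \setminus X_\Sigma$, so $\Sigma \cap \mathcal{R} = \min(\mathcal{R} \setminus X_\Sigma)$ is incomparable and recovers $X_\Sigma$ as its up-closure; this gives well-definedness, injectivity, and $X_\Sigma = B_\Sigma$.

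For surjectivity in (1) and for part (2), I proceed by a reflection algorithm. Given an incomparable $I \subseteq \mathcal{R}$, set $X := \mathcal{R} \setminus \{\alpha \in \mathcal{R} : \exists \beta \in I,\, \alpha \ge \beta\}$, which is down-closed and satisfies $\min(\mathcal{R} \setminus X) = I$. Starting from $\distinguished$ (whose flipped set is empty), at each step pick $\alpha$ minimal in $X$ among the not-yet-flipped roots and apply $r_\alpha$. Down-closedness of $X$ together with (ii) applied to the current base ensures $\alpha$ is a simple right odd root, so the reflection is legal and stays inside $\mathbb{S}$; after $|X|$ steps every element of $X$ has been flipped, producing $\Sigma \in \mathbb{S}$ with $X_\Sigma = X = B_\Sigma$ and hence $\Sigma \cap \mathcal{R} = I$. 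Running the same algorithm with $I := \Sigma \cap \mathcal{R}$ witnesses part (2). The main obstacle will be the parity-and-index case analysis in step (ii), since this is precisely what ties the decomposability of a simple odd root in $\Sigma$ to the concrete poset structure on $\mathcal{R}$; the rest of the argument is bookkeeping around the two combinatorial bijections.
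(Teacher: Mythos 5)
Your argument is correct, and it takes a genuinely different route from the paper's. The paper proves both parts through the word model: bases in $\mathbb{S}$ are encoded as $\eps\delta$-words (Lemma \ref{base to word bij}), incomparability of $\Sigma\cap\mathcal{R}$ comes from linear independence, the inverse map is built by writing down the word of a base explicitly (Lemma \ref{ror are incomparable}), a four-case word analysis controls how a single reflection changes $\Sigma\cap\mathcal{R}$ (Lemma \ref{reflection on set of ror}), existence of a valid sequence is an induction on a counting statistic (Lemma \ref{sequence from distinguished}), and Theorem \ref{B Sigma} is completed by counting $\eps\delta$-inversions. You instead work intrinsically with positive systems: the set $X_\Sigma$ of right odd roots made negative is an order ideal of $(\mathcal{R},\le)$ (your sum-of-positive-roots argument is valid because the even positive roots of any $\Sigma\in\mathbb{S}$ are the distinguished ones), and $\Sigma\cap\mathcal{R}$ is exactly the set of minimal elements of $\mathcal{R}\setminus X_\Sigma$, since an odd positive root can split only as (even)$+$(odd), both split types produce a strict predecessor of it lying in $\mathcal{R}\setminus X_\Sigma$, and up-closedness of $\mathcal{R}\setminus X_\Sigma$ gives the converse; part (1) is then the standard ideal--antichain bijection, and part (2) follows from your greedy algorithm, whose steps are legal precisely by the minimality criterion applied to the intermediate bases (which stay in $\mathbb{S}$ because odd reflections do not move even positive roots). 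What the paper's approach buys is the explicit word dictionary that is reused throughout the text, and the uniqueness half of Theorem \ref{B Sigma}(2) (any valid sequence producing $\Sigma$ uses exactly the roots of $B_\Sigma$), which the introduction's statement does not require and you do not address --- though your framework yields it at once, since a reflection by a right odd root only ever flips an element of $\mathcal{R}$ from positive to negative, so the set of roots used in any valid sequence reaching $\Sigma$ is forced to be $X_\Sigma$. What your approach buys is a cleaner conceptual identification $B_\Sigma=X_\Sigma$ and the complete avoidance of word case analysis; note only that, like the parenthetical reformulation in Theorem \ref{B Sigma}, you read $B_\Sigma$ as the complement of the upward closure of $\Sigma\cap\mathcal{R}$ (the comparison $\alpha\ge\beta$ taken inside the poset $\mathcal{R}$), which is indeed the reading under which the statement holds.
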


To give a visualization of this statement, we interpret the partially ordered set $\mathcal{R}$ as a lattice of size $m \times n$, and we circle elements of $\Sigma\cap \mathcal{R}$.

\begin{example}
The diagram below describes a base $\Sigma\in \mathbb{S}$ for $\mathfrak{gl}(4|4)$, such that $\Sigma \cap \mathcal{R}=\{\ror{1}{1}, \ror{2}{2},\ror{4}{4}\}$. The elements of $\Sigma \cap \mathcal{R}$ are circled in blue, while the set $B_\Sigma$ is marked by a red polygon. 
    \begin{equation*}
\begin{tikzpicture}[anchorbase,scale=1.1]
\node at (0,1.5) {$\bullet $};

\node at (0.5,1) {$\bullet $};
\node at (-0.5, 1) {$\bullet $};

\node at (1, 0.5) {$\bullet $};
\node at (0, 0.5) {$\bullet $};
\node at (-1, 0.5) {$\bullet $};

\node at (1.5, 0) {$\bullet $};
\node at (0.5, 0) {$\bullet $};
\node at (-0.5, 0) {$\bullet $};
\node at (-1.5, 0) {$\bullet $};
\draw[thick, blue] (-1.5,0) circle [radius=0.25cm];
\draw[thick, blue] (-0.5,0) circle [radius=0.25cm];
\draw[thick, blue] (1.5,0) circle [radius=0.25cm];
\node at (1, -0.5) {$\bullet $};
\node at (0, -0.5) {$\bullet $};
\node at (-1, -0.5) {$\bullet $};

\node at (0.5, -1) {$\bullet $};
\node at (-0.5, -1) {$\bullet $};

\node at (0, -1.5) {$\bullet $};

\draw[thick, red, -] (0.5 , 0.5)--(-0.5 ,-0.5);
\draw[thick, red, -] (0.5 , 0.5)--(1.5 ,-0.5);
\draw[thick, red, -] (1.5 ,-0.5)--(0 ,-2);

\draw[thick, red, -] (-1 , 0)--(-1.5 ,-0.5);
\draw[thick, red, -] (-1 , 0)--(-0.5 , -0.5);
\draw[thick, red, -] (-1.5 , -0.5)--(0 ,-2);
\draw[thick, red, -] (0.5 , -1.5)--(0 ,-2);

\end{tikzpicture}
\end{equation*} 
\end{example}

Understanding the result of performing odd reflections by right odd roots is the central question required to understand which weight diagrams correspond to a simple integrable finite-dimensional module $L$. If we denote by $\lambda$ the highest weight of $L$ with respect to $\distinguished$, we are interested in understanding the set of diagrams $\{D_{\lambda}^{\Sigma}\,|\, \Sigma\in \mathbb{S}\}$.

This is done using a new combinatorial tool which we introduce in this paper: the {\it arrow diagram} of $\lambda$.

Given $\lambda\in \distHW$, we define the arrow diagram of $\lambda$ by drawing $m$ arrows on the diagram $D^{\distinguished}_\lambda$ according to the following rules:
\begin{itemize}
    \item The arrows start at positions $\overline{\lambda}_m< \overline{\lambda}_{m-1} < \ldots <\overline{\lambda}_1$.
    \item The order in which we draw the arrows is that of the starting points (from left to right).
    \item Each arrow ends at the first position $p$ to the right of the starting point that has $D^{\distinguished}_\lambda (p) \in \{\circ, >\}$ and such that $p$ is not the endpoint of any previously drawn arrow.
\end{itemize}

We denote by $k_i$ the position of the endpoint of the arrow starting at position $\overline{\lambda}_i$ and by $M_i$ the total number of $<$ and $\times$ symbols in $D^{\distinguished}_\lambda$ to the left of position $k_i$.

\begin{example}
    Consider $\lambda \in \Lambda^+_{4|4}$ given by $\overline{\lambda}_{\distinguished}=(4\, 3\, 1\, 0| 0\, -1\, -4\, -5)$. The arrow diagram of of $\lambda$ is given below

    \begin{equation*}
\label{first arrow diagram}
    \begin{tikzpicture}
    
        \draw[thick, -] (-6.5 ,0)--(6.5, 0);
        
        \draw[thick, fill=white] (-6,0) circle [radius=0.25cm];
        \node[] at (-6,-0.5) {-1};
        
        \draw[thick, -] (-4.5,0)--(-4.5 + 0.25 ,0.25);
        \draw[thick, -] (-4.5,0)--(-4.5 - 0.25 ,0.25);
        \draw[thick, -] (-4.5,0)--(-4.5 + 0.25 ,-0.25);
        \draw[thick, -] (-4.5,0)--(-4.5 - 0.25 ,-0.25);
        \node[] at (-4.5,-0.5) {0};
        
        \draw[thick, -] (-3,0)--(-3 + 0.25 ,0.25);
        \draw[thick, -] (-3,0)--(-3 - 0.25 ,0.25);
        \draw[thick, -] (-3,0)--(-3 + 0.25 ,-0.25);
        \draw[thick, -] (-3,0)--(-3 - 0.25 ,-0.25);
        \node[] at (-3,-0.5) {1};
        
        \draw[thick, fill=white] (-1.5,0) circle [radius=0.25cm];
        \node[] at (-1.5,-0.5) {2};
        
        \draw[thick, -] (0 +0.25, 0)--(0 - 0.25, 0.25);
        \draw[thick, -] (0 +0.25, 0)--(0 - 0.25, -0.25);
        \node[] at (0,-0.5) {3};
        
        \draw[thick, -] (1.5,0)--(1.5 + 0.25 ,0.25);
        \draw[thick, -] (1.5,0)--(1.5 - 0.25 ,0.25);
        \draw[thick, -] (1.5,0)--(1.5 + 0.25 ,-0.25);
        \draw[thick, -] (1.5,0)--(1.5 - 0.25 ,-0.25);
        \node[] at (1.5,-0.5) {4};
        
        \draw[thick, -] (3 -0.25, 0)--(3 + 0.25, 0.25);
        \draw[thick, -] (3 -0.25, 0)--(3 + 0.25, -0.25);
        \node[] at (3,-0.5) {5};
        
        \draw[thick, fill=white] (4.5,0) circle [radius=0.25cm];
        \node[] at (4.5,-0.5) {6};
        
        \draw[thick, fill=white] (6,0) circle [radius=0.25cm];
        \node[] at (6,-0.5) {7};
        
        \draw[thick, ->] (-4.5 ,0.4) .. controls (-4.5 ,1.65) and (-1.5 , 1.65) .. (-1.5 ,0.4);
        \draw[thick, ->] (-3,0.4) .. controls (-3 ,1.65) and (0 -0.2, 1.65) .. (0 -0.2,0.4);
        \draw[thick, ->] (0.2 ,0.4) .. controls (0.2 ,1.65) and (4.5 , 1.65) .. (4.5 ,0.4);

        \draw[thick, ->] (1.5 ,0.4) .. controls (1.5 ,1.65) and (6 , 1.65) .. (6 ,0.4);
        
        
        
        
    \end{tikzpicture}
\end{equation*}

    Here $k_4 = 2, k_3 = 3, k_2=6$ and $k_1=7$. $M_4 = M_3 = 2$ and $M_2=M_1=4$.
    
\end{example}
We now examine the difference between the diagrams $D^{r_\alpha \Sigma}_\lambda$ and $D^{\Sigma}_\lambda$ for $\alpha\in \Sigma \cap \mathcal{R}$. If these diagrams are indeed different, we say that {\it $r_{\alpha}$ changes the diagram $D^{\Sigma}_\lambda$}.
\begin{introtheorem}
    [See  Proposition \ref{changes the weight diagram of lambda}, Theorem \ref{theorem for ror change inequality}]\label{introthrm:2}  Let $\lambda \in \distHW$.
    \begin{enumerate}
        \item Let $\Sigma, \Sigma' \in \mathbb{S}$ and $\alpha \in \Sigma \cap \cap \Sigma' \cap \mathcal{R} $. Then $D^{r_\alpha \Sigma}_\lambda \neq D^{\Sigma}_\lambda \Rightarrow D^{r_\alpha \Sigma'}_\lambda \neq D^{\Sigma'}_\lambda$. \\ 
        That is, given $\alpha\in\mathcal{R}$, the odd reflection $r_{\alpha}$ will either change the weight diagram with respect to any bases in $\mathbb{S}$ containing it, or change none of these diagrams.
        \item Let $\ror{i}{j} \in \mathcal{R}$. The following statements are equivalent: 
        \begin{itemize}
            \item The odd reflection $r_{\ror{i}{j}}$ changes the diagram $D^{\Sigma}_\lambda$ for any base $\Sigma\in \mathbb{S}$ such that $\ror{i}{j}\in\Sigma$.
            \item The indices $i, j$ satisfy: $M_i - (k_i - \overline{\lambda}_i -1) \le j \le M_i$.
        \end{itemize} 
    \end{enumerate}
\end{introtheorem}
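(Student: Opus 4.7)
My plan is as follows. The starting point for both parts is the standard action of an odd reflection on the shifted highest weight: for an isotropic odd simple root $\alpha\in\Sigma$, one has $\overline{\lambda}_{r_\alpha\Sigma}=\overline{\lambda}_\Sigma+\alpha$ if $(\overline{\lambda}_\Sigma,\alpha)=0$ and $\overline{\lambda}_{r_\alpha\Sigma}=\overline{\lambda}_\Sigma$ otherwise. Using the standard bilinear form ($(\eps_a,\eps_b)=\delta_{ab}$, $(\delta_a,\delta_b)=-\delta_{ab}$), the condition $(\overline{\lambda}_\Sigma,\alpha)=0$ with $\alpha=\ror{i}{j}$ reads $(\overline{\lambda}_\Sigma)_i=-(\overline{\lambda}_\Sigma)_{m+j}$. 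So combinatorially, $r_\alpha$ changes $D^\Sigma_\lambda$ if and only if the $\eps_i$-mark and the $\delta_j$-mark of $\overline{\lambda}_\Sigma$ sit at the same position.

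For part~(1), I will show that when $\alpha=\ror{i}{j}\in\Sigma$ the coordinates $(\overline{\lambda}_\Sigma)_i$ and $(\overline{\lambda}_\Sigma)_{m+j}$ depend only on $\alpha$ and $\lambda$, independent of the choice of $\Sigma$ in $\mathbb{S}_\alpha:=\{\Sigma\in\mathbb{S}:\alpha\in\Sigma\}$; by the criterion above this immediately yields the claim. The key structural input is that any $\beta\in\Sigma\cap\mathcal{R}$ distinct from $\alpha$ must be incomparable with $\alpha$, and since two right odd roots sharing a row or a column are always comparable in our order, necessarily $\beta=\ror{i'}{j'}$ with $i'\neq i$ and $j'\neq j$. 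A short check shows that $\alpha\pm\beta$ is not a root for such $\beta$, so $\alpha$ is preserved when we apply $r_\beta$ inside any base containing $\alpha$. Combined with Theorem~\ref{introthrm:1}(1), this identifies $\mathbb{S}_\alpha$ with the incomparable subsets of the two disjoint sub-rectangles $\{(i',j'):i'<i,\,j'<j\}$ and $\{(i',j'):i'>i,\,j'>j\}$, each of which carries the structure of $\mathcal{R}$ for a smaller general linear Lie superalgebra. Using the connectedness of the analogue of $\mathbb{S}$ for those smaller algebras, I will join any two bases of $\mathbb{S}_\alpha$ by a chain of odd reflections $r_\beta$ with $\beta\in\mathcal{R}$ incomparable to $\alpha$. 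Each such $r_\beta$ alters only the $\eps_{i'}$- and $\delta_{j'}$-coordinates of $\overline{\lambda}$, hence preserves $(\overline{\lambda})_i$ and $(\overline{\lambda})_{m+j}$ throughout, and~(1) follows.

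For part~(2), I will use part~(1) to evaluate these invariants in a convenient choice of $\Sigma\in\mathbb{S}_\alpha$. By Theorem~\ref{introthrm:1}(2), $\overline{\lambda}_\Sigma$ is reached from $\overline{\lambda}_\distinguished$ by performing, in some valid order, the odd reflections in $B_\Sigma$; a direct poset computation shows that for every $\Sigma\in\mathbb{S}_\alpha$ one has $B_\Sigma\cap(\text{row }i)=\{\ror{i}{j'}:j'<j\}$ and $B_\Sigma\cap(\text{column }j)=\{\ror{i''}{j}:i''>i\}$. Each active row-$i$ reflection shifts the $\eps_i$-mark one step to the right, and chaining these activations is exactly what the arrow of the arrow diagram starting at $\overline{\lambda}_i$ records: $\eps_i$ moves through the intermediate $\times$- and $<$-positions and halts at $k_i$, the first $\circ$- or $>$-slot allowed by the ``previously drawn arrow'' rule. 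A symmetric analysis governs the rightward motion of $\delta_j$ under the column-$j$ reflections. Translating $(\overline{\lambda}_\Sigma)_i=-(\overline{\lambda}_\Sigma)_{m+j}$ into the statement that the final $\eps_i$- and $\delta_j$-positions coincide, and counting which $j$'s produce this coincidence, yields the interval $\{M_i-(k_i-\overline{\lambda}_i-1),\ldots,M_i\}$: the upper bound $j\le M_i$ records that $\delta_j$ starts to the left of $k_i$ in $\distinguished$, while the lower bound says that only the last $k_i-\overline{\lambda}_i$ of these $M_i$ left-of-$k_i$ $\delta$-marks are within range of $\eps_i$ along its arrow of length $k_i-\overline{\lambda}_i$. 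The main obstacle will be this final combinatorial step: one must verify that the row-$i$ and column-$j$ activation cascades cooperate so that $\eps_i$ and $\delta_j$ meet precisely for $j$ in the claimed interval, which is delicate because of the global interaction between distinct arrows enforced by the ``previously drawn arrow'' clause.
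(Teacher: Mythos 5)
Your part (1) is essentially correct, and it takes a mildly different route from the paper. The paper (Proposition~\ref{changes the weight diagram of lambda}) compares $B_{\Sigma'}$ and $B_{\Sigma''}$ and uses that any right odd root non-orthogonal to $\alpha$ is comparable to $\alpha$, hence lies in both sets (if $<\alpha$) or in neither (if $>\alpha$); you instead show the coordinates $(\overline{\lambda}_\Sigma)_i$ and $(\overline{\lambda}_\Sigma)_{-j}$ are constant on $\mathbb{S}_\alpha:=\{\Sigma\in\mathbb{S}:\alpha\in\Sigma\}$ by a chain argument, which is a clean way to make precise the point the paper treats tersely. The one thing you must actually prove is the connectivity claim: the cheapest version is that from any $\Sigma\in\mathbb{S}_\alpha$ one reaches the base $\Sigma_0$ with $\Sigma_0\cap\mathcal{R}=\{\alpha\}$ (which has the largest $B$ among $\mathbb{S}_\alpha$) by repeatedly reflecting at an element of the current base's $\mathcal{R}$-part other than $\alpha$; each step keeps $\alpha$ simple, adds one incomparable-to-$\alpha$ root to $B$, and $B$ strictly increases inside $B_{\Sigma_0}$, so the process terminates at $\Sigma_0$ and any two bases of $\mathbb{S}_\alpha$ are joined through $\Sigma_0$. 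Your appeal to ``connectedness of the analogue of $\mathbb{S}$ for smaller algebras'' should be replaced by, or fleshed out into, such an argument.

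Part (2), however, has a genuine gap. The equivalence with $M_i-(k_i-\overline{\lambda}_i-1)\le j\le M_i$ is precisely the quantitative content of Theorem~\ref{theorem for ror change inequality}, and your sketch asserts rather than proves its two key points: (a) that under the row-$i$ activations the $\eps_i$-mark halts exactly at $k_i$, and (b) that the $\eps_i$- and $\delta_j$-marks coincide exactly for $j$ in the stated interval. Both require knowing the shape of the intermediate diagrams at the moment the relevant reflections are applied: one needs that in $D^{\Sigma^i}_\lambda$ every position $p$ with $\overline{\lambda}_i\le p<k_i$ carries a $\times$ or a $<$, that no position carries both $\times$ and $<$, and that the diagram agrees with $D_\lambda$ to the right of $k_{i+1}$. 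These are the inductive invariants the paper establishes along the chain $\Sigma^m,\dots,\Sigma^0$, and they are exactly what encodes the ``previously drawn arrow'' rule (the $\times$ symbols of rows $i''>i$ have already been parked at $k_{i''}$, blocking those slots). Note also that $M_i$ is counted in the original diagram $D_\lambda$, while the $\delta$-marks have already moved by the time the row-$i$ reflections are performed: for $\overline{\lambda}=(6\,4\,3\,0|0\,-1\,-4\,-5)$ and $i=1$ the mark of $\delta_4$ sits at $5\notin[\overline{\lambda}_1,k_1)$ in $D_\lambda$ but at $6=\overline{\lambda}_1$ in $\overline{\lambda}_{\Sigma^1}$, so reconciling the count in $D_\lambda$ with the positions in the intermediate base is part of the same bookkeeping, not a formality. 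You flag this step yourself as ``the main obstacle'', but it is not an afterthought: it is the proof. As it stands, part (2) is a plan that mirrors the paper's strategy (reduce via part (1) to a convenient base and track the marks along a sequence of right odd reflections) without supplying the induction that makes the arrow-diagram quantities $k_i$ and $M_i$ actually govern the outcome.
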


Combining \cref{introthrm:1} and \cref{introthrm:2}, we obtain a simple way to compute the weight diagram $D^{\Sigma}_\lambda$ with respect to any $\Sigma \in \mathbb{S}$. 

We define the change tracking diagram (CTD) of $\lambda\in \distHW$ as the function $c_\lambda: \mathcal{R}\to \{0,1\}$ given by 
    $$c_\lambda(\ror{i}{j})=
    \begin{cases}
        1 & M_i - (k_i - \overline{\lambda}_i -1) \le j \le M_i \\
        0 & \text{otherwise}
    \end{cases}$$

\begin{introcor} [See Corollary \ref{CTD formula}] Let $\lambda\in \distHW$. Then 
    $\overline{\lambda}_\Sigma = \overline{\lambda} + \sum_{\alpha\in B_\Sigma} c_\lambda(\alpha)\cdot \alpha$.
    \comment{
    }
\end{introcor}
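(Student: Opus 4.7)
The plan is to reduce the corollary to a bookkeeping argument that combines \cref{introthrm:1}(2), \cref{introthrm:2}, and a one-step rule describing how $\overline{\lambda}_\Sigma$ shifts under a single odd reflection. No genuinely new combinatorics is needed; the work is in organizing the inductive step correctly.

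The first ingredient I would record is the one-step rule: for $\Sigma' \in \mathbb{S}$ and $\alpha \in \Sigma' \cap \mathcal{R}$, we have $\overline{\lambda}_{r_\alpha \Sigma'} \in \{\overline{\lambda}_{\Sigma'},\; \overline{\lambda}_{\Sigma'} + \alpha\}$, with the $+\alpha$ case occurring exactly when $D^{r_\alpha \Sigma'}_\lambda \neq D^{\Sigma'}_\lambda$. This is a standard consequence of the behavior of odd reflections at isotropic simple roots: either the highest weight vector of $L(\lambda)$ with respect to $\Sigma'$ survives as the highest weight vector with respect to $r_\alpha \Sigma'$, so that $\lambda_{r_\alpha \Sigma'} = \lambda_{\Sigma'}$ while $\rho_{r_\alpha \Sigma'} = \rho_{\Sigma'} + \alpha$ produces the $+\alpha$ shift of $\overline{\lambda}$; or it is replaced by its image under $e_{-\alpha}$, giving $\lambda_{r_\alpha \Sigma'} = \lambda_{\Sigma'} - \alpha$, which precisely cancels the $\rho$-shift so that $\overline{\lambda}$ is unchanged. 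Either way, the change in $\overline{\lambda}$ is visible exactly through a change in the diagram.

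Next I would invoke \cref{introthrm:1}(2) to choose a valid order $\alpha_1, \dots, \alpha_k$ of the elements of $B_\Sigma$ and form the intermediate bases
\[
\Sigma_0 := \distinguished, \qquad \Sigma_{i+1} := r_{\alpha_{i+1}} \Sigma_i, \qquad \Sigma_k = \Sigma,
\]
with $\alpha_{i+1} \in \Sigma_i \cap \mathcal{R}$ at every step. Induction on $i$ together with the one-step rule yields $\overline{\lambda}_{\Sigma_{i+1}} - \overline{\lambda}_{\Sigma_i} \in \{0, \alpha_{i+1}\}$, with the nonzero value occurring precisely when $r_{\alpha_{i+1}}$ changes the diagram $D^{\Sigma_i}_\lambda$. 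By \cref{introthrm:2}(1) this condition depends only on $\alpha_{i+1}$ (not on the particular base $\Sigma_i$ containing it), and by \cref{introthrm:2}(2) it is equivalent to $c_\lambda(\alpha_{i+1}) = 1$. Telescoping the increments gives
\[
\overline{\lambda}_\Sigma = \overline{\lambda} + \sum_{i=1}^k c_\lambda(\alpha_i)\cdot\alpha_i = \overline{\lambda} + \sum_{\alpha \in B_\Sigma} c_\lambda(\alpha)\cdot\alpha,
\]
with the order of summation immaterial.

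The main obstacle, and the only point requiring care, is the one-step rule above: one must verify that the two cases (weight vector preserved versus replaced by $e_{-\alpha} v$) are \emph{both} consistent with the dichotomy "$\overline{\lambda}$ shifts by $0$ or $+\alpha$," so that the invariant "$c_\lambda$ marks precisely the $\alpha$'s that shift $\overline{\lambda}$" is preserved at every reflection. Provided this rule has been recorded earlier (it is implicit in the very definition, before \cref{introthrm:2}, of "$r_\alpha$ changes the diagram $D^\Sigma_\lambda$"), the corollary follows from a routine induction.
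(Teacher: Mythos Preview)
Your proposal is correct and follows essentially the same route as the paper: invoke \cref{introthrm:1}(2) to order the elements of $B_\Sigma$, use the one-step rule (recorded in the paper as \cref{odd reflection diagram}/\cref{lem:generalities_on_ror_changes}) to see that each reflection adds either $0$ or $\alpha$ to $\overline{\lambda}$, and then use \cref{introthrm:2} to identify which case occurs via $c_\lambda$. The paper's proof is simply a terser version of your telescoping argument.
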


We may interpret the CTD of $\lambda$ by expanding on the idea of the visual representation of $B_\Sigma$ seen above. This is done by coloring the node corresponding to every $\alpha\in \mathcal{R}$ according to the value of $c_\lambda(\alpha)$ (black if $c_\lambda(\alpha)=1$ and white otherwise). 
\begin{example}
For $\lambda\in \Lambda^+_{3|5}$ given by $\overline{\lambda} = (5\, 4\, 0|0\, -1\, -4\, -5\, -6)$ we have $c_\lambda$ given by 
\begin{equation*}
\begin{tikzpicture}[anchorbase,scale=1.1]
\node at (0,1.5) {$\bullet $};

\node at (0.5,1) {$\bullet $};
\node at (-0.5, 1) {$\bullet $};

\node at (1, 0.5) {$\circ $};
\node at (0, 0.5) {$\bullet $};
\node at (-1, 0.5) {$\bullet $};

\node at (0.5, 0) {$\circ $};
\node at (-0.5, 0) {$\bullet $};
\node at (-1.5, 0) {$\circ $};

\node at (0, -0.5) {$\circ $};
\node at (-1, -0.5) {$\circ $};
\node at (-2, -0.5) {$\circ $};

\node at (-0.5, -1) {$\bullet $};
\node at (-1.5, -1) {$\circ $};

\node at (-1, -1.5) {$\bullet $};

\draw[thick, red, -] (-1, -2)--(-2 ,-1);
\draw[thick, red, -] (-2 ,-1)--(-0.5 ,0.5);
\draw[thick, red, -] (-0.5 ,0.5)--(0.5 ,-0.5);
\draw[thick, red, -] (0.5 ,-0.5)--(-1, -2);

\end{tikzpicture}
\end{equation*}

Here we marked the set $B_\Sigma$ for $\Sigma\in \mathbb{S}$ by a red polygon. As described in Theorem \ref{introthrm:1}, $\Sigma$ corresponds to an incomparable set $\Sigma \cap \mathcal{R}$, which in this case is $\{\ror{1}{1},\ror{1}{4}\}$. Were we to compute $\overline{\lambda}_\Sigma$, this can be done by adding  to $\overline{\lambda}$ the right odd roots whose corresponding nodes are black and which are enclosed by the red polygon.
\end{example}
\subsection{The longtail of an integral dominant weight}

\mbox{}

Consider a symmetric bilinear form $(-|-)$ on $\fh^*$ defined by $$(\eps_i|\eps_j):=\delta_{i, j}, \; (\eps_i|\delta_j):=(\delta_j|\eps_i):=0, \; (\delta_i|\delta_j):=-\delta_{i, j}.$$

Let $\nu\in span_{\mathbb{Z}}
\{\eps_1, \ldots, \eps_m, \delta_1,\ldots, \delta_n\} \subset \fh^*$ be an integral weight for $\glmn$.  We define $s(\nu)$ to be the cardinality of the largest set of odd roots $S$ such that $$\forall \alpha, \beta\in S, \;\; (\alpha|\beta)=0, \, (\alpha|\nu)=0.$$ 
For $\lambda\in \distHW$ we define $$longtail(\lambda):=\max_{\Sigma\in \mathbb{S}}s(\overline{\lambda}_\Sigma).$$

\begin{introtheorem}[See Corollary \ref{cor:CTD_vs_longtail_eq}, Theorem \ref{thrm:max tail arrow}, Corollary \ref{thrm:longtail_and_cap_diagrams_formula}]\label{introthrm:4}
    The following values are equal to $longtail(\lambda)$:
    \begin{enumerate}
        \item The maximal cardinality of an incomparable subset $S\subset\mathcal{R}$ such that $c_{\lambda}(\alpha)=1$ for all $\alpha\in S$.
        \item $\max_{p\in \mathbb{R}}\sharp \{i \in \{1,\dots, m\}|p \in [\overline{\lambda}_i,k_i]\}$ which is to say the long arrow of $\lambda$ is the maximal number of arrows going over a single point in the arrow diagram of $\lambda$. 
        \item The maximal number of caps going over a single point in the cap diagram of $\lambda$, as defined in \cite{weightdiagraminvetion}.
    \end{enumerate}
\end{introtheorem}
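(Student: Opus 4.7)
The plan is to reduce $longtail(\lambda)$ to the total $\times$-count in $D^\Sigma_\lambda$, and then to translate that count into each of the three described quantities using the CTD, the arrow diagram, and the cap-diagram correspondence.

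First I would prove that $s(\overline{\lambda}_\Sigma)$ equals the number of $\times$ symbols in $D^\Sigma_\lambda$. Two odd roots $\pm(\varepsilon_i-\delta_j)$ and $\pm(\varepsilon_{i'}-\delta_{j'})$ are $(-|-)$-orthogonal iff $i\neq i'$ and $j\neq j'$, while $\varepsilon_i-\delta_j$ is orthogonal to $\overline{\lambda}_\Sigma$ iff $(\overline{\lambda}_\Sigma)_i=-(\overline{\lambda}_\Sigma)_{-j}$, i.e.\ iff $\varepsilon_i$ and $\delta_j$ occupy the same position of the diagram. Maximising $|S|$ is therefore a position-by-position bipartite matching problem of value $\sum_p\min(k_p,s_p)$, which is the total $\times$-count. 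Hence $longtail(\lambda)=\max_{\Sigma\in\mathbb{S}}|\{\times\text{ in }D^\Sigma_\lambda\}|$.

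For (1) I would combine the formula $\overline{\lambda}_\Sigma=\overline{\lambda}+\sum_{\alpha\in B_\Sigma} c_\lambda(\alpha)\alpha$ with the bijection $\Sigma\leftrightarrow\Sigma\cap\mathcal{R}$ from \cref{introthrm:1}. Given an incomparable $S\subset\mathcal{R}$ with $c_\lambda|_S\equiv 1$, the base $\Sigma\in\mathbb{S}$ with $\Sigma\cap\mathcal{R}=S$ makes each $\alpha=\varepsilon_i-\delta_j\in S$ simple in $\Sigma$; using \cref{introthrm:1}(2) to reach $\Sigma$ from $\distinguished$ through odd reflections along $B_\Sigma$, the cumulative translation by the active $\alpha$'s pushes $\varepsilon_i$ and $\delta_j$ onto a common position of $D^\Sigma_\lambda$, producing a $\times$ there; this yields $(1)\le longtail(\lambda)$. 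For the reverse, starting from an optimal $\Sigma$ and the pairwise orthogonal family $\{(i_k,j_k)\}$ read off from the $\times$'s of $D^\Sigma_\lambda$, I would apply an uncrossing swap: if two such pairs $(i,j),(i',j')$ are comparable in $\mathcal{R}$, then replacing them by $(i,j')$ and $(i',j)$ preserves pairwise orthogonality and the $\times$-count while strictly reducing the number of comparable pairs; after finitely many swaps one obtains an incomparable family, whose activity under $c_\lambda$ is forced by the CTD formula.

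For (2), I would show that the active interval $[M_i-(k_i-\overline{\lambda}_i-1),M_i]$ of row $i$ of the CTD precisely records the $\delta$-columns swept by the $i$-th arrow as it flies from $\overline{\lambda}_i$ to $k_i$. A maximum incomparable subset of active CTD entries is then a chain $(i_1,j_1)<\cdots <(i_r,j_r)$ in the product order, and by the nested greedy structure of arrows such a chain exists iff the arrows labelled $i_1,\ldots ,i_r$ all cover some common point $p$; the forward direction uses the nesting of active intervals, while the converse extracts distinct $\delta$-columns from the explicit formulas for $M_i$ and $k_i$ at a most-covered $p$. For (3), I would invoke the bijective correspondence between the arrow diagram here and the Brundan--Stroppel cap diagram of \cite{weightdiagraminvetion}: both connect the same $\times/>$ positions to the same $\circ/>$ positions via analogous greedy left-to-right rules, so arrow-coverage and cap-coverage at a point share the same maximum.

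The principal obstacle is the uncrossing argument in (1): one must verify that the swap $(i,j),(i',j')\mapsto (i,j'),(i',j)$ preserves not only orthogonality but also activity $c_\lambda\equiv 1$ of the four resulting entries. This requires exploiting the shape of the active intervals together with \cref{introthrm:2}(1), which ensures that "changing the diagram" is a base-independent property. This same structural fact also underlies the arrow interpretation in (2) — once the active-row-equals-arrow-sweep identity is established, the remaining chain-arrow-cap dictionary needed for (2) and (3) is largely routine.
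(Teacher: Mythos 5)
Your opening reduction is false, and it contradicts the very statement you are proving. In this paper $s(\nu)$ is \emph{not} a bipartite matching number: by the definition in Section \ref{tail conj section} and \cref{cor:s_nu_second_definition}, $s(\nu)$ is the largest cardinality of an iso-set that can be embedded into some base in $\mathbb{S}$, equivalently of an \emph{incomparable} subset of $\mathcal{R}$ orthogonal to $\nu$ (the looser wording in the introduction is not the operative definition). Pairwise orthogonality of $\eps_i-\delta_j$ and $\eps_{i'}-\delta_{j'}$ only forces $i\neq i'$ and $j\neq j'$; such a pair can perfectly well be comparable (take $i<i'$, $j>j'$), and then it lies in no $\Sigma\in\mathbb{S}$ and is not counted by $s$. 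Your position-by-position matching computes instead the maximal pairwise orthogonal family, i.e.\ the atypicality of $\overline{\lambda}_\Sigma$, which equals the total number of $\times$ symbols and is the same for every $\Sigma\in\mathbb{S}$; so your reduction would give $longtail(\lambda)=atyp(\lambda)$ for all $\lambda$, which is false. In the counterexample of Subsection \ref{tail conj counterexample} every $D^\Sigma_\lambda$ has five $\times$ symbols, yet $s(\overline{\lambda}^{\dagger})=2$ and $longtail(\lambda)=3$. The only true statement in this direction is \cref{lem:longtail_ineq_number_x_stacked}: $s(\overline{\lambda}_\Sigma)$ is at least the number of $\times$ symbols \emph{stacked at a single position}, and even that is not an equality in general.

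The same oversight breaks the reverse step of your part (1): the uncrossing swap $(i,j),(i',j')\mapsto(i,j'),(i',j)$ applied to pairs read off from two $\times$'s at distinct positions $p\neq p'$ destroys orthogonality to the weight, since $(\overline{\lambda}_\Sigma|\eps_i-\delta_{j'})=0$ would force $(\overline{\lambda}_\Sigma)_i=-(\overline{\lambda}_\Sigma)_{-j'}$, i.e.\ $p=p'$. No uncrossing is needed in the paper precisely because $s$ is defined through incomparable sets: any incomparable $S\subset\mathcal{R}$ orthogonal to $\overline{\lambda}_\Sigma$ automatically satisfies $c_\lambda\equiv 1$ on $S$ by \cref{CTD 1 iff alpha perp nu}, which together with your (correct) lower-bound half, as in \cref{lem:CTD_vs_s_lam_ineq}, yields \cref{cor:CTD_vs_longtail_eq}. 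Your sketches for (2) and (3) are directionally in line with the paper (compare \cref{lem:rectangle_in_CTD}, \cref{lem:max_arrows_over_point_and_rectangle}, \cref{lem:stacked_x_vs_arrows_over_ineq}), but for (3) note that arrows and caps are not in bijection ($m$ arrows versus $atyp(\lambda)$ caps), so ``same greedy rule'' is not an argument; one needs the cancellation of arrows starting and ending at $>$ positions, as in the counting lemma preceding \cref{thrm:longtail_and_cap_diagrams_formula}. As it stands, the proposal cannot be repaired without discarding the first reduction and working with the incomparability-sensitive definition of $s(\nu)$ throughout.
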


The above theorem is a correction to the Tail Conjecture appearing in \cite{tailconjpaper} (see \cref{tail_conj}).

\subsection{Structure of the paper}
Section \ref{prelims} provides the required preliminaries on Lie superalgebras and weight diagrams. The remainder of this paper is divided into 4 sections: 

Section \ref{b sigma section} describes two approaches to describe bases in $\mathbb{S}$. Section \ref{arrow diagram section} describes the arrow diagram and provides insight into the changes to weight diagrams caused by odd reflections. Section \ref{ctd section} defines the CDT which can be used to compute the weight diagrams by any $\Sigma \in \mathbb{S}$ and explores it's structure. Finally, Section \ref{tail conj section} provides a counter example and correction to the tail conjecture appearing in \cite{tailconjpaper}.

\subsection{Acknowledgements}
During this research, the author was supported by the NSF-BSF grant 2019694 (PI: Inna Entova-Aizenbud, Vera Serganova).

I would like to thank my advisor Dr. Inna Entova-Aizenbud for the support and advice through out this project and Prof. Maria Gorelik for her help and interest in my work.

\section{Preliminaries}
\label{prelims}

\subsection{Vector superspaces and Lie superalgebras}
\mbox{}

In this section we follow \cite[Chapter 1]{cheng2012dualities} with some adaptation of notation. 

\begin{definition}
A \textbf{vector superspace} $V$ is a vector space together with $ \mathbb{Z}/2\mathbb{Z}$-grading $V=\even{V}\oplus \odd{V}$. The \textbf{dimension} of $V$ is given by the pair $\dim V = (\dim \even{V}|\,\dim  \odd{V})$, and the \textbf{superdimension} of $V$ is defined as $sdim V=\dim \even{V}-\dim  \odd{V}$. 

The \textbf{parity} of a homogeneous $a\in V_i$ is denoted by $p(a)=i$ for $i\in \mathbb{Z}/2\mathbb{Z}$. 
\end{definition}

\begin{example}
We denote by $\mathbb{C}^{m|n}$ the super vector space with even subspace $\mathbb{C}^m$ and odd subspace $\mathbb{C}^n$.
\end{example}

\begin{definition} An associative \textbf{superalgebra} $A$ is a super vector space $A=\even{A}\oplus \odd{A}$, with a multiplication operation $A\otimes A \to A$ that satisfies $A_iA_j\subseteq A_{i+j}$ for $i,j\in \mathbb{Z}/2\mathbb{Z}$ and which is associative. 
\end{definition}

\begin{definition} 
\label{module over superalgebra}
A \textbf{module} $M$ over a superalgebra $A$ is a module in the traditional sense, that must also agree with the $ \mathbb{Z}/2\mathbb{Z}$-grading. That is, $M=\even{M}\oplus \odd{M}$ and $A_iM_j\subseteq M_{i+j}$ for $i,j\in \mathbb{Z}/2\mathbb{Z}$.
\end{definition}

\begin{definition}
A \textbf{Lie superalgebra} is a vector superspace $\mathfrak{g}=\even{\mathfrak{g}}\oplus \odd{\mathfrak{g}}$ with a binary bilinear operation $[\cdot,\cdot]$ satisfying the following axioms for homogeneous $a,b,c\in \mathfrak{g}$:
\begin{enumerate}
    \item Supersymmetry: $[a,b]=-(-1)^{p(a)\cdot p(b)}[b,a]$.
    \item Super Jacobi identity: $[a,[b,c]]=[[a,b],c]+(-1)^{p(a)\cdot p(b)}[b,[a,c]]$.
\end{enumerate}
\end{definition}

\begin{example}
Any Lie algebra $\mathfrak{g}$ is a Lie superalgebra with even part given by $\mathfrak{g}$ and odd part equal to $\{0\}$.
\end{example}

\begin{example}
\label{commutator example}
Let $A=\even{A}\oplus \odd{A}$ be an associative superalgebra. We can give it the structure of a superalgebra by defining the bracket by the supercommutator 
$$[a,b]=ab-(-1)^{p(a)\cdot p(b)}ba$$ for homogeneous $a,b\in A$, and extending $[\cdot,\cdot]$ bilinearly.
\end{example}

\begin{definition}[The general linear Lie superalgebra] Let $V=\even{V}\oplus \odd{V}$ be a super vector space. The algebra $\text{End}(V)$ is naturally an associative superalgebra, and as in Example \ref{commutator example} we may equip it with the supercommutator to form a Lie superalgebra called the \textbf{general linear Lie superalgebra}. This superalgebra is denoted by $\mathfrak{gl}(V)$. 

When $V=\mathbb{C}^{m|n}$ we write $\glmn:=\mathfrak{gl}(V)$. 
\end{definition}
Observe that for a vector superspace $V=V_{\bar 0} \oplus V_{\bar{1}}$, we have: $\even{\mathfrak{gl}(V)}\cong \mathfrak{gl}(V_{\bar 0})\oplus \mathfrak{gl}(V_{\bar{1}})$ and $\odd{\mathfrak{gl}(V)}\cong (V_{\bar 0} \oplus V_{\bar{1}}^*) \oplus (V_{\bar 0}^* \oplus V_{\bar{1}})$.

Fixing a basis for $\even{V}$ and for $\odd{V}$ that combine to a basis of $V$, elements of $\mathfrak{gl}(V)$ can be realized as a superalgebra of $(m+n)\times (m+n)$ matrices, with basis given by elementary matrices $E_{ij}$.

\begin{definition}
    A \textbf{Cartan subalgebra} of $\glmn$ is defined as a Cartan subalgebra of the even subalgebra $\even{\glmn}\cong \mathfrak{gl}(m)\oplus\mathfrak{gl}(n)$ (which in turn is defined as a subalgebra $\mathfrak{h}$ of $\even{\glmn}$ that is both nilpotent and equal to its normalizer).
\end{definition}

Note that Cartan subalgebras need not be unique, and there can (and often are) multiple Cartan subalgebras. Cartan subalgebras are conjugate under inner automorphisms.

\begin{example}
    The set of all diagonal matrices is a Cartan subalgebra of $\glmn$.
\end{example}

Note that the scope of this paper is limited to the Lie superalgebra $\glmn$. That being said, all Lie superalgebras with a reductive even part and an invariant symmetric form have Cartan subalgebras. 

Furthermore, all definitions and statements presented in this section can be stated for any Lie superalgebra equipped with a grading $\fg = \fg(-1)\oplus\fg(0)\oplus \fg(1)$ where $\fg(0)=\even{\fg}$ is a reductive Lie algebra and $\fg(-1)$ and $\fg(1)$ are irreducible $\fg_0$-modules.

\begin{definition}
    Let $\mathfrak{h}$ be a Cartan subalgebra of $\glmn$.
    Consider the actions of the elements of $\mathfrak{h}$ on $\mathfrak{g}$ by the adjoint action (as in Example \ref{adjoint action}). These can be simultaneously diagonalized, so there exists a decomposition of $\mathfrak{g}$ into eigenspaces
        $$\mathfrak{g}=\mathfrak{h}\oplus \underset{\alpha\in\mathfrak{h}^*}{\bigoplus} \mathfrak{g}_\alpha\text{.}$$
    where $\alpha\in \mathfrak{h}^*$ and $\mathfrak{g}_\alpha=\{g\in\mathfrak{g}\,|\,\forall h\in\mathfrak{h}\,[h,g]=\alpha(h)g\}$. 
    We call $\alpha$ such that $\mathfrak{g}_\alpha \neq 0$ a \textbf{root} and $\mathfrak{g}_\alpha$ a \textbf{root space}. 
    The set of all roots $\Delta\subset \fh^*$ is called the \textbf{root system} of $\mathfrak{gl}(m|n)$. Roots can be assigned parity via the parity of their respective root spaces. As such $\Delta$ can be split into even and odd parts denoted by $\even{\Delta}$ and $\odd{\Delta}$ respectively. 
\end{definition}

\begin{theorem}
    Let $\mathfrak{h}$ be a Cartan subalgebra of $\glmn$. We have:
    \begin{enumerate}       
        \item $\dim\mathfrak{g}_\alpha=1$, for $\alpha\in\Delta$.
        \item $[\mathfrak{g}_\alpha,\mathfrak{g}_\beta]\subset \mathfrak{g}_{\alpha+\beta}$ where $\mathfrak{h}$ is considered to be $\mathfrak{g}_0$.
        \item $\Delta=-\Delta$, $\even{\Delta}=-\even{\Delta}$ and $\odd{\Delta}=-\odd{\Delta}$.
    \end{enumerate}
\end{theorem}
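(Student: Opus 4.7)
The plan is to reduce everything to explicit computation in the standard realization of $\glmn$ as $(m+n)\times (m+n)$ matrices with the Cartan subalgebra $\mathfrak{h}_0$ of diagonal matrices, and then to transport the conclusions to an arbitrary Cartan $\mathfrak{h}$ via the conjugacy under inner automorphisms noted in the text. Since dimensions, parities, and the combinatorial shape of the root system are invariant under inner automorphisms, this reduction loses nothing.

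First I would establish (2), which is purely formal and works for any purely even Cartan. For $x\in\mathfrak{g}_\alpha$, $y\in\mathfrak{g}_\beta$ and $h\in\mathfrak{h}$, the super Jacobi identity gives
\begin{equation*}
[h,[x,y]] = [[h,x],y] + (-1)^{p(h)\,p(x)}[x,[h,y]].
\end{equation*}
Because $\mathfrak{h}\subset\even{\fg}$ is purely even, $p(h)=\bar 0$, so the sign is trivial and the right-hand side collapses to $\bigl(\alpha(h)+\beta(h)\bigr)[x,y]$. Hence $[x,y]\in\mathfrak{g}_{\alpha+\beta}$, with the convention $\mathfrak{g}_0:=\mathfrak{h}$.

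For (1) and (3) I would work explicitly with $\mathfrak{h}_0$. A direct computation on an elementary matrix $E_{ij}$ with $i\ne j$ gives $[\mathrm{diag}(h_1,\dots,h_{m+n}),E_{ij}]=(h_i-h_j)E_{ij}$, so each $E_{ij}$ is a weight vector whose weight is $\eps_i-\eps_j$, $\delta_{i-m}-\delta_{j-m}$, or $\pm(\eps_i-\delta_{j-m})$ according to which of the $m$- and $n$-blocks the indices lie in. Since $\{E_{ij}\}_{i\ne j}$ together with $\mathfrak{h}_0$ forms a basis of $\glmn$, each nonzero weight space is spanned by a single $E_{ij}$, yielding (1). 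The set of weights that occurs is
\begin{equation*}
\{\eps_i-\eps_j\mid i\ne j\}\cup\{\delta_i-\delta_j\mid i\ne j\}\cup\{\pm(\eps_i-\delta_j)\},
\end{equation*}
which is manifestly stable under $\alpha\mapsto-\alpha$; moreover the parity of $E_{ij}$ depends only on whether $i,j$ lie in the same block, so negation of roots preserves parity, giving (3).

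Finally I would invoke the conjugacy of Cartan subalgebras to deduce the three statements for an arbitrary $\mathfrak{h}$: if $\mathfrak{h}=\mathrm{Ad}(g)\mathfrak{h}_0$, then $\mathrm{Ad}(g)$ is a parity-preserving Lie superalgebra automorphism that carries each $\mathfrak{h}_0$-root space isomorphically onto an $\mathfrak{h}$-root space, transporting the pullback of the $\mathfrak{h}_0$-weight to the corresponding $\mathfrak{h}$-weight. No real obstacle appears: the entire argument is bookkeeping inside a finite-dimensional matrix superalgebra, and the only point that needs care is tracking the parity of root vectors, which is precisely what the super Jacobi sign in (2) was designed to handle.
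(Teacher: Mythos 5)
Your proposal is correct. The paper does not actually prove this statement—it is recorded in the preliminaries as a standard fact (following the references such as \cite{cheng2012dualities})—and your argument is exactly the standard one it implicitly relies on: part (2) from the super Jacobi identity with $p(h)=\bar 0$, parts (1) and (3) by the explicit computation $[\mathrm{diag}(h_1,\dots,h_{m+n}),E_{ij}]=(h_i-h_j)E_{ij}$ for the diagonal Cartan, and then transport to an arbitrary $\fh$ via conjugacy; the only point worth making explicit is that the conjugating inner automorphism comes from the even group $GL(m)\times GL(n)$ (Cartan subalgebras here being Cartan subalgebras of $\even{\glmn}$), which is why $\mathrm{Ad}(g)$ is parity-preserving as you assert.
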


\begin{definition}
\label{ weight def}
    Fix a Cartan subalgebra $\fh$ of $\glmn$. Functionals $\lambda\in \fh^*$ are called \textbf{weights}. 
    Given a simple $\glmn$-module $M$, any subspace $V$ of dimension one is a weight space in the sense that there exists $\lambda\in \fh^*$ such that $\{v\in V|\,\forall h\in\fh\, h.v=\lambda(h)v\}$.

    Notice that the set of roots $\Delta$ is in fact a set of weights, that is to say $\Delta \subset \fh^*$.
\end{definition}

\begin{example}
\label{usual cartan}
    Let $\mathfrak{h}$ be the Cartan subalgebra  of $\glmn$ given by all diagonal matrices. For $1\le i \le m$ and $1\le j \le n$, denote by $\varepsilon_i$ and $\delta_j$ the weights which are dual to the matrices $E_{i,\,i}\in\mathfrak{h}$ and $E_{m+j,\,m+j}\in\mathfrak{h}$. 
    
    Then $\Delta_{\overline{0}}=\{\pm (\varepsilon_i-\varepsilon_j)\}_{1\leq i, j\leq m} \cup \{\pm (\delta_i-\delta_j)\}_{1\leq i, j\leq n}$, $\Delta_{\overline{1}} = \{\pm (\ror{i}{j})\}_{1\leq i\leq m, \, 1\leq j\leq n}$.
\end{example}

Defining $\varepsilon_i$ and $\delta_j$ as in Example \ref{usual cartan}, any weight $\nu\in\fh^*$ is given by $$\nu = \sum_{i=1}^{m}a_i\varepsilon_i + \sum_{j=1}^{n}b_j\delta_j.$$ For convenience we write $\gram$ as shorthand for $\sum_{i=1}^{m}a_i\varepsilon_i + \sum_{j=1}^{n}b_j\delta_j$ and write $\nu_k:=a_k$ and $\nu_{-k}:=b_k$.

Further, one may equip $\fh^*$ with a symmetric bilinear form given by $$(\eps_i|\eps_j):=\delta_{ij}, \; (\delta_j|\eps_i) :=(\eps_i|\delta_j) :=-\delta_{ij} \; (\delta_i|\delta_j):=-\delta_{ij}$$ for all $i, j$.

\begin{definition}
    A {\it base} of $\Delta$ is a linearly independent set $\Sigma \subset \Delta$ such that every root $\alpha\in \Delta$ can be written as a linear combination of the elements of $\Sigma$, with integer coefficients, so that the coefficients are either all non-positive or all non-negative.
\end{definition}
The following is a classical result appearing in \cite[Chapter 10]{intro1972liealgebras}.
\begin{lemma}
    Let $\Delta = \Delta^+\sqcup \Delta^-$ be a decomposition of the root system such that $$\alpha\in \Delta^+ \,\Longleftrightarrow \, -\alpha\in \Delta^-.$$ Consider the simple roots with respect to this decomposition: these are roots that cannot be written as a sum of two positive roots with respect to this decomposition. The set of all simple roots is a base. 

    Conversely, given a base $\Sigma$, denote by $\Delta^+$ the set of all roots which are linear combinations of elements in $\Sigma$ with non-negative coefficients. Then $\Delta = \Delta^+ \sqcup (\Delta\setminus \Delta^+)$ is a a decomposition of $\Delta$ into sets of positive and negative roots, satisfying the requirement above.
\end{lemma}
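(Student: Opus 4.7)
The plan is to prove both directions, with the forward implication (positive system yields base) carrying the content. For the converse, given a base $\Sigma$ and defining $\Delta^+$ as the roots with non-negative $\Sigma$-expansion, the defining property of a base (coefficients in any root-expansion share a common sign) gives $\Delta = \Delta^+ \sqcup (-\Delta^+)$ and the required biconditional $\alpha \in \Delta^+ \Leftrightarrow -\alpha \in \Delta^-$; this direction is essentially tautological and takes a single sentence.

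For the forward direction, fix a decomposition $\Delta = \Delta^+ \sqcup \Delta^-$ with the stated property and let $\Sigma$ denote the set of simple roots. First I would pick an $\mathbb{R}$-linear functional $\ell : \fh^*_{\mathbb{R}} \to \mathbb{R}$ with $\ell(\alpha) > 0$ for every $\alpha \in \Delta^+$; such an $\ell$ exists because $\Delta$ is finite and $\Delta^+$ is disjoint from $-\Delta^+$. Strong induction on $\ell(\alpha)$ then shows that every $\alpha \in \Delta^+$ is a non-negative integer combination of elements of $\Sigma$: either $\alpha \in \Sigma$, or by the definition of simplicity $\alpha = \beta + \gamma$ with $\beta, \gamma \in \Delta^+$ of strictly smaller $\ell$-value, so the inductive hypothesis applies to both summands.

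The main obstacle is linear independence of $\Sigma$. The classical Humphreys argument uses an invariant form: for distinct $\alpha, \beta \in \Sigma$ one shows $\alpha - \beta \notin \Delta$ (else $\alpha$ or $\beta$ decomposes as a sum of two positives), whence $(\alpha|\beta) \le 0$; then a hypothetical linear dependence is separated into positive- and negative-coefficient parts $\mu := \sum_{i \in I} a_i \alpha_i = \sum_{j \in J} b_j \alpha_j$, yielding $(\mu|\mu) \le 0$ while $\ell(\mu) > 0$, which contradicts positive-definiteness. In the super setting the form $(-|-)$ on $\fh^*$ is indefinite, so this route does not transfer directly. For $\glmn$, however, one can conclude by direct inspection of root coordinates in $\{\eps_i, \delta_j\}$: the condition $\alpha - \beta \notin \Delta$ for all distinct $\alpha, \beta \in \Sigma$ imposes strong combinatorial restrictions on which $\eps_i - \eps_{i+1}$, $\delta_j - \delta_{j+1}$, and $\pm(\eps_i - \delta_j)$ may appear simultaneously in $\Sigma$, and working through these cases yields linear independence directly. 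This combinatorial bookkeeping is the technical heart of the proof.
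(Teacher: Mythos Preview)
The paper does not actually prove this lemma: it is stated with the preamble ``The following is a classical result appearing in \cite[Chapter 10]{intro1972liealgebras}'' and no proof is given. So there is no argument in the paper to compare yours against.

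That said, your proposed proof has a genuine gap, and it exposes a looseness in the lemma's statement. Your claim that a linear functional $\ell$ with $\ell(\alpha)>0$ for all $\alpha\in\Delta^+$ exists ``because $\Delta$ is finite and $\Delta^+$ is disjoint from $-\Delta^+$'' is false: disjointness from $-\Delta^+$ does not prevent $0$ from lying in the convex hull of $\Delta^+$. Concretely, for $\mathfrak{gl}(1|2)$ take
\[
\Delta^+=\{\varepsilon_1-\delta_1,\ \delta_2-\varepsilon_1,\ \delta_1-\delta_2\}.
\]
No element is the negative of another, so the hypothesis of the lemma holds; yet these three roots sum to zero, so no separating functional exists. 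Moreover, none of the three can be written as a sum of two elements of $\Delta^+$, so all three are ``simple'' in the sense of the lemma --- but they are linearly dependent, hence do not form a base. Thus the lemma, read literally, is false for $\glmn$, and your combinatorial patch for linear independence cannot succeed either.

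What is implicitly intended (and what Humphreys actually proves) is that $\Delta^+$ arises from a regular element, equivalently that $\Delta^+$ is closed under addition of roots; with that extra hypothesis your $\ell$-and-induction argument goes through and the Humphreys-style linear-independence proof can be adapted. If you want a self-contained super version, you should add this closure hypothesis explicitly before running your argument.
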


\begin{definition}
    Consider $\glmn$ with the Cartan subalgebra as described in \cref{usual cartan} and set of positive roots given by $$\Delta^+=\{\eps_i-\eps_j|1\le i < j \le m\}\cup \{\delta_i-\delta_j|1\le i < j \le n\}\cup\{\ror{i}{j}|i\in\{1,\dots,m\}\text{ and }j\in\{1,\dots,n\}\}.$$ We call the base corresponding to $\Delta^+$ the \textbf{distinguished base} and denote it by $\distinguished$. 
\end{definition}

\begin{example}
    For $\mathfrak{gl}(2|2)$, we have:
    $\Delta = \{\pm (\ror{i}{j})|1\le i,j \le 2\}$. The distinguished base is given by base $$\distinguished= \{\eps_1-\eps_2,\eps_2-\delta_1, \delta_1-\delta_2\}.$$
    The set of positive and negative roots is given by the set of roots in $\Delta$ which can we written as $\sum k_i \alpha_i$ where $\alpha_i\in \distinguished$ and $k_i$ are either all non-negative and all non-positive respectively. 

    As such, the set of positive roots with respect to $\distinguished$ is 
    $$\Delta^+=\{\eps_1-\eps_2, \eps_2-\delta_1, \delta_1-\delta_2, \eps_1-\delta_1, \eps_2-\delta_2,\eps_1-\delta_2\}$$
    and $\Delta^- = - \Delta^+$.
\end{example}



\begin{definition}
    A $\fg$-module $M$ is a vector superspace with a Lie superalgebra homomorphism $\fg\to \mathfrak{gl}(M)$.
\end{definition}

\begin{example}
    \label{adjoint action}
    Let $\mathfrak{g}$ be a Lie superalgebra. The adjoint module of $\mathfrak{g}$ is the superspace $\mathfrak{g}$; the action of $g\in\mathfrak{g}$ on $h\in\mathfrak{g}$ is given by $g.h=[g,h]$.
\end{example}

In this paper we consider only $\glmn$-modules that are finite dimensional and semisimple over $\fh$, with $\fh$ acting integrably (that is to say, the action of $\fh$ integrates to an algebraic action of the corresponding torus group).

\begin{definition}
\label{partial order}
    \comment{Fix some set of positive roots $\Delta^+$ of $\fh^*$ and consider the base $\Sigma$ corresponding to $\Delta^+$.} We define a partial order on $\fh^*$ by $\lambda \ge \mu$ if $\lambda - \mu=\sum_{\alpha\in\distinguished}a_\alpha \alpha$ where $a_\alpha\in \mathbb{R}_{\ge 0}$.
\end{definition}

\begin{fact}
    Let $L$ be a simple $\glmn$-module. As an $\fh$-module, $L=\oplus_{\lambda\in \fh^*}V_\lambda$ where $V_\lambda$ is trivial for all except finitely many $\lambda$. Fixing any base $\Sigma$, there exists a single highest weight $\lambda$ with respect to the partial order described in \cref{partial order}. Such $\lambda$ is called the highest weight of $L$ with respect to $\Sigma$.

    Moreover, any pair of simple $\glmn$-modules $M\not\cong N$ have distinct highest weights. Therefore, we denote the simple $\glmn$-module of highest weight $\lambda$ with respect to a base $\Sigma$ by $L_\Sigma(\lambda)$.
    
    When $\Sigma=\distinguished$, we write $L(\lambda)$.

    The highest weight $\lambda$ of any integrable simple module $L$ with respect to $\distinguished$ satisfies the conditions: $\lambda_i\in \Z$ for any $i$, and $\lambda_i\geq \lambda_{i+1}$ for $i<m$ and $i\geq m+1$. 
    
    The weights satisfying the first of these conditions are called {\it integral}, and the weights satisfying the the second condition are called {\it dominant}. Any integral dominant weight of $\glmn$ is the highest weight of some integrable simple module $L$ with respect to $\distinguished$. 
\end{fact}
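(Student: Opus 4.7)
The plan is to verify each statement comprising the fact in turn, using the standard highest-weight machinery adapted to the super setting. First, the decomposition $L=\bigoplus_{\lambda\in\fh^*}V_\lambda$ into finitely many nonzero weight spaces is immediate from the standing assumption that all modules are finite dimensional and $\fh$-semisimple.

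To produce a highest weight with respect to a base $\Sigma$, let $\Delta^+_\Sigma$ denote the corresponding positive roots and write $\fn_\Sigma^\pm$ for the subalgebras spanned by the root spaces $\fg_\alpha$ with $\alpha\in\pm\Delta^+_\Sigma$. Since the set of weights of $L$ is finite, the induced partial order has at least one maximal element $\lambda$; pick any nonzero $v\in V_\lambda$. For each $\alpha\in\Delta^+_\Sigma$, any root vector $x_\alpha$ sends $v$ into $V_{\lambda+\alpha}=0$, hence $v$ is annihilated by $\fn_\Sigma^+$, so $U(\fg).v$ is a nonzero submodule. Simplicity forces $U(\fg).v=L$, and the PBW theorem applied to $\fg=\fn_\Sigma^-\oplus\fh\oplus\fn_\Sigma^+$ gives $L=U(\fn_\Sigma^-).v$, so every weight of $L$ lies in $\lambda-\Z_{\ge 0}\langle\Delta^+_\Sigma\rangle$. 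This both shows $\lambda$ is unique and that $\dim V_\lambda=1$.

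For the second part (simple modules are determined by their highest weight), suppose $M\not\cong N$ are simple with the same highest weight $\lambda$ with respect to $\Sigma$, and fix highest weight vectors $v_M\in M$, $v_N\in N$. Let $W\subseteq M\oplus N$ be the submodule generated by $(v_M,v_N)$; applying the argument above to $W$ gives $\dim W_\lambda=1$. Hence the kernel of $\pi_M|_W$, as a submodule of $N$ containing no weight-$\lambda$ vector, must be zero by simplicity of $N$, so $\pi_M|_W$ is an isomorphism $W\cong M$; symmetrically $W\cong N$, contradicting $M\not\cong N$.

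The last two claims concern integrability and dominance. For one direction, I would restrict $L$ to the $\mathfrak{sl}(2)$-triple $\mathfrak{s}_\alpha$ attached to each even simple root $\alpha$ of $\distinguished$: integrability forces this restriction to be a finite-dimensional rational $SL_2$-module, so $\langle\lambda,\alpha^\vee\rangle\in\Z_{\ge 0}$, which is exactly the stated integrality and dominance of the $\lambda_i$. Conversely, given an integral dominant $\lambda$, construct the Kac module
\[K(\lambda):=U(\glmn)\otimes_{U(\even{\glmn}\oplus\fg(1))}V^0_\lambda,\]
where $V^0_\lambda$ is the simple integrable $\even{\glmn}$-module of highest weight $\lambda$ (which exists by the classical theory) extended by the trivial action of $\fg(1)$. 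Since $\fg(-1)$ is odd of dimension $mn$, $K(\lambda)$ is finite dimensional of dimension $2^{mn}\dim V^0_\lambda$ and has a unique simple quotient $L(\lambda)$, which inherits highest weight $\lambda$ and integrability from $V^0_\lambda$. The main obstacle is essentially verifying that the even PBW and $\mathfrak{sl}(2)$-restriction arguments survive the parity signs; both do, because $\fh$ itself is purely even and the weight-space calculations never mix parity.
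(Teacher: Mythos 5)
Your proposal is correct, and it supplies exactly the standard argument that the paper leaves implicit: this statement appears only as an unproved ``Fact'' in the preliminaries, resting on the classical highest-weight theory of Kac (cf.\ \cite{kacclassicalreptheory, cheng2012dualities}), which is precisely the route you take --- maximal weight plus PBW for the triangular decomposition attached to $\Sigma$, the direct-sum trick for uniqueness of the simple module with a given highest weight, $\mathfrak{sl}(2)$-restriction for dominance, and the Kac module $K(\lambda)$ for the converse. One small imprecision worth fixing: the $\mathfrak{sl}(2)$-triples attached to the even simple roots $\eps_i-\eps_{i+1}$, $\delta_j-\delta_{j+1}$ only give $\lambda_i-\lambda_{i+1}\in\Z_{\geq 0}$ (and likewise for the $\delta$-part), i.e.\ dominance and integrality of the \emph{differences}; the absolute condition $\lambda_i\in\Z$ for every $i$ does not follow from these coroots (the odd roots are isotropic and carry no $\mathfrak{sl}(2)$), but rather from the paper's standing assumption that $\fh$ acts integrably, so that every weight of $L$ lies in the character lattice of the torus. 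With that one clause adjusted, the proof is complete and matches the intended background result.
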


\begin{notation}
\mbox{}
    \begin{itemize}
        \item Denote by $\distHW$ the set of all $\lambda \in \fh^*$ such that $\lambda$ is integral dominant with respect to $\distinguished$.
        \item Denote  $\Lambda^+:=\bigsqcup_{m, n\geq 0} \distHW$.
    \end{itemize}
\end{notation}

\subsection{Reflections}
\mbox{}

We now consider $\glmn$.

\begin{definition}
    Let $\Sigma$ be some base of $\Delta$. We denote the Weyl vector associated with $\Sigma$ by $$\rho_\Sigma := \frac{1}{2}\sum_{\alpha\in \even{\Delta}^+}\alpha -\frac{1}{2}\sum_{\beta\in \odd{\Delta}^+}\beta.$$ 

        We will usually write $\rho := \rho_{\distinguished}$.
\end{definition}

\begin{fact}
    For any $\alpha \in \Sigma$, $(\rho_\Sigma| \alpha)=\frac{1}{2}(\alpha|\alpha)$. 
\end{fact}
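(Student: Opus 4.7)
The plan splits by the parity of $\alpha$.

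If $\alpha$ is even simple, the argument is the classical one from Lie theory. Since $\alpha$ is a simple root of $\Sigma$, the reflection $s_\alpha$ permutes $\Delta^+(\Sigma) \setminus \{\alpha\}$: any positive $\beta \neq \alpha$ has a positive coefficient at some simple root other than $\alpha$ (for instance, an odd $\beta$ must have some odd simple root contributing, and $\alpha$ is even), and $s_\alpha$ only modifies the $\alpha$-coefficient. Since $s_\alpha$ preserves parity, it permutes $\even{\Delta}^+ \setminus \{\alpha\}$ and $\odd{\Delta}^+$ separately. Applying $s_\alpha$ to $2\rho_\Sigma$ then yields $s_\alpha(2\rho_\Sigma) = 2\rho_\Sigma - 2\alpha$; comparing with the standard reflection formula $s_\alpha(\rho_\Sigma) = \rho_\Sigma - \frac{2(\rho_\Sigma|\alpha)}{(\alpha|\alpha)}\alpha$ gives the desired equality.

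If $\alpha$ is odd simple, then all odd roots of $\glmn$ are isotropic, so $(\alpha|\alpha) = 0$ and the task reduces to showing $(\rho_\Sigma|\alpha) = 0$. I would proceed by induction on the length of a chain of even and odd reflections connecting $\distinguished$ to $\Sigma$; such a chain exists because bases of $\glmn$ are all connected by such reflections. In the base case $\Sigma = \distinguished$, the only odd simple root is $\eps_m - \delta_1$, and an explicit computation of $\rho_{\distinguished} = \frac{1}{2}\sum_i (m - 2i + 1 - n)\eps_i + \frac{1}{2}\sum_j (n - 2j + 1 + m)\delta_j$ readily gives $(\rho_{\distinguished}|\eps_m - \delta_1) = 0$. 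For the inductive step, I would use the standard transformation rules $\rho_{s_\gamma \Sigma} = \rho_\Sigma - \gamma$ for $\gamma$ even simple (which follows from the even case above) and $\rho_{r_\gamma \Sigma} = \rho_\Sigma + \gamma$ for $\gamma$ odd simple, then verify $(\rho_{\Sigma'}|\alpha') = 0$ for every odd simple root $\alpha'$ of the new base $\Sigma'$.

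The main obstacle is the case analysis in the inductive step for odd reflections, since the passage from the simple roots of $\Sigma$ to those of $\Sigma' = r_\gamma \Sigma$ is not a linear map on the full root system. A new odd simple root $\alpha'$ of $\Sigma'$ is one of: $-\gamma$; an unchanged odd simple $\beta \in \Sigma$ with $(\beta|\gamma)=0$; or a sum $\beta + \gamma$ for an even simple $\beta \in \Sigma$ such that $\beta + \gamma$ is a root. In each sub-case, bilinearity combined with the induction hypothesis and the isotropy $(\gamma|\gamma) = 0$ of odd roots reduces $(\rho_{\Sigma'}|\alpha')$ to the desired value; for example, when $\alpha' = \beta + \gamma$ one computes $(\rho_\Sigma + \gamma | \beta + \gamma) = \tfrac{1}{2}(\beta|\beta) + (\gamma|\beta)$, which vanishes precisely because $\alpha' = \beta+\gamma$ is itself an odd (hence isotropic) root.
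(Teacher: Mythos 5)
Your proof is correct, but it is not parallel to anything in the paper: the paper does not prove this Fact at all, it simply cites \cite[Proposition 1.28]{cheng2012dualities} (and extracts the $\rho$-shift rules of Fact 2.2.7 from that same proof). What you supply is a self-contained argument, and it holds up. The even case is the classical Weyl-group computation, and your parenthetical justification that $s_\alpha$ permutes $\Delta^+_{\Sigma}\setminus\{\alpha\}$ while preserving parity is exactly what is needed so that $s_\alpha$ fixes the odd half-sum and shifts the even one by $\alpha$; note the form $(\cdot|\cdot)$ is negative on the $\delta$-block but still nondegenerate on even roots, so the reflection formula applies. The odd case correctly reduces, via isotropy of all odd roots of $\glmn$, to $(\rho_\Sigma|\alpha)=0$, and your three sub-cases for $r_\gamma$ exhaust the odd simple roots of $r_\gamma\Sigma$ ($-\gamma$; untouched odd $\beta$ orthogonal to $\gamma$; $\beta+\gamma$ with $\beta$ even simple, the case $\beta$ odd giving an even root), with the computation in the last sub-case closed by isotropy of $\beta+\gamma$, as you say. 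Two points are left implicit but are harmless: the inductive step for an even reflection (where one checks $(\rho_\Sigma-\gamma\,|\,s_\gamma\beta)=0$ using the already-established even case, or simply notes $\rho_{s_\gamma\Sigma}=s_\gamma\rho_\Sigma$), and the connectivity of all bases to $\distinguished$ by even and odd reflections, which the paper itself asserts and is standard. There is also no circularity in invoking $\rho_{r_\gamma\Sigma}=\rho_\Sigma+\gamma$ and $\rho_{s_\gamma\Sigma}=\rho_\Sigma-\gamma$, since both follow directly from how a reflection in a simple root alters the positive system (Remark 2.2.6), independently of the Fact being proved. Compared with the paper's citation, your route costs an induction over bases but buys a proof that stays entirely inside the combinatorics already set up in Section 2.
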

This fact is \cite[Proposition 1.28]{cheng2012dualities}.

\begin{remark}
When discussing the Weyl vector $\rho_\Sigma$, we will usually consider the Weyl vector shifted by $r(1\, \dots \, 1|-1\, \dots \, -1)$ for some $r\in \{0, 1/2\}$, so that all the coordinates of $\rho_\Sigma$ are integers (which need not always be true as the Weyl vector is a half sum). This shifted Weyl vector satisfies the above property as well.
\end{remark}

\begin{definition}
An \textbf{even reflection by $\alpha\in \even{\Delta}$} is an automorphism $s_\alpha: \Delta \to \Delta$ of the root system given by
    $$
    s_\alpha(\beta)=\beta-2\frac{(\beta,\alpha)}{(\alpha,\alpha)}\alpha. 
    $$
\end{definition}
The even reflections generate the Weyl group of the Lie superalgebra $\glmn$, which is isomorphic to $S_m\times S_n$.

This definition is the same as for root systems of Lie algebras. That being said, with parity coming into play, one may need to consider another type of reflection. This is because not every pair of bases $\Sigma$ and $\Sigma'$ of $\glmn$ are conjugate with respect to the action of the Weyl group.

We now introduce the notion of an odd reflection, due to Serganova in \cite{LSS86}.

\begin{definition}
\label{odd ref def}
   An \textbf{odd reflection by $\alpha\in \odd{\Delta}$} is an automorphism $r_\alpha: \Delta \to \Delta$ given by  
    $$
    r_\alpha(\beta)=
    \begin{cases}
        -\alpha &\text{ if } \beta = \alpha \\
        \beta + \alpha &\text{ if } \beta \neq \alpha \text{ and } (\beta, \alpha) \neq 0 \\
        \beta & \text{ otherwise}
    \end{cases}
    $$
\end{definition}

\begin{fact}
    Let $\Sigma$ be a base. For any $\alpha\in \odd{\Sigma}$ and $\beta\in \even{\Sigma}$, $r_\alpha (\Sigma)$ and $s_\beta (\Sigma)$ are also bases of $\Delta$.
\end{fact}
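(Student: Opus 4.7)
My plan is to handle the two cases separately, since even reflections are essentially classical while odd reflections require a bit more care due to the non-linear nature of $r_\alpha$ (it is only defined as a bijection on $\Delta$, not as a map on $\fh^*$).

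For the even case, I would invoke the standard fact from semisimple Lie theory: when $\beta \in \even{\Sigma}$ is a simple even root, the reflection $s_\beta$ permutes $\Delta^+ \setminus \{\beta\}$ and sends $\beta$ to $-\beta$. Applied to the even sub-root system of $\glmn$ (with its base $\even{\Sigma}$), this shows $s_\beta(\even{\Sigma})$ is a base of $\even{\Delta}$; since $s_\beta$ is an even Weyl group element, it also permutes $\odd{\Delta}^+ \setminus \{\,\}$ appropriately, and the full set $s_\beta(\Sigma)$ is the base of simple roots for the new positive system $(\Delta^+ \setminus \{\beta\}) \cup \{-\beta\}$.

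For the odd case, the plan is to exhibit an explicit candidate for the new positive system, namely
\[
\Delta^+_{\mathrm{new}} := (\Delta^+ \setminus \{\alpha\}) \cup \{-\alpha\},
\]
and verify that it is a valid decomposition whose simple roots are precisely $r_\alpha(\Sigma)$. The first step is to check that $\Delta^+_{\mathrm{new}}$ is closed under sums landing in $\Delta$: if $\gamma_1, \gamma_2 \in \Delta^+_{\mathrm{new}}$ with $\gamma_1 + \gamma_2 \in \Delta$, the only non-trivial case is $\gamma_2 = -\alpha$ and $\gamma_1 \in \Delta^+ \setminus \{\alpha\}$, in which case $\gamma_1 - \alpha \in \Delta$ being positive in the old system would force $\gamma_1 = \alpha + \mu$ for some $\mu \in \Delta^+$, contradicting the simplicity of $\alpha$; the remaining cases use that $\alpha$ is isotropic in $\glmn$, so $2\alpha \notin \Delta$. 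Consequently $\Delta^+_{\mathrm{new}}$ does correspond to some base $\Sigma'$.

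Next, I would identify $\Sigma'$ with $r_\alpha(\Sigma)$ element by element. Writing $\Sigma = \{\alpha\} \cup \{\beta_1, \ldots, \beta_k\}$, the image $r_\alpha(\alpha) = -\alpha$ is clearly indecomposable in $\Delta^+_{\mathrm{new}}$. For $\beta_i$ with $(\beta_i, \alpha) = 0$, $r_\alpha(\beta_i) = \beta_i$ remains in $\Delta^+_{\mathrm{new}}$ and is still indecomposable (a decomposition using $-\alpha$ would require $\beta_i + \alpha \in \Delta^+ \setminus \{\alpha\}$, but $(\beta_i, \alpha) = 0$ forces $\beta_i + \alpha \notin \Delta$ in $\glmn$). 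For $\beta_i$ with $(\beta_i, \alpha) \neq 0$, $r_\alpha(\beta_i) = \beta_i + \alpha \in \Delta^+ \setminus \{\alpha\} \subset \Delta^+_{\mathrm{new}}$, and indecomposability follows from the fact that any expression $\beta_i + \alpha = \gamma_1 + \gamma_2$ with $\gamma_j \in \Delta^+_{\mathrm{new}}$ forces, after adding $\alpha$ to both sides in the old system, a non-trivial decomposition of $\beta_i$ in $\Delta^+$, contradicting its simplicity. Linear independence of $r_\alpha(\Sigma)$ follows from linear independence of $\Sigma$ since the transition matrix between the two sets is unitriangular with $\pm 1$ entries.

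The main obstacle I expect is bookkeeping in the indecomposability arguments above, especially handling the case $(\beta_i, \alpha) \neq 0$: one must rule out decompositions that mix the new negative root $-\alpha$ with old positive roots. The isotropy of $\alpha$ and the explicit description of $\Delta$ given in \cref{usual cartan} make these arguments straightforward in $\glmn$, where all odd roots have the form $\pm(\varepsilon_i - \delta_j)$ and the combinatorics of which sums of these lie in $\Delta$ is completely explicit.
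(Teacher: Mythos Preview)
The paper does not give a proof of this statement; it is stated as a \emph{Fact}, implicitly attributed to the standard references on odd reflections (Serganova's original definition in \cite{LSS86}, and the textbook treatments in \cite[Sections~1.3, 2.4]{cheng2012dualities} and \cite[Section~3.5]{mussonbook}). So there is no proof in the paper to compare against.

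Your overall strategy is the standard one and is correct: exhibit the new positive system $\Delta^+_{\mathrm{new}} = (\Delta^+ \setminus \{\alpha\}) \cup \{-\alpha\}$, check it is closed, and identify its indecomposable elements with $r_\alpha(\Sigma)$. However, two of your arguments are garbled as written. First, in the closedness check for the case $\gamma_2 = -\alpha$, the sentence ``$\gamma_1 - \alpha \in \Delta$ being positive in the old system would force $\gamma_1 = \alpha + \mu$ \ldots\ contradicting the simplicity of $\alpha$'' has the logic inverted: you \emph{want} $\gamma_1 - \alpha$ to be positive. The correct argument is that if $\gamma_1 - \alpha$ were negative, then $\alpha - \gamma_1 \in \Delta^+$ and $\alpha = \gamma_1 + (\alpha - \gamma_1)$ would contradict simplicity of $\alpha$; hence $\gamma_1 - \alpha \in \Delta^+$, and it is $\neq \alpha$ since $2\alpha \notin \Delta$. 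Second, for the indecomposability of $\beta_i + \alpha$ when $(\beta_i,\alpha) \neq 0$, the phrase ``after adding $\alpha$ to both sides'' does not produce a decomposition of $\beta_i$. The clean argument is a height count: if $\beta_i + \alpha = \gamma_1 + \gamma_2$ with both $\gamma_j \in \Delta^+ \setminus \{\alpha\}$, then since $\beta_i + \alpha$ has height $2$ with respect to $\Sigma$, each $\gamma_j$ has height $1$, so $\{\gamma_1,\gamma_2\} = \{\beta_i,\alpha\}$ by linear independence of $\Sigma$, contradicting $\alpha \notin \Delta^+ \setminus \{\alpha\}$; and if one $\gamma_j = -\alpha$, the other would be $\beta_i + 2\alpha$, which is not a root in $\glmn$. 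With these fixes, your sketch becomes a complete proof.
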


\begin{remark}
\label{ref set rmk}
    It is not immediately obvious why the odd reflection is a natural extension of the notion of even reflection. One way to see this connection is by considering the way even and odd reflections change the set of positive roots.
    
    Given a base $\Sigma$ and $\beta\in \even{\Sigma}$, we have $\Delta^+_{s_{\beta}\Sigma} = \Delta^+_\Sigma \setminus \{\beta\} \cup \{-\beta\} $.

    Similarly, for $\alpha \in \odd{\Sigma}$ we have $\Delta^+_{r_{\alpha}\Sigma} = \Delta^+_\Sigma \setminus \{\alpha\} \cup \{-\alpha\} $.
\end{remark}

We now consider how an odd reflection of a base affect the the highest weight of a given module with respect to this base.

\begin{fact}
\label{odd reflection hw}
  Let $\Sigma$ be a base for $\glmn$. Let $L$ be a simple integrable (finite dimensional) $\glmn$-module, and let $\alpha \in \odd{\Sigma}$. Let $\lambda$ be the highest weight of the module $L$ with respect to the base $ \Sigma$ and let $\lambda'$ be the highest weight of the module $L$ with respect to the base $r_\alpha \Sigma$; that is,
    $L_\Sigma(\lambda) \cong L_{r_\alpha \Sigma}(\lambda')$. Then $\lambda'$ is given by 
    $$
    \lambda' =
    \begin{cases}
        \lambda - \alpha &\text{ if }  (\lambda,\alpha)\neq 0 \\
        \lambda &\text{ if }  (\lambda,\alpha)=0.
    \end{cases}
    $$
\end{fact}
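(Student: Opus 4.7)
The plan is to take a highest weight vector $v \in L$ with respect to $\Sigma$ and identify the highest weight vector of $L$ with respect to $r_\alpha \Sigma$. By \cref{ref set rmk}, the positive root sets of $\Sigma$ and $r_\alpha \Sigma$ differ only by swapping $\alpha$ for $-\alpha$, so $v$ remains a highest weight vector for $r_\alpha \Sigma$ precisely when $\fg_{-\alpha}$ annihilates $v$; otherwise the natural candidate is $w := f \cdot v$ for a nonzero $f \in \fg_{-\alpha}$. I will show that whenever $w$ is nonzero it is automatically a highest weight vector for $r_\alpha \Sigma$, and that the value of $(\lambda | \alpha)$ controls which of the two cases occurs.

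The first step is the computation of $e \cdot w$ for a nonzero $e \in \fg_\alpha$, normalized so that $\{e, f\} = h_\alpha$ with $\mu(h_\alpha) = (\mu | \alpha)$ for all $\mu \in \fh^*$. Since $\alpha$ is odd, $e$ and $f$ are both odd elements and satisfy the above anticommutation relation; using $e \cdot v = 0$ we obtain
\[
e \cdot w = \{e, f\} \cdot v - f \cdot e \cdot v = (\lambda | \alpha)\, v.
\]
Next I would verify that $w$ is annihilated by $\fg_\gamma$ for every simple positive root $\gamma$ of $r_\alpha \Sigma$, i.e.\ $\gamma \in r_\alpha(\Sigma)$. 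For $\gamma = -\alpha$ this is $f^2 \cdot v = 0$, which follows from the isotropy of odd roots of $\glmn$: $\{f, f\} \in \fg_{-2\alpha} = 0$, hence $f^2 = 0$. For any other simple root $\beta \in \Sigma$, $r_\alpha(\beta)$ equals $\beta$ or $\beta + \alpha$, and in either case $\fg_{r_\alpha(\beta)} \cdot w$ reduces via super-commutation (and the vanishing of $\fg_{r_\alpha(\beta)} \cdot v$) to an element of $\fg_{r_\alpha(\beta) - \alpha} \cdot v$. When $r_\alpha(\beta) = \beta$, the weight $\beta - \alpha$ is not a root since it has coefficient $+1$ on $\beta$ and $-1$ on $\alpha$ in the basis $\Sigma$, so the bracket vanishes. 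When $r_\alpha(\beta) = \beta + \alpha$, the bracket lies in $\fg_\beta$, which kills $v$ because $\beta$ is a positive root of $\Sigma$.

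These two computations together yield both cases. If $(\lambda | \alpha) \neq 0$, then $e \cdot w \neq 0$ forces $w \neq 0$, so $w$ is a nonzero highest weight vector of $L$ with respect to $r_\alpha \Sigma$ of weight $\lambda - \alpha$, giving $\lambda' = \lambda - \alpha$. If $(\lambda | \alpha) = 0$, I would interpret the same annihilation calculations inside the Verma module $M(\lambda)$ with respect to $\Sigma$: we now additionally have $e \cdot w = 0$, so $w$ (if nonzero) is a primitive vector of weight $\lambda - \alpha < \lambda$ in $M(\lambda)$, hence lies in the unique maximal submodule and vanishes in the simple quotient $L(\lambda)$. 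Thus $f \cdot v = 0$ in $L$, so $v$ itself serves as a highest weight vector for $r_\alpha \Sigma$ and $\lambda' = \lambda$. The main obstacle is the annihilation verification in the second paragraph: it rests on the isotropy of odd roots of $\glmn$ (which makes $f^2 = 0$) together with the combinatorial fact that the difference of two distinct simple roots is never a root; both features are intrinsic to the Lie superalgebra and independent of which base in $\mathbb{S}$ we are working with, so the argument applies uniformly.
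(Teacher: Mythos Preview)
The paper does not prove this statement; it records it as a \emph{Fact} drawn from the literature (the standard reference being \cite[Lemma~1.40]{cheng2012dualities} or \cite[Section~3.5]{mussonbook}). Your argument is essentially the standard textbook proof, and it is correct.

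One small clarification regarding your final paragraph: to conclude that $w$ is primitive in $M(\lambda)$ \emph{with respect to $\Sigma$}, you need annihilation by $\fn^+_\Sigma$, whereas your second paragraph established annihilation by the simple root vectors of $r_\alpha\Sigma$. This is fine, because $\fn^+_\Sigma$ and $\fn^+_{r_\alpha\Sigma}$ differ only by swapping $\fg_\alpha$ for $\fg_{-\alpha}$ (cf.\ \cref{ref set rmk}), so annihilation by $\fn^+_{r_\alpha\Sigma}$ together with $e\cdot w = 0$ immediately yields annihilation by $\fn^+_\Sigma$. (Alternatively, one checks $e_\beta\cdot w = [e_\beta,f]\cdot v \in \fg_{\beta-\alpha}\cdot v = 0$ directly for each $\beta\in\Sigma\setminus\{\alpha\}$, which is the same mechanism.) Also note that in the Verma module $w = f\cdot v$ is automatically nonzero by PBW, so the ``if nonzero'' qualifier is unnecessary there; the content is that this nonzero singular vector lies in the maximal submodule and hence dies in $L(\lambda)$.
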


\begin{notation}
    Let $\lambda\in \distHW$ and let $\Sigma$ be a base for $\glmn$. We denote by $\lambda_{\Sigma}\in \fh^*$ the highest weight of $L(\lambda)$ with respect to the base $\Sigma$; that is, $L_{\distinguished}(\lambda) \cong L_\Sigma(\lambda_{\Sigma})$.
\end{notation}

By \cref{odd reflection hw}, the weights $\lambda_{\Sigma}$ are always integral.

\begin{fact}
\label{reflection weyl vec}
    Let $\Sigma$ be a base of $\Delta$. Then for $\alpha \in \even{\Sigma}$ we have
    $$\rho_{s_\alpha \Sigma} = \rho_\Sigma -\alpha.$$

    Similarly, for $\alpha \in \odd{\Sigma}$ we have
    $$\rho_{r_\alpha \Sigma} = \rho_\Sigma + \alpha.$$
\end{fact}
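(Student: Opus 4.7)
The plan is a direct calculation from the definition of $\rho_\Sigma$, using Remark \ref{ref set rmk} to track exactly how the set of positive roots changes under a single reflection. For $\alpha \in \even{\Sigma}$, Remark \ref{ref set rmk} says $\Delta^+_{s_\alpha \Sigma} = (\Delta^+_\Sigma \setminus \{\alpha\}) \cup \{-\alpha\}$; since $\alpha$ is even, this swap is confined to the even part, so that $\even{\Delta}^+_{s_\alpha \Sigma}$ differs from $\even{\Delta}^+_\Sigma$ only by replacing $\alpha$ with $-\alpha$, while $\odd{\Delta}^+_{s_\alpha \Sigma} = \odd{\Delta}^+_\Sigma$. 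Plugging into the defining half-sum, the odd contribution to $\rho_{s_\alpha \Sigma} - \rho_\Sigma$ cancels and the even contribution telescopes to $\tfrac{1}{2}((-\alpha) - \alpha) = -\alpha$, yielding the first identity.

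The odd case is symmetric. For $\alpha \in \odd{\Sigma}$, Remark \ref{ref set rmk} now places the swap in the odd part, so $\odd{\Delta}^+_{r_\alpha \Sigma} = (\odd{\Delta}^+_\Sigma \setminus \{\alpha\}) \cup \{-\alpha\}$ and $\even{\Delta}^+_{r_\alpha \Sigma} = \even{\Delta}^+_\Sigma$. Because the odd half-sum enters $\rho_\Sigma$ with a minus sign, the same computation produces $\rho_{r_\alpha \Sigma} - \rho_\Sigma = -\tfrac{1}{2}((-\alpha) - \alpha) = +\alpha$, which is the second identity.

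The only point that requires any care is confirming that Remark \ref{ref set rmk} genuinely applies in both settings, i.e.\ that $s_\alpha$ (respectively $r_\alpha$) permutes all positive roots other than $\alpha$ and only flips $\alpha$ itself. For an even simple reflection this is the classical fact from Lie algebra theory, while for an odd reflection it can be read off directly from Definition \ref{odd ref def}: every positive root $\beta \ne \alpha$ satisfies either $(\beta,\alpha) = 0$ (so $r_\alpha(\beta) = \beta$) or $r_\alpha(\beta) = \beta + \alpha$, and in either case $r_\alpha(\beta)$ remains a positive root with respect to $r_\alpha \Sigma$. Once this is granted the computation is pure bookkeeping, so no substantive obstacle is anticipated.
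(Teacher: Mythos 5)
Your computation is correct, and it in fact supplies more than the paper does: the paper gives no argument for this Fact, simply noting that it appears inside the proof of \cite[Proposition 1.28]{cheng2012dualities}. Your route — take the definition of $\rho_\Sigma$ as the half-sum of positive even roots minus the half-sum of positive odd roots, and use Remark \ref{ref set rmk} to see that passing to $s_\alpha\Sigma$ (resp.\ $r_\alpha\Sigma$) replaces exactly one positive root $\alpha$ by $-\alpha$ in the even (resp.\ odd) part — immediately gives $\rho_{s_\alpha\Sigma}-\rho_\Sigma=\tfrac12((-\alpha)-\alpha)=-\alpha$ and $\rho_{r_\alpha\Sigma}-\rho_\Sigma=-\tfrac12((-\alpha)-\alpha)=+\alpha$, which is exactly the computation the cited reference performs; so the approaches agree in substance, with yours written out and the paper's delegated to the literature. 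One small point of care in your final paragraph: what Remark \ref{ref set rmk} asserts is a statement about positive \emph{systems}, namely that every $\beta\in\Delta^+_\Sigma$ with $\beta\neq\alpha$ still lies in $\Delta^+_{r_\alpha\Sigma}$ (the set of roots itself is unchanged when the base changes); your phrasing, that $r_\alpha(\beta)$ ``remains a positive root with respect to $r_\alpha\Sigma$'', mixes the map $r_\alpha$ on roots with the change of positive system. The correct justification is the usual base argument: $\beta\neq\pm\alpha$ has a positive coefficient at some simple root other than $\alpha$ in its expansion over $\Sigma$, and that coefficient is unchanged when re-expanding over $r_\alpha\Sigma$, so $\beta$ stays positive. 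Since the paper states Remark \ref{ref set rmk} and you are entitled to quote it, this does not affect the validity of your proof, but the verification sketch should be phrased as above if you include it.
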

This fact appears as part of the proof of \cite[Proposition 1.28]{cheng2012dualities}.

\begin{definition}
    We say that a base $\Sigma$ {\bf agrees with the even part of} $\distinguished$ if $\Delta^+ _{\Sigma, \bar{0}}= \Delta^+ _{\distinguished, \bar{0}}$. Denote by $\mathbb{S}$ the set of all bases that agree with the even part of $\distinguished$. 
\end{definition}

\begin{remark}
    As Remark \ref{ref set rmk} states, even reflections change the set of positive even roots and while odd reflections change the set of positive odd roots. As such, $\mathbb{S}$ is clearly the set of all bases that can be produced from $\distinguished$ by performing only odd reflections with respect to roots in $\Delta^+_{\overline{1}}$ (the set of positive odd roots for the distinguished base).
\end{remark}

\mbox{}

\mbox{}

\subsection{Weight diagrams}

\begin{notation}
    Let $\lambda\in \distHW$ and $\Sigma \in \mathbb{S}$.
    We denote $\overline{\lambda}_\Sigma :=\lambda_\Sigma + \rho_\Sigma$ and we write $\overline{\lambda}:=\overline{\lambda}_{\distinguished}$ for short.
    
\end{notation}

\begin{notation}
   Given $\lambda \in \distHW$, denote by $$\text{Hwt}(\lambda):=\{\overline{\lambda}_\Sigma |\Sigma\in \mathbb{S}\}$$ the set of $\rho$-shifted weights which are highest weights of $L_{\distinguished}(\lambda)$ with respect to different bases in $\mathbb{S}$.
\end{notation}

\begin{remark}
    Note that $\overline{\lambda}_\Sigma$ is itself an element of $\fh ^ *$ and an integral weight, so we maintain the indexing convention described in Definition \ref{ weight def}.    
\end{remark}

\begin{definition}
    Let $\lambda \in \distHW$ and $\Sigma \in \mathbb{S}$. \textbf{The weight diagram of $\lambda$ with respect to $\Sigma$} is defined as a function on the integers, whose value in $p\in \mathbb{Z}$ is a finite collection of symbols $\times, >, <, \circ$. This function is subject to the following rules:
\begin{itemize}
    \item Let $k:=\sharp \{i\in \{1,\dots, m\}|(\overline{\lambda}_\Sigma)_i=p\}$, $s:= \sharp \{j\in \{1,\dots, n\}|(\overline{\lambda}_\Sigma)_{-j}=-p\}$. Then $D^\Sigma_\lambda(p)$ contains exactly $\min(k, s)$ symbols $\times$.
    \item If $k>s$ then $D^\Sigma_\lambda(p)$ contains exactly $k-s$ symbols $>$.
      \item If $k<s$ then $D^\Sigma_\lambda(p)$ contains exactly $s-k$ symbols $<$.
      \item $D^\Sigma_\lambda(p)$ contains a (unique) symbol $\circ$ iff it does not contain any other symbol (in such a case, we day that $p$ is an {\it empty} position).
\end{itemize}

\end{definition}
\begin{remark}
   For $\Sigma\in \mathbb{S}$, we have: $-1\leq s-k \leq 1$ by \cref{lem:generalities_on_ror_changes}. That is to say, in diagrams $D_{\lambda}^\Sigma$ for $\Sigma\in \mathbb{S}$, the number of positive coordinates such that $(\overline{\lambda}_\Sigma)_i=p$ and the number of negative coordinates such that $(\overline{\lambda}_\Sigma)_{-j}=-p$, differ by 1 at most (for any $p$). 
\end{remark}

\begin{notation}
      When not specified otherwise, we denote by $D_\lambda := D^{\distinguished}_\lambda$ the weight diagram of $\lambda$ with respect to the distinguished base $\distinguished$.
\end{notation}

\begin{remark}
  
  The weight diagram $D_\lambda$ has at most one symbol corresponding to each $p$; that is to say, for any $p\in\mathbb{Z}$, $D_\lambda(p)\in \{\circ, \times, >, <\}$. 
\end{remark}

Given $\lambda\in \distHW$, one may visualize $D^\Sigma_\lambda$ by drawing the symbols corresponding to each integer at the integer points of the real line: that is, at every point $p\in\mathbb{Z}$ we draw the symbols $D^\Sigma _\lambda(p)$ (expressions of the form $\times^k >$ or $\times^k<$ are drawn with the symbol $>$ or $<$ on the line and $k$ symbols $\times$ stacked vertically above it). 

\begin{example}
Consider the weight $\lambda \in \Lambda_{3|5}$ such that $\overline{\lambda}_\Sigma=  (4\, 4\, 2\,|-1\, -2\, -4\, -4\, -5)$ for some $\Sigma\in \mathbb{S}$. The diagram $D^\Sigma_\lambda$ is drawn as follows: 
        \begin{equation*}
    \begin{tikzpicture}
    
        \draw[thick, -] (-6.5 ,0)--(6.5, 0);
        
        \draw[thick, fill=white] (-6,0) circle [radius=0.25cm];
        \node[] at (-6,-0.5) {-1};
        
        \draw[thick, fill=white] (-4.5,0) circle [radius=0.25cm];
        \node[] at (-4.5,-0.5) {0};
        
        \draw[thick, -] (-3 -0.25, 0)--(-3 + 0.25, 0.25);
        \draw[thick, -] (-3 -0.25, 0)--(-3 + 0.25, -0.25);
        \node[] at (-3,-0.5) {1};

        \draw[thick, -] (-1.5,0)--(-1.5 + 0.25 ,0.25);
        \draw[thick, -] (-1.5,0)--(-1.5 - 0.25 ,0.25);
        \draw[thick, -] (-1.5,0)--(-1.5 + 0.25 ,-0.25);
        \draw[thick, -] (-1.5,0)--(-1.5 - 0.25 ,-0.25);
        \node[] at (-1.5,-0.5) {2};
            
        \draw[thick, fill=white] (0,0) circle [radius=0.25cm];
        \node[] at (0,-0.5) {3};

        \draw[thick, -] (1.5, 0 + 0.65)--(1.5 + 0.25 ,0.25 + 0.65);
        \draw[thick, -] (1.5, 0 + 0.65)--(1.5 - 0.25 ,0.25 + 0.65);
        \draw[thick, -] (1.5, 0 + 0.65)--(1.5 + 0.25 ,-0.25 + 0.65);
        \draw[thick, -] (1.5, 0 + 0.65)--(1.5 - 0.25 ,-0.25 + 0.65);
        
        \draw[thick, -] (1.5,0)--(1.5 + 0.25 ,0.25);
        \draw[thick, -] (1.5,0)--(1.5 - 0.25 ,0.25);
        \draw[thick, -] (1.5,0)--(1.5 + 0.25 ,-0.25);
        \draw[thick, -] (1.5,0)--(1.5 - 0.25 ,-0.25);
        \node[] at (1.5,-0.5) {4};

        \draw[thick, -] (3 -0.25, 0)--(3 + 0.25, 0.25);
        \draw[thick, -] (3 -0.25, 0)--(3 + 0.25, -0.25);
        \node[] at (3,-0.5) {5};
        
        \draw[thick, fill=white] (4.5,0) circle [radius=0.25cm];
        \node[] at (4.5,-0.5) {6};
        
        \draw[thick, fill=white] (6,0) circle [radius=0.25cm];
        \node[] at (6,-0.5) {7};
        
        
        
        
    \end{tikzpicture}
\end{equation*}

\end{example}

Consider $\lambda\in \distHW$ and $\Sigma\in \mathbb{S}$. We may consider how an odd reflection by $\alpha\in \odd{\Sigma}$ changes the $\rho_{\Sigma}$-shifted weight $\overline{\lambda}_{\Sigma}$ and the diagram $D_{\lambda}^{\Sigma}$:
\begin{prop}
\label{odd reflection diagram}
    Let $\lambda \in \distHW$ and $\Sigma\in \mathbb{S}$.
    Let $\alpha = \pm (\ror{i}{j})\in \odd{\Sigma}$. Then 
    $$
    \overline{\lambda}_{r_\alpha \Sigma}=
    \begin{cases}
        \overline{\lambda}_\Sigma &\text{ if } (\overline{\lambda}_\Sigma)_i \neq -(\overline{\lambda}_\Sigma)_{-j} \\
        \overline{\lambda}_\Sigma + \alpha &\text{ if }  (\overline{\lambda}_\Sigma)_i = -(\overline{\lambda}_\Sigma)_{-j}.
    \end{cases}
    $$ 
\end{prop}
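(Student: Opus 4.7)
The plan is to reduce the proposition to a direct combination of the two already-stated facts \cref{odd reflection hw} (how the highest weight of $L(\lambda)$ changes under an odd reflection) and \cref{reflection weyl vec} (how $\rho_\Sigma$ changes under an odd reflection). Since $\overline{\lambda}_\Sigma = \lambda_\Sigma + \rho_\Sigma$ and $\alpha \in \odd{\Sigma}$, applying $r_\alpha$ affects each summand in a predictable way: $\rho_\Sigma$ always gains a copy of $\alpha$, whereas $\lambda_\Sigma$ loses a copy of $\alpha$ if $(\lambda_\Sigma, \alpha) \neq 0$ and is unchanged if $(\lambda_\Sigma, \alpha) = 0$. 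Adding the two contributions immediately produces the two candidate values $\overline{\lambda}_\Sigma$ and $\overline{\lambda}_\Sigma + \alpha$ appearing in the statement; the remaining task is to rewrite the test $(\lambda_\Sigma, \alpha) = 0$ in terms of the coordinates of $\overline{\lambda}_\Sigma$.

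For this translation, I would write $\alpha = \pm(\eps_i - \delta_j)$ and unpack the bilinear form to obtain $(\lambda_\Sigma, \alpha) = \pm\bigl((\lambda_\Sigma)_i + (\lambda_\Sigma)_{-j}\bigr)$. To replace $\lambda_\Sigma$ by $\overline{\lambda}_\Sigma$ in this test, I use that every odd root is isotropic, $(\alpha|\alpha) = 0$, together with the fact $(\rho_\Sigma | \alpha) = \frac{1}{2}(\alpha|\alpha)$ for $\alpha \in \Sigma$. This forces $(\rho_\Sigma)_i + (\rho_\Sigma)_{-j} = 0$, so adding $\rho_\Sigma$ does not disturb the vanishing criterion, and we obtain the equivalence
\[
(\lambda_\Sigma, \alpha) = 0 \iff (\overline{\lambda}_\Sigma)_i + (\overline{\lambda}_\Sigma)_{-j} = 0 \iff (\overline{\lambda}_\Sigma)_i = -(\overline{\lambda}_\Sigma)_{-j}.
\]
Combining this with the first paragraph yields both branches of the proposition simultaneously.

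I do not foresee a serious obstacle: the argument is purely bookkeeping from the two cited facts together with isotropy of odd roots. The only mild care is to confirm that the sign choice in $\alpha = \pm(\eps_i - \delta_j)$ drops out uniformly. Indeed the test $(\lambda_\Sigma, \alpha) = 0$ is invariant under $\alpha \mapsto -\alpha$, and the correction term $\alpha$ appearing in $\overline{\lambda}_\Sigma + \alpha$ scales consistently with the formula $\lambda_{r_\alpha\Sigma} = \lambda_\Sigma - \alpha$, so the two cases $\alpha = \ror{i}{j}$ and $\alpha = -(\ror{i}{j})$ collapse into a single statement. This completes the proposed proof.
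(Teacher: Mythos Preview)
Your proposal is correct and follows essentially the same approach as the paper: both combine \cref{odd reflection hw} and \cref{reflection weyl vec} to track how $\lambda_\Sigma$ and $\rho_\Sigma$ change under $r_\alpha$, and both use the isotropy of odd roots together with $(\rho_\Sigma|\alpha)=\tfrac{1}{2}(\alpha|\alpha)=0$ to translate the vanishing condition $(\lambda_\Sigma,\alpha)=0$ into the coordinate condition $(\overline{\lambda}_\Sigma)_i = -(\overline{\lambda}_\Sigma)_{-j}$.
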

\begin{proof}
    Observe that $\pm ((\overline{\lambda}_\Sigma)_i + (\overline{\lambda}_\Sigma)_{-j}) =(\overline{\lambda}_\Sigma| \alpha) = (\lambda_\Sigma| \alpha) + (\rho_\Sigma| \alpha)= (\lambda_\Sigma| \alpha) + (\alpha|\alpha)$. 
    
    For any $\alpha\in \odd{\Delta}$ we have: $(\alpha| \alpha)=0$. So we have: $$(\overline{\lambda}_\Sigma)_i + (\overline{\lambda}_\Sigma)_{-j} = 0 \Longleftrightarrow (\lambda_\Sigma| \alpha) = 0.$$ By Fact \ref{reflection weyl vec} we find that $\rho_{r_\alpha \Sigma} = \rho_\Sigma + \alpha$, and by Fact \ref{odd reflection hw},
     $$\lambda_{r_\alpha \Sigma} =
     \begin{cases}
        \lambda_\Sigma - \alpha &\text{ if }  (\overline{\lambda}_\Sigma)_i \neq -(\overline{\lambda}_\Sigma)_{-j} \\
        \lambda_\Sigma & \text{ if } (\overline{\lambda}_\Sigma)_i = -(\overline{\lambda}_\Sigma)_{-j}
    \end{cases}
     $$

    As $\overline{\lambda_\Sigma} = \lambda_\Sigma + \rho_\Sigma$, we deduce the statement of the proposition.
\end{proof}

\begin{remark}\label{rmk:odd_reflection_on_diagrams}
    Let $\alpha = \pm (\ror{i}{j})$ for some $i, j$. Recall that  $\overline{\lambda}_{r_\alpha \Sigma}= \overline{\lambda}_\Sigma + \alpha$ only if $(\overline{\lambda}_\Sigma)_i = -(\overline{\lambda}_\Sigma)_{-j}$; the latter means that position $(\overline{\lambda}_\Sigma)_i$ in $D_{\Sigma}^{\lambda}$ has a $\times$ symbol.
    
    We deduce that if the diagram $D^{r_\alpha \Sigma}_{\lambda}$ differs from $D^{\Sigma}_{\lambda}$, then it differs by a single $\times$ symbol moved 1 position to the right if $\alpha = \ror{i}{j}$  for some $i, j$ or 1 position to the left if $\alpha = \delta_j - \eps_i$ for some $i, j$. 
    
\end{remark}

\begin{notation}
    Let $\lambda\in \distHW$, let $\alpha \in \odd{\Sigma}$. We write $r_\alpha D^\Sigma _\lambda := D^{r_\alpha \Sigma}_\lambda$.
\end{notation}

\begin{notation}
    We denote by $\times_i$ the position of the $i$-th $\times$ symbol in the diagram $D_{\lambda}$, counted from left to right. 
\end{notation}
In particular, $\times_1$ is the position of the leftmost $\times$ symbol.

\subsection{Cap diagrams}
The cap diagrams are a combinatorial tool developed to study representations of $\glmn$ and similar representation theories. They were introduced by Brundan and Stroppel, see for example \cite{weightdiagraminvetion}.
\begin{definition}
\label{cap diagram def}
    The \textbf{cap diagram} of the weight diagram $D_{\lambda}$ is defined as follows: for every symbol $\times$ in position $p$ in, we draw a cap from position $p$ to position $q>p$ if $D_\lambda (q)=\circ$ and we have: 
    
\begin{align*}
    \forall \, p\leq r < q, \;\;\;\;  &\sharp\{ p\leq s \leq r\,|\, D_{\lambda}(s) = \times \}> \sharp\{ p\leq s \leq r\,|\, D_{\lambda}(s) = \circ \} \\
    &\sharp\{ p\leq s \leq q\,|\, D_{\lambda}(s) = \times \}= \sharp\{ p\leq s \leq q\,|\, D_{\lambda}(s) = \circ \}. \\  
\end{align*}
\end{definition}

In other words, we draw a cap from position $p$ to position $q$ if for every $p\leq r< q$, the number of $\times$ symbols in the segment $[p, r]$ is greater than the number of $\circ$ symbols in the same segment, and these numbers are equal in the segment $[p, q]$.

\begin{remark}
    Note that the function
$$ g^{(p)}_{\times}(r):=\sharp\{ p\leq s \leq r\,|\, D_{\lambda}(s) = \times \} -  \sharp\{ p\leq s \leq r\,|\, D_{\lambda}(s) = \circ \}$$
satisfies: 
\begin{itemize}
    \item $g^{(p)}_{\times}(p)=1$ (since $D_{\lambda}(p)=\times$), 
    \item $g^{(p)}_{\times}(r)<0$ for $r>>l$ (due to the fact that there are finitely many $\times$ symbols and infinitely many $\circ$ symbols in $D_{\lambda}$), and 
    \item $|g^{(p)}_{\times}(r+1) -g^{(p)}_{\times}(r)| \leq 1$ for any $r\geq p$.
\end{itemize} 

So there exists $q\geq p$ such that $g^{(p)}_{\times}(q)=0$ and by the definition above, the minimal such $q$ is the endpoint of the cap starting from $l$.

Moreover, the fact that $g^{(p)}_{\times}(q)=0$ and $g^{(p)}_{\times}(q-1)>0$ implies that $D_{\lambda}(i)=\circ$.

\end{remark}

\begin{lemma}
    A position $q\in \Z$ is the endpoint of a cap iff there exists $p\in \Z, p\leq q$ such that 
\begin{align*}
    \forall \, p< r \leq q, \;\;\;\;  &\sharp\{ r\leq s \leq q\,|\, D_{\lambda}(s) = \times \}< \sharp\{ r\leq s \leq q\,|\, D_{\lambda}(s) = \circ \} \\
    &\sharp\{ p\leq s \leq q\,|\, D_{\lambda}(s) = \times \}= \sharp\{ p\leq s \leq q\,|\, D_{\lambda}(s) = \circ \}. 
\end{align*}
\end{lemma}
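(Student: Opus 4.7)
The plan is to show that the two conditions differ only by a change of viewpoint (prefix sums from $p$ versus suffix sums to $q$) once the shared equality at the endpoints of $[p,q]$ is used to translate between them. First I would observe that both the cap-defining condition of \cref{cap diagram def} and the condition of the lemma impose the same balance equation
\[
\sharp\{p\leq s\leq q\,|\, D_{\lambda}(s)=\times\}=\sharp\{p\leq s\leq q\,|\, D_{\lambda}(s)=\circ\}.
\]
Denoting this common value by $N$, and writing $T_\times(a,b):=\sharp\{a\leq s\leq b\,|\, D_{\lambda}(s)=\times\}$ and $T_\circ(a,b)$ analogously, I would use the trivial additivity $T_\tau(p,r)+T_\tau(r+1,q)=N$ (for $\tau\in\{\times,\circ\}$ and $p\leq r<q$) to see that
\[
T_\times(p,r)>T_\circ(p,r)\;\Longleftrightarrow\; T_\times(r+1,q)<T_\circ(r+1,q).
\]

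Next I would verify that the index ranges match up correctly under the substitution $r'=r+1$: the cap condition asks for the strict prefix inequality for every $r\in\{p,p+1,\ldots,q-1\}$, which is exactly the range $r'\in\{p+1,\ldots,q\}$ appearing in the lemma. This is the core of the argument; the remaining work is only to reconcile the statements about the symbols sitting at the endpoints $p$ and $q$.

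For those endpoint conditions, I would note that in \cref{cap diagram def} the symbol $D_{\lambda}(p)=\times$ is built into the setup and the condition $D_{\lambda}(q)=\circ$ is imposed explicitly, whereas the lemma states neither — but both follow automatically from the lemma's hypotheses. Evaluating the lemma's inequality at $r=q$ gives $T_\times(q,q)<T_\circ(q,q)$, which forces $D_{\lambda}(q)=\circ$ (the symbols $>$ and $<$ contribute to neither count and so cannot satisfy the strict inequality). Pushing the equivalence of prefix/suffix inequalities back to $r'=p$ gives $T_\times(p,p)>T_\circ(p,p)$, which likewise forces $D_{\lambda}(p)=\times$.

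I expect essentially no obstacle here — the lemma is a routine rewriting of \cref{cap diagram def} using the balance identity — and the only subtlety to guard against is the presence of the symbols $>$ and $<$, which are invisible to both counting functions and hence play no role beyond being excluded as possible values of $D_\lambda(p)$ and $D_\lambda(q)$. I would therefore present the proof as a direct chain of equivalences showing that the existence of $p$ satisfying the lemma's conditions is logically the same as $q$ being the right endpoint of a cap in the sense of \cref{cap diagram def}.
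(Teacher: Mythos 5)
Your proposal is correct and is essentially the paper's own argument in a cleaner notation: the paper also passes between prefix counts from $p$ and suffix counts to $q$ by observing that $g^{(p)}_{\times}(r)+g^{(q)}_{\circ}(r)$ is independent of $r$ (which is exactly your balance-plus-additivity identity), and likewise extracts $D_{\lambda}(p)=\times$ and $D_{\lambda}(q)=\circ$ from the endpoint evaluations. The only caveat, shared with the paper's proof, is that your endpoint arguments tacitly use $p<q$ (for $p=q$ the lemma's conditions degenerate), so it is worth stating that assumption explicitly.
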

\begin{proof}
    Define 
    $$g^{(q)}_\circ (r) := \sharp \{ r < s \le q|D_\lambda(s)=\times\} - \sharp \{ r < s \le q|D_\lambda(s)=\circ\}.$$

    Notice that for $p < r \le q$, $$\sharp\{ r\leq s \leq q\,|\, D_{\lambda}(s) = \times \}< \sharp\{ r\leq s \leq q\,|\, D_{\lambda}(s) = \circ \} \iff g^{(q)}_\circ (r) < 0$$
    Similarly, for $p \le r < q$, 
    $$\sharp\{ r\leq s \leq q\,|\, D_{\lambda}(s) = \times \}< \sharp\{ r\leq s \leq q\,|\, D_{\lambda}(s) = \circ \} \iff g^{(p)}_\times (r) > 0$$
    Furthermore, $g^{(p)}_\times (q) = g^{(q)}_\circ (p)$. So for the $\Rightarrow$ direction it is sufficient to show that 
    $$\forall p < r \le q, \;\;\;\; g^{(q)}_\circ (r) < 0.$$
    Likewise, for the $\Leftarrow$ direction it is sufficient to show that 
    $$\forall p \le r < q, \;\;\;\; g^{(p)}_\times (r) < 0.$$
    
   The value $g^{(p)}_\times (r) + g^{(q)}_\circ (r)$ is precisely the number $\times$ symbols in the segment $(p,q]$ plus the number of $\circ$ symbols in the same segment. Thus $g^{(p)}_\times (r) + g^{(q)}_\circ (r)$ doesn't depend on $\lambda$.

We now prove the implications in the statement of the lemma.

    $\Rightarrow:$ Let $q$ be the position of the endpoint of a cap starting at position $p$ (thus $D_\lambda (q)=\circ$). Then by definition $g^{(p)}_\times(r) > 0$ for $p < r < q$ and $g^{(p)}_\times(q)=0$.
    As such the number of $\times$ symbols in the segment $[p,r]$ is greater than the number of $\circ$ symbols. Since $g^{(p)}_\times(q)=0$ we deduce that $g^{(p)}_\times (r) + g^{(q)}_\circ (r) = 0$. As such, $g^{(q)}_\circ (r) < 0$. Additionally we have: $$D_\lambda(q)=\circ, \; g^{(q)}_\circ (r) = -1 < 0.$$

    $\Leftarrow:$ Let $q$ and $p\le q$ be such that for $p< r \le q$, we have: $g^{(q)}_\circ (r) < 0$. Recall that $g^{(q)}_\circ (p) = 0$, and for the same reason $g^{(p)}_\times (q)=0$. We deduce that $D_\lambda(p) = \times$ and $g^{(p)}_\times (p) > 0$. 

    As $g^{(p)}_\times (r) + g^{(q)}_\circ (r)$ is constant and equal to zero, and $g^{(q)}_\circ (r) < 0$ for $p < r < q$, we conclude that $g^{(p)}_\times (r) > 0$. 
    
\end{proof}

From the above proof, the function $g^{(q)}_{\circ}$ also satisfies properties similar to the ones of $g^{(p)}_{\times}$:

\begin{itemize}
    \item $g^{(q)}_{\circ}(q)=-1$,
    \item $g^{(q)}_{\circ}(r)<<0$ for $r<<q$ (due to the fact that there are finitely many $\times$ symbols and infinitely many $\circ$ symbols in $D_{\lambda}$), and 
    \item $|g^{(q)}_{\circ}(r+1) -g^{(q)}_{\circ}(r)| \leq 1$ for any $r\geq p$.
\end{itemize} 

This immediately implies:

\begin{cor}
\label{cap cor}
    Let $q\in \Z$ such that $D_{\lambda}(q) = \circ$. Assume that for some $p\in \Z, p<q$ we have:
\begin{align*}
\sharp\{ p\leq r\leq q \,|\, D_{\lambda}(r) = \times \}\geq \sharp\{ p\leq r\leq q \,|\, D_{\lambda}(r) = \circ \} 
\end{align*}
Then position $q$ is the end of some cap in $D_{\lambda}$.
\end{cor}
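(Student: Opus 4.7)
The plan is to reduce the corollary directly to the equivalent characterization of cap endpoints given in the preceding lemma. That lemma asserts that $q$ is the endpoint of a cap if and only if there exists some $p' \le q$ with $g^{(q)}_{\circ}(p') = 0$ and $g^{(q)}_{\circ}(r) < 0$ for all $p' < r \le q$. So the task reduces to producing such a $p'$ from the weaker hypothesis we are given, in which only $g^{(q)}_{\circ}(p) \ge 0$ (the inequality version of the balance condition) is known.

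First I would restate the hypothesis in terms of the function $g^{(q)}_{\circ}$: the assumption is exactly that $g^{(q)}_{\circ}(p) \ge 0$. I would then record the two elementary properties of $g^{(q)}_{\circ}$ already established in the paragraph preceding the corollary: namely, $g^{(q)}_{\circ}(q) = -1$ (because $D_\lambda(q) = \circ$), and $|g^{(q)}_{\circ}(r+1) - g^{(q)}_{\circ}(r)| \le 1$ for every $r$, since incrementing $r$ by one can add or remove at most one symbol from the window $[r,q]$ and only $\times$'s and $\circ$'s affect the count.

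With these facts in hand, I would set
\[
p' := \max\bigl\{\, r \in \Z \,:\, p \le r \le q,\; g^{(q)}_{\circ}(r) \ge 0 \,\bigr\}.
\]
This maximum exists because the set is finite and contains $p$. Since $g^{(q)}_{\circ}(q) = -1 < 0$ we have $p' < q$, and by the definition of $p'$ we immediately obtain $g^{(q)}_{\circ}(r) < 0$ for all $p' < r \le q$. The remaining point is that in fact $g^{(q)}_{\circ}(p') = 0$, not just $\ge 0$: this follows because $g^{(q)}_{\circ}(p'+1) < 0$, so the unit step bound $|g^{(q)}_{\circ}(p'+1) - g^{(q)}_{\circ}(p')| \le 1$ forces $g^{(q)}_{\circ}(p') \le 0$.

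Having produced $p'$ satisfying both conditions in the preceding lemma, I would invoke that lemma to conclude that $q$ is indeed the endpoint of a cap starting at $p'$. There is no real obstacle here; the only delicate point is the small discrete intermediate-value argument ensuring the passage from "$\ge 0$" in the hypothesis to the exact equality "$= 0$" required by the lemma, which is handled by the unit step property.
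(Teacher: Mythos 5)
Your proposal is correct and follows essentially the same route as the paper: both arguments observe that $g^{(q)}_{\circ}$ is nonnegative at $p$, equals $-1$ at $q$, and moves in unit steps, then take the rightmost point $p'$ where it vanishes (you take the rightmost point where $g^{(q)}_{\circ}\ge 0$ and use the unit-step bound to force equality, which is the same point) and invoke the preceding lemma to conclude $q$ ends a cap. The only difference is that you spell out the discrete intermediate-value detail that the paper leaves implicit, so there is nothing further to add.
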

\begin{proof}
    For $p$ as in the statement of the Corollary, we have $g^{(q)}_{\circ}(p)>0$. Thus there exists $p\leq p'\leq q$ such that $g^{(q)}_{\circ}(p')=0$, and the largest such $p'$ would be the beginning of a cap which ends at $q$.
\end{proof}

\begin{remark}
    The cap diagram of a given weight diagram $D_\lambda$ may be described in a more intuitive manner. Denote by $\{c_1,\dots, c_k\}$ the set of right endpoints of the caps, where $c_i$ is the end of the cap starting at $\times_i$. The positions $c_k,\dots, c_1$ can be defined starting inductively by  
    $$c_l = \min \{ p \in \mathbb{Z}| p > \times_l,\, D_{\lambda}(p) = \circ \text{ and } p \neq c_{l'} \text{ for } l' > l\} $$
    This description tells us how to draw a cap diagram effectively. 
    This is done by going over all $\times$ symbols of the diagram from rightmost to leftmost and drawing a cap from the $\times$ to the first empty position to the right of this $\times$ symbol that is not the endpoint of a previously drawn cap.
\end{remark}

\section{Description of bases in \texorpdfstring{$\mathbb{S}$}{S}}
\label{b sigma section}
\subsection{Right odd roots}
\mbox{}

Let $m,n \geq 0$.

\begin{definition}
\mbox{}
\begin{itemize}
    \item  The odd positive roots for the choice of distinguished base $\distinguished$ will be called right odd roots. The set of right odd roots is denoted by $\mathcal{R}:=\odd{{\Delta^+_{dist}}}$. Explicitly, $\alpha\in \mathcal{R} \implies \alpha=\ror{i}{j} \text{ for some } 1\le i \le m \text{ and } 1\le j \le n$.
    \item The set $\mathcal{R}$ inherits the partial order on the weights induced by the base $\distinguished$: $\alpha \leq \beta $ iff $\beta - \alpha $ is a linear combination of elements of $\distinguished$ with non-negative integer coefficients. Explicitly, 
$$\varepsilon_i-\delta_j\le \varepsilon_k-\delta_l\iff i\ge k \text{ and } j\le l.$$
\item  Given a set $X \subset \mathcal{R}$ and $\alpha\in \mathcal{R}$, we write $\alpha \geq X$ when $\alpha \geq \beta$ for all $\beta \in X$.
\end{itemize}
  
\end{definition}

\begin{remark}
This order yields a lattice structure on $\mathcal{R}$. For example, for $\mathfrak{gl}(4|4)$ the lattice looks as follows:

\begin{equation*}
\begin{tikzcd}
   &     &    
   & \varepsilon_1-\delta_4 \arrow[ld] \arrow[rd] &            &   & \\
 &  & \varepsilon_1-\delta_3 \arrow[ld] \arrow[rd] &                                              & \varepsilon_2-\delta_4 \arrow[ld] \arrow[rd] &                                              &  \\
 & \varepsilon_1-\delta_2 \arrow[ld] \arrow[rd] &               & \varepsilon_2-\delta_3 \arrow[ld] \arrow[rd] &           & \varepsilon_3-\delta_4 \arrow[ld] \arrow[rd] &  \\
\varepsilon_1-\delta_1 \arrow[rd] &                                              & \varepsilon_2-\delta_2 \arrow[ld] \arrow[rd] &    & \varepsilon_3-\delta_3 \arrow[ld] \arrow[rd] &                                              & \varepsilon_4-\delta_4 \arrow[ld] \\
                                  & \varepsilon_2-\delta_1 \arrow[rd]            &                                              & \varepsilon_3-\delta_2 \arrow[ld] \arrow[rd] &                                              & \varepsilon_4-\delta_3 \arrow[ld]            &                                   \\
                                  &                                              & \varepsilon_3-\delta_1 \arrow[rd]            &                                              & \varepsilon_4-\delta_2 \arrow[ld]            &                                              &                                   \\
                                  &                                              &                                              & \varepsilon_4-\delta_1                       &                                              &                                              &                                  
\end{tikzcd}
\end{equation*}
\end{remark}

\begin{definition}
    A subset $X \subset \mathcal{R}$ is called {\bf incomparable} if no two elements of $X$ can be compared in the order above: that is, $\forall \alpha, \beta \in X$, $\alpha \not\ge \beta$, $\alpha \not\le \beta$.
\end{definition}

\subsection{Describing bases in \texorpdfstring{$\mathbb{S}$}{S} using right odd roots}
\mbox{}

In what follows we will often use words over the alphabet $\{\varepsilon,\delta\}$ with $m$ copies of $\varepsilon$ and $n$ copies of $\delta$ to represent bases in $\mathbb{S}$, which is possible due to the well-known lemma below (cf. \cite[Example 1.20]{cheng2012dualities}). 

\begin{notation}
\mbox{}
\begin{itemize}
    \item For any word $w$ as above, we will write $\eps_i$ for the $i$-th letter $\eps$ in $w$, read from left to right (respectively, $\delta_j$ for the $j$-th letter $\delta$ in $w$).
    \item 
Given a word $w$, let $f_w(k)$ denote the number of letters $\delta$ in $w$ standing between the $\eps_k$ and $\eps_{k+1}$. We also set $f_w(0)$ to be the number of letters $\delta$ to the left of $\eps_1$. Similarly, let $g_w(k)$ denote the number of letters $\eps$ in $w$ standing between the $\delta_k$ and $\delta_{k+1}$. We also set $g_w(0)$ to be the number of letters $\eps$ to the left of $\delta_1$. When it is clear which word $w$ we are referring to, we will omit the subscript $w$ from this notation.
\end{itemize}
\end{notation}
\begin{lemma}
\label{base to word bij}
There exists a bijection between $\mathbb{S}$ and the set of words over the alphabet $\{\varepsilon,\delta\}$ with $m$ copies of $\varepsilon$ and $n$ copies of $\delta$. 
\end{lemma}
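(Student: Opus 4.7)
The plan is to construct the bijection $\Phi: \mathbb{S} \to \{\text{words}\}$ together with an explicit inverse. The key observation is that each $\Sigma\in\mathbb{S}$ induces a total order on the set $\{\eps_1,\ldots,\eps_m,\delta_1,\ldots,\delta_n\}\subset\fh^*$ whose restrictions to the $\eps$'s and to the $\delta$'s are the standard orders $\eps_1>\cdots>\eps_m$ and $\delta_1>\cdots>\delta_n$; the freedom in how to interleave the $\delta$'s with the $\eps$'s is exactly what a word in the lemma records.

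Given $\Sigma\in\mathbb{S}$, define a relation $\succ_\Sigma$ on $\{\eps_i,\delta_j\}$ by $v\succ_\Sigma u$ iff $v-u\in\Delta^+_\Sigma$. Totality is immediate, since for any two distinct elements $u,v$ of this set, $v-u$ is a root and $\Delta=\Delta^+_\Sigma\sqcup(-\Delta^+_\Sigma)$. For transitivity, I would use the standard closure property of $\Delta^+_\Sigma$: if $\alpha,\beta$ are non-negative integer combinations of $\Sigma$, so is $\alpha+\beta$, and in particular, if $\alpha+\beta$ is a root, it lies in $\Delta^+_\Sigma$. Applied to a chain $w\succ_\Sigma v\succ_\Sigma u$, the element $w-u=(w-v)+(v-u)$ has the form $\pm(\eps_i-\eps_j)$, $\pm(\delta_i-\delta_j)$, or $\pm(\eps_i-\delta_j)$, hence is a root and therefore in $\Delta^+_\Sigma$. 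The restriction of $\succ_\Sigma$ to $\{\eps_i\}$ (resp.\ $\{\delta_j\}$) is the standard decreasing order because $\Sigma\in\mathbb{S}$ fixes $\Delta^+_{\bar 0}$. I then let $\Phi(\Sigma)$ be the word obtained by listing $\{\eps_i,\delta_j\}$ in decreasing $\succ_\Sigma$ order and replacing each $\eps_i$, $\delta_j$ by the letters $\eps$, $\delta$; the indexing convention of the lemma is respected since $\eps_1$ is the $\succ_\Sigma$-largest among the $\eps$'s and hence is the leftmost $\eps$ in $\Phi(\Sigma)$, and similarly for $\delta_1$.

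For the inverse, given a word $w$ with $m$ letters $\eps$ and $n$ letters $\delta$, I use the labelling convention of the lemma to define a total order $>_w$ on $\{\eps_i,\delta_j\}$ by $u>_w v$ iff $u$ stands strictly to the left of $v$ in $w$, and set $\Delta^+_w:=\{x-y\in\Delta:x>_w y\}$. Then $\Delta^+_w\sqcup(-\Delta^+_w)=\Delta$ (since every root is a difference of two distinct elements in $\{\eps_i,\delta_j\}$), and closure of $\Delta^+_w$ under sums that lie in $\Delta$ follows immediately from transitivity of $>_w$; hence $\Delta^+_w$ is the positive system of a unique base $\Sigma_w\subset\Delta$, which lies in $\mathbb{S}$ because $>_w$ restricts to the standard orders on the $\eps$'s and $\delta$'s. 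Checking that $\Phi$ and $w\mapsto\Sigma_w$ are mutually inverse is then a matter of unwinding the definitions: the construction of $\succ_{\Sigma_w}$ reproduces $>_w$, and conversely $\Delta^+_{\Sigma_{\Phi(\Sigma)}}=\Delta^+_\Sigma$ because the positive system of a base is determined by the order it induces on $\{\eps_i,\delta_j\}$.

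The only step requiring genuine argument is the transitivity of $\succ_\Sigma$, which reduces to the closure of $\Delta^+_\Sigma$ under root-valued sums; everything else is bookkeeping and a direct comparison of definitions.
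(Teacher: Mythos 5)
Your proof is correct, and it reaches the bijection by a more structural route than the paper does. The paper's proof is an explicit two-line construction: from a word it takes the differences of adjacent letters as the base $\Sigma_w$ (immediately in $\mathbb{S}$ by the labelling convention), and from a base it recovers the word by the counting formula $f(k)$, which tallies how many odd roots $\eps_k-\delta_i$ are positive versus those for $\eps_{k+1}$; the inverse check is left as ``clearly inverse''. You instead encode $\Sigma$ by the total order $\succ_\Sigma$ on $\{\eps_i,\delta_j\}$ induced by $\Delta^+_\Sigma$, verify totality and transitivity via closure of the positive system under root-valued sums, and go back from a word by declaring $\Delta^+_w=\{x-y\in\Delta: x>_w y\}$ and appealing to the standard fact (the paper's preliminary lemma on decompositions $\Delta=\Delta^+\sqcup\Delta^-$) that such a closed decomposition is the positive system of a unique base, which lies in $\mathbb{S}$ because the order restricts to the standard ones on the $\eps$'s and $\delta$'s. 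The two constructions record the same data --- which odd roots $\eps_i-\delta_j$ are positive --- but yours makes the ``maps are mutually inverse'' step genuinely transparent (a positive system is exactly the set of differences $x-y$ with $x\succ y$, and a base is determined by its positive system), at the cost of invoking the decomposition-to-base fact and the small induction that every root in $\Delta^+_w$ is a non-negative combination of the adjacent differences; the paper's version is more elementary and yields the explicit word/base dictionary (adjacent differences, the function $f$) that is used repeatedly later, e.g.\ in Lemma \ref{reflection on set of ror} and Theorem \ref{B Sigma}.
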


\begin{proof}
Fixing a word $w$, we number the letters $\varepsilon$ in $w$ from left to right $\varepsilon_1, \ldots, \varepsilon_m$ as before, and similarly for the letters $\delta$ in $w$. Taking the differences between each two adjacent symbols gives a base $\Sigma_w$, and it is obviously in $\mathbb{S}$ (due to the numbering of the symbols).

Vice versa, given a base $\Sigma \in \mathbb{S}$, we define the corresponding word $w$ by describing the function $f:\{0, \ldots,m-1\} \to \{0, \ldots, n\}$ corresponding to $w$: we set $$f(0):=\sharp\{1\leq i\leq n: \varepsilon_1 - \delta_i \in \Delta^-_{\Sigma}\}$$ and for $k>0$, we set
$$f(k):=\sharp\{1\leq i\leq n: \varepsilon_k - \delta_i \in \Delta^+_{\Sigma}\} - \sharp\{1\leq i\leq m: \varepsilon_{k+1} - \delta_i \in \Delta^+_{\Sigma}\}.$$

These maps are clearly inverse to one another.

\end{proof}

\begin{example}
 The base $\distinguished$ is given by the word $w_{dist}:=\varepsilon^m\delta^n= \delta^{f(0)}\overset{m}{\underset{k=1}{\prod}}\varepsilon\delta^{f(k)}$ where $f(k)=0$ for $k\in\{0,\dots,m-1\}$ and $f(m)=n$. 
 \end{example}
\begin{lemma}
\label{ror are incomparable}
    There exists a map between $\mathbb{S}$ and the set of incomparable subsets of $\mathcal{R}$ given by $\Sigma \longmapsto \Sigma \cap \mathcal{R}$, and this map is a bijection.
\end{lemma}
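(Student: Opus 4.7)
The plan is to work through the word representation of bases provided by \cref{base to word bij}: a base $\Sigma_w \in \mathbb{S}$ corresponds to a word $w$ over $\{\varepsilon, \delta\}$, and unwinding the definition of $\Sigma_w$ shows that $\Sigma_w \cap \mathcal{R}$ is precisely the set of odd roots $\ror{i}{j}$ such that $\varepsilon_i$ is immediately followed by $\delta_j$ in $w$ (reading left to right). Adjacencies of the form $\delta$-then-$\varepsilon$ contribute negative right odd roots to $\Sigma_w$ and are not in $\mathcal{R}$. Thus it suffices to prove that the map $w \mapsto \Sigma_w \cap \mathcal{R}$ is a bijection from words onto incomparable subsets of $\mathcal{R}$.

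First I would verify that $\Sigma_w \cap \mathcal{R}$ is always incomparable. Suppose $\ror{i}{j}, \ror{i'}{j'} \in \Sigma_w \cap \mathcal{R}$ and, without loss of generality, $\varepsilon_i$ occurs to the left of $\varepsilon_{i'}$ in $w$, so $i < i'$. Since $\delta_j$ immediately follows $\varepsilon_i$, it lies to the left of $\varepsilon_{i'}$, hence also to the left of $\delta_{j'}$, giving $j < j'$. But $\ror{i}{j} \le \ror{i'}{j'}$ would require $i \ge i'$, while $\ror{i}{j} \ge \ror{i'}{j'}$ would require $j \ge j'$; both fail, so the two roots are incomparable.

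For surjectivity, given an incomparable subset $T = \{\ror{i_1}{j_1}, \ldots, \ror{i_r}{j_r}\} \subset \mathcal{R}$ with $i_1 < \cdots < i_r$ (whence $j_1 < \cdots < j_r$ by incomparability), I would build a word $w$ that realizes $T$ by forcing the designated pairs $\varepsilon_{i_k}\delta_{j_k}$ to appear as adjacencies and filling the gaps with the remaining letters in the ``safe'' pattern $\delta^{*}\varepsilon^{*}$:
\[
   w \;=\; \delta^{a_0}\,\varepsilon^{b_0}\;\varepsilon_{i_1}\delta_{j_1}\;\delta^{a_1}\,\varepsilon^{b_1}\;\varepsilon_{i_2}\delta_{j_2}\,\cdots\,\varepsilon_{i_r}\delta_{j_r}\;\delta^{a_r}\,\varepsilon^{b_r},
\]
where $a_k$ (resp.\ $b_k$) is the number of indices $j$ (resp.\ $i$) with $j_k < j < j_{k+1}$ (resp.\ $i_k < i < i_{k+1}$), using the obvious conventions for $k=0$ and $k=r$. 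A direct inspection shows that within each $\delta^{a_k}\varepsilon^{b_k}$ block there is no $\varepsilon$-then-$\delta$ adjacency, the boundaries between these blocks and the designated pairs produce only adjacencies of type $\delta\delta$, $\varepsilon\varepsilon$, or $\delta\varepsilon$ (none of which lie in $\mathcal{R}$), and the identities $\sum_k a_k + r = n$, $\sum_k b_k + r = m$ confirm that all letters are accounted for exactly once.

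Finally, for injectivity, once $T = \Sigma_w \cap \mathcal{R}$ is prescribed, the positions of the designated pairs within $w$ and the labels of the letters filling each non-designated block are pinned down by $T$; within such a block the absence of $\varepsilon$-then-$\delta$ adjacencies forces the ordering $\delta^{*}\varepsilon^{*}$, since the internal order of each letter class is preserved by our conventions, so $w$ is uniquely determined and coincides with the construction above. Combined with \cref{base to word bij}, this yields the claimed bijection. The main technical point requiring care is the bookkeeping of indices in the block decomposition, ensuring that every $\varepsilon$ and $\delta$ outside the designated pairs lands in exactly one of the $a_k, b_k$ slots; this follows readily from the strict monotonicity of the sequences $\{i_k\}$ and $\{j_k\}$.
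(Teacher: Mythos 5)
Your proposal is correct and follows essentially the same route as the paper: both pass through the word encoding of \cref{base to word bij} and construct the inverse by building the unique word whose only $\varepsilon$-followed-by-$\delta$ adjacencies are the prescribed incomparable roots, with filler blocks of the form $\delta^{*}\varepsilon^{*}$. The only minor difference is that you obtain incomparability of $\Sigma\cap\mathcal{R}$ directly from the adjacency combinatorics of the word, whereas the paper deduces it from linear independence of $\Sigma$ (comparable elements would differ by a nonnegative combination of positive even roots); both arguments are fine.
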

\begin{proof}
    Let $\Sigma\in \mathbb{S}$. Assume $\alpha, \beta\in \Sigma$ satisfy: $\alpha - \beta$ is a linear combination with positive coefficients of the elements of $\Delta_{\overline{0}, \distinguished}^+$. Then $\alpha - \beta$ is a combination with positive coefficients of the elements of $\Delta_{\overline{0}, \Sigma}^+$, since $\Sigma\in \mathbb{S}$. But that contradicts the assumption that $\Sigma$ is a base and its elements are linearly independent. Thus $\Sigma \cap \mathcal{R}$ is incomparable and the map is well-defined.

    We now define the inverse map. 
    Let $S$ be a set of incomparable roots. For any pair of roots $\ror{i}{j}$ and $\ror{i'}{j'}$ in $S$, we have: $i'\neq i$, and $i < i'\Rightarrow j < j'$.
    So we may write $S=\{\ror{i_1}{j_1},\dots ,\ror{i_k}{j_k}\}$ where $i_1 < i_2 < \dots < i_k$, and $j_1< j_2 < \dots < j_k$. We define the word $w_S$ by defining the function $d:\{0, \ldots, n+1\} \to \{0, \ldots, m\}$:
    $$
    d(j) := 
    \begin{cases}
        0 &\text{ if } \; 0\le j < j_1 \\
        i_l &\text{ if } \; j_{l-1} \le j < j_l \text{ for } l\in \{2,\dots, k-1\} \\
        i_k &\text{ if } \; j_k \le  j \le n\\
        m & \text{ if } \; j=n+1
    \end{cases}
    $$
    It is obvious that $d$ is an increasing function and we take the word $w_S= \varepsilon^{g(0)}\overset{n}{\underset{k=1}{\prod}}(\delta\varepsilon^{g(k)})$ 
    where $g(k):=d(k+1)-d(k)$, $k=0, \ldots, n$. Let $\Sigma_S \in \mathbb{S}$ be the base corresponding to the word $w_S$.
    
    All that remains is to verify that these maps are indeed inverse to each other. One direction is clear: if we set $S:= \Sigma \cap \mathcal{R}$ for some $\Sigma \in \mathbb{S}$, then the base we recover from $S$ by the construction above is clearly $\Sigma$ (it has the same word as $\Sigma$). 
    
    So we only need to check that for any incomparable set $S \subset \mathcal{R}$, we have: $\Sigma_S\cap \mathcal{R} =S$. 
    Consider the word $w_S$ constructed above.
    Observe that $\ror{i}{j}\in \Sigma_S\cap \mathcal{R}$ if and only if $j = \min\{j': i=d(j')\}$.
  Thus $\ror{i}{j}\in \Sigma_S\cap \mathcal{R}$ iff $(i, j) = (i_l, j_l)$ for some $l\in \{1, \ldots, k\}$, which implies $S \subset \Sigma_S\cap \mathcal{R}$.

\end{proof}

\begin{example}
   Consider the root system of $\fgl{4}{4}$ and $X=\{\ror{2}{1},\ror{3}{3}\}$, we know that $f(1)=f(4)=0$, $f(1)+f(2)=3 - 1 = 2$ and $f(1)+f(2)+f(3)+f(4)=4$ thus the word corresponding to the inverse of $X$ is $\varepsilon^2 \delta^2 \varepsilon \delta^2 \varepsilon$ and thus the base is 
   $$\Sigma=\{\varepsilon_1-\varepsilon_2, \ror{2}{1}, \delta_1-\delta_2, \delta_2-\varepsilon_3, \ror{3}{3}, \delta_3-\delta_4, \delta_4-\varepsilon_4\}.$$
   \end{example}
   
Recall that for any $\ror{i}{j}\in \Sigma$, then $r_{i,j}(\Sigma)\in \mathbb{S}$. Here is how the words describing $\Sigma, r_{i,j}(\Sigma)$ are related: take the word $w$ corresponding to $\Sigma$. The word $w$ corresponding to $ r_{i,j}(\Sigma)$ is obtained by swapping the $\eps_i$ (from the left) with the symbol $\delta_j$ standing to its right. 

In other words, if the word corresponding to $\Sigma$ is $w=\delta^{f(0)}\overset{m}{\underset{k=1}{\prod}}\varepsilon\delta^{f(k)}$ then the word corresponding to $r_{i,j}(\Sigma)$ will be $w'=\delta^{f'(0)}\overset{m}{\underset{k=1}{\prod}}\varepsilon\delta^{f'(k)}$ where $f'(i-1)= f(i-1)+1$, $f'(i)=f(i)-1$ and $f'$ agrees with $f$ everywhere else.

\begin{lemma}
    \label{reflection on set of ror}
    Let $\Sigma\in\mathbb{S}$ and $\ror{i}{j}\in\Sigma$. Then for any $\alpha\in (r_{i,j}(\Sigma)\cap \mathcal{R}) \setminus (\Sigma\cap \mathcal{R})$ has $\alpha > \ror{i}{j}$.
\end{lemma}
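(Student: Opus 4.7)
The plan is to work entirely at the level of the word representation provided by Lemma \ref{base to word bij}. Let $w$ be the word encoding $\Sigma$. Since simple right odd roots of $\Sigma$ correspond precisely to $\eps\delta$-adjacencies in $w$, the hypothesis $\ror{i}{j} \in \Sigma$ means that $\eps_i$ stands immediately to the left of $\delta_j$ in $w$. The discussion preceding the lemma shows that $w'$, the word encoding $r_{i,j}(\Sigma)$, is obtained by swapping this $\eps_i$ with the $\delta_j$ to its right.

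Next I would catalogue the adjacencies altered by this transposition. Writing $w = \ldots X\, \eps_i\, \delta_j\, Y \ldots$ with $X$ and $Y$ possibly absent, the only adjacencies of $w'$ that do not already appear in $w$ are $X\delta_j$, $\delta_j\eps_i$, and $\eps_i Y$. Of these, only the first and third can be $\eps\delta$-adjacencies (the second is a $\delta\eps$-adjacency, which produces a negative odd root, not an element of $\mathcal{R}$). Since $X$ can be of type $\eps$ only when $X = \eps_{i-1}$, and $Y$ can be of type $\delta$ only when $Y = \delta_{j+1}$, the only possible members of $(r_{i,j}(\Sigma) \cap \mathcal{R}) \setminus (\Sigma \cap \mathcal{R})$ are $\ror{i-1}{j}$ and $\ror{i}{j+1}$.

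The last step is to verify that each candidate lies strictly above $\ror{i}{j}$ in the partial order on $\mathcal{R}$, which is immediate: both $\ror{i-1}{j} - \ror{i}{j} = \eps_{i-1} - \eps_i$ and $\ror{i}{j+1} - \ror{i}{j} = \delta_j - \delta_{j+1}$ are positive simple even roots with respect to $\distinguished$. There is no genuine obstacle in this argument; the only subtlety is bookkeeping the different possibilities for $X$ and $Y$ (whether they exist and which type they are), which the word reformulation renders essentially mechanical.
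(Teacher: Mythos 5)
Your proposal is correct and follows essentially the same route as the paper: both work with the word encoding of $\Sigma$, observe that the reflection swaps the adjacent pair $\eps_i\delta_j$, and check the neighbouring letters to see that the only possible new right odd roots are $\ror{i-1}{j}$ and $\ror{i}{j+1}$, each strictly greater than $\ror{i}{j}$. Your bookkeeping via the symbols $X$, $Y$ is just a more compact version of the paper's explicit four-case analysis.
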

\begin{proof}
    Let $\Sigma\in \mathbb{S}$ and $\ror{i}{j}\in \Sigma$. 
    Consider the word $w$ corresponding to $\Sigma$, since $\ror{i}{j}\in \Sigma$ $\eps_i$ must be immediately to the left of $\delta_j$. 
    
    The letter immediately to the left of $\eps_i$ is either $\delta$ or $\eps$. Since $\delta$s and $\eps$s are numbered by order of appearance from left to right, the letter to left of $\eps_i$ is either $\delta_{j-1}$ or $\eps_{i-1}$.
    
    Similarly, the letter immediately to the right of $\delta_j$ is either $\delta_{j+1}$ or $\eps_{i+1}$.
    
    As the word corresponding to $r_{i,j}(\Sigma)$ is the same as $w$ excepts $\eps_i$ and $\delta_j$ swap places, we must simply consider the letters immediately to the left and right of $\eps_i \delta_j$. As such we consider 4 possible cases: $\eps_{i-1}\eps_i\delta_j\eps_{i+1}$, $\eps_{i-1}\eps_i\delta_j\delta_{j+1}$,
    $\delta_{j-1}\eps_i\delta_j\eps_{i+1}$ and $\delta_{j-1}\eps_i\delta_j\delta_{j+1}$.

    In the first case, $\eps_{i-1}\eps_i\delta_j\eps_{i+1} \mapsto \eps_{i-1}\delta_j\eps_i\eps_{i+1}$ thus $$r_{i,j}(\Sigma) \cap \mathcal{R} = (\Sigma \cap \mathcal{R}) \cup \{\ror{i-1}{j}\} \setminus \{\ror{i}{j}\}.$$ Notice that $\ror{i-1}{j} > \ror{i}{j}$.

    In the second case, $\eps_{i-1}\eps_i\delta_j\delta_{j+1} \mapsto \eps_{i-1}\delta_j\eps_i\delta_{j+1}$ thus 
    
$$r_{i,j}(\Sigma) \cap \mathcal{R} = (\Sigma \cap \mathcal{R}) \cup \{\ror{i-1}{j}, \ror{i}{j+1}\} \setminus \{\ror{i}{j}\}.$$ Notice that $\ror{i}{j+1}>\ror{i}{j}$.
    
    In the third case, $\delta_{j-1}\eps_i\delta_j\eps_{i+1} \mapsto \delta_{j-1}\delta_j\eps_i\eps_{i+1}$ thus $r_{i,j}(\Sigma) \cap \mathcal{R} = (\Sigma \cap \mathcal{R}) \setminus \{\ror{i}{j}\}$. 

    And finally in the fourth case $\delta_{j-1}\eps_i\delta_j\delta_{j+1} \mapsto \delta_{j-1}\delta_j\eps_i\delta_{j+1}$ thus 
$$r_{i,j}(\Sigma) \cap \mathcal{R} = (\Sigma \cap \mathcal{R}) \cup \{\ror{i}{j+1}\} \setminus \{\ror{i}{j}\}.$$ 

    In all cases $\alpha \in r_{i,j}(\Sigma)\cap \mathcal{R} \setminus \Sigma\cap \mathcal{R}$ has $\alpha > \ror{i}{j}$.
\end{proof}

\begin{definition}
    Given a sequence of right odd roots $(\alpha_1,\dots,\alpha_k)$, we say that the expression $r_{\alpha_k}\circ\dots\circ r_{\alpha_1}(\distinguished)$ is valid means that for any $i=0, \ldots, k-1$ we have:
$$\alpha_{i+1} \in r_{\alpha_i}\circ\dots\circ r_{\alpha_1}(\distinguished).$$
\end{definition}

\begin{lemma}
\label{sequence from distinguished}
    Let $\Sigma \in \mathbb{S}$. Then there exists a sequence of right odd roots $(\alpha_1,\dots,\alpha_k)$ such that the $r_{\alpha_k}\circ\dots\circ r_{\alpha_1}(\distinguished)$ is valid and equal to $\Sigma$.
\end{lemma}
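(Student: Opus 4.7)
The plan is to induct on a suitable measure of how far $\Sigma$ is from $\distinguished$, using the bijection between $\mathbb{S}$ and words over $\{\varepsilon,\delta\}$ with $m$ copies of $\varepsilon$ and $n$ copies of $\delta$ from \cref{base to word bij}. Specifically, I will define the \emph{inversion count} $\mathrm{inv}(\Sigma)$ to be the number of positions $p$ in $w_\Sigma$ at which the letter $\delta$ occurs to the left of some letter $\varepsilon$; equivalently, the number of transpositions needed to sort $w_\Sigma$ into the word $\varepsilon^m \delta^n$ of $\distinguished$. Note that $\mathrm{inv}(\Sigma)=0$ if and only if $\Sigma = \distinguished$.

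The base case $\mathrm{inv}(\Sigma)=0$ is immediate: the empty sequence (i.e.\ $k=0$) produces $\distinguished = \Sigma$ trivially.

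For the inductive step, assume $\Sigma \neq \distinguished$, so $w_\Sigma$ contains at least one $\delta$ to the left of some $\varepsilon$. I will pick the leftmost $\varepsilon$ in $w_\Sigma$ that has a $\delta$ somewhere to its left; by minimality, the letter immediately preceding it must itself be a $\delta$. Labelling these letters $\delta_j$ and $\varepsilon_i$ (according to the numbering convention introduced after Lemma \ref{base to word bij}), I obtain an adjacent pair $\delta_j \varepsilon_i$ in $w_\Sigma$. Let $\Sigma' \in \mathbb{S}$ be the base whose word $w_{\Sigma'}$ is obtained from $w_\Sigma$ by swapping this pair into $\varepsilon_i \delta_j$. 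Then $\mathrm{inv}(\Sigma') = \mathrm{inv}(\Sigma)-1$, and since $\varepsilon_i$ is immediately followed by $\delta_j$ in $w_{\Sigma'}$, we have $\varepsilon_i-\delta_j \in \Sigma' \cap \mathcal{R}$. Using the description of how odd reflections act on words (given in the paragraph preceding Lemma \ref{reflection on set of ror}, which says that $r_{\ror{i}{j}}$ swaps the adjacent pair $\varepsilon_i \delta_j$ into $\delta_j \varepsilon_i$), it follows that $r_{\varepsilon_i-\delta_j}(\Sigma') = \Sigma$. By the inductive hypothesis applied to $\Sigma'$, there is a valid sequence $(\alpha_1,\dots,\alpha_{k-1})$ of right odd roots with $r_{\alpha_{k-1}}\circ\dots\circ r_{\alpha_1}(\distinguished) = \Sigma'$; appending $\alpha_k := \varepsilon_i - \delta_j \in \Sigma' \cap \mathcal{R}$ gives the required valid sequence for $\Sigma$.

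There is no real obstacle in this argument — it is essentially a bubble-sort reduction in the language of words. The only point requiring care is to verify that the adjacent swap of an $\varepsilon_i\delta_j$ block in a word corresponds exactly to the odd reflection $r_{\varepsilon_i-\delta_j}$ at the associated base, which is already established in the discussion preceding \cref{reflection on set of ror}, and that the resulting sequence is ``valid'' in the sense of the preceding definition, which holds because at each step the reflecting root lies in the current base by construction.
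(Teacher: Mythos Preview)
Your proof is correct and follows essentially the same approach as the paper: both argue by induction on the inversion count of the word $w_\Sigma$ (the paper writes this quantity as $\mathbf{\Delta}_e(\Sigma) = \sum_{i=1}^m (n - e(i))$, which unwinds to exactly your $\mathrm{inv}(\Sigma)$), and both reduce by locating an adjacent $\delta\varepsilon$ pair, swapping it to obtain a base $\Sigma'$ with one fewer inversion, and observing that the swap is realized by the odd reflection $r_{\varepsilon_i-\delta_j}$ applied to $\Sigma'$. Your write-up is in fact cleaner than the paper's, which has some confused indexing in its choice of the pair, but the underlying bubble-sort idea is identical.
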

Note that $\alpha_i\neq \alpha_j$ for all indices $i\neq j$.

\begin{proof}
    Consider $w$ the word corresponding to some $\Sigma\in \mathbb{S}$ and $f_w$ the function correspond to $w$, which is to say, $w = \delta^{f_w (0)}\overset{m}{\underset{k=1}{\prod}}\varepsilon\delta^{f_w(k)}$.
    Set $e(i):= \sum_{l=i}^{m}f_w(l)$ and $\mathbf{\Delta}_e(\Sigma) := \sum_{i=1}^m (n-e(i))$.
    
    We prove the Lemma by induction of $\mathbf{\Delta}_e(\Sigma)$.

    For the base case the only base that has $\mathbf{\Delta}_e(\Sigma)=0$ is $\distinguished$ and there is the set of roots is $\varnothing$.

    Assume the statement is correct for $\mathbf{\Delta}_e(\Sigma) = K$ and let $\Sigma\in \mathbb{S}$ such that $\mathbf{\Delta}_e(\Sigma) = K$.
    Let $w$ be the word corresponding to $\Sigma$. Let $1\le i \le m$ be the minimal index such that $f_w(i) > 0$. By minimality there must exists at least one $\delta$ between $\varepsilon_{m-i}$ and $\varepsilon_{m-(i+1)}$. denote the $\delta$ immediately to the left of $\varepsilon_{m-(i+1)}$ by $\delta_j$. Consider $w'$ obtained from $w$ by swapping $\delta_j$ with $\varepsilon_{m-(i+1)}$. $w'$ corresponds to some base $\Sigma'$ which clearly has $\mathbf{\Delta}_e(\Sigma')=\mathbf{\Delta}_e(\Sigma) - 1$ and we find that: 
    $$\ror{m-(i+1)}{j}\in \Sigma',\, r_{(m-(i+1)),j}(\Sigma')=\Sigma$$ 
    This completes the proof of the induction step.
\end{proof}

Note that there may be many other sequences of right odd roots $(\beta_1,\dots,\beta_k)$ such that $r_k \circ \dots \circ r_1(\distinguished)$ is valid and equal to $\Sigma$. We will now show that all such sequences are in fact simply permutations of one another. That is to say, the {\it set} $\{\alpha_1,\dots, \alpha_k\}$ is uniquely defined.

\begin{theorem}
\label{B Sigma}
    Consider the map 
$ B: \mathbb{S} \longrightarrow P(\mathcal{R})$ (subsets of $\mathcal{R}$)
given by
$$ B_{\Sigma} := \mathcal{R} \setminus \{\alpha \in \mathcal{R} | \; \exists \beta \in \Sigma \text{ so that } \alpha \geq \beta \}$$
(in other words, $B_{\Sigma}$ is a subset of $\mathcal{R}$ such that the minimal elements in the complement $\mathcal{R} \setminus B_{\Sigma}$ are precisely the elements of $\Sigma \cap \mathcal{R}$).

Then the map $B$ satisfies:
\begin{enumerate}
    \item There exists an ordering $(\alpha_1,\ldots,\alpha_k)$ of the elements of $ B_{\Sigma}$ such that the expression $r_{\alpha_k}\circ\ldots\circ r_{\alpha_1}(\Sigma_{dist} )$ is valid and equal to $\Sigma$;
    \item Given any sequence $(\alpha_1,\ldots,\alpha_k)$ of right odd roots such that $r_{\alpha_k}\circ\dots\circ r_{\alpha_1}(\Sigma_{dist})$ is valid and equal to $\Sigma$, we have: $\{\alpha_1,\ldots,\alpha_k\} = B_{\Sigma}$.
\end{enumerate}
\end{theorem}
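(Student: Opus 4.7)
The plan is to prove a key lemma describing how $B_\Sigma$ changes under a single odd reflection, and then deduce both parts by induction on the length of a reflection sequence. The claim is that for any $\Sigma'\in \mathbb{S}$ and any $\alpha\in \Sigma'\cap \mathcal{R}$, we have
\begin{equation*}
B_{r_\alpha\Sigma'} = B_{\Sigma'}\sqcup \{\alpha\}.
\end{equation*}
Granting this, Part (2) follows by induction on $k$: if $(\alpha_1,\dots,\alpha_k)$ is a valid sequence with $r_{\alpha_k}\circ\dots\circ r_{\alpha_1}(\distinguished)=\Sigma$, the base case $B_{\distinguished}=\emptyset$ is clear since $\distinguished\cap \mathcal{R} = \{\eps_m-\delta_1\}$ is the unique minimum of $\mathcal{R}$, and the key lemma supplies the inductive step. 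Part (1) then follows by applying Part (2) to any valid sequence produced by Lemma \ref{sequence from distinguished}.

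To prove the key lemma, I would work with the complementary filter $U_\Sigma := \mathcal{R}\setminus B_\Sigma$, the upward closure of $\Sigma\cap \mathcal{R}$ in $\mathcal{R}$, and establish $U_{r_\alpha\Sigma'}=U_{\Sigma'}\setminus\{\alpha\}$. Since $\Sigma'\cap\mathcal{R}$ is an antichain by Lemma \ref{ror are incomparable}, any element strictly below $\alpha$ already lies in $B_{\Sigma'}$, so the desired equation is at least sensible (the right-hand side is an order ideal). The inclusion $U_{r_\alpha\Sigma'}\subseteq U_{\Sigma'}\setminus\{\alpha\}$ is immediate from Lemma \ref{reflection on set of ror}: each generator of $r_\alpha\Sigma'\cap\mathcal{R}$ either already belongs to $\Sigma'\cap\mathcal{R}\setminus\{\alpha\}$ or strictly dominates $\alpha$, and in both scenarios it lies in $U_{\Sigma'}\setminus\{\alpha\}$.

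The substantive direction is $U_{\Sigma'}\setminus\{\alpha\}\subseteq U_{r_\alpha\Sigma'}$, which is the main obstacle. Given $\beta\in U_{\Sigma'}$ with $\beta\neq\alpha$, reduce to the case $\beta>\alpha$ (otherwise $\beta$ already dominates a generator that survives the reflection), and write $\beta=\eps_{i'}-\delta_{j'}$ with $i'\leq i$, $j'\geq j$ and at least one strict inequality. In the case $i'<i$, using the word description of $\Sigma'$ from Lemma \ref{base to word bij}, walking rightward from the letter $\eps_{i'}$ in the word until the first $\delta$ is reached produces a right odd root $\eps_l-\delta_{k_0}\in \Sigma'\cap\mathcal{R}$ with $l\geq i'$ and $k_0\leq j\leq j'$, hence $\eps_l-\delta_{k_0}\leq\beta$. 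Whenever this root differs from $\alpha$, it is a surviving generator and we are done; the only scenario in which it equals $\alpha$ forces the local word pattern $\eps_{i'}\eps_{i'+1}\cdots\eps_{i-1}\eps_i\delta_j$, placing us in Cases 1 or 2 of Lemma \ref{reflection on set of ror}, where the new generator $\eps_{i-1}-\delta_j$ of $r_\alpha\Sigma'\cap\mathcal{R}$ satisfies $\eps_{i-1}-\delta_j\leq\beta$. A symmetric argument, walking leftward from $\delta_{j'}$, handles the remaining case $j'>j$ by producing either a surviving generator or, in the analogous degenerate situation, the new generator $\eps_i-\delta_{j+1}$ arising in Cases 2 or 4, completing the proof.
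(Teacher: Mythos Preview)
Your argument is correct, and it proceeds by a genuinely different route from the paper. The paper first observes that the length $k$ of any valid right-odd-reflection sequence from $\distinguished$ to $\Sigma$ is forced (it equals the number of ``inversions'' $\sum_j d(j)$ in the word of $\Sigma$, which coincides with $\sharp B_\Sigma$), then shows that every term of any such sequence lies in $B_\Sigma$; equality follows by cardinality. Your approach instead proves the one-step identity $B_{r_\alpha\Sigma'}=B_{\Sigma'}\sqcup\{\alpha\}$ directly via an order-theoretic analysis of the upper sets $U_\Sigma$, and then inducts. Interestingly, the paper records this one-step identity only \emph{after} the theorem, as a corollary; you have inverted the logic, making it the engine of the proof. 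Your method is arguably cleaner in that it never needs the counting argument or the (slightly implicit) identification $\sharp B_\Sigma=\sum_j d(j)$, and it makes the role of the case analysis in Lemma~\ref{reflection on set of ror} more transparent: the easy inclusion uses only that new generators dominate $\alpha$, while the substantive inclusion uses the precise form of those new generators ($\eps_{i-1}-\delta_j$ in Cases~1/2, $\eps_i-\delta_{j+1}$ in Cases~2/4). The paper's approach, on the other hand, gives the explicit description of one valid ordering essentially for free.
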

\begin{proof}
    We may use Lemma \ref{sequence from distinguished} to stipulate that such a sequence $(\alpha_1,\ldots,\alpha_k)$ exists, and by Part (2) it will be an ordering of the elements of $B_{\Sigma}$, proving Part (1).    
    Moving on to Part (2), we need only verify that given a sequence of right odd roots $(\alpha_1,\dots,\alpha_k)$ such that $r_{\alpha_k}\circ \cdots \circ r_{\alpha_1}(\distinguished) = \Sigma$, that $k = \sharp B_\Sigma$ and that $\alpha_i\in B_\Sigma$ for $1\le i \le k$.

    Consider a base $\Sigma\in\mathbb{S}$ corresponding to a word $w= \eps^{g(0)}\overset{n}{\underset{k=1}{\prod}}\delta\varepsilon^{g(k)}$. As stated before, a reflection by $\alpha \in \Sigma \cap \mathcal{R}$ corresponds to a swap of a pair of letters $\varepsilon\delta \mapsto \delta\varepsilon$ in $w$. Therefore, the number of reflections with respect to right odd roots required to produce $\Sigma$ from $\distinguished$ is fixed and does not depend on the sequence of reflections: this number is 
    $ k=\sum_{j=1}^n d(j)$, where $d(j) := m - \sum_{l=0}^{j-1} g(l)$ is the number of letters $\eps$ to the right of the letter $\delta_j$ in $w$.

    We now construct a sequence $(\alpha_1,\dots, \alpha_k)$ explicitly,
    consider $\Sigma\in \mathbb{S}$ corresponding to a word $w= \eps^{g(0)}\overset{n}{\underset{k=1}{\prod}}\delta\varepsilon^{g(k)}$. 
    Recall that $w_{dist}$, the word corresponding to $\distinguished$ is given by $w_{dist}= \eps^m \delta^n =\eps^{g'(0)}\overset{n}{\underset{k=1}{\prod}}\delta\varepsilon^{g'(k)}$ where $g'(0)= m$ and $g'(k)\equiv 0$ for $1\le k \le n$.
    
    Consider that performing reflections $\ror{i}{1}$ for $i = m, m-1, \dots, g(0)$ on $\distinguished$ in that order is valid and yields a base i.e. $r_{m, 1}\circ \cdots \circ r_{g(0),1}=\Sigma_1$ corresponding to word $w_1=\eps^{g_1(0)}\overset{n}{\underset{k=1}{\prod}}\delta\varepsilon^{g_1(k)}$ where $g_1(0)= g(0)$ and $g_1(1)= m - g(0) = \sum_{k=1}^n g(k)$. Note here that $d(1)$ reflections have been performed.

    Repeating this process by performing reflections by $\ror{i}{2}$ for $i = m, \dots, g(1) + g(0)$ on $\Sigma_1$ is once again valid and yields a base  i.e. $r_{m,2}\circ \cdots \circ r_{g(0)+g(1), 2}(\Sigma_1)=\Sigma_2$ which is given by word $w_2=\eps^{g_2(0)}\overset{n}{\underset{k=1}{\prod}}\delta\varepsilon^{g_2(k)}$ where $g_2(0)= g_1(0) = g(0)$ and $g_2(1)= g_1(1) - (m - g(0) - g(1)) = g(1)$ and $g_2(2) = m - g(0) - g(1)$. Again note that $d(2)$ additional reflections have been performed to get from $\distinguished$ to $\Sigma_2$. 

    Further repeating these steps for every $1\le j \le n$ yields a base $\Sigma_n$ which has $w_n = w$ one may verify that the number of reflections performed is $k= \sum_{j=1}^n d(j)$.
    By Lemmas \ref{ror are incomparable}, \ref{reflection on set of ror} we obtain: for any $i=1, \ldots, k$, 
    $$\forall \beta \in r_{\alpha_i} \circ \ldots \circ r_{\alpha_1}(\distinguished) \cap \mathcal{R}, \;\;\;\;\text{ either } \; \beta> \alpha_i \;\;\text{ or }\;\; \beta, \alpha_i \; \text{ are incomparable}.$$
    This means that for any $\beta \in \Sigma$, and any $1\leq i \leq k$, we have $\beta \not<\alpha_i$. Thus $\alpha_1, \ldots, \alpha_k \in B_{\Sigma}$. This completes the proof of the theorem.
\end{proof}
\begin{cor}
    Let $\Sigma_1\in \mathbb{S}$ and let $\alpha\in\Sigma_1$. Let $\Sigma_2:=r_\alpha (\Sigma_1)$. Then $B_{\Sigma_2}=B_{\Sigma_1}\cup \{\alpha\}$.
\end{cor}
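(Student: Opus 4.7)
The plan is to reduce this immediately to the two parts of \cref{B Sigma}. First, I would interpret $\alpha\in\Sigma_1$ as a right odd root in $\Sigma_1\cap\mathcal{R}$, since $r_\alpha$ is only defined for odd roots and the conclusion $B_{\Sigma_2}=B_{\Sigma_1}\cup\{\alpha\}$ requires $\alpha\in\mathcal{R}$ (recall that $B_{\Sigma}\subseteq\mathcal{R}$ by construction). The strategy is to take a valid sequence of odd reflections from $\distinguished$ to $\Sigma_1$ witnessing $B_{\Sigma_1}$, append $r_\alpha$ at the end to reach $\Sigma_2$, and then read off $B_{\Sigma_2}$ from the extended sequence using the second part of \cref{B Sigma}.

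The one preliminary I would verify is that $\alpha\notin B_{\Sigma_1}$, so that appending $\alpha$ produces a sequence of pairwise distinct right odd roots. This is immediate from the definition: since $\alpha\in\Sigma_1\cap\mathcal{R}$ and $\alpha\geq\alpha$, the root $\alpha$ lies in $\{\beta\in\mathcal{R}\mid\exists\gamma\in\Sigma_1\text{ with }\beta\geq\gamma\}$, hence $\alpha\notin B_{\Sigma_1}$.

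Now I would invoke \cref{B Sigma}(1) to obtain an ordering $(\alpha_1,\dots,\alpha_k)$ of $B_{\Sigma_1}$ such that $r_{\alpha_k}\circ\cdots\circ r_{\alpha_1}(\distinguished)$ is valid and equals $\Sigma_1$. Since $\alpha\in\Sigma_1$, appending $\alpha$ to this sequence preserves validity, and
\[ r_\alpha\circ r_{\alpha_k}\circ\cdots\circ r_{\alpha_1}(\distinguished)=r_\alpha(\Sigma_1)=\Sigma_2. \]
Applying \cref{B Sigma}(2) to this extended valid sequence yields $B_{\Sigma_2}=\{\alpha_1,\dots,\alpha_k,\alpha\}=B_{\Sigma_1}\cup\{\alpha\}$, which (using $\alpha\notin B_{\Sigma_1}$) is a disjoint union, as claimed.

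I do not anticipate any real obstacle: the corollary is essentially a packaging of \cref{B Sigma}, and the only content is the bookkeeping check $\alpha\notin B_{\Sigma_1}$, which follows from the definition of $B_{\Sigma_1}$ together with $\alpha\in\Sigma_1\cap\mathcal{R}$.
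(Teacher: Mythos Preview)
Your proof is correct and follows essentially the same argument as the paper: use part (1) of \cref{B Sigma} to write $\Sigma_1$ as a valid composition of odd reflections indexed by $B_{\Sigma_1}$, append $r_\alpha$, and apply part (2) to read off $B_{\Sigma_2}$. Your explicit verification that $\alpha\notin B_{\Sigma_1}$ and your remark on interpreting $\alpha\in\Sigma_1\cap\mathcal{R}$ are minor clarifications the paper leaves implicit, but otherwise the two proofs coincide.
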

\begin{proof}
    By Part (1) of Theorem \ref{B Sigma}, we deduce that there is some ordering of the elements of $\{\beta_1,\dots,\beta_k\} := B_{\Sigma_1}$ such that $r_{\beta_k}\circ\cdots\circ r_{\beta_1}(\distinguished)$ is valid and equal to $\Sigma_1$. 
    Therefore $r_\alpha \circ r_{\beta_k}\circ\cdots\circ r_{\beta_1}(\distinguished)$ is valid and equal to $\Sigma_2$. 
    
    By Part (2) we deduce that $B_{\Sigma_2}=B_{\Sigma_1}\cup \{\alpha\}$.
\end{proof}

\subsection{Visual interpretation of the set \texorpdfstring{$B_\Sigma$}{B} and the set \texorpdfstring{$\Sigma \cap \mathcal{R}$}{Sigma in R}}
\mbox{}

The set $B_\Sigma$ gives us a way to describe a base in $\mathbb{S}$ in terms of a set of right odd roots. This in itself is quite convenient, but we may further develop this set by giving it a visual interpretation. 

Consider the set $\mathcal{R}$ as a lattice (as described earlier in this section) with labels removed. For each $\Sigma\in \mathbb{S}$, we may add to this lattice the relevant data defining $\Sigma$: either the set $\Sigma\cap \mathcal{R}$ (which is a set of incomparable right odd roots) or the set $B_{\Sigma}$.
\begin{example}
\label{first rectangle ex}
Consider $\fgl{4}{4}$ and consider the base $\Sigma$ given by the word $\varepsilon\delta\varepsilon\delta^2\varepsilon^2\delta$.
    \begin{equation*}
\begin{tikzpicture}[anchorbase,scale=1.1]
\node at (0,1.5) {$\bullet $};

\node at (0.5,1) {$\bullet $};
\node at (-0.5, 1) {$\bullet $};

\node at (1, 0.5) {$\bullet $};
\node at (0, 0.5) {$\bullet $};
\node at (-1, 0.5) {$\bullet $};

\node at (1.5, 0) {$\bullet $};
\node at (0.5, 0) {$\bullet $};
\node at (-0.5, 0) {$\bullet $};
\node at (-1.5, 0) {$\bullet $};
\draw[thick, blue] (-1.5,0) circle [radius=0.25cm];
\draw[thick, blue] (-0.5,0) circle [radius=0.25cm];
\draw[thick, blue] (1.5,0) circle [radius=0.25cm];
\node at (1, -0.5) {$\bullet $};
\node at (0, -0.5) {$\bullet $};
\node at (-1, -0.5) {$\bullet $};

\node at (0.5, -1) {$\bullet $};
\node at (-0.5, -1) {$\bullet $};

\node at (0, -1.5) {$\bullet $};

\draw[thick, red, -] (0.5 , 0.5)--(-0.5 ,-0.5);
\draw[thick, red, -] (0.5 , 0.5)--(1.5 ,-0.5);
\draw[thick, red, -] (1.5 ,-0.5)--(0 ,-2);

\draw[thick, red, -] (-1 , 0)--(-1.5 ,-0.5);
\draw[thick, red, -] (-1 , 0)--(-0.5 , -0.5);
\draw[thick, red, -] (-1.5 , -0.5)--(0 ,-2);
\draw[thick, red, -] (0.5 , -1.5)--(0 ,-2);

\end{tikzpicture}
\end{equation*} 
In the above diagram the bullets circled in blue indicate the right odd roots $\Sigma\cap \mathcal{R}$ which in this case is the set $\{\ror{1}{1},\ror{2}{2},\ror{4}{4}\}$. 

Similarly, the area enclosed in a red polygon is used to indicate the set $B_\Sigma$. Depending on the application one may be inclined to display only $B_\Sigma$ or only $\Sigma\cap \mathcal{R}$.
\end{example}

\begin{remark}
    Any pair of right odd roots are incomparable if and only if one element is not less than the other, then it is important to understand what roots that are comparable look like on the diagram. This can be done by considering a single black dot (which of course corresponds to some right odd root), any other black dot which can be reached by downwards diagonal moves is smaller. 
Beneath is an example of all some fixed root $\alpha$ circled in blue) and all roots $\beta \lneq \alpha$ (enclosed by the blue polygon).

    \begin{equation*}
\begin{tikzpicture}[anchorbase,scale=1.1]
\node at (0,1.5) {$\bullet $};

\node at (0.5,1) {$\bullet $};
\node at (-0.5, 1) {$\bullet $};

\node at (1, 0.5) {$\bullet $};
\node at (0, 0.5) {$\bullet $};
\node at (-1, 0.5) {$\bullet $};
\draw[thick, blue] (-1,0.5) circle [radius=0.25cm];

\node at (1.5, 0) {$\bullet $};
\node at (0.5, 0) {$\bullet $};
\node at (-0.5, 0) {$\bullet $};
\node at (-1.5, 0) {$\bullet $};

\node at (1, -0.5) {$\bullet $};
\node at (0, -0.5) {$\bullet $};
\node at (-1, -0.5) {$\bullet $};

\node at (0.5, -1) {$\bullet $};
\node at (-0.5, -1) {$\bullet $};

\node at (0, -1.5) {$\bullet $};

\draw[thick, blue, -] (-1.5, 0.5)--(-1, 0);
\draw[thick, blue, -] (-1, 0)--(-0.5, 0.5);
\draw[thick, blue, -] (-0.5, 0.5)--(1, -1);
\draw[thick, blue, -] (1, -1)--(0, -2);
\draw[thick, blue, -] (0, -2)--(-2, 0);
\draw[thick, blue, -] (-2, 0)--(-1.5, 0.5);

\end{tikzpicture}
\end{equation*} 
\end{remark}

\section{Tracking changes in the weight diagrams}
\label{arrow diagram section}
\mbox{}
\subsection{Applying a single right odd reflection}
\begin{definition}
\mbox{}

\begin{itemize}
    \item Given $\Sigma \in \mathbb{S}$ and $\alpha\in \Sigma$, we say that $\alpha\in \mathcal{R}$ \textbf{changes $D_\lambda^{\Sigma}$} if $D_{\lambda}^{\Sigma} \neq D_{\lambda}^{r_{\alpha}\Sigma}$.
    \item We say that $\alpha\in \mathcal{R}$ \textbf{changes the weight diagram of $\lambda$} if there exists $\Sigma\in \mathbb{S}$ such that $\alpha\in\Sigma$ and $\overline{\lambda}_\Sigma\neq \overline{\lambda}_{r_\alpha\Sigma}$.

\end{itemize}

\end{definition}

In other words, $\alpha\in \mathcal{R}$ changes the weight diagram of $\lambda$ iff there exists $\Sigma\in \mathbb{S}$ such that $D_{\lambda}^{\Sigma} \neq D_{\lambda}^{r_{\alpha}\Sigma}$.

We will use the following straightforward observations, the first of them being just \cref{rmk:odd_reflection_on_diagrams}:
\begin{lemma}\label{lem:generalities_on_ror_changes}
Let $\Sigma\in \mathbb{S}$, and let $1\leq i\leq m$, $1\leq j\leq n$. 
    \begin{enumerate}
  
    \item\label{itm:gen_ror_change2} Assume that the application $r_{i, j}(\Sigma)$ is valid, i.e. $\ror{i}{j}\in \Sigma$. Denote: $p:=\left(\overline{\lambda}_{\Sigma}\right)_i$.

Then the odd reflection $r_{i, j}$ changes the diagram $D_{\lambda}^{\Sigma}$ iff $$p=\left(\overline{\lambda}_{\Sigma}\right)_i = -\left(\overline{\lambda}_{\Sigma}\right)_{-j}.$$

In that case, the diagram $D_{\lambda}^{r_{i,j}.\Sigma}$ is given by moving a $\times$ symbol in the diagram $D_{\lambda}^{\Sigma}$ from position $p$ to the position $p+1$.
\item\label{itm:gen_ror_change3} Assume the composition $r_{i, j+1}r_{i, j}(\Sigma)$ is valid and $r_{i, j}$ changes the diagram $D_{\lambda}^{\Sigma}$. 

Then the odd reflection $r_{i, j+1}$ changes the diagram $D_{\lambda}^{r_{i, j}(\Sigma)}$ if and only if $$-\left(\overline{\lambda}_{r_{i, j}.\Sigma}\right)_{-(j+1)} = p +1.$$

\end{enumerate}
\end{lemma}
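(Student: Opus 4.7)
The plan is to derive both parts directly from \cref{odd reflection diagram} applied to $\alpha = \ror{i}{j}$, together with an inspection of the effect on the counting functions $k$ and $s$ that define the diagram.

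For Part (1), I would begin by specializing \cref{odd reflection diagram} to the root $\alpha = \ror{i}{j}\in \odd{\Sigma}$. This immediately gives that $\overline{\lambda}_{r_{i,j}\Sigma} = \overline{\lambda}_\Sigma$ unless $(\overline{\lambda}_\Sigma)_i = -(\overline{\lambda}_\Sigma)_{-j}$, in which case $\overline{\lambda}_{r_{i,j}\Sigma} = \overline{\lambda}_\Sigma + (\ror{i}{j})$. This yields the equivalence in Part (1). For the description of the diagram change, I would observe that adding $\ror{i}{j}$ to $\overline{\lambda}_\Sigma$ increments $(\overline{\lambda}_\Sigma)_i$ from $p$ to $p+1$ and decrements $(\overline{\lambda}_\Sigma)_{-j}$ from $-p$ to $-(p+1)$, leaving every other coordinate unchanged. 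Thus, in the counts $k(q)$ and $s(q)$ defining the diagram at a position $q$, only positions $q = p$ and $q = p+1$ are affected, and both $k$ and $s$ lose one unit at $p$ and gain one unit at $p+1$. Since by hypothesis $p$ had a $\times$ symbol (it was a position where both $k$ and $s$ were positive and equal in that $\min$), the net effect is that exactly one $\times$ symbol is removed from position $p$ and one $\times$ symbol is added at position $p+1$, which is precisely the stated move.

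For Part (2), the plan is to apply Part (1) to the base $\Sigma' := r_{i,j}(\Sigma)$ and the reflection $r_{i, j+1}$ (which is valid by assumption, so $\ror{i}{j+1}\in \Sigma'$). Part (1) applied to $\Sigma'$ tells us that $r_{i, j+1}$ changes $D_{\lambda}^{\Sigma'}$ if and only if
\[
\left(\overline{\lambda}_{\Sigma'}\right)_i = -\left(\overline{\lambda}_{\Sigma'}\right)_{-(j+1)}.
\]
By the hypothesis that $r_{i, j}$ already changed $D_{\lambda}^\Sigma$, the explicit formula from Part (1) gives $(\overline{\lambda}_{\Sigma'})_i = p+1$. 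Substituting this into the above equality produces exactly the criterion in the statement.

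I do not expect any serious obstacle here; the argument is essentially a direct reading of \cref{odd reflection diagram} together with the definition of the diagram. The only point that requires a little care is the bookkeeping in Part (1): one must check that only positions $p$ and $p+1$ are affected and that the $\min(k,s)$-count at those two positions changes in the way claimed, rather than, for instance, converting a $\times$ into a $>$ or $<$ symbol. This is where the assumption $p = (\overline{\lambda}_\Sigma)_i = -(\overline{\lambda}_\Sigma)_{-j}$ is essential: it guarantees simultaneous presence of an $\eps$-contribution and a $\delta$-contribution at $p$, so the symbol that is being moved is unambiguously a $\times$.
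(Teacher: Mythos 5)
Your proposal is correct and follows essentially the same route as the paper: Part (1) is read off from \cref{odd reflection diagram} (equivalently \cref{odd reflection hw} and \cref{reflection weyl vec}), with the diagram move being the addition of $\ror{i}{j}$ to $\overline{\lambda}_\Sigma$, and Part (2) is obtained by applying Part (1) to $r_{i,j}(\Sigma)$ using $(\overline{\lambda}_{r_{i,j}\Sigma})_i = p+1$. Your extra bookkeeping that only positions $p$ and $p+1$ are affected and that the moved symbol is genuinely a $\times$ is a slightly more careful write-up of what the paper leaves implicit.
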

\begin{proof}
The first two statements follow from the fact that an odd reflection $r_{\alpha}$ changes the diagram $D_{\lambda}^{\Sigma}$ for any $\Sigma \in \mathbb{S}$, $\alpha\in \Sigma$ iff $(\overline{\lambda}_{\Sigma}|\alpha)=0$, and in that case, $\overline{\lambda}_{r_{\alpha}.\Sigma} = \overline{\lambda}_{\Sigma}+\alpha$.

The last statement follows from the fact that 
$\overline{\lambda}_{r_{i, j}.\Sigma} = \overline{\lambda}_{\Sigma} + \eps_i - \delta_j$ so $\left(\overline{\lambda}_{r_{i, j}.\Sigma}\right)_i = p +1$.

\end{proof}

\begin{remark}\label{rmk:diagram_change}
In particular, in the setting of \eqref{itm:gen_ror_change3},  $r_{i, j+1}$ changes the diagram $D_{\lambda}^{r_{i, j}(\Sigma)}$ iff $D_{\lambda}^{\Sigma}(p+1)$ has either a $<$ or a $\times$ symbol. 
\end{remark}

Recall from Lemma \ref{sequence from distinguished} that any $\Sigma\in \mathbb{S}$ can be obtained from the base $\distinguished$ by applying several right odd reflections, and $\overline{\lambda}_{\Sigma}$ is obtained from $\overline{\lambda}$ by adding several right odd roots $\eps_i-\delta_j$ for some $i, j$. Together with \cref{lem:generalities_on_ror_changes}, this implies:

\begin{cor}
\label{diagram by base inequality}
The diagram $D_{\lambda}^{\Sigma}$ is obtained from $D_{\lambda}$ by moving several (perhaps zero) $\times$ symbols several positions to the right.

  Accordingly, $(\overline{\lambda}_{\Sigma})_i\geq \overline{\lambda}_i$ and $-(\overline{\lambda}_{\Sigma})_{-j}\geq -(\overline{\lambda}_{-j})$ for any $i, j$.
\end{cor}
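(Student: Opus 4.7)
My plan is to prove the corollary by induction on the length of a sequence of right odd reflections connecting $\distinguished$ to $\Sigma$. By \cref{sequence from distinguished} we may fix such a sequence $(\alpha_1, \dots, \alpha_k)$, i.e. with $\Sigma = r_{\alpha_k} \circ \cdots \circ r_{\alpha_1}(\distinguished)$ a valid expression, and for each $0 \leq t \leq k$ set $\Sigma_t := r_{\alpha_t}\circ \cdots \circ r_{\alpha_1}(\distinguished)$, so that $\Sigma_0 = \distinguished$ and $\Sigma_k = \Sigma$. The induction hypothesis will assert the two claimed properties hold for $\Sigma_t$.

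For the inductive step, I would apply part \eqref{itm:gen_ror_change2} of \cref{lem:generalities_on_ror_changes} to the valid reflection $r_{\alpha_{t+1}}$ with $\alpha_{t+1} = \eps_i - \delta_j \in \Sigma_t$. There are two cases: either $r_{\alpha_{t+1}}$ does not change $D_\lambda^{\Sigma_t}$, in which case $D_\lambda^{\Sigma_{t+1}} = D_\lambda^{\Sigma_t}$ and $\overline{\lambda}_{\Sigma_{t+1}} = \overline{\lambda}_{\Sigma_t}$, and the conclusion is immediate; or it does, in which case the lemma tells us exactly that a single $\times$ symbol moves from position $p = (\overline{\lambda}_{\Sigma_t})_i$ to position $p+1$. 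Since $\alpha_{t+1}$ is a right odd root, we have $\overline{\lambda}_{\Sigma_{t+1}} = \overline{\lambda}_{\Sigma_t} + (\eps_i - \delta_j)$, so $(\overline{\lambda}_{\Sigma_{t+1}})_i = (\overline{\lambda}_{\Sigma_t})_i + 1$ and $(\overline{\lambda}_{\Sigma_{t+1}})_{-j} = (\overline{\lambda}_{\Sigma_t})_{-j} - 1$, with all other coordinates unchanged. Composing with the inductive hypothesis yields both assertions for $\Sigma_{t+1}$.

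For the ``Accordingly'' clause, I would simply observe that at each stage of the induction the coordinates $(\overline{\lambda}_{\Sigma_t})_i$ are nondecreasing in $t$ and the coordinates $(\overline{\lambda}_{\Sigma_t})_{-j}$ are nonincreasing in $t$; passing to $t = k$ gives the stated inequalities.

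I do not expect any serious obstacle here: all the technical work has already been done in \cref{lem:generalities_on_ror_changes} (which pins down exactly what happens when an odd reflection by a right odd root is applied) and in \cref{sequence from distinguished} (which provides the sequence to induct on). The only mildly delicate point is that the sequence $(\alpha_1, \dots, \alpha_k)$ need not consist of distinct roots with respect to $\distinguished$, but this is irrelevant for the induction since at each intermediate step $\alpha_{t+1}$ is a genuine simple root of $\Sigma_t$, which is all that \cref{lem:generalities_on_ror_changes} requires.
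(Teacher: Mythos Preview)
Your proof is correct and follows essentially the same approach as the paper: the paper's own justification, given just before the corollary, is precisely to combine \cref{sequence from distinguished} with \cref{lem:generalities_on_ror_changes} and observe that each right odd reflection either leaves the diagram unchanged or moves a single $\times$ one position to the right. Your induction on $t$ makes this explicit. (Minor remark: by the note following \cref{sequence from distinguished}, the $\alpha_t$ are in fact distinct, so your ``delicate point'' does not arise, though as you say it is irrelevant anyway.)
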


\begin{prop}
\label{changes the weight diagram of lambda}
Let $\lambda\in \distHW$ and let $\alpha$ be a right odd root. For any pair $\Sigma', \Sigma''\in \mathbb{S}$ such that $\alpha\in\Sigma'\cap\Sigma''$, we have: $$\overline{\lambda}_{\Sigma'}\neq \overline{\lambda}_{r_{\alpha}\Sigma'}\implies 
\overline{\lambda}_{\Sigma''}\neq \overline{\lambda}_{r_{\alpha}\Sigma''}.$$
In other words, if the odd reflection $r_{\alpha}$ changes the weight diagram $D^{\Sigma}_\lambda$ for some $\Sigma \in \mathbb{S}$, then it does so for every $\Sigma\in \mathbb{S}$ with $\alpha \in \Sigma$.
\end{prop}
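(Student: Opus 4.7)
By \cref{lem:generalities_on_ror_changes}\eqref{itm:gen_ror_change2}, the odd reflection $r_\alpha$ changes $D_\lambda^\Sigma$ if and only if $(\overline{\lambda}_\Sigma|\alpha) = 0$, since for $\alpha = \ror{i}{j}$ one has $(\overline{\lambda}_\Sigma|\alpha) = (\overline{\lambda}_\Sigma)_i + (\overline{\lambda}_\Sigma)_{-j}$. The plan is therefore to reformulate the statement as the claim that for any $\Sigma', \Sigma'' \in \mathbb{S}$ both containing $\alpha$, the scalar $(\overline{\lambda}_{\Sigma}|\alpha)$ takes the same value at $\Sigma = \Sigma'$ and at $\Sigma = \Sigma''$.

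To establish this, I will connect $\Sigma'$ to $\Sigma''$ by a chain of bases in $\mathbb{S}$ all containing $\alpha$, in which consecutive terms differ by a single right odd reflection $r_{\beta}$ with $\beta \neq \alpha$. Writing $\alpha = \ror{i}{j}$ and using the word model of \cref{base to word bij}, the bases in $\mathbb{S}$ containing $\alpha$ correspond exactly to words of the form $u\,\varepsilon_i \delta_j\, v$, where $u$ has $i-1$ copies of $\varepsilon$ and $j-1$ copies of $\delta$, and $v$ has $m-i$ copies of $\varepsilon$ and $n-j$ copies of $\delta$. Each adjacent swap $\varepsilon\delta \mapsto \delta\varepsilon$ performed entirely inside $u$ or entirely inside $v$ corresponds to a right odd reflection by some $\beta \in (\Sigma\cap\mathcal{R})\setminus\{\alpha\}$, and any two words of this prescribed shape are connected by a sequence of such swaps, by the standard connectivity of anagrams under adjacent transpositions.

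Finally, I will show that each step in the chain preserves the pairing with $\alpha$. Along a single step $\Sigma_s \to \Sigma_{s+1} = r_{\beta_s}(\Sigma_s)$, \cref{odd reflection diagram} yields $\overline{\lambda}_{\Sigma_{s+1}} - \overline{\lambda}_{\Sigma_s} \in \{0,\, \beta_s\}$, so it suffices to verify that $(\alpha|\beta_s) = 0$. But $\alpha$ and $\beta_s$ are distinct elements of $\Sigma_s \cap \mathcal{R}$, hence incomparable in $\mathcal{R}$ by \cref{ror are incomparable}; writing $\beta_s = \ror{k}{l}$, incomparability forces $i \neq k$ and $j \neq l$, so a direct computation yields $(\alpha|\beta_s) = \delta_{ik} - \delta_{jl} = 0$. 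Telescoping along the chain produces $(\overline{\lambda}_{\Sigma'}|\alpha) = (\overline{\lambda}_{\Sigma''}|\alpha)$, as required. The main subtlety is the connectivity claim, but the word model reduces it to the elementary fact that any anagram of a fixed multiset can be transformed into any other by adjacent transpositions that do not disturb a fixed distinguished adjacent pair.
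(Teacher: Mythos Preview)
Your chain argument is sound in spirit but has a slip in the final step. Adjacent transpositions in the word model go in \emph{both} directions: the swap $\varepsilon\delta\mapsto\delta\varepsilon$ inside $u$ or $v$ is an odd reflection by some $\beta_s=\ror{k}{l}\in\Sigma_s\cap\mathcal{R}$, but the reverse swap $\delta\varepsilon\mapsto\varepsilon\delta$ is an odd reflection by $\beta_s=\delta_l-\varepsilon_k\in\Sigma_s$, which is \emph{not} a right odd root. Your appeal to \cref{ror are incomparable} (``$\alpha$ and $\beta_s$ are distinct elements of $\Sigma_s\cap\mathcal{R}$, hence incomparable'') therefore does not cover half of the steps, and connectivity genuinely requires both directions: for instance, from $u=\delta\varepsilon$ one cannot reach $u=\varepsilon\delta$ using only forward swaps.

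The repair is immediate and makes the argument cleaner: since the swap is performed entirely inside $u$ or entirely inside $v$, it involves some $\varepsilon_k$ with $k\neq i$ and some $\delta_l$ with $l\neq j$, so whichever sign $\beta_s$ carries one has $(\alpha\mid\beta_s)=\pm(\delta_{ik}-\delta_{jl})=0$. The incomparability detour is then unnecessary, \cref{odd reflection diagram} still gives $\overline{\lambda}_{\Sigma_{s+1}}-\overline{\lambda}_{\Sigma_s}\in\{0,\beta_s\}$ for odd $\beta_s\in\Sigma_s$ of either sign, and your telescoping goes through unchanged.

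By contrast, the paper does not build a chain between $\Sigma'$ and $\Sigma''$; it writes each $\overline{\lambda}_{\Sigma}$ as $\overline{\lambda}$ plus a $\{0,1\}$-combination of the elements of $B_\Sigma$ (via \cref{B Sigma}) and argues that every $\beta\in\mathcal{R}$ with $(\beta\mid\alpha)\neq 0$ is either strictly below $\alpha$ (hence lies in both $B_{\Sigma'}$ and $B_{\Sigma''}$) or strictly above $\alpha$ (hence lies in neither), so the symmetric difference $B_{\Sigma'}\triangle B_{\Sigma''}$ consists only of roots orthogonal to $\alpha$. Your route trades the structural input of \cref{B Sigma} for the elementary word--model connectivity, and once patched it yields the stronger conclusion $(\overline{\lambda}_{\Sigma'}\mid\alpha)=(\overline{\lambda}_{\Sigma''}\mid\alpha)$ (cf.\ \cref{cor:things_orthog_to_hwt_lam}) in a fully self-contained way.
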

\begin{proof}

Let $\lambda\in \Lambda^+_{m|n}$. 

Recall that by \cref{lem:generalities_on_ror_changes}, for any $\Sigma \in \mathbb{S}$ and any $\beta\in \Sigma\cap\mathcal{R}$, we have: $$\overline{\lambda}_{r_{\beta}\Sigma} = \begin{cases}
\overline{\lambda}_{\Sigma} &\text{ if } (\overline{\lambda}_{\Sigma}|\beta) \neq 0\\ 
\overline{\lambda}_{\Sigma}+\beta &\text{ if } (\overline{\lambda}_{\Sigma}|\beta) = 0
\end{cases}$$
Thus $\overline{\lambda}_{\Sigma}\neq \overline{\lambda}_{r_{\alpha}\Sigma} $ iff $ (\overline{\lambda}_{\Sigma}|\beta) = 0$.
Now, let $\Sigma', \Sigma''\in \mathbb{S}$. We need to show that for any $ \alpha=\eps_i-\delta_j \in \Sigma'\cap \Sigma''\cap \mathcal{R}$, the scalars
$$(\overline{\lambda}_{\Sigma'}|\beta)=(\overline{\lambda}_{\Sigma'})_i + (\overline{\lambda}_{\Sigma'})_{-j}, \;\; (\overline{\lambda}_{\Sigma''}|\beta)=(\overline{\lambda}_{\Sigma''})_i +  (\overline{\lambda}_{\Sigma''})_{-j}$$
are either both zero or both non-zero.

Recall from Theorem \ref{B Sigma} that $\Sigma'$ and $\Sigma''$ can be obtained by performing a sequence of right odd reflections on $\distinguished$; we may write $\Sigma'=r_{\beta_{1}}\circ \dots \circ r_{\beta_{k}}(\distinguished)$ where $\{\beta_{1},\dots,\beta_{k}\}=B_{\Sigma'}$, and $\Sigma''=r_{\gamma_{1}}\circ \dots \circ r_{\gamma_{l}}(\distinguished)$ where $\{\gamma_{1},\dots,\gamma_{l}\}=B_{\Sigma''}$.

Thus the inequality $(\overline{\lambda}_{\Sigma'}|\alpha)\neq (\overline{\lambda}_{\Sigma''}|\alpha)$ may occur only if there was some $\beta \in B_{\Sigma'}\triangle B_{\Sigma''}$ (here $\triangle$ denotes the symmetric difference) such that $(\beta|\alpha)\neq 0$. Yet for $\beta = \eps_{i'}-\delta_{j'} \in \mathcal{R}$, to have $(\beta|\alpha)\neq 0$ we must have either $i=i'$ or $j=j'$ which means that either $\beta <\alpha$ or $\beta>\alpha$. 

By the definition of $B_\Sigma$ in Theorem \ref{B Sigma}, all the elements $\beta\in \mathcal{R}, \beta<\alpha$ must appear in both sets $B_{\Sigma'}$ and $B_{\Sigma''}$, while by the same theorem elements $\beta\in \mathcal{R}, \beta>\alpha$ appear neither in $B_{\Sigma'}$ nor in $B_{\Sigma''}$. This shows that $(\overline{\lambda}_{\Sigma'}|\alpha)\neq (\overline{\lambda}_{\Sigma''}|\alpha)$ and the statement is proved.

\end{proof}

From the proof of \cref{changes the weight diagram of lambda}, we also obtain:
\begin{cor}\label{cor:things_orthog_to_hwt_lam}
    Let $\lambda\in \distHW$ and let $\alpha\in \mathcal{R}$. Then for any $\Sigma', \Sigma''\in \mathbb{S}$ such that $\alpha \in \Sigma'\cap \Sigma''$, we have:
  $$(\overline{\lambda}_{\Sigma'}|\beta) =0 \;\; \Longleftrightarrow\;\; (\overline{\lambda}_{\Sigma''}|\beta)=0$$
\end{cor}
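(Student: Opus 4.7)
The plan is to recognize that the corollary (with the evident typo $\beta \to \alpha$) is an immediate reformulation of \cref{changes the weight diagram of lambda} together with the criterion for when an odd reflection changes the shifted highest weight. First I would recall \cref{odd reflection diagram} (or equivalently part \eqref{itm:gen_ror_change2} of \cref{lem:generalities_on_ror_changes}): for any $\Sigma \in \mathbb{S}$ and $\alpha = \eps_i - \delta_j \in \Sigma \cap \mathcal{R}$, we have
$$\overline{\lambda}_{r_\alpha \Sigma} = \begin{cases} \overline{\lambda}_\Sigma + \alpha & \text{if } (\overline{\lambda}_\Sigma)_i = -(\overline{\lambda}_\Sigma)_{-j},\\ \overline{\lambda}_\Sigma & \text{otherwise,}\end{cases}$$
and since $(\overline{\lambda}_\Sigma \mid \alpha) = (\overline{\lambda}_\Sigma)_i + (\overline{\lambda}_\Sigma)_{-j}$, the condition $(\overline{\lambda}_\Sigma)_i = -(\overline{\lambda}_\Sigma)_{-j}$ is equivalent to $(\overline{\lambda}_\Sigma \mid \alpha) = 0$. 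Hence the odd reflection $r_\alpha$ changes $\overline{\lambda}_\Sigma$ if and only if $(\overline{\lambda}_\Sigma \mid \alpha) = 0$.

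Then I would simply invoke \cref{changes the weight diagram of lambda}: under the hypothesis $\alpha \in \Sigma' \cap \Sigma''$ with $\Sigma', \Sigma'' \in \mathbb{S}$, the statement "$r_\alpha$ changes the weight diagram" is common to both bases. Combining this with the above equivalence yields
$$(\overline{\lambda}_{\Sigma'} \mid \alpha) = 0 \;\Longleftrightarrow\; r_\alpha \text{ changes } D_\lambda^{\Sigma'} \;\Longleftrightarrow\; r_\alpha \text{ changes } D_\lambda^{\Sigma''} \;\Longleftrightarrow\; (\overline{\lambda}_{\Sigma''} \mid \alpha) = 0,$$
where the middle biconditional uses both directions of \cref{changes the weight diagram of lambda} (which are symmetric in $\Sigma'$ and $\Sigma''$, since the roles can be swapped). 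There is no real obstacle here; the corollary is essentially a translation of the Proposition from the language of "diagram-changing reflections" into the language of "orthogonality to $\alpha$", made possible by the fact that the criterion for an odd reflection to change the weight is exactly orthogonality of $\overline{\lambda}_\Sigma$ to the reflecting root. The whole content of the corollary thus reduces to noting this dictionary and symmetrizing $\Sigma' \leftrightarrow \Sigma''$ in the Proposition.
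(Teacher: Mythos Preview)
Your proposal is correct and matches the paper's approach: the paper states that the corollary is obtained from (the proof of) \cref{changes the weight diagram of lambda}, and your argument is exactly the translation of that proposition via the equivalence ``$r_\alpha$ changes $\overline{\lambda}_\Sigma$'' $\Leftrightarrow$ ``$(\overline{\lambda}_\Sigma\mid\alpha)=0$'' established in \cref{odd reflection diagram}/\cref{lem:generalities_on_ror_changes}. The only minor difference is that the paper's proof of the proposition actually shows the stronger fact $(\overline{\lambda}_{\Sigma'}\mid\alpha)=(\overline{\lambda}_{\Sigma''}\mid\alpha)$, whereas you deduce the corollary from the statement alone by symmetrizing in $\Sigma',\Sigma''$; both routes are valid.
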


\subsection{The arrow diagram}
\begin{definition}
\label{Arrow diagram def}
Given $\lambda\in \distHW$ with $\overline{\lambda}:= \lambda + \rho_{\distinguished}$, we construct the \textbf{arrow diagram of $\lambda$} by considering the weight diagram $D_\lambda$ together with a set $\{k_1,\dots,k_m\}$. The number $k_i$ is defined recursively (from $k_m$ to $k_1$) by $$k_i:=\min\{c\in\mathbb{Z}\;|\;\forall 1\le j \le n, \forall  i < i' \le m,\;c\ge \overline{\lambda}_i,\, c\neq -\overline{\lambda}_{-j}\; \text{ and } \; c \neq k_{i'}\}.$$
\end{definition}
This set can be interpreted visually by iteratively drawing arrows above the diagram $D_\lambda$, each arrow beginning in a position with a symbol $>$ or $\times$, and ending in a symbol $\circ$ or $>$. At the $i$-th step of the process ($i=0, \ldots, m-1$), we draw an arrow from the position $\overline{\lambda}_{m-i}$ to the first position to the right of $\overline{\lambda}_{m-i}$ that does not contain a symbol $<$ nor a symbol $\times$, nor is the end of a previously drawn arrow. The endpoint of the arrow originating from $\overline{\lambda}_{m-i}$ is called $k_{m-i}$. We will refer to this arrow as the $i$-th arrow in the diagram $D_{\lambda}$. 

Thus the arrows are drawn from left to right and numbered from right to left.

Additionally, we will use the following notation:
\begin{definition}
\label{Mi def}
For $1\le i \le m$ we denote $$M_i:=\max \{j\in\{1,\dots, n\}|-\overline{\lambda}_{-j}<k_i\}.$$ 

The number $M_i$ is the index $j$ such that $-\overline{\lambda}_{-j}$ is the position closest to the left of $k_i$ such that $D_\lambda (\overline{\lambda}_{-j})\in \{\times, <\}$. 
Another way to interpret $M_i$ is as the number of the symbols $<$ and $\times$ that appear in $D_{\lambda}$ to the left of the position $k_i$.

\end{definition}

\begin{remark}
We remind the reader that the similar treatment of the symbols $\times$ and $<$ (respectively, $\times$ and $>$) is due to the fact that the symbol $\times$ is in fact a combination of the symbols $>$ and $<$.
\end{remark}

\begin{example}
\label{ex:first arrow diagram}
 Given $\lambda \in \Lambda^+_{4|4}$ such that $\overline{\lambda}=(6\, 4\, 3\, 0\,|0\, -1\, -4\, -5)$, we may draw the arrow diagram of $\lambda$ as $D_\lambda$ with arrows starting at $\overline{\lambda}_i$ and ending at $k_i$ for $1\le i \le 4$ as seen below-
 
\begin{equation*}
    \begin{tikzpicture}
    
        \draw[thick, -] (-6.5 ,0)--(6.5, 0);
        
        \draw[thick, fill=white] (-6,0) circle [radius=0.25cm];
        \node[] at (-6,-0.5) {-1};
        
        \draw[thick, -] (-4.5,0)--(-4.5 + 0.25 ,0.25);
        \draw[thick, -] (-4.5,0)--(-4.5 - 0.25 ,0.25);
        \draw[thick, -] (-4.5,0)--(-4.5 + 0.25 ,-0.25);
        \draw[thick, -] (-4.5,0)--(-4.5 - 0.25 ,-0.25);
        \node[] at (-4.5,-0.5) {0};
        
        \draw[thick, -] (-3 -0.25, 0)--(-3 + 0.25, 0.25);
        \draw[thick, -] (-3 -0.25, 0)--(-3 + 0.25, -0.25);
        \node[] at (-3,-0.5) {1};
        
        \draw[thick, fill=white] (-1.5,0) circle [radius=0.25cm];
        \node[] at (-1.5,-0.5) {2};
        
        \draw[thick, -] (0 +0.25, 0)--(0 - 0.25, 0.25);
        \draw[thick, -] (0 +0.25, 0)--(0 - 0.25, -0.25);
        \node[] at (0,-0.5) {3};
        
        \draw[thick, -] (1.5,0)--(1.5 + 0.25 ,0.25);
        \draw[thick, -] (1.5,0)--(1.5 - 0.25 ,0.25);
        \draw[thick, -] (1.5,0)--(1.5 + 0.25 ,-0.25);
        \draw[thick, -] (1.5,0)--(1.5 - 0.25 ,-0.25);
        \node[] at (1.5,-0.5) {4};
        
        \draw[thick, -] (3 -0.25, 0)--(3 + 0.25, 0.25);
        \draw[thick, -] (3 -0.25, 0)--(3 + 0.25, -0.25);
        \node[] at (3,-0.5) {5};
        
        \draw[thick, -] (4.5 +0.25, 0)--(4.5 - 0.25, 0.25);
        \draw[thick, -] (4.5 +0.25, 0)--(4.5 - 0.25, -0.25);
        \node[] at (4.5,-0.5) {6};
        
        \draw[thick, fill=white] (6,0) circle [radius=0.25cm];
        \node[] at (6,-0.5) {7};
        
        \draw[thick, ->] (-4.5 ,0.4) .. controls (-4.5 ,1.65) and (-1.5 , 1.65) .. (-1.5 ,0.4);
        \draw[thick, ->] (-0.2 ,0.4) .. controls (-0.2 ,1.65) and (0.15 , 1.65) .. (0.15 ,0.4);
        \draw[thick, ->] (1.5 ,0.4) .. controls (1.5 ,1.65) and (4.5 -0.2 , 1.65) .. (4.5 -0.2 ,0.4);
        \draw[thick, ->] (4.5 +0.15 ,0.4) .. controls (4.5 +0.15 , 1.65) and (6 , 1.65) .. (6 ,0.4);
        
        
        
        
    \end{tikzpicture}
\end{equation*}

Here $k_4=2$, $k_3=3$, $k_2=6$ and $k_1=7$ and $M_4=2$, $M_3=2$, $M_2=4$ and $M_1=4$. 
\end{example} 

\begin{remark}
Note that there may exist $i\neq i'$ such that $M_i=M_{i'}$. On the other hand, $k_i=k_{i'}\implies i= i'$.
\end{remark}

In order to proceed with the description of the set of right odd roots that change the diagram of $\lambda$, we would like to consider a set of bases which will be helpful in the proof. 

For any $0 \le i \le m$, denote by $\Sigma^i$ the base in $\mathbb{S}$ described by the word $$\eps_1\eps_2\ldots \eps_{i}\delta_1\delta_2\ldots \delta_n\eps_{i+1}\ldots \eps_{m}.$$ 
That is, $\Sigma^m:=\distinguished$ and for every $i<m$,
$$\Sigma^i := r_{i+1,n}\circ \dots \circ r_{i+1,1}(\Sigma^{i+1})$$

Note that for $1 \le  i \le m$, we have: $\Sigma^i \cap \mathcal{R} = \{\ror{i}{1}\}$ and $\Sigma^0 \cap \mathcal{R} = \emptyset$. It is immediate that each of these sets is an incomparable set (as they all have at most one element).

Additionally one may note that $B_{\Sigma^i}=\{\ror{i'}{j}|\, i < i' \le m \text{ and } 1 \le j \le n\}$.

Denote $\overline{\lambda}^{i}:=\overline{\lambda_{\Sigma^i}}$. In the next theorem, we describe how the weight diagram of $\lambda$ is changed when we move from the base $\distinguished=\Sigma^{m}$ to $\Sigma^{m-1}, \ldots, \Sigma^0=\antidist$, the most important conclusion being \eqref{itm:ror_change_main} below.
\begin{theorem}
\label{theorem for ror change inequality}
Let $\lambda \in \distHW$. For $0\le i \leq m$ we have:
    \begin{enumerate}
        \item\label{itm:ror_change_1} For every $ 1\le j < n$, we have $ -\overline{\lambda}^i_{-j} <- \overline{\lambda}^i_{-(j+1)}$. 
        \item\label{itm:ror_change_2} For $\overline{\lambda}_i=\overline{\lambda}^i_{i} \le p < k_i$ we have: $D^{\Sigma^i}_\lambda(p)\in \{\times, <, \times >\}$.
        
        \item\label{itm:ror_change_4} For every $p> k_{i+1}$, $D_{\lambda}^{\Sigma^i}(p) = D_{\lambda}(p) $ (here we set $k_{m+1}:=-\infty$).
        \item\label{itm:ror_change_main} For every $i\geq 1$, the odd reflection $r_{i, j}$ changes the diagram of $\lambda$ if and only if $$M_i +1 - (k_i -\overline{\lambda}_i) \le j \le M_i.$$
    \end{enumerate}
\end{theorem}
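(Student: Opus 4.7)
The plan is to prove all four parts simultaneously by downward induction on $i$, starting at the base case $i=m$. Part (1) is cheap at every level: the word of $\Sigma^i$ has all its $\delta$-letters consecutive, so each $\delta_j-\delta_{j+1}$ is an even simple root of $\Sigma^i$; combining the identity $(\rho_{\Sigma^i}\mid\delta_j-\delta_{j+1})=\tfrac{1}{2}(\delta_j-\delta_{j+1}\mid\delta_j-\delta_{j+1})=-1$ with the weak dominance of $\lambda_{\Sigma^i}$ on its $\delta$-coordinates yields strict monotonicity of $-\overline{\lambda}^i_{-j}$ in $j$. At $i=m$ parts (2) and (3) reduce respectively to the definition of $k_m$ and to vacuity (since $k_{m+1}=-\infty$).

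For the inductive analysis leading to (4), I would interpolate between $\Sigma^i$ and $\Sigma^{i-1}$ along the chain $\Sigma^i_0:=\Sigma^i$, $\Sigma^i_j:=r_{i,j}\Sigma^i_{j-1}$, so $\Sigma^i_n=\Sigma^{i-1}$. At each intermediate step $\ror{i}{j}$ is simple in $\Sigma^i_{j-1}$, so by \cref{changes the weight diagram of lambda} part (4) reduces to testing whether $r_{i,j}$ alters $\overline{\lambda}^{\Sigma^i_{j-1}}$. Setting $p_j:=(\overline{\lambda}^{\Sigma^i_{j-1}})_i$ and $q_j:=-(\overline{\lambda}^{\Sigma^i_{j-1}})_{-j}$, the earlier reflections never touch $\delta_j$, so $q_j=-(\overline{\lambda}^i)_{-j}$, which by (1) for $\Sigma^i$ is strictly increasing in $j$. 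Meanwhile $(p_j)$ starts at $\overline{\lambda}_i$ (the reflections passed into $\Sigma^i$ never moved $\eps_i$) and, by \cref{lem:generalities_on_ror_changes}, increments by exactly one at each change-step, with a change occurring iff $p_j=q_j$. A direct comparison of the two monotone sequences shows the change-set is a consecutive interval $\{j_0,\ldots,j_0+L-1\}$; by (2)--(3) for $\Sigma^i$ the positions $\overline{\lambda}_i,\ldots,k_i-1$ in $D^{\Sigma^i}_\lambda$ all carry a $\delta$-symbol while $D^{\Sigma^i}_\lambda(k_i)=D_\lambda(k_i)\in\{\circ,>\}$, pinning down $L=k_i-\overline{\lambda}_i$. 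To identify $j_0$ I would show that the number of $\delta$-positions strictly left of $k_i$ is the same in $D_\lambda$ as in $D^{\Sigma^i}_\lambda$: every $\delta$-shift produced by the reflections used to pass from $\distinguished$ to $\Sigma^i$ occurs inside some interval $[\overline{\lambda}_{i'},k_{i'}]$ with $i'>i$, and all these intervals lie strictly left of $k_i$ because $(k_{i'})$ is strictly decreasing in $i$. This yields $j_0+L-1=M_i$, which rearranges to (4).

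Propagating (1) and (3) from $\Sigma^i$ to $\Sigma^{i-1}$ is straightforward: (1) is the same consecutive-$\delta$ argument applied to $\Sigma^{i-1}$, and (3) holds because the reflections $r_{i,j}$ only move $\eps_i$ and various $\delta_j$'s to positions $\leq k_i$, so combined with (3) for $\Sigma^i$ every position $p>k_i$ agrees with $D_\lambda(p)$. The main obstacle will be propagating (2) to $\Sigma^{i-1}$: one must show every position in $[\overline{\lambda}_{i-1},k_{i-1}-1]$ of $D^{\Sigma^{i-1}}_\lambda$ lies in $\{\times,<,\times>\}$. For positions $p>k_i$ this follows from (3) for $\Sigma^{i-1}$ together with the definition of $k_{i-1}$, since the only $D_\lambda$-open positions in the relevant range are previously-drawn arrow endpoints $k_{i'}$ for $i'\geq i$, all of which are $\leq k_i$. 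For positions $p\leq k_i$ I would invoke (4) just proved: the $\delta_j$'s participating in the change-steps $r_{i,j_0},\ldots,r_{i,j_0+L-1}$ land at positions $\overline{\lambda}_i+1,\ldots,k_i$ in $D^{\Sigma^{i-1}}_\lambda$, supplying each such position with a fresh $\delta$-symbol, while $\eps_i$ itself contributes an additional $>$ at $k_i$. The delicate bookkeeping of multiplicities here -- tracking which $\eps$'s and $\delta$'s coincide at each position -- is the technical heart of the argument, but it is entirely determined by the combinatorics already set up in (4).
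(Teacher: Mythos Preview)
Your proposal is correct and follows essentially the same downward-induction strategy as the paper: interpolate from $\Sigma^i$ to $\Sigma^{i-1}$ along the chain $r_{i,j}$, use the inductive hypotheses (2)--(3) to locate the consecutive block of $\delta$-symbols in $[\overline{\lambda}_i,k_i-1]$, read off the change-interval $\{j_0,\ldots,j_0+L-1\}$ with $L=k_i-\overline{\lambda}_i$, and then verify that (2)--(3) propagate. The paper's treatment of the bookkeeping for (2) at $\Sigma^{i-1}$ is slightly more explicit than your sketch, but the mechanism---each changed $\delta_j$ shifts one step right, so positions $\overline{\lambda}_i+1,\ldots,k_i$ retain a $\delta$-symbol while $\eps_i$ supplies the new $\times$ at $k_i$---is the same.

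One genuine improvement in your version: you prove (1) directly at every level by observing that the $\delta$-letters are consecutive in the word for $\Sigma^i$, so $\delta_j-\delta_{j+1}\in\Sigma^i$ is an even simple root and finite-dimensionality forces $(\lambda_{\Sigma^i}\mid\delta_j-\delta_{j+1})\cdot(\delta_j-\delta_{j+1}\mid\delta_j-\delta_{j+1})^{-1}\in\mathbb{Z}_{\ge 0}$, whence strict monotonicity of $-\overline{\lambda}^i_{-j}$ after the $\rho$-shift. The paper instead carries (1) through the induction, checking at each step that the single $\times$-move cannot create a $\times<$ collision. Your argument is cleaner and decouples (1) from the rest of the induction; the paper's approach has the mild advantage of not appealing to the dominance criterion for $\lambda_{\Sigma}$ with respect to a non-distinguished base (which is standard but not stated in the preliminaries).
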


Note that the first statement means that for all $p\in \Z$, we have: $D^{\Sigma^i}_\lambda(p) \neq \times^k <$ for $k\geq 1$. The last statement implies that $r_{i, j}$ changes the diagram of $\lambda$ only if $k_i>\overline{\lambda}_i$ (that is, only if the $i$-th arrow in $D_{\lambda}$ has distinct beginning and endpoint).

\begin{proof}
    We prove the statements \eqref{itm:ror_change_1}-\eqref{itm:ror_change_4} for $\Sigma^{i}$ by "reversed" induction on $i$ (which is to say we consider the bases $\Sigma^{m},\dots, \Sigma^0$ in that order). 
    
    The statement \eqref{itm:ror_change_main} will be obtained along the way for every $i$.

    For the base case consider $i=m$ ($\Sigma^m=\distinguished$). In that case, the statements \eqref{itm:ror_change_1}-\eqref{itm:ror_change_4} hold by the properties of the distinguished base and the definition of the arrow diagram.

{\bf Step:} Let $i<m$, and assume that the statement holds for $\Sigma^{i+1}$. Let us write $\mu:= {\lambda}^{i+1}$, and $D:=D^{\Sigma^{i+1}}_\lambda$ for short.

First of all, we notice that the weight $\overline{\mu}$ was obtained from $\overline{\lambda}$ by adding some right odd roots of the form $\eps_{i'}-\delta_{j}$ for $i'>i+1$ so $\overline{\mu}_{s}=\overline{\lambda}_{s} < k_s$ for any $1\leq s\leq i+1$.

    We will perform a sequence of reflections on $\Sigma^{i+1}$: $r_{i+1,n}\circ \dots \circ r_{i+1,1}(\Sigma^{i+1})$ (this sequence of reflections is clearly valid and the resulting base is $\Sigma^i$).

By the definition of arrow diagram, if $\overline{\lambda}_{i+1} = k_{i+1}$ then $D_{\lambda}(\overline{\lambda}_{i+1})$ is the symbol $>$. In that case, we can use the induction assumption, namely, that the statement \eqref{itm:ror_change_4} holds for $\Sigma^{i+1}$: we then have $$D_{\lambda}(\overline{\lambda}_{i+1}) =D(\overline{\lambda}_{i+1}) = D(\overline{\mu}_{i+1}) $$ since $\overline{\mu}_{i+1} = k_{i+1}>k_{i+2}$. Thus the symbol in $D(\overline{\lambda}_{i+1}) = D(\overline{\mu}_{i+1})$ is $>$. In that case, by \cref{lem:generalities_on_ror_changes}, the right odd reflections $r_{i+1,n}, \dots,  r_{i+1,1}$ will not change the diagram $D$. 

We conclude: if $\overline{\mu}_{i+1} = k_{i+1}$, then the right odd reflections $r_{i+1,n}, \dots,  r_{i+1,1}$ will not change the diagram of $\lambda$. In that case, the statement of the lemma clearly holds for $\Sigma^i$, and we are done.

From now on, we assume that $\overline{\mu}_{i+1} =  \overline{\lambda}_{i+1} < k_{i+1}$. In that case, $D_{\lambda}(\overline{\lambda}_{i+1}) = D(\overline{\mu}_{i+1}) =\times$.

 By the induction assumption, \eqref{itm:ror_change_2} holds for $\Sigma^{i+1}$: each $D(p)$ for $\overline{\mu}_{i+1}\le p < k_{i+1}$ is either $\times$, or $<$, or $\times >$. 
    
    As such, for every such $p$, there exists $1\le j_p \le n$ such that $-\overline{\mu}_{-j_p}= p$. 
    
    By induction assumption, \eqref{itm:ror_change_1} holds for $\Sigma^{i+1}$, and we deduce that $j_{p+1}=j_p +1$. So there exists $J:=j_{\overline{\mu}_{i+1}}$ such that $-\overline{\mu}_{-(J+c)}= p+c$ for $c=0, 1, \dots, k_{i+1} -1 -\overline{\mu}_{i+1}$.

By the induction assumption \eqref{itm:ror_change_4}, for all $ p\geq k_{i+1}>k_{i+2}$, we have: $D_{\lambda}(p) = D(p)$, so both diagrams $D_{\lambda}, D$ have $n-M_{i+1}$ symbols $<$ or $\times$ to the right of position $k_{i+1}$, and both diagrams have either $\circ$ or $>$ in position $k_{i+1}$.

By \cref{lem:generalities_on_ror_changes} \eqref{itm:gen_ror_change3}, the reflections $r_{i+1, 1}, \ldots, r_{i+1, J-1}$ will not change the diagram $D$, while the reflections $r_{i+1, J}, \ldots, r_{i+1, J+k_{i+1}-1-\overline{\mu}_{i+1}}$ will change $D$, each time moving the $\times$ symbol (originally in position $\overline{\mu}_{i+1}$ in $D$) one position to the right.

This symbol $\times$ reaches the position $k_{i+1}$ once the reflections $r_{i+1, 1}, \ldots, r_{i+1, (J+k_{i+1}-1-\overline{\mu}_{i+1})}$ are performed. 

Let $\Sigma:=r_{i+1, (J+k_{i+1}-1-\overline{\mu}_{i+1})}\circ \ldots\circ r_{i+1, 1}.\Sigma^{i+1}$. Denote by $\mu'$ the newly obtained weight such that $$\overline{\mu'} = \overline{\lambda}_{\Sigma}$$ and by $D'$ the obtained weight diagram: $D'$ is obtained from $D$ by moving the $\times$ symbol from position $\overline{\mu}_{i+1}$ to position $\overline{\mu'}_i = k_{i+1}$. The definition of $\mu'$ implies that $-\overline{\mu'}_{-(J+k_{i+1}-1-\overline{\mu}_{i+1})} = \overline{\mu'}_{i+1}=k_{i+1}$. 

Since $D(k_{i+1})\in \{\circ, >\}$, we conclude that $-\overline{\mu'}_{-j} \neq \overline{\mu'}_{i+1}$ for any $j> J+k_{i+1}-1-\overline{\mu}_{i+1}$. By \cref{lem:generalities_on_ror_changes}, this implies that the odd reflections $r_{i+1,j}$ for $ j>J+k_{i+1}-1-\overline{\mu}_{i+1}$ do not change the diagram $D'$, meaning that $\mu'=\lambda^i$ and $D' = D^{\Sigma^i}_{\lambda}$.

\mbox{}

Let us now check that the diagram $D'$ satisfies conditions \eqref{itm:ror_change_1}-\eqref{itm:ror_change_4}.

Recall that $D'$ was obtained from $D$ by moving the $\times$ symbol from position $\overline{\mu}_{i+1}$ to position $\overline{\mu'}_i = k_{i+1}$, and that the diagram $D$ satisfies the required conditions by the induction assumption.

Property \eqref{itm:ror_change_1} says that there is no position with both a $\times$ symbol and a $<$ symbol. This holds for $D$ by the induction assumption, and in all positions $p\neq k_{i+1}$, we have: the set of symbols in $D'(p)$ is a subset of the symbols in $D(p)$, differing by at most one symbol $\times$. 
So we only need to check that we don't have both a $\times$ symbol and a $<$ symbol in position $k_{i+1}$ of $D'$. But by definition, $D'(k_{i+1})$ is either $\times$ or $\times >$, so Property \eqref{itm:ror_change_1} for $D' = D^{\Sigma^i}_{\lambda}$ is proved.

To show Property \eqref{itm:ror_change_4}, recall that this property for $D$ means that $D_{\lambda}(p)= D(p)$ for any $p>k_{i+2}$. By the definition of $D'$, we also have: $D(p)=D'(p)$ for any $p>k_{i+1}$, so Property \eqref{itm:ror_change_4} for $D'$ is proved.

To show Property \eqref{itm:ror_change_2} for $D'$, recall that $\overline{\lambda}^i = \overline{\mu'}$ is obtained from both $\overline{\lambda}^{i+1} = \overline{\mu}$ and $\overline{\lambda}$ by adding some right odd roots of the form $\eps_{i'} - \delta_j$ for some $i'>i$ and some $j$; so $$\overline{\lambda}^i_i = \overline{\mu'}_i = \overline{\mu}_i = \overline{\lambda}_i.$$

Recall that $D_{\lambda}(p)= D'(p)$ for any $p>k_{i+1}$ by Property \eqref{itm:ror_change_4} for $D'$. So Property \eqref{itm:ror_change_2} for $D'(p)$ where $\max\{k_{i+1}, \overline{\lambda}^i_i\}\leq p<k_i$ follows from the same property for $D_{\lambda}$, which is part of the definition of an arrow diagram.

So we only need to establish that $D'(p)\in \{\times, <, \times >\}$ for all $\overline{\lambda}^i_i\leq p \leq k_{i+1}$ (if the set of such $p$'s is non-empty).

For $p=k_{i+1}$, we have already seen that $D'(k_{i+1})$ is either $\times$ or $\times >$.

For $\overline{\lambda}^i_i\leq p < k_{i+1}$, recall that
 $$\overline{\lambda}^i_i = \overline{\mu'}_i = \overline{\mu}_i = \overline{\lambda}_i > \overline{\lambda}_{i+1} = \overline{\mu}_{i+1}  $$
so any such $p$ satisfies $\overline{\lambda}_{i+1}\leq p\leq k_{i+1}$. But we established that $D(p)=D'(p)$ for any $\overline{\lambda}_{i+1}\leq p\leq k_{i+1}$, and thus by the induction assumption (Property \eqref{itm:ror_change_2} for $D$), we have:
$$ \overline{\lambda}^i_i\leq p < k_{i+1}, \;\;\; \;\;\; D(p)=D'(p)\in \{\times, <, \times >\} .$$
This proves Property \eqref{itm:ror_change_2} for $D'$. 

Finally, since the diagrams $D_{\lambda}, D, D'$ coincide in positions $k_{i+1}+1,k_{i+1}+2, \ldots$, the diagram $D'$ has exactly $n-M_{i+1}$ symbols $<$ in these positions. By definition of $\mu'=\lambda^i$, we have: $$-\overline{\mu'}_{-j} = \overline{\mu'}_{i+1}=k_{i+1}$$
only for $j=J+k_{i+1}-1-\overline{\mu}_{i+1}$. Thus $J+k_{i+1}-1-\overline{\mu}_{i+1} = M_{i+1}$ so $J=M_{i+1} - (k_{i+1}-1-\overline{\mu}_{i+1} )$. Thus the odd reflections $r_{i+1, j}$ changing the diagram of $\lambda$ are those where $$J=M_{i+1} - (k_{i+1}-1-\overline{\mu}_{i+1} )\leq j\leq M_{i+1}$$
(the last inequality follows from our computation that exactly $J+k_{i+1}-\overline{\mu}_{i+1}$ odd reflections of the form $r_{i+1, j}$ change the diagram $D$). This proves Property \eqref{itm:ror_change_main} for $i+1$.

This completes the proof of the theorem.
\end{proof}

\begin{remark}
    For a weight $\mu$ such that $\overline{\mu}_{\distinguished}= \overline{\lambda}_{\Sigma^i}$, consider the weight $\nu$ for $\mathfrak{gl}(i|n)$ given by $\overline{\nu}_{\distinguished_{(i|n)}}:=(\mu_1, \ldots, \mu_i|\mu_{-1}, \ldots, \mu_{-n})$ (here the base $\distinguished_{(i|n)}$ is the distinguished base for $\mathfrak{gl}(i|n)$).  Essentially, \cref{theorem for ror change inequality} shows that $\nu\in\Lambda^+_{i|n}$. 
   
\end{remark}
    Further, examining the proof of the theorem we find that:
    
    \begin{cor}
    \label{Sigma i diagarm}
        \mbox{}
        \begin{itemize}
            \item If $\overline{\lambda}_{i}=k_{i}$, the diagram $D^{\Sigma^i}_\lambda$ remains unchanged when we perform the odd reflections $r_{i, j}$ for all $j$.
            \item If $\overline{\lambda}_{i}\neq k_{i}$, the diagram $D^{\Sigma^{i-1}}_\lambda$ is obtained from the diagram $D^{\Sigma^i}_\lambda$ by moving the $\times$ symbol appearing in position $\overline{\lambda}_{i}$ of $D^{\Sigma^i}_\lambda$ to the position $k_{i}$.
        \end{itemize}
    \end{cor}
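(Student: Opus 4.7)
The corollary is extracted directly from the case analysis in the induction step of the proof of \cref{theorem for ror change inequality}, simply re-indexed by one. That induction step studies the transition from $\Sigma^{i+1}$ to $\Sigma^i$ via the valid sequence $r_{i+1,n}\circ\cdots\circ r_{i+1,1}$, and splits according to whether $\overline{\mu}_{i+1}=k_{i+1}$ (where $\mu:=\lambda^{i+1}$); replacing the index $i+1$ by $i$ throughout, this becomes precisely the analysis of the transition from $\Sigma^i$ to $\Sigma^{i-1}$ described in the corollary.

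The one preliminary observation needed is the identification $\overline{\lambda}^i_i=\overline{\lambda}_i$. This holds because $\Sigma^i$ differs from $\distinguished$ only through reflections with respect to roots of the form $\eps_{i'}-\delta_j$ with $i'>i$ (this being precisely the set $B_{\Sigma^i}$ computed earlier), and by \cref{lem:generalities_on_ror_changes} each such reflection either fixes the current weight or adds such a root; neither possibility affects the $i$-th coordinate. In particular, the condition $\overline{\lambda}^i_i=k_i$ is equivalent to $\overline{\lambda}_i=k_i$.

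For the first bullet, assuming $\overline{\lambda}_i=k_i$, the definition of $k_i$ forces $D_\lambda(\overline{\lambda}_i)$ to be the symbol $>$, and the first case in the theorem's induction step (re-indexed to $i$) shows that each of the reflections $r_{i,n},\ldots,r_{i,1}$ leaves the current diagram unchanged, so $D^{\Sigma^{i-1}}_\lambda=D^{\Sigma^i}_\lambda$. For the second bullet, assuming $\overline{\lambda}_i<k_i$, Property~(\ref{itm:ror_change_2}) of the theorem guarantees that $D^{\Sigma^i}_\lambda(\overline{\lambda}_i)\in\{\times,\times >\}$, so a $\times$ symbol is indeed present at position $\overline{\lambda}_i$. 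The second case of the induction step then tracks the sequence $r_{i,1},\ldots,r_{i,n}$ explicitly: an initial block of reflections does nothing, the next $k_i-\overline{\lambda}_i$ reflections each move this single $\times$ symbol one step to the right until it reaches $k_i$, and the remaining reflections again have no effect. The net effect is exactly what the second bullet asserts.

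The main obstacle is purely bookkeeping: aligning the indices of the theorem's inductive argument with those of the corollary and verifying $\overline{\lambda}^i_i=\overline{\lambda}_i$. No new calculation beyond what is already carried out in the proof of \cref{theorem for ror change inequality} is required.
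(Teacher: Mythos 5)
Your proposal is correct and matches the paper's treatment: the paper derives this corollary by "examining the proof of Theorem \ref{theorem for ror change inequality}", which is exactly your extraction of the two cases ($\overline{\lambda}_i = k_i$ giving no change since the position carries a $>$, and $\overline{\lambda}_i < k_i$ giving the step-by-step rightward motion of the $\times$ symbol to $k_i$), together with the observation $\overline{\lambda}^i_i=\overline{\lambda}_i$ that the paper also records in the induction step. No gap; only the re-indexing bookkeeping, which you handle correctly.
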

Note that $\overline{\lambda}_{i}\neq k_{i}$ implies that $D^{\Sigma^{i}}_\lambda(\overline{\lambda}_{i})$ must be either $\times$ or $\times >$, so we have a $\times$ symbol we can move.

\begin{cor}
\label{bounds on a i}
    Let $\lambda\in \distHW$. Let $\{k_1,\dots,k_m\}$ be the arrow endpoints in the arrow diagram of $\lambda$. For any $\Sigma\in \mathbb{S}$, we have: $\overline{\lambda}_i\le\left(\overline{\lambda_\Sigma}\right)_i\le k_i$. 
\end{cor}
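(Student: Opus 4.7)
The lower bound $\overline{\lambda}_i \le (\overline{\lambda}_{\Sigma})_i$ is immediate: Corollary \ref{diagram by base inequality} tells us that the diagram $D_\lambda^\Sigma$ is obtained from $D_\lambda$ by moving $\times$ symbols only to the right, which yields $(\overline{\lambda}_\Sigma)_i \ge \overline{\lambda}_i$ directly.

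For the upper bound, the plan is to track how the $i$-th coordinate grows as we move from $\distinguished$ to $\Sigma$ via a sequence of right odd reflections. By Theorem \ref{B Sigma}, we can order the elements of $B_\Sigma$ as $(\alpha_1,\ldots,\alpha_k)$ so that $r_{\alpha_k}\circ\cdots\circ r_{\alpha_1}(\distinguished) = \Sigma$ is valid, and no right odd root is repeated in this sequence. By Lemma \ref{lem:generalities_on_ror_changes}, applying a reflection $r_{\alpha_l}$ with $\alpha_l = \varepsilon_{i'}-\delta_{j'}$ affects the $i$-th coordinate only if $i' = i$, and in that case it either leaves the coordinate unchanged or increases it by exactly $1$, the latter happening precisely when $r_{\alpha_l}$ changes the current weight diagram.

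The key point is that by Proposition \ref{changes the weight diagram of lambda}, whether $r_{i,j}$ changes the diagram depends only on the pair $(i,j)$ and on $\lambda$, not on the intermediate base in which the reflection is applied. Theorem \ref{theorem for ror change inequality}\eqref{itm:ror_change_main} pins down the set of such $j$ precisely: these are the integers with $M_i + 1 - (k_i - \overline{\lambda}_i) \le j \le M_i$, a set of cardinality exactly $k_i - \overline{\lambda}_i$.

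Putting the two observations together: among the reflections $r_{\alpha_1},\ldots,r_{\alpha_k}$, at most $k_i - \overline{\lambda}_i$ of them can be of the form $r_{i,j}$ with $j$ in the range above (since each such reflection appears at most once in the sequence), and only these contribute to the $i$-th coordinate. Therefore
\[
(\overline{\lambda}_\Sigma)_i \;\le\; \overline{\lambda}_i + (k_i - \overline{\lambda}_i) \;=\; k_i,
\]
completing the proof. I do not anticipate any real obstacle here; the work has been done in Theorem \ref{theorem for ror change inequality} and Proposition \ref{changes the weight diagram of lambda}, and this corollary is essentially a bookkeeping consequence of counting the reflections that can possibly move the $i$-th coordinate.
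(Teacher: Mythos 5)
Your proof is correct and follows exactly the route the paper intends: the corollary is a bookkeeping consequence of Theorem \ref{B Sigma}, Lemma \ref{lem:generalities_on_ror_changes}, Proposition \ref{changes the weight diagram of lambda} and Theorem \ref{theorem for ror change inequality}\eqref{itm:ror_change_main}, with the lower bound already contained in Corollary \ref{diagram by base inequality}. Counting that at most $k_i-\overline{\lambda}_i$ distinct reflections $r_{i,j}$ can increment the $i$-th coordinate, each by $1$, gives the upper bound just as the paper's preceding results are meant to yield it.
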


\begin{remark}
\label{analogous arrow diagram}
    At this point it is prudent to note that for any proposition related to arrow diagrams one may show an analogous statement by replacing the arrows of the diagrams with ones that start at $<$ symbols. 

    This is done by defining $N_j$ as the number of $>$ symbols (where $\times$ symbols are counted as $>$) to the left of the $j$th (from the left) $<$ symbol and taking $l_j$ analagously to $k_i$ by 
    $$l_j:=\min\{c\in\mathbb{Z}\;|\;\forall 1\le i \le m, \forall  j > j' \ge 1,\;c\ge -\overline{\lambda}_{-j},\, c\neq \overline{\lambda}_{i}\; \text{ and } \; c \neq l_{j'}\}.$$
\end{remark}

\begin{cor}
\label{under arrow changes diagram}
    Let $\lambda\in \distHW$, consider the arrow diagram of $\lambda$ given by $k_1,\dots, k_m$, let $1\le i \le n$ and $1\le j \le n$. 
    $$\overline{\lambda}_i \le -\overline{\lambda}_{-j} < k_i \implies \ror{i}{j} \text{ changes the diagram of }\lambda$$
\end{cor}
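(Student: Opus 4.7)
The plan is to reduce this corollary to the numerical criterion in \cref{theorem for ror change inequality}\eqref{itm:ror_change_main}, which asserts that $r_{\ror{i}{j}}$ changes the diagram of $\lambda$ if and only if $M_i + 1 - (k_i - \overline{\lambda}_i) \le j \le M_i$. Thus the task reduces to verifying both of these inequalities from the hypothesis $\overline{\lambda}_i \le -\overline{\lambda}_{-j} < k_i$.

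The starting observation is that, since $\lambda\in\distHW$ is dominant, the coordinates $\overline{\lambda}_{-1},\ldots,\overline{\lambda}_{-n}$ of $\overline{\lambda}$ are strictly decreasing, so the map $j'\mapsto -\overline{\lambda}_{-j'}$ is strictly increasing (and in particular injective). This injectivity turns \cref{Mi def} into the identity $M_i = \bigl|\{\,j' \in \{1,\dots,n\} : -\overline{\lambda}_{-j'} < k_i\,\}\bigr|$. The assumption $-\overline{\lambda}_{-j} < k_i$ then places $j$ inside this counted set, yielding the upper bound $j \le M_i$ at once.

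For the lower bound, I would interpret $M_i - j$ as the number of indices $j' > j$ for which $-\overline{\lambda}_{-j'} < k_i$. By strict monotonicity, the associated positions $-\overline{\lambda}_{-j'}$ are distinct integers lying strictly between $-\overline{\lambda}_{-j}$ and $k_i$, and there are only $k_i - (-\overline{\lambda}_{-j}) - 1$ such integers. Combined with the hypothesis $-\overline{\lambda}_{-j} \ge \overline{\lambda}_i$, this gives $M_i - j \le k_i - \overline{\lambda}_i - 1$, i.e.\ $j \ge M_i + 1 - (k_i - \overline{\lambda}_i)$. With both bounds in place, \cref{theorem for ror change inequality}\eqref{itm:ror_change_main} delivers the conclusion. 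I anticipate no substantial obstacle here; the corollary is essentially a translation of the $(M_i, k_i, \overline{\lambda}_i)$-criterion into the geometric statement that the positions $-\overline{\lambda}_{-j}$ lying under the $i$-th arrow are precisely those for which $r_{\ror{i}{j}}$ changes the diagram of $\lambda$.
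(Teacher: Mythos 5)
Your proposal is correct and follows essentially the same route as the paper: both reduce the statement to the criterion $M_i+1-(k_i-\overline{\lambda}_i)\le j\le M_i$ of \cref{theorem for ror change inequality}, obtain $j\le M_i$ directly from $-\overline{\lambda}_{-j}<k_i$ and the definition of $M_i$, and get the lower bound by noting that the strictly increasing integer sequence $j'\mapsto -\overline{\lambda}_{-j'}$ forces $M_i-j$ to be at most the number of integer positions in $[\overline{\lambda}_i,k_i)$ beyond $-\overline{\lambda}_{-j}$, i.e.\ $M_i-j\le k_i-\overline{\lambda}_i-1$. The only difference is cosmetic bookkeeping (you count integers strictly between $-\overline{\lambda}_{-j}$ and $k_i$, while the paper passes through the position $-\overline{\lambda}_{-M_i}$), so the argument is sound as written.
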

\begin{proof}
    Observe that if  $\overline{\lambda}_i \le -\overline{\lambda}_{-j} < k_i $ then  $ \overline{\lambda}_i \le -\overline{\lambda}_{-j} \le -\overline{\lambda}_{-M_i} <  k_i$ thus $-\overline{\lambda}_{-M_i} -(-\overline{\lambda}_{-j}) \le k_i - \overline{\lambda}_i - 1 $ (RHS is the distance between 2 positions greater or equal to $\overline{\lambda}_i$ and strictly less than $k_i$). 

    Thus $M_i \ge j \underset{(1)}{\ge} M_i -(-\overline{\lambda}_{-M_i} -(-\overline{\lambda}_{-j})) \ge M_i - (k_i - \overline{\lambda}_i - 1) $ (where inequality $(1)$ is due to the fact that $-\overline{\lambda}_{-(j+1)} \ge -\overline{\lambda}_{-j} + 1$) thus by Theorem \ref{theorem for ror change inequality} $\ror{i}{j}$ changes the diagram of $\lambda$.
\end{proof}

\begin{example}
    Expanding on \cref{ex:first arrow diagram}, we draw below the diagrams $D^{\Sigma^i}_{\lambda}$ for $\lambda$ as in \cref{ex:first arrow diagram} and $i=4,3,2,1,0$. The solid arrows in the diagrams are the arrows remaining from the arrow diagram of $D^{\Sigma^4}_\lambda = D_\lambda$, and the dashed arrows drawn below indicate the changes in the diagrams when passing from base$\Sigma^{i+1}$ to base $\Sigma^i$. Recall that these changes, if they happen, are given by moving a $\times$ symbol. 
    
    We find that 
    the diagram of $D^{\Sigma^4}_\lambda = D_\lambda$ is given by

    \begin{equation*}
    \begin{tikzpicture}
    
        \draw[thick, -] (-6.5 ,0)--(6.5, 0);
        
        \draw[thick, fill=white] (-6,0) circle [radius=0.25cm];
        \node[] at (-6,-0.5) {-1};
        
        \draw[thick, -] (-4.5,0)--(-4.5 + 0.25 ,0.25);
        \draw[thick, -] (-4.5,0)--(-4.5 - 0.25 ,0.25);
        \draw[thick, -] (-4.5,0)--(-4.5 + 0.25 ,-0.25);
        \draw[thick, -] (-4.5,0)--(-4.5 - 0.25 ,-0.25);
        \node[] at (-4.5,-0.5) {0};
        
        \draw[thick, -] (-3 -0.25, 0)--(-3 + 0.25, 0.25);
        \draw[thick, -] (-3 -0.25, 0)--(-3 + 0.25, -0.25);
        \node[] at (-3,-0.5) {1};
        
        \draw[thick, fill=white] (-1.5,0) circle [radius=0.25cm];
        \node[] at (-1.5,-0.5) {2};
        
        \draw[thick, -] (0 +0.25, 0)--(0 - 0.25, 0.25);
        \draw[thick, -] (0 +0.25, 0)--(0 - 0.25, -0.25);
        \node[] at (0,-0.5) {3};
        
        \draw[thick, -] (1.5,0)--(1.5 + 0.25 ,0.25);
        \draw[thick, -] (1.5,0)--(1.5 - 0.25 ,0.25);
        \draw[thick, -] (1.5,0)--(1.5 + 0.25 ,-0.25);
        \draw[thick, -] (1.5,0)--(1.5 - 0.25 ,-0.25);
        \node[] at (1.5,-0.5) {4};
        
        \draw[thick, -] (3 -0.25, 0)--(3 + 0.25, 0.25);
        \draw[thick, -] (3 -0.25, 0)--(3 + 0.25, -0.25);
        \node[] at (3,-0.5) {5};
        
        \draw[thick, -] (4.5 +0.25, 0)--(4.5 - 0.25, 0.25);
        \draw[thick, -] (4.5 +0.25, 0)--(4.5 - 0.25, -0.25);
        \node[] at (4.5,-0.5) {6};
        
        \draw[thick, fill=white] (6,0) circle [radius=0.25cm];
        \node[] at (6,-0.5) {7};
        
        \draw[thick, ->] (-4.5 ,0.4) .. controls (-4.5 ,1.65) and (-1.5 , 1.65) .. (-1.5 ,0.4);
        \draw[thick, ->] (-0.2 ,0.4) .. controls (-0.2 ,1.65) and (0.15 , 1.65) .. (0.15 ,0.4);
        \draw[thick, ->] (1.5 ,0.4) .. controls (1.5 ,1.65) and (4.5 -0.2 , 1.65) .. (4.5 -0.2 ,0.4);
        \draw[thick, ->] (4.5 +0.15 ,0.4) .. controls (4.5 +0.15 , 1.65) and (6 , 1.65) .. (6 ,0.4);
        
        
        
        
    \end{tikzpicture}
\end{equation*}    
    The diagram of $D^{\Sigma^3}_\lambda$ is given by
    
    \begin{equation*}
    \begin{tikzpicture}
    
        \draw[thick, -] (-6.5 ,0)--(6.5, 0);
        
        \draw[thick, fill=white] (-6,0) circle [radius=0.25cm];
        \node[] at (-6,-0.5) {-1};
        
        \draw[thick, fill=white] (-4.5,0) circle [radius=0.25cm];
        \node[] at (-4.5,-0.5) {0};
        
        \draw[thick, -] (-3 -0.25, 0)--(-3 + 0.25, 0.25);
        \draw[thick, -] (-3 -0.25, 0)--(-3 + 0.25, -0.25);
        \node[] at (-3,-0.5) {1};
        
        \draw[thick, -] (-1.5,0)--(-1.5 + 0.25 ,0.25);
        \draw[thick, -] (-1.5,0)--(-1.5 - 0.25 ,0.25);
        \draw[thick, -] (-1.5,0)--(-1.5 + 0.25 ,-0.25);
        \draw[thick, -] (-1.5,0)--(-1.5 - 0.25 ,-0.25);
        \node[] at (-1.5,-0.5) {2};
        
        \draw[thick, -] (0 +0.25, 0)--(0 - 0.25, 0.25);
        \draw[thick, -] (0 +0.25, 0)--(0 - 0.25, -0.25);
        \node[] at (0,-0.5) {3};
        
        \draw[thick, -] (1.5,0)--(1.5 + 0.25 ,0.25);
        \draw[thick, -] (1.5,0)--(1.5 - 0.25 ,0.25);
        \draw[thick, -] (1.5,0)--(1.5 + 0.25 ,-0.25);
        \draw[thick, -] (1.5,0)--(1.5 - 0.25 ,-0.25);
        \node[] at (1.5,-0.5) {4};
        
        \draw[thick, -] (3 -0.25, 0)--(3 + 0.25, 0.25);
        \draw[thick, -] (3 -0.25, 0)--(3 + 0.25, -0.25);
        \node[] at (3,-0.5) {5};
        
        \draw[thick, -] (4.5 +0.25, 0)--(4.5 - 0.25, 0.25);
        \draw[thick, -] (4.5 +0.25, 0)--(4.5 - 0.25, -0.25);
        \node[] at (4.5,-0.5) {6};
        
        \draw[thick, fill=white] (6,0) circle [radius=0.25cm];
        \node[] at (6,-0.5) {7};
        
        \draw[thick, dashed, ->] (-4.5 ,0.4) .. controls (-4.5 ,1.65) and (-1.5 , 1.65) .. (-1.5 ,0.4);
        \draw[thick, ->] (-0.2 ,0.4) .. controls (-0.2 ,1.65) and (0.15 , 1.65) .. (0.15 ,0.4);
        \draw[thick, ->] (1.5 ,0.4) .. controls (1.5 ,1.65) and (4.5 -0.2 , 1.65) .. (4.5 -0.2 ,0.4);
        \draw[thick, ->] (4.5 +0.15 ,0.4) .. controls (4.5 +0.15 , 1.65) and (6 , 1.65) .. (6 ,0.4);
        
        
        
        
    \end{tikzpicture}
\end{equation*}

    Here we make the first arrow dashed to indicate that a $\times$ symbol has travelled along it. The solid arrows are left in the same start and endpoints as defined for $D_\lambda$ which are again left to illustrate the process.
    
    The diagram of $D^{\Sigma^2}_\lambda$ is given by

    \begin{equation*}
    \begin{tikzpicture}
    
        \draw[thick, -] (-6.5 ,0)--(6.5, 0);
        
        \draw[thick, fill=white] (-6,0) circle [radius=0.25cm];
        \node[] at (-6,-0.5) {-1};
        
        \draw[thick, fill=white] (-4.5,0) circle [radius=0.25cm];
        \node[] at (-4.5,-0.5) {0};
        
        \draw[thick, -] (-3 -0.25, 0)--(-3 + 0.25, 0.25);
        \draw[thick, -] (-3 -0.25, 0)--(-3 + 0.25, -0.25);
        \node[] at (-3,-0.5) {1};
        
        \draw[thick, -] (-1.5,0)--(-1.5 + 0.25 ,0.25);
        \draw[thick, -] (-1.5,0)--(-1.5 - 0.25 ,0.25);
        \draw[thick, -] (-1.5,0)--(-1.5 + 0.25 ,-0.25);
        \draw[thick, -] (-1.5,0)--(-1.5 - 0.25 ,-0.25);
        \node[] at (-1.5,-0.5) {2};
        
        \draw[thick, -] (0 +0.25, 0)--(0 - 0.25, 0.25);
        \draw[thick, -] (0 +0.25, 0)--(0 - 0.25, -0.25);
        \node[] at (0,-0.5) {3};
        
        \draw[thick, -] (1.5,0)--(1.5 + 0.25 ,0.25);
        \draw[thick, -] (1.5,0)--(1.5 - 0.25 ,0.25);
        \draw[thick, -] (1.5,0)--(1.5 + 0.25 ,-0.25);
        \draw[thick, -] (1.5,0)--(1.5 - 0.25 ,-0.25);
        \node[] at (1.5,-0.5) {4};
        
        \draw[thick, -] (3 -0.25, 0)--(3 + 0.25, 0.25);
        \draw[thick, -] (3 -0.25, 0)--(3 + 0.25, -0.25);
        \node[] at (3,-0.5) {5};
        
        \draw[thick, -] (4.5 +0.25, 0)--(4.5 - 0.25, 0.25);
        \draw[thick, -] (4.5 +0.25, 0)--(4.5 - 0.25, -0.25);
        \node[] at (4.5,-0.5) {6};
        
        \draw[thick, fill=white] (6,0) circle [radius=0.25cm];
        \node[] at (6,-0.5) {7};
        
        \draw[thick, dashed, ->] (-4.5 ,0.4) .. controls (-4.5 ,1.65) and (-1.5 , 1.65) .. (-1.5 ,0.4);
        \draw[thick, dashed, ->] (-0.2 ,0.4) .. controls (-0.2 ,1.65) and (0.15 , 1.65) .. (0.15 ,0.4);
        \draw[thick, ->] (1.5 ,0.4) .. controls (1.5 ,1.65) and (4.5 -0.2 , 1.65) .. (4.5 -0.2 ,0.4);
        \draw[thick, ->] (4.5 +0.15 ,0.4) .. controls (4.5 +0.15 , 1.65) and (6 , 1.65) .. (6 ,0.4);
        
        
        
        
    \end{tikzpicture}
\end{equation*}

    Here we note that as the arrow starting at position $3$ ends at the same position, no change occurs in the diagram itself.

    The diagram of $D^{\Sigma^1}_\lambda$ is given by

    \begin{equation*}
    \begin{tikzpicture}
    
        \draw[thick, -] (-6.5 ,0)--(6.5, 0);
        
        \draw[thick, fill=white] (-6,0) circle [radius=0.25cm];
        \node[] at (-6,-0.5) {-1};
        
        \draw[thick, fill=white] (-4.5,0) circle [radius=0.25cm];
        \node[] at (-4.5,-0.5) {0};
        
        \draw[thick, -] (-3 -0.25, 0)--(-3 + 0.25, 0.25);
        \draw[thick, -] (-3 -0.25, 0)--(-3 + 0.25, -0.25);
        \node[] at (-3,-0.5) {1};
        
        \draw[thick, -] (-1.5,0)--(-1.5 + 0.25 ,0.25);
        \draw[thick, -] (-1.5,0)--(-1.5 - 0.25 ,0.25);
        \draw[thick, -] (-1.5,0)--(-1.5 + 0.25 ,-0.25);
        \draw[thick, -] (-1.5,0)--(-1.5 - 0.25 ,-0.25);
        \node[] at (-1.5,-0.5) {2};
        
        \draw[thick, -] (0 +0.25, 0)--(0 - 0.25, 0.25);
        \draw[thick, -] (0 +0.25, 0)--(0 - 0.25, -0.25);
        \node[] at (0,-0.5) {3};
        
        \draw[thick, fill=white] (1.5,0) circle [radius=0.25cm];
        \node[] at (1.5,-0.5) {4};
        
        \draw[thick, -] (3 -0.25, 0)--(3 + 0.25, 0.25);
        \draw[thick, -] (3 -0.25, 0)--(3 + 0.25, -0.25);
        \node[] at (3,-0.5) {5};
        
        \draw[thick, -] (4.5, 0 + 0.65)--(4.5 + 0.25 ,0.25 + 0.65);
        \draw[thick, -] (4.5, 0 + 0.65)--(4.5 - 0.25 ,0.25 + 0.65);
        \draw[thick, -] (4.5, 0 + 0.65)--(4.5 + 0.25 ,-0.25 + 0.65);
        \draw[thick, -] (4.5, 0 + 0.65)--(4.5 - 0.25 ,-0.25 + 0.65);
        
        \draw[thick, -] (4.5 +0.25, 0)--(4.5 - 0.25, 0.25);
        \draw[thick, -] (4.5 +0.25, 0)--(4.5 - 0.25, -0.25);
        \node[] at (4.5,-0.5) {6};
        
        \draw[thick, fill=white] (6,0) circle [radius=0.25cm];
        \node[] at (6,-0.5) {7};
        
        \draw[thick, dashed, ->] (-4.5 ,0.4) .. controls (-4.5 ,1.65) and (-1.5 , 1.65) .. (-1.5 ,0.4);
        \draw[thick, dashed, ->] (-0.2 ,0.4) .. controls (-0.2 ,1.65) and (0.15 , 1.65) .. (0.15 ,0.4);
        \draw[thick, dashed, ->] (1.5 ,0.4) .. controls (1.5  ,1.65 + 0.25) and (4.5 -0.2 - 0.25 , 1.65 + 0.65) .. (4.5 -0.2 ,0.4 + 0.65);
        \draw[thick, ->] (4.5 +0.15 ,0.4 +0.65) .. controls (4.5 +0.15 + 0.25, 1.65 + 0.65) and (6 , 1.65) .. (6 ,0.4);
        
        
        
        
    \end{tikzpicture}
\end{equation*}

    Here we notice that as the arrow starting at position $4$ ends at a position with symbol $>$, the symbols become stacked and as was noted after Corollary \ref{Sigma i diagarm}, we find that it is possible to have $\times <$ in a position in the diagram, for some base $\Sigma^i$.
    Finally, the diagram $D^{\Sigma^0}_\lambda= D^{\antidist}_\lambda$ is given by 

    \begin{equation*}
    \begin{tikzpicture}
    
        \draw[thick, -] (-6.5 ,0)--(6.5, 0);
        
        \draw[thick, fill=white] (-6,0) circle [radius=0.25cm];
        \node[] at (-6,-0.5) {-1};
        
        \draw[thick, fill=white] (-4.5,0) circle [radius=0.25cm];
        \node[] at (-4.5,-0.5) {0};
        
        \draw[thick, -] (-3 -0.25, 0)--(-3 + 0.25, 0.25);
        \draw[thick, -] (-3 -0.25, 0)--(-3 + 0.25, -0.25);
        \node[] at (-3,-0.5) {1};
        
        \draw[thick, -] (-1.5,0)--(-1.5 + 0.25 ,0.25);
        \draw[thick, -] (-1.5,0)--(-1.5 - 0.25 ,0.25);
        \draw[thick, -] (-1.5,0)--(-1.5 + 0.25 ,-0.25);
        \draw[thick, -] (-1.5,0)--(-1.5 - 0.25 ,-0.25);
        \node[] at (-1.5,-0.5) {2};
        
        \draw[thick, -] (0 +0.25, 0)--(0 - 0.25, 0.25);
        \draw[thick, -] (0 +0.25, 0)--(0 - 0.25, -0.25);
        \node[] at (0,-0.5) {3};
        
        \draw[thick, fill=white] (1.5,0) circle [radius=0.25cm];
        \node[] at (1.5,-0.5) {4};
        
        \draw[thick, -] (3 -0.25, 0)--(3 + 0.25, 0.25);
        \draw[thick, -] (3 -0.25, 0)--(3 + 0.25, -0.25);
        \node[] at (3,-0.5) {5};
        
        \draw[thick, -] (4.5 +0.25, 0)--(4.5 - 0.25, 0.25);
        \draw[thick, -] (4.5 +0.25, 0)--(4.5 - 0.25, -0.25);
        \node[] at (4.5,-0.5) {6};
        
        \draw[thick, -] (6,0)--(6 + 0.25 ,0.25);
        \draw[thick, -] (6,0)--(6 - 0.25 ,0.25);
        \draw[thick, -] (6,0)--(6 + 0.25 ,-0.25);
        \draw[thick, -] (6,0)--(6 - 0.25 ,-0.25);
        \node[] at (6,-0.5) {7};
        
        \draw[thick, dashed, ->] (-4.5 ,0.4) .. controls (-4.5 ,1.65) and (-1.5 , 1.65) .. (-1.5 ,0.4);
        \draw[thick, dashed, ->] (-0.2 ,0.4) .. controls (-0.2 ,1.65) and (0.15 , 1.65) .. (0.15 ,0.4);
        \draw[thick, dashed, ->] (1.5 ,0.4) .. controls (1.5 ,1.65) and (4.5 -0.2 , 1.65) .. (4.5 -0.2 ,0.4);
        \draw[thick, dashed, ->] (4.5 +0.15 ,0.4) .. controls (4.5 +0.15 , 1.65) and (6 , 1.65) .. (6 ,0.4);
        
        
        
        
    \end{tikzpicture}
\end{equation*}

\end{example}

\begin{lemma}
\label{ctd inequality lower bound ie}
    Let $\lambda \in \distHW$, and let $1\le i \leq i'\leq m$. Then
    $$M_{i'} \leq M_i, \;\; M_{i'} - (k_{i'} - \overline{\lambda}_{i'} -1) \le M_i - (k_i - \overline{\lambda}_i - 1).$$
\end{lemma}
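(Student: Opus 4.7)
The plan is to prove both inequalities from a single key geometric fact: the arrow endpoints are strictly monotone in the sense that $i'>i \implies k_{i'} < k_i$. This is established directly from \cref{Arrow diagram def}: since $\overline{\lambda}_{i'} < \overline{\lambda}_i$ (strict, because $\lambda$ is integral dominant and $\rho$ shifts strictly), arrow $i'$ is drawn before arrow $i$; if we had $k_i \le k_{i'}$, then $k_i$ would be an earlier valid candidate (a $\{\circ,>\}$ position $\ge \overline{\lambda}_{i'}$, not yet used) for arrow $i'$, contradicting the minimality defining $k_{i'}$. Since by \cref{Mi def} $M_i$ counts the integers $p < k_i$ with $D_\lambda(p)\in\{\times,<\}$, the monotonicity $k_{i'}\le k_i$ yields $M_{i'}\le M_i$ at once.

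For the second inequality I would proceed by induction on $i'-i$, reducing everything to the step $i \rightsquigarrow i+1$. Writing out $M_i - (k_i - \overline{\lambda}_i - 1) - \bigl(M_{i+1} - (k_{i+1} - \overline{\lambda}_{i+1} - 1)\bigr)$ and using that $M_i - M_{i+1}$ counts the $\{\times,<\}$-symbols in $[k_{i+1},k_i)$, the inequality is equivalent to
\[
\#\bigl\{p\in[k_{i+1},k_i) \,:\, D_\lambda(p)\in\{\circ,>\}\bigr\} \;\le\; \overline{\lambda}_i - \overline{\lambda}_{i+1}.
\]
The central observation I would use is that every $\{\circ,>\}$ position inside $[\overline{\lambda}_i, k_i)$ must be the endpoint of some arrow drawn before arrow $i$, i.e.\ some $k_s$ with $s>i$ (otherwise $k_i$ would have been chosen smaller).

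I then split into two cases. If $k_{i+1} \ge \overline{\lambda}_i$, the interval $[k_{i+1},k_i)$ sits inside $[\overline{\lambda}_i,k_i)$, so any $\{\circ,>\}$-symbol there is $k_s$ for some $s>i$; by the strict monotonicity $k_s < k_{i+1}$ for $s>i+1$, the only such $k_s$ lying in $[k_{i+1},k_i)$ is $k_{i+1}$ itself, giving a count of $1\le\overline{\lambda}_i-\overline{\lambda}_{i+1}$ (the RHS is $\ge 1$ by strict integrality). If $k_{i+1}<\overline{\lambda}_i$, I split $[k_{i+1},k_i) = [k_{i+1},\overline{\lambda}_i)\cup[\overline{\lambda}_i,k_i)$. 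The second piece contains no $\{\circ,>\}$-symbol, again by the endpoint argument (all $k_s$ with $s>i$ satisfy $k_s\le k_{i+1}<\overline{\lambda}_i$). The first piece contains $k_{i+1}$ and possibly $\circ$-positions in $(k_{i+1},\overline{\lambda}_i)$; there are no $>$-positions in that subinterval because $>$-positions are exactly the coordinates $\overline{\lambda}_s$ and none lies strictly between $\overline{\lambda}_{i+1}$ and $\overline{\lambda}_i$. Thus the total is bounded by $1 + (\overline{\lambda}_i - k_{i+1} - 1) \le 1 + (\overline{\lambda}_i-\overline{\lambda}_{i+1}-1)$, as required.

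The main obstacle is not any single computation but the careful case distinction according to whether $k_{i+1}$ sits to the left of or at/beyond $\overline{\lambda}_i$; the endpoint-tracking argument (that unused $\{\circ,>\}$-slots left of $k_i$ are exactly arrow endpoints of previously drawn arrows) is what makes both cases collapse to a simple counting bound, and verifying this argument rigorously --- together with the reliance on strict decrease of the coordinates $\overline{\lambda}_i$ --- is the only delicate point.
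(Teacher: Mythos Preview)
Your proof is correct and takes a genuinely different route from the paper's. The paper's argument goes through the heavy machinery of \cref{theorem for ror change inequality} and the auxiliary bases $\Sigma^i$: it interprets $M_i-(k_i-\overline{\lambda}_i-1)$ as the count $\sharp\{j\mid -\overline{\lambda}^i_{-j}\le\overline{\lambda}_i\}$ in the diagram $D^{\Sigma^i}_\lambda$, and then compares these counts for consecutive $i$ by appealing to \cref{Sigma i diagarm} (at most one $\times$ symbol moves when passing from $\Sigma^{i+1}$ to $\Sigma^i$). Your argument, by contrast, stays entirely inside the distinguished diagram $D_\lambda$: after the easy monotonicity $k_{i'}<k_i$, you reformulate the step inequality as a counting bound on $\{\circ,>\}$-positions in $[k_{i+1},k_i)$, and the key observation that any such position in $[\overline{\lambda}_i,k_i)$ must be an earlier arrow endpoint $k_s$ (with $s>i$) follows directly from the minimality in \cref{Arrow diagram def}. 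This makes your proof independent of \cref{theorem for ror change inequality}, which is an advantage structurally (the lemma could now be placed before that theorem). The paper's approach, on the other hand, gives the more conceptual identity $M_i-(k_i-\overline{\lambda}_i-1)=\sharp\{j\mid -\overline{\lambda}^i_{-j}\le\overline{\lambda}_i\}$, which is reused elsewhere (e.g.\ in \cref{lem:max_arrows_over_point_and_rectangle}).
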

\begin{proof}
Recall from the Definition \ref{Mi def} which defines $M_i$  that $$M_i =\sharp\{j | -\overline{\lambda}_{-j}<k_i\}$$ (the total number of symbols $<$ and $\times$ in $D_{\lambda}$ to the left of position $k_i$). 
By the construction of the arrow diagram, $k_{i'}<k_i$ so $$M_{i'}  =\sharp\{j | -\overline{\lambda}_{-j}<k_{i'}\} \leq \sharp\{j | -\overline{\lambda}_{-j}\leq k_i\} =M_i.$$

Denote again $\lambda^i:=\lambda_{\Sigma^i}$. Then by Theorem \ref{theorem for ror change inequality} we have:\begin{itemize}
    \item $\overline{\lambda}^i_i=\overline{\lambda}_i$,
    \item $D^{\Sigma^{i}}_\lambda(p) = D_\lambda(p)$ for $p>k_i$,
\end{itemize} This implies that $$M_i=\sharp\{j | -\overline{\lambda}^i_{-j}< k_{i}\}$$ (the total number of symbols $<$ and $\times$ in $D^{\Sigma^{i}}_{\lambda}$ to the left of position $k_i$).

The same theorem gives:
$D^{\Sigma^{i}}_\lambda(p) \in \{<, \times, \times >\}$ for $\overline{\lambda}_i \le p \le k_i$, so
$$ \sharp\{j | -\overline{\lambda}^i_{-j}< k_{i}\}-\sharp\{j | -\overline{\lambda}^i_{-j}\leq \overline{\lambda}_{i}\} = k_{i} - \overline{\lambda}_{i} -1 $$ and thus
$$M_{i} - (k_{i} - \overline{\lambda}_{i} -1) =\sharp\{j | -\overline{\lambda}^i_{-j}\leq \overline{\lambda}_{i}\}.$$

We saw that $\overline{\lambda}^i_i=\overline{\lambda}_i$ and $D^{\Sigma^{i}}_\lambda(\overline{\lambda}_i) \in \{<, \times, \times >\}$, and thus $D^{\Sigma^{i}}_\lambda(\overline{\lambda}_i)$ has a $\times$ symbol.

Thus the ray $(-\infty; \overline{\lambda}_{i} ]$ in $D^{\Sigma^{i}}_\lambda$ contains at least one more symbol $\times$ that the ray $(-\infty; \overline{\lambda}_{i+1} ]$ in the same diagram, and the same number of $<$ symbols. On the other hand, as we saw in Corollary \ref{Sigma i diagarm} that, $D^{\Sigma^{i}}_\lambda$ was obtained from $D^{\Sigma^{i+1}}_\lambda $ by moving at most one $\times$ symbol from position $\overline{\lambda}_{i+1}$ to the right.

So the total number of symbols $\times$ or $<$ in the ray $(-\infty; \overline{\lambda}_{i+1} ]$ in $D^{\Sigma^{i}}_\lambda$ is at most one less than the number of theses symbols in the ray $(-\infty; \overline{\lambda}_{i+1} ]$ in the $D^{\Sigma^{i+1}}_\lambda$.

In other words,
\begin{align*}
    M_{i+1} - (k_{i+1} - \overline{\lambda}_{i+1} -1) -1 &=\sharp\{j | -\overline{\lambda}^{i+1}_{-j}\leq \overline{\lambda}_{i+1}\} -1 \leq 
    \sharp\{j | -\overline{\lambda}^{i}_{-j}\leq \overline{\lambda}_{i+1}\} \\&< 
    \sharp\{j | -\overline{\lambda}^{i}_{-j}\leq \overline{\lambda}_{i}\}= M_{i} - (k_{i} - \overline{\lambda}_{i} -1).
\end{align*}
Thus $M_{i+1} - (k_{i+1} - \overline{\lambda}_{i+1} -1) -1 < M_{i} - (k_{i} - \overline{\lambda}_{i} -1)$ and so 
$$ M_{i+1} - (k_{i+1} - \overline{\lambda}_{i+1} -1) -1\leq M_{i} - (k_{i} - \overline{\lambda}_{i} -1).$$
Writing these inequalities for $i, i+1, \ldots, i'-1$ we obtain the required result.
\end{proof}

Following this result, we would like to devise some easier methods to compute the weight diagram $D_{\lambda}^{\Sigma}$ for other $\Sigma \in \mathbb{S}$, not just for $\Sigma=\distinguished$.

One such method can be used to describe the weight diagram with respect to the anti-distinguished base $\antidist$ (reminder: this base is given by word $\delta^n\varepsilon^m$). This is done by tying in the cap diagram of $\lambda$. 

\begin{definition}
\label{cross o def}
    Let $\lambda\in\distHW$. Consider the arrow diagram of $\lambda$ 
    and denote by $\{k_1,\dots,k_m\}$ the endpoints of the arrows. 
    
    Given a symbol $\times$ in $D_{\lambda}$, denote its position by $p$ and consider a sequence of coordinates as follows: 
    
    Let $1\le i_1 \le m$ such that $\overline{\lambda}_{i_1}=p$ (such $i_1$ exists because we have an $\times$ symbol in this position). Then $D_{\lambda}(k_{i_1})\in \{>, \circ\}$ (the endpoint of the arrow starting in $\overline{\lambda}_{i_1}$).
    
    If $D_{\lambda}(k_{i_1})$ is $>$ then there exists $i_2 < i_1$ such that $\overline{\lambda}_{i_2} = k_{i_1}$. By repeating this process we must eventually reach $i_l$ such that, $D_{\lambda}(k_{i_l}) = \circ$. This yields a sequence $1 \le i_l < \dots < i_1 \le m$ such that $D_{\lambda}(\overline{\lambda}_{i_1})=\times$, $D_{\lambda}(k_{i_l})= \circ$ and for every $1\leq s<l$,
    \begin{enumerate}
        \item $\overline{\lambda}_{i_{s + 1}} = k_{i_{s}}$,
        \item $D_{\overline{\lambda}}(k_{i_s}) $ is $>$ for $s < l$.
    \end{enumerate}
    We call $(i_1,\dots, i_l)$ the \textbf{$(\times-\circ)$ sequence of the symbol $\times$ at position $p$}. Position $p$ is called the {\bf beginning of the $(\times-\circ)$ sequence} and position $k_{i_l}$ is called the {\bf end of the $(\times-\circ)$ sequence}. We will sometimes write "the $(\times-\circ)$ sequence $[p, k_{i_l}]$" as shorthand. 
\end{definition}
In other words, the sequence $(i_1,\dots, i_l)$ describes a sequence of arrows from left to right, starting at a symbol $\times$, so that each arrow ending where the next begins, and the rightmost arrow ends with $\circ$ (making this the maximal sequence beginning with the given symbols $\times$).

\begin{remark}
\label{cross o seq order}
Let $k_i$ be the endpoint of an arrow in the arrow diagram $D_{\lambda}$, such that $D_{\lambda}(k_i)=\circ$, one may reconstruct a $(\times-\circ)$ sequence ending in position $k_i$. Recall that $D_{\lambda}(\overline{\lambda}_i)\in \{\times, >\}$. 
    
    In the case when $D_{\lambda}(\overline{\lambda}_i)$ is $>$, there exists $i' > i$ such that $k_{i'}=\overline{\lambda}_{i}$. 
    
    In the case when $D_{\lambda}(\overline{\lambda}_i)= \times$ there is no $i'$ such that $k_{i'}=\overline{\lambda}_i$.

    Therefore we can reconstruct the sequence $1 \le i_l < \dots < i_1 \le m$ in reversed order.
\end{remark}

\begin{remark}\label{rmk:circ_under_sequence_busy}
Given a $(\times-\circ)$ sequence beginning at position $p$ and ending at position $k$, note that every $\circ$ symbol in positions $p+1, \ldots, k$ must be the endpoint of some arrow in $D_{\lambda}$. Therefore, by \cref{cross o seq order}, it is the end of some $(\times-\circ)$ sequence.
\end{remark}

\begin{definition}
\label{x o sequence}
    Let $\lambda\in \distHW$, Two $(\times-\circ)$ sequences $[p, k]$ and $[p', k']$ in $D_{\lambda}$ are called {\bf intersecting} if $[p, k] \cap [p', k']\neq \emptyset$ (as an intersection of segments in $\Z$). Consider the equivalence relation on the set of all $(\times-\circ)$ sequences in $D_{\lambda}$ generated by the pairs of intersecting segments.
    
    For each equivalence class $E$ of $(\times-\circ)$ sequences, we will define the corresponding {\bf interval} $[p_E, k_E]$ as the segment in $\Z$ where $p_E$ is the smallest (leftmost) beginning of an $(\times-\circ)$ sequence in $E$, and $k_E$ is the largest (rightmost) end of an $(\times-\circ)$ sequence in $E$.
\end{definition}

\begin{remark}
    Note that the relation given by whether or not a pair $(\times-\circ)$ sequences are intersecting is not an equivalence relation. Further, given an equivalence class $E$ with respect to the relation, $p_E$ and $k_E$ need not belong to the same $(\times-\circ)$ sequence.
\end{remark}

Recall that for $\lambda\in \distHW$ we have defined a cap diagram, drawn on top of its weight diagram $D_{\lambda}$ (See Definition \ref{cap diagram def}). We denote by $a$ the atypicality of $\lambda$ (the number of $\times$ symbols in $D_{\lambda}$).
\begin{lemma}
\label{x o seq cap end}
    Let $\lambda\in \distHW$ of atypicality $a\ge 0$. Consider the arrow diagram of $\lambda$ 
    with $\{k_1,\dots,k_m\}$ denoting the endpoints of the arrows, and consider the cap diagram of $\lambda$ with caps $\{c_1,\dots, c_a\}$ ($c_i$ being the endpoint of the cap beginning at $\times_i$).

    Then $\{c_1,\dots,c_a\} = \{k_i\,|\,D_{\lambda}(k_i)=\circ \}$.
    That is to say, $\{c_1,\dots,c_a\}$ is precisely the set of arrow endpoints with a $\circ$ symbol in the weight diagram.
    
\end{lemma}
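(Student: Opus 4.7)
The plan is to reduce the assertion to the characterization of cap endpoints given by \cref{cap cor}. I would first verify that both sides have cardinality $a$: the left side $\{c_1,\ldots,c_a\}$ does by construction of the cap diagram, while on the right, \cref{cross o seq order} together with \cref{cross o def} furnishes a bijection between the $a$ symbols $\times$ in $D_\lambda$ and the set $\{k_i \mid D_\lambda(k_i) = \circ\}$ via the $(\times-\circ)$ sequences: each $\times$ begins exactly one such sequence, each such $k_i$ is the end of exactly one such sequence, and distinct sequences terminate at distinct $k_i$'s. Thus it suffices to establish one inclusion, and I will prove $\{k_i \mid D_\lambda(k_i)=\circ\}\subseteq\{c_1,\ldots,c_a\}$.

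Given $q = k_i$ with $D_\lambda(q) = \circ$, let $E$ be the equivalence class (from \cref{x o sequence}) of the $(\times-\circ)$ sequence ending at $q$, and let $[p_E, k_E]$ be the associated interval. The key structural step is to show that $\bigcup_{I\in E} I = [p_E, k_E]$. This follows from the standard fact that the union of a chain of pairwise intersecting closed intervals in $\mathbb{Z}$ is again a closed interval, so the transitive closure defining the equivalence relation collapses to a single interval whose left and right endpoints are, by definition, $p_E$ and $k_E$.

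With this covering property in hand, I would deduce two symmetric consequences: (i) every $\times$ at a position $p\in [p_E,k_E]$ initiates a sequence belonging to $E$, because the sequence $[p,q_p]$ starting there meets some $I'\in E$ containing $p$ and hence lies in the same class; (ii) every $\circ$ at a position $r\in [p_E,k_E]$ is the end of a sequence belonging to $E$, combining \cref{rmk:circ_under_sequence_busy} with the same intersection argument. Together these give two bijections from $E$ onto the set of $\times$-positions in $[p_E, k_E]$ and onto the set of $\circ$-positions in $[p_E, k_E]$.

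Counting symbols in the sub-interval $[p_E,q]$, one then finds that $\sharp\{p\in[p_E,q] : D_\lambda(p)=\times\}$ equals the number of sequences in $E$ whose start lies in $[p_E,q]$, while $\sharp\{r\in[p_E,q] : D_\lambda(r)=\circ\}$ equals the number of sequences in $E$ whose end lies in $[p_E,q]$; since each sequence has its start strictly less than its end, the difference is precisely the number of sequences in $E$ straddling $q$, which is non-negative. Taking $p := p_E$ in \cref{cap cor} yields that $q$ is a cap endpoint, completing the desired inclusion. The main obstacle I expect is the chain-of-intersections argument showing that $\bigcup_{I \in E} I$ is a single interval; once that structural point is secured, the rest of the proof is a clean bookkeeping argument built on the infrastructure already developed for $(\times-\circ)$ sequences.
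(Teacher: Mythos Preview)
Your proposal is correct and follows essentially the same route as the paper's own proof: both establish equality of cardinalities via the $(\times-\circ)$ sequence bijection, then prove the inclusion $K_\times \subseteq \{c_1,\ldots,c_a\}$ by passing to the equivalence class $E$ and its interval $[p_E,k_E]$, showing that every $\circ$ in this interval is the end of a sequence in $E$ (and every $\times$ the start of one), and finally invoking \cref{cap cor}. Your version is slightly more explicit about why the union of the sequences in $E$ fills out the whole interval and about the counting step on $[p_E,q]$, but the structure and the key ingredients are the same.
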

\label{end points of caps}
\begin{proof}
We denote $K_\times(\lambda) := \{k_i\,|\,D_{\lambda}(k_i)=\circ \}$ the set of ``endpoints'' of $(\times-\circ)$ sequences. When there is no ambiguity, we will write $K_{\times} := K_{\times}(\lambda)$.

    First we show that $\sharp K_\times =a$. Recall that $a$ is the number of $\times$ symbols in the weight diagram of $\lambda$. For every $1\le i \le a$, there exists a $(\times-\circ)$ sequence corresponding to $\times_i$ which gives a 1-to-1 correspondence between elements $k\in K_\times$ and $\times$ symbols in the weight diagram $D_{\lambda}$.

    Since we know that $\sharp K_\times =a$ it is sufficient to show that $\{c_1, \dots, c_a\} \supseteq K_\times$. 

    let $k \in K_{\times}(\lambda)$ be the end of some $(\times-\circ)$ sequence. Denote by $E$ the equivalence class of this $(\times-\circ)$ sequence, and consider the corresponding interval $[p_E, k_E]$.

By the definition of an interval, every symbol $\circ$ in the interval $[p_E, k_E]$ is between the beginning and the end of some $(\times-\circ)$ sequence in $E$. Therefore, by \cref{rmk:circ_under_sequence_busy} such a symbol is the end of some $(\times-\circ)$ sequence, and by definition of an interval, such a sequence must begin in a point in the same interval.

This correspondence between $\times$ symbols in $[p_E, k_E]$ and $\circ$ symbols in the same interval implies that 
So for every position $r\in [p_E, k_E]$ such that $D_{\lambda}(r)=\circ$, we have: 
\begin{align*}
\sharp\{ p\leq r\leq q \,|\, D_{\lambda}(r) = \times \}\geq \sharp\{ p\leq r\leq q \,|\, D_{\lambda}(r) = \circ \} 
\end{align*}
By Corollary \ref{cap cor}, every $\circ$ symbol in the interval $[p_E, k_E]$ (in particular, the $\circ$ in position $k$) is the endpoint of some cap in $D_{\lambda}$.

In other words, $k\in \{c_1, \ldots, c_a\}$. Thus we conclude that $K_{\times}(\lambda) \subseteq  \{c_1, \ldots, c_a\}$ and we are done.
    
\end{proof}

The following proposition is immediate from a well-known well known result appearing as  \cite[Exercise 2.56(2)]{cheng2012dualities}
 and is related to the computation of the socle of the Kac module with highest weight $\lambda$ (with respect to the distinguished base $\distinguished$).

\begin{prop}
    Let $\lambda\in \distHW$. Denote by $\{c_1,\dots, c_a\}$ the caps in the cap diagram of $\lambda$. Then $D_{\lambda}^{\antidist}$ is given by 
    $$
    D_{\lambda}^{\antidist}(p)=
    \begin{cases}
        \times &\text{ if } \, p=c_i \, \text{ for some } \, i\in \{1, \ldots, a\}\\
        \circ &\text{ if } \, p=\times_i \, \text{ for some } \, i\in \{1, \ldots, a\}\\
        D_{\lambda}(p) & \text{otherwise.}
    \end{cases}
    $$

\end{prop}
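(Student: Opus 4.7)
The plan is to compute $D_\lambda^{\antidist}$ by traversing the chain of bases $\Sigma^m = \distinguished \to \Sigma^{m-1} \to \cdots \to \Sigma^0 = \antidist$ introduced before \cref{theorem for ror change inequality}, using \cref{Sigma i diagarm} to track the changes at each step, and then using \cref{x o seq cap end} to identify the set of final $\times$ positions with the set of cap endpoints.

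First, I would trace the fate of a single $\times$ symbol of $D_\lambda$. Fix a $(\times-\circ)$ sequence $(i_1,\ldots,i_l)$ with $\overline{\lambda}_{i_1}=\times_r$ for some $r$, so that by construction $k_{i_s} = \overline{\lambda}_{i_{s+1}}$ for $1 \le s < l$ and $D_\lambda(k_{i_l}) = \circ$. By \cref{Sigma i diagarm}, each transition $\Sigma^i \to \Sigma^{i-1}$ either does nothing (when $\overline{\lambda}_i = k_i$) or moves the $\times$ symbol at position $\overline{\lambda}_i$ in $D_\lambda^{\Sigma^i}$ to position $k_i$. Applied to our chain: at step $i_1$ the symbol at $\overline{\lambda}_{i_1}$ moves to $k_{i_1} = \overline{\lambda}_{i_2}$; at step $i_2$ this newly arrived $\times$ (sitting at $\overline{\lambda}_{i_2}$, which also retains its original $>$) moves to $k_{i_2} = \overline{\lambda}_{i_3}$; and so on, until finally at step $i_l$ it arrives at $k_{i_l}$, a position with $\circ$ in $D_\lambda$. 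Intermediate positions $\overline{\lambda}_{i_s}$ for $1 < s < l$ temporarily acquire an extra $\times$ but lose it again at the very next relevant step, so they return to their original $D_\lambda$-symbol in $D_\lambda^{\antidist}$.

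Second, I would argue that the indices $i \in \{1,\ldots,m\}$ not appearing in any $(\times-\circ)$ sequence contribute no change. These are precisely the indices with $D_\lambda(\overline{\lambda}_i) = >$ which are themselves not of the form $k_{i'}$ for some $i'$; by definition of the arrow diagram their arrows satisfy $k_i = \overline{\lambda}_i$, and hence they contribute nothing by the first bullet of \cref{Sigma i diagarm}. Combined with the first paragraph, this shows that $D_\lambda^{\antidist}$ differs from $D_\lambda$ only at positions that are either the beginning of a $(\times-\circ)$ sequence (i.e.\ the positions $\times_1,\ldots,\times_a$), where the $\times$ disappears and leaves a $\circ$, or at the end of a $(\times-\circ)$ sequence, where a $\circ$ is replaced by a $\times$.

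Finally, I would invoke \cref{x o seq cap end}, which identifies the set of endpoints of $(\times-\circ)$ sequences with the set $\{c_1,\ldots,c_a\}$ of cap endpoints. This exactly yields the formula in the proposition. The main delicate point is the bookkeeping in paragraph two: one must check that when the $\times$ traveling along the chain reaches $\overline{\lambda}_{i_s}$, the position $\overline{\lambda}_{i_s}$ in the diagram $D_\lambda^{\Sigma^{i_s}}$ really does carry a $\times$ symbol (so that \cref{Sigma i diagarm} applies at the next step), and that no other reflections performed between indices $i_s$ and $i_{s+1}$ (coming from indices $i$ with $i_s > i > i_{s+1}$) can interact with this position; but this follows from the fact that those indices either satisfy $\overline{\lambda}_i = k_i$ or belong to $(\times-\circ)$ sequences starting strictly to the left of $\overline{\lambda}_{i_s}$ whose arrows, by the minimality defining $k_i$, cannot end at $\overline{\lambda}_{i_s}$.
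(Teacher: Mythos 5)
Your proposal is correct and follows essentially the same route as the paper's own proof: it walks through the chain $\Sigma^m=\distinguished,\Sigma^{m-1},\ldots,\Sigma^0=\antidist$, uses \cref{theorem for ror change inequality} and \cref{Sigma i diagarm} to see that each $\times$ symbol travels along its $(\times-\circ)$ sequence to the end of that sequence, and then invokes \cref{x o seq cap end} to identify these ends with the cap endpoints $c_1,\ldots,c_a$. The extra bookkeeping you flag (that the position $\overline{\lambda}_{i_s}$ carries a $\times$ in $D_\lambda^{\Sigma^{i_s}}$, and that intermediate reflections cannot disturb it since the $k_i$ are pairwise distinct) is exactly the content the paper gets from \cref{theorem for ror change inequality}, so no gap remains, apart from a harmless index slip in which window of intermediate reflections you describe.
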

   In other words, the diagram $D_{\lambda}^{\antidist}$ of the highest weight of $L_{\Sigma}(\lambda)$ with respect to the antidistinguished basis $\antidist$ is given by moving all the $\times$ symbols in the diagram $D_{\lambda}$ via the caps in this diagram.

\begin{proof}

 Let $i\geq 1$. Recall that by \cref{theorem for ror change inequality} \eqref{itm:ror_change_main}, if $\overline{\lambda}_i \neq k_i$ then $D^{\Sigma^i}_\lambda$ must have a $\times$ symbol in position $\overline{\lambda}_i = \overline{\lambda}^i_i$ (despite the fact that the diagram $D_\lambda$ need not have a $\times$ symbol in position $\overline{\lambda}_i$ even if $\overline{\lambda}_i \neq k_i$).

By Corollary \ref{Sigma i diagarm}, the diagram $D^{\Sigma^{i-1}}_\lambda$ is either the same as $D^{\Sigma^i}_\lambda$ (which occurs if and only if $\overline{\lambda}^{i}_{i} = k_{i}$), or $D^{\Sigma^{i-1}}_\lambda$ is obtained from $D^{\Sigma^{i}}_\lambda$ by moving a $\times$ symbol from the position $\overline{\lambda}^{i}_{i}$ to the position $k_{i}$.

 If we have $k_i = \overline{\lambda}_{i'}$ for some $1\leq i'<i$ (that is, if $D_{\lambda}(k_i)$ was $>$), then the $\times$ symbol in position $k_i$ of $D^{\Sigma^{i-1}}_\lambda$ will be moved again when we pass from $\Sigma^{i'}$ to $\Sigma^{i'-1}$.

    Thus by moving through all bases $\Sigma^m,\Sigma^{m-1},\dots, \Sigma^{0}$ we find that every symbol $\times$ in $D_\lambda$ must move to the end of its respective $(\times-\circ)$ sequence. 

    We conclude that in the diagram $D^{\antidist}_\lambda =  D^{ \Sigma^0}_{\lambda}$ the positions of the $\times $ symbols are precisely in the ends of the $\times-\circ$ sequences of $D_{\lambda}$.
    
    By Lemma \ref{x o seq cap end}, there is a correspondence between endpoints of caps and ends of $(\times-\circ)$ sequences, so the positions of the $\times$ symbols in $D^{\antidist}_\lambda$ are $c_1, \ldots, c_a$ as required.
\end{proof}

\begin{remark}
    Observe that for any $\lambda\in \distHW$,  $\overline{\lambda}_{\antidist}$ is a shifted highest weight with respect to the anti-distinguished base $\antidist$ and therefore must satisfy $\left(\overline{\lambda}_{\antidist}\right)_i > \left(\overline{\lambda}_{\antidist}\right)_{i+1}$ and $\left(\overline{\lambda}_{\antidist}\right)_{-j} > \left(\overline{\lambda}_{\antidist}\right)_{-(j+1)}$ for all $i, j$. As such, given $D^{\antidist}_{\lambda}$ one may recover $\overline{\lambda}_{\antidist}$.
\end{remark}

\section{The change tracking diagram}
\label{ctd section}

\subsection{The change tracking diagram}
Combining the result shown in Theorem \ref{theorem for ror change inequality} with $B_\Sigma$ shown in Theorem \ref{B Sigma}, we may produce a useful tool to describe the weight diagram of $\lambda$ with respect to any $\Sigma \in \mathbb{S}$.

\begin{definition}
\label{CTD}
Let $\lambda \in \distHW$. The {\bf change tracking diagram (CTD) of $\lambda$} is given by the map $c_\lambda:\mathcal{R}\to \{0,1\}$ where 
$$c_\lambda(\ror{i}{j})=\begin{cases}
			0 &\text{ if } \; \overline{\lambda}_i=k_i \text{ or } j\notin \{M_i-(k_i-a_i-1), \dots, M_i\}\\
            1 & \text{otherwise}
		 \end{cases}$$
\end{definition}

    By \cref{theorem for ror change inequality}, $c_\lambda(\ror{i}{j})=1$ if and only if $\ror{i}{j}$ changes the weight diagram of $\lambda$.

\begin{cor}
    \label{CTD formula}
    Let $\lambda\in \distHW$, and $\Sigma\in \mathbb{S}$. We have:
    $$\overline{\lambda}_\Sigma= \overline{\lambda} + \sum_{\alpha\in B_\Sigma}c_\lambda (\alpha) \cdot \alpha$$
\end{cor}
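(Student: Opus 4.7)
The plan is to prove the Corollary by induction on the length of a sequence of odd reflections producing $\Sigma$ from $\distinguished$, using the three main ingredients already established in the paper: Theorem \ref{B Sigma} (the structure of $B_\Sigma$), Proposition \ref{odd reflection diagram} (the effect of a single odd reflection on $\overline{\lambda}_\Sigma$), and the combination of Proposition \ref{changes the weight diagram of lambda} with Theorem \ref{theorem for ror change inequality}\eqref{itm:ror_change_main} (which identifies $c_\lambda(\alpha)=1$ with the property that $r_\alpha$ changes the diagram of $\lambda$, independently of which base in $\mathbb{S}$ containing $\alpha$ we are reflecting from).

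First I would invoke Theorem \ref{B Sigma}\,(1) to pick an ordering $(\alpha_1,\dots,\alpha_k)$ of the elements of $B_\Sigma$ such that the expression $r_{\alpha_k}\circ\cdots\circ r_{\alpha_1}(\distinguished)$ is valid and equal to $\Sigma$. I denote $\Sigma_0:=\distinguished$ and $\Sigma_i:=r_{\alpha_i}(\Sigma_{i-1})$ for $i=1,\dots,k$, so that $\Sigma_k=\Sigma$ and $\alpha_i\in\Sigma_{i-1}\cap\mathcal{R}$ for each $i$. By Proposition \ref{odd reflection diagram}, at each step we have
$$\overline{\lambda}_{\Sigma_i}=\overline{\lambda}_{\Sigma_{i-1}}+\varepsilon_i\cdot\alpha_i,$$
where $\varepsilon_i\in\{0,1\}$ equals $1$ precisely when $r_{\alpha_i}$ changes $D_\lambda^{\Sigma_{i-1}}$.

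Next I would identify $\varepsilon_i$ with $c_\lambda(\alpha_i)$. By Proposition \ref{changes the weight diagram of lambda}, the property that $r_{\alpha_i}$ changes $D_\lambda^{\Sigma'}$ for some $\Sigma'\in\mathbb{S}$ containing $\alpha_i$ is equivalent to its changing $D_\lambda^{\Sigma''}$ for every such $\Sigma''$; hence it is an intrinsic property of the root $\alpha_i$ relative to $\lambda$. By Theorem \ref{theorem for ror change inequality}\eqref{itm:ror_change_main}, this intrinsic property holds if and only if $\alpha_i=\ror{i'}{j'}$ with $M_{i'}-(k_{i'}-\overline{\lambda}_{i'}-1)\le j'\le M_{i'}$ and $\overline{\lambda}_{i'}\neq k_{i'}$, which by Definition \ref{CTD} is exactly the condition $c_\lambda(\alpha_i)=1$. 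Therefore $\varepsilon_i=c_\lambda(\alpha_i)$ for every $i$.

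A straightforward induction on $i$ then yields
$$\overline{\lambda}_{\Sigma_i}=\overline{\lambda}+\sum_{s=1}^{i}c_\lambda(\alpha_s)\cdot\alpha_s,$$
and taking $i=k$ gives
$$\overline{\lambda}_\Sigma=\overline{\lambda}+\sum_{\alpha\in B_\Sigma}c_\lambda(\alpha)\cdot\alpha,$$
as desired. No step presents a real obstacle: the essential content is already in Theorem \ref{B Sigma}, Proposition \ref{changes the weight diagram of lambda}, and Theorem \ref{theorem for ror change inequality}; the only point requiring attention is to ensure that the intermediate bases $\Sigma_{i-1}$ indeed contain $\alpha_i$, which is exactly the validity condition guaranteed by Theorem \ref{B Sigma}\,(1), so that Proposition \ref{changes the weight diagram of lambda} may be applied at each step.
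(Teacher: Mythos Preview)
Your proof is correct and follows essentially the same approach as the paper's: order $B_\Sigma$ via Theorem~\ref{B Sigma}, track the effect of each successive odd reflection on $\overline{\lambda}$, and use the base-independence of ``changes the diagram'' (Proposition~\ref{changes the weight diagram of lambda} together with Theorem~\ref{theorem for ror change inequality}) to identify the increment at each step with $c_\lambda(\alpha_i)\cdot\alpha_i$. The paper's proof is more terse but structurally identical; your explicit invocation of Proposition~\ref{changes the weight diagram of lambda} to justify that $\varepsilon_i$ depends only on $\alpha_i$ (and not on the intermediate base $\Sigma_{i-1}$) makes the argument clearer.
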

\begin{proof}
    Observe that by Theorem \ref{B Sigma}, $\Sigma$ is the result of performing reflection by every $\alpha \in B_\Sigma$ in some valid order. 
    Furthermore, for every $\alpha\in B_\Sigma$, $c_\lambda(\alpha)=1$ if and only if $\alpha$ changed the diagram of $\lambda$. In that case, performing the odd reflection $r_{\alpha}$ resulted in adding $\alpha$ to our highest weight, by \cref{lem:generalities_on_ror_changes}. Thus the formula is immediate.
\end{proof}

To visualize $c_\lambda$, we may draw the rectangle describing $\mathcal{R}$ as shown in Example \ref{first rectangle ex}, and color the right odd root $\ror{i}{j}$ in black if $c_\lambda(\ror{i}{j})=1$ and in white otherwise. Next, when desire to compute $\overline{\lambda}_\Sigma$ for some $\Sigma\in \mathbb{S}$, we simply overlay the colored rectangle with $B_\Sigma$, as was done in Example \ref{first rectangle ex}, and add the right odd roots corresponding to black circles to $\overline{\lambda}$.

\begin{example}
Consider $\lambda\in \Lambda^+_{(3|5)}$ such that $\overline{\lambda}= (4\, 3\, 0| 0\, -1\, -3\, -4 \, -5)$. Then the diagram $D_{\overline{\lambda}}$ is 
    \begin{equation*}
    \begin{tikzpicture}
    
        \draw[thick, -] (-6.5 ,0)--(6.5, 0);
        
        \draw[thick, fill=white] (-6,0) circle [radius=0.25cm];
        \node[] at (-6,-0.5) {-1};
        
        \draw[thick, -] (-4.5,0)--(-4.5 + 0.25 ,0.25);
        \draw[thick, -] (-4.5,0)--(-4.5 - 0.25 ,0.25);
        \draw[thick, -] (-4.5,0)--(-4.5 + 0.25 ,-0.25);
        \draw[thick, -] (-4.5,0)--(-4.5 - 0.25 ,-0.25);
        \node[] at (-4.5,-0.5) {0};
        
        \draw[thick, -] (-3 -0.25, 0)--(-3 + 0.25, 0.25);
        \draw[thick, -] (-3 -0.25, 0)--(-3 + 0.25, -0.25);
        \node[] at (-3,-0.5) {1};
        
        \draw[thick, fill=white] (-1.5,0) circle [radius=0.25cm];
        \node[] at (-1.5,-0.5) {2};
            
        \draw[thick, -] (0,0)--(0 + 0.25 ,0.25);
        \draw[thick, -] (0,0)--(0 - 0.25 ,0.25);
        \draw[thick, -] (0,0)--(0 + 0.25 ,-0.25);
        \draw[thick, -] (0,0)--(0 - 0.25 ,-0.25);
        \node[] at (0,-0.5) {3};
        
        \draw[thick, -] (1.5,0)--(1.5 + 0.25 ,0.25);
        \draw[thick, -] (1.5,0)--(1.5 - 0.25 ,0.25);
        \draw[thick, -] (1.5,0)--(1.5 + 0.25 ,-0.25);
        \draw[thick, -] (1.5,0)--(1.5 - 0.25 ,-0.25);
        \node[] at (1.5,-0.5) {4};
        
        \draw[thick, -] (3 -0.25, 0)--(3 + 0.25, 0.25);
        \draw[thick, -] (3 -0.25, 0)--(3 + 0.25, -0.25);
        \node[] at (3,-0.5) {5};
        
        \draw[thick, fill=white] (4.5,0) circle [radius=0.25cm];
        \node[] at (4.5,-0.5) {6};
        
        \draw[thick, fill=white] (6,0) circle [radius=0.25cm];
        \node[] at (6,-0.5) {7};
        
        
        
        
    \end{tikzpicture}
\end{equation*}

Drawing the arrow diagram of $\lambda$, we obtain:
    \begin{equation*}
    \begin{tikzpicture}
    
        \draw[thick, -] (-6.5 ,0)--(6.5, 0);
        
        \draw[thick, fill=white] (-6,0) circle [radius=0.25cm];
        \node[] at (-6,-0.5) {-1};
        
        \draw[thick, -] (-4.5,0)--(-4.5 + 0.25 ,0.25);
        \draw[thick, -] (-4.5,0)--(-4.5 - 0.25 ,0.25);
        \draw[thick, -] (-4.5,0)--(-4.5 + 0.25 ,-0.25);
        \draw[thick, -] (-4.5,0)--(-4.5 - 0.25 ,-0.25);
        \node[] at (-4.5,-0.5) {0};
        
        \draw[thick, -] (-3 -0.25, 0)--(-3 + 0.25, 0.25);
        \draw[thick, -] (-3 -0.25, 0)--(-3 + 0.25, -0.25);
        \node[] at (-3,-0.5) {1};
        
        \draw[thick, fill=white] (-1.5,0) circle [radius=0.25cm];
        \node[] at (-1.5,-0.5) {2};
            
        \draw[thick, -] (0,0)--(0 + 0.25 ,0.25);
        \draw[thick, -] (0,0)--(0 - 0.25 ,0.25);
        \draw[thick, -] (0,0)--(0 + 0.25 ,-0.25);
        \draw[thick, -] (0,0)--(0 - 0.25 ,-0.25);
        \node[] at (0,-0.5) {3};
        
        \draw[thick, -] (1.5,0)--(1.5 + 0.25 ,0.25);
        \draw[thick, -] (1.5,0)--(1.5 - 0.25 ,0.25);
        \draw[thick, -] (1.5,0)--(1.5 + 0.25 ,-0.25);
        \draw[thick, -] (1.5,0)--(1.5 - 0.25 ,-0.25);
        \node[] at (1.5,-0.5) {4};
        
        \draw[thick, -] (3 -0.25, 0)--(3 + 0.25, 0.25);
        \draw[thick, -] (3 -0.25, 0)--(3 + 0.25, -0.25);
        \node[] at (3,-0.5) {5};
        
        \draw[thick, fill=white] (4.5,0) circle [radius=0.25cm];
        \node[] at (4.5,-0.5) {6};
        
        \draw[thick, fill=white] (6,0) circle [radius=0.25cm];
        \node[] at (6,-0.5) {7};

        \draw[thick, ->] (-4.5 ,0.4) .. controls (-4.5 ,1.65) and (-1.5 , 1.65) .. (-1.5 ,0.4);
        \draw[thick, ->] (0 ,0.4) .. controls (0 ,1.65) and (4.5 , 1.65) .. (4.5 ,0.4);
        \draw[thick, ->] (1.5 ,0.4) .. controls (1.5 ,1.65) and (6 , 1.65) .. (6 ,0.4);
        
        
        
        
    \end{tikzpicture}
\end{equation*}

Thus $k_3 = 2$, $k_2=6$ and $k_1=7$ and $M_3=2$, $M_2=5$ and $M_1=5$. 

We now visualize the CTD of $\lambda$: 

     \begin{equation*}
\begin{tikzpicture}[anchorbase,scale=1.1]
\node at (0,1.5) {$\bullet $};

\node at (0.5,1) {$\bullet $};
\node at (-0.5, 1) {$\bullet $};

\node at (1, 0.5) {$\circ $};
\node at (0, 0.5) {$\bullet $};
\node at (-1, 0.5) {$\bullet $};

\node at (0.5, 0) {$\circ $};
\node at (-0.5, 0) {$\bullet $};
\node at (-1.5, 0) {$\circ $};

\node at (0, -0.5) {$\circ $};
\node at (-1, -0.5) {$\circ $};
\node at (-2, -0.5) {$\circ $};

\node at (-0.5, -1) {$\bullet $};
\node at (-1.5, -1) {$\circ $};

\node at (-1, -1.5) {$\bullet $};

\end{tikzpicture}
\end{equation*} 

Let us now compute $\overline{\lambda}_\Sigma$ for $\Sigma \in \mathbb{S}$ given by the word $\varepsilon\delta^3\varepsilon^2\delta^2$. Then $\Sigma\cap \mathcal{R} = \{\eps_1-\delta_1, \eps_3-\delta_4\}$, and $B_{\Sigma}=\{\ror{3}{1},\ror{3}{2},\ror{3}{3} , \ror{2}{1}, \ror{2}{2},\ror{2}{3}\}$.
Overlaying $B_\Sigma$ on our CTD we obtain 

\begin{equation*}
\begin{tikzpicture}[anchorbase,scale=1.1]
\node at (0,1.5) {$\bullet $};

\node at (0.5,1) {$\bullet $};
\node at (-0.5, 1) {$\bullet $};

\node at (1, 0.5) {$\circ $};
\node at (0, 0.5) {$\bullet $};
\node at (-1, 0.5) {$\bullet $};

\node at (0.5, 0) {$\circ $};
\node at (-0.5, 0) {$\bullet $};
\node at (-1.5, 0) {$\circ $};

\node at (0, -0.5) {$\circ $};
\node at (-1, -0.5) {$\circ $};
\node at (-2, -0.5) {$\circ $};

\node at (-0.5, -1) {$\bullet $};
\node at (-1.5, -1) {$\circ $};

\node at (-1, -1.5) {$\bullet $};

\draw[thick, red, -] (-1, -2)--(-2 ,-1);
\draw[thick, red, -] (-2 ,-1)--(-0.5 ,0.5);
\draw[thick, red, -] (-0.5 ,0.5)--(0.5 ,-0.5);
\draw[thick, red, -] (0.5 ,-0.5)--(-1, -2);

\end{tikzpicture}
\end{equation*}

We notice that the only black circles inside $B_\Sigma$ are given by the right odd roots $\ror{5}{1}$, $\ror{5}{2}$ and $\ror{4}{3}$. Hence
$$\overline{\lambda}_\Sigma=\overline{\lambda} + \ror{5}{1} + \ror{5}{2} +\ror{4}{3}= (4\, 4\, 2| -1\, -2\, -4\, -4 \, -5) $$
So the diagram of $\overline{\lambda}_\Sigma$ is 

    \begin{equation*}
    \begin{tikzpicture}
    
        \draw[thick, -] (-6.5 ,0)--(6.5, 0);
        
        \draw[thick, fill=white] (-6,0) circle [radius=0.25cm];
        \node[] at (-6,-0.5) {-1};
        
        \draw[thick, fill=white] (-4.5,0) circle [radius=0.25cm];
        \node[] at (-4.5,-0.5) {0};
        
        \draw[thick, -] (-3 -0.25, 0)--(-3 + 0.25, 0.25);
        \draw[thick, -] (-3 -0.25, 0)--(-3 + 0.25, -0.25);
        \node[] at (-3,-0.5) {1};

        \draw[thick, -] (-1.5,0)--(-1.5 + 0.25 ,0.25);
        \draw[thick, -] (-1.5,0)--(-1.5 - 0.25 ,0.25);
        \draw[thick, -] (-1.5,0)--(-1.5 + 0.25 ,-0.25);
        \draw[thick, -] (-1.5,0)--(-1.5 - 0.25 ,-0.25);
        \node[] at (-1.5,-0.5) {2};
            
        \draw[thick, fill=white] (0,0) circle [radius=0.25cm];
        \node[] at (0,-0.5) {3};

        \draw[thick, -] (1.5, 0 + 0.65)--(1.5 + 0.25 ,0.25 + 0.65);
        \draw[thick, -] (1.5, 0 + 0.65)--(1.5 - 0.25 ,0.25 + 0.65);
        \draw[thick, -] (1.5, 0 + 0.65)--(1.5 + 0.25 ,-0.25 + 0.65);
        \draw[thick, -] (1.5, 0 + 0.65)--(1.5 - 0.25 ,-0.25 + 0.65);
        
        \draw[thick, -] (1.5,0)--(1.5 + 0.25 ,0.25);
        \draw[thick, -] (1.5,0)--(1.5 - 0.25 ,0.25);
        \draw[thick, -] (1.5,0)--(1.5 + 0.25 ,-0.25);
        \draw[thick, -] (1.5,0)--(1.5 - 0.25 ,-0.25);
        \node[] at (1.5,-0.5) {4};

        \draw[thick, -] (3 -0.25, 0)--(3 + 0.25, 0.25);
        \draw[thick, -] (3 -0.25, 0)--(3 + 0.25, -0.25);
        \node[] at (3,-0.5) {5};
        
        \draw[thick, fill=white] (4.5,0) circle [radius=0.25cm];
        \node[] at (4.5,-0.5) {6};
        
        \draw[thick, fill=white] (6,0) circle [radius=0.25cm];
        \node[] at (6,-0.5) {7};
        
        
        
        
    \end{tikzpicture}
\end{equation*}

\end{example}
Once we have computed $\overline{\lambda}_{\Sigma}$ in terms of $c_{\lambda}$ in \cref{CTD formula}, we can easily recover the non-shifted highest weight of $L(\lambda)$ with respect to the base $\Sigma$:
\begin{cor}
    \label{highest weight formula}
    Given $\lambda\in \distHW$, 
    $$\lambda_\Sigma = \lambda +\sum_{\alpha\in B_\Sigma}(c_\lambda-1) (\alpha) \cdot \alpha$$
\end{cor}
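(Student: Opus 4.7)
The plan is to derive this formula directly from Corollary \ref{CTD formula} by unshifting the $\rho$-correction. Recall that by definition $\overline{\lambda}_\Sigma = \lambda_\Sigma + \rho_\Sigma$ and $\overline{\lambda} = \lambda + \rho$, so rearranging Corollary \ref{CTD formula} gives
\[
\lambda_\Sigma \;=\; \lambda \;+\; (\rho - \rho_\Sigma) \;+\; \sum_{\alpha\in B_\Sigma} c_\lambda(\alpha)\cdot\alpha.
\]
Thus the entire task reduces to computing the Weyl vector difference $\rho_\Sigma - \rho$ and observing that it equals $\sum_{\alpha \in B_\Sigma}\alpha$.

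First I would invoke Theorem \ref{B Sigma}(1): fix an ordering $(\alpha_1,\dots,\alpha_k)$ of the elements of $B_\Sigma$ such that the composition $r_{\alpha_k}\circ\cdots\circ r_{\alpha_1}(\distinguished)$ is valid and equal to $\Sigma$. At each step $i$, the root $\alpha_i$ is an element of the current base $r_{\alpha_{i-1}}\circ\cdots\circ r_{\alpha_1}(\distinguished)$, so Fact \ref{reflection weyl vec} applies and yields
\[
\rho_{r_{\alpha_i}\circ\cdots\circ r_{\alpha_1}(\distinguished)} \;=\; \rho_{r_{\alpha_{i-1}}\circ\cdots\circ r_{\alpha_1}(\distinguished)} \;+\; \alpha_i.
\]
Telescoping this identity from $i=1$ to $i=k$ gives $\rho_\Sigma = \rho + \sum_{i=1}^{k}\alpha_i = \rho + \sum_{\alpha\in B_\Sigma}\alpha$.

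Substituting this into the displayed expression for $\lambda_\Sigma$ yields
\[
\lambda_\Sigma \;=\; \lambda \;-\; \sum_{\alpha\in B_\Sigma}\alpha \;+\; \sum_{\alpha\in B_\Sigma} c_\lambda(\alpha)\cdot\alpha \;=\; \lambda \;+\; \sum_{\alpha\in B_\Sigma}\bigl(c_\lambda(\alpha)-1\bigr)\cdot\alpha,
\]
which is exactly the claimed formula. There is no real obstacle here; the only point that requires a moment of care is that the telescoped sum $\sum_i \alpha_i$ is independent of the chosen ordering (it equals the set-sum over $B_\Sigma$), but this is guaranteed by Theorem \ref{B Sigma}(2), which asserts that $\{\alpha_1,\dots,\alpha_k\}=B_\Sigma$ for any valid sequence producing $\Sigma$ from $\distinguished$.
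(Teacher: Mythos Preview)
Your proof is correct and essentially the same as the paper's. Both arguments use Fact~\ref{reflection weyl vec} together with the description of $\Sigma$ as a valid sequence of odd reflections from Theorem~\ref{B Sigma}; you compute $\rho_\Sigma-\rho=\sum_{\alpha\in B_\Sigma}\alpha$ by telescoping and then subtract from Corollary~\ref{CTD formula}, whereas the paper phrases the same computation as the observation that an odd reflection changes the (shifted) diagram precisely when it does \emph{not} change the unshifted highest weight, which amounts to the same bookkeeping.
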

\begin{proof}
    Let $\lambda\in\distHW$ and $\Sigma\in \mathbb{S}$. 
    Recall that $\overline{\lambda}_\Sigma=\lambda_\Sigma + \rho_\Sigma$.
    In Fact \ref{reflection weyl vec}, we saw that $\rho_{r_\alpha(\Sigma)}=\rho_\Sigma + \alpha$, and in Fact \ref{odd reflection hw}, we saw that 
    $$
    \lambda_{r_\alpha(\Sigma)}=
    \begin{cases}
        \lambda_\Sigma & (\alpha | \lambda)=0 \\
        \lambda_\Sigma -\alpha & (\alpha| \lambda)\neq 0
    \end{cases}
    $$
    Thus we find that $\ror{i}{j}$ changes the diagram of $\lambda$ if and only if $\ror{i}{j}$ does not change the (non-shifted) highest weight. Combining this with \cref{CTD formula} we find that 
        $$\lambda_\Sigma = \lambda +\sum_{\alpha\in B_\Sigma}(c_\lambda-1) (\alpha) \cdot \alpha.$$
\end{proof}
\begin{remark}
    Similarly to the visual algorithm used to compute the shifted weight $\overline{\lambda}_{\Sigma}$ for some $\Sigma\in \mathbb{S}$, if we wish to compute the non-shifted weight ${\lambda}_{\Sigma}$ for $\Sigma\in \mathbb{S}$, one may again overlay $B_\Sigma$ on the CTD of $\lambda$. This time we subtract from $\lambda$ every root in $B_\Sigma$ that is white (rather than add every root that is black).
\end{remark}

\begin{prop}
\label{CTD 1 iff alpha perp nu}
    Let $\lambda \in \distHW$ and $\alpha\in\mathcal{R}$. We have
    $$\left[\exists \Sigma \in \mathbb{S}:\;(\overline{\lambda}_\Sigma|\alpha)=0\right] \;\; \Longleftrightarrow \;\; c_\lambda(\alpha)=1.$$
\end{prop}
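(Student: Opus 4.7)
The proof splits into two implications, of which $(\Leftarrow)$ is the short one.

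$(\Leftarrow)$ Since the singleton $\{\alpha\}$ is trivially an incomparable subset of $\mathcal{R}$, Lemma \ref{ror are incomparable} produces a base $\Sigma\in\mathbb{S}$ with $\alpha\in\Sigma$. By the very definition of the CTD (cf.\ Theorem \ref{theorem for ror change inequality}), the hypothesis $c_\lambda(\alpha)=1$ is exactly the statement that the odd reflection $r_\alpha$ changes the weight diagram of $\lambda$; applied to this $\Sigma$, Proposition \ref{changes the weight diagram of lambda} together with Lemma \ref{lem:generalities_on_ror_changes} then yields $(\overline{\lambda}_\Sigma|\alpha)=0$.

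$(\Rightarrow)$ Assume $(\overline{\lambda}_\Sigma|\alpha)=0$ for some $\Sigma\in\mathbb{S}$. The plan is to argue the contrapositive: assuming $c_\lambda(\alpha)=0$, I will show that $(\overline{\lambda}_{\Sigma''}|\alpha)\neq 0$ for \emph{every} $\Sigma''\in\mathbb{S}$. The sub-case $\alpha\in\Sigma''$ is immediate: by Theorem \ref{theorem for ror change inequality} the assumption $c_\lambda(\alpha)=0$ means that $r_\alpha$ does not change $D^{\Sigma''}_\lambda$, which by Lemma \ref{lem:generalities_on_ror_changes} forces $(\overline{\lambda}_{\Sigma''}|\alpha)\neq 0$.

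The substantive case is $\alpha\notin\Sigma''$. Here I would expand $(\overline{\lambda}_{\Sigma''}|\alpha)$ via Corollary \ref{CTD formula}:
$$(\overline{\lambda}_{\Sigma''}|\alpha)=(\overline{\lambda}|\alpha)+\sum_{\beta\in B_{\Sigma''}}c_\lambda(\beta)(\beta|\alpha).$$
Writing $\alpha=\varepsilon_i-\delta_j$, only $\beta\in B_{\Sigma''}$ sharing exactly one coordinate with $\alpha$ can contribute: those in the ``$\varepsilon_i$-row'' $\{\varepsilon_i-\delta_{j'}:j'\neq j\}$ contribute $+1$ and those in the ``$\delta_j$-column'' $\{\varepsilon_{i'}-\delta_j:i'\neq i\}$ contribute $-1$.

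The main obstacle is to show that this signed weighted count cannot cancel $(\overline{\lambda}|\alpha)$. The plan is a case-by-case combinatorial analysis, using the description of $B_{\Sigma''}$ via the incomparable set $\Sigma''\cap\mathcal{R}$ (Theorem \ref{B Sigma}) together with the criterion of Theorem \ref{theorem for ror change inequality} (and its dual form from Remark \ref{analogous arrow diagram}) characterizing when $c_\lambda(\beta)=1$ for $\beta$ in the row or column of $\alpha$. The incomparability of $\Sigma''\cap\mathcal{R}$ dictates precisely which row-elements and column-elements of $\alpha$ lie in $B_{\Sigma''}$, and the assumption $c_\lambda(\alpha)=0$ will provide enough arithmetic control on $\overline{\lambda}_i$, $-\overline{\lambda}_{-j}$, $k_i$, and the co-arrow endpoint $l_j$ to conclude that the signed sum is never equal to $-(\overline{\lambda}|\alpha)$, forcing $(\overline{\lambda}_{\Sigma''}|\alpha)\neq 0$.
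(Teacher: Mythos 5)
Your $(\Leftarrow)$ direction and the sub-case $\alpha\in\Sigma''$ of the contrapositive are correct and essentially the paper's argument (Lemma \ref{ror are incomparable} to get a base containing $\alpha$, then Proposition \ref{changes the weight diagram of lambda} and Lemma \ref{lem:generalities_on_ror_changes}). The genuine gap is in what you yourself call the substantive case: after expanding $(\overline{\lambda}_{\Sigma''}|\alpha)=(\overline{\lambda}|\alpha)+\sum_{\beta\in B_{\Sigma''}}c_\lambda(\beta)(\beta|\alpha)$ via Corollary \ref{CTD formula}, you only announce a ``plan'' that a case-by-case analysis of the $+1$ contributions from the $\varepsilon_i$-row and the $-1$ contributions from the $\delta_j$-column ``will provide enough arithmetic control'' to rule out cancellation. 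That analysis is never carried out, and it is exactly where the content of the proposition lies: nothing in your text excludes that for some base the signed sum equals $-(\overline{\lambda}|\alpha)$. So the $(\Rightarrow)$ direction is not proved.

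The missing idea is simpler than bookkeeping $B_{\Sigma''}$, and it is what the paper uses: the coordinate bounds of Corollary \ref{diagram by base inequality} (equivalently Corollary \ref{bounds on a i}), namely $\overline{\lambda}_i\le(\overline{\lambda}_\Sigma)_i\le k_i$ and $-\overline{\lambda}_{-j}\le-(\overline{\lambda}_\Sigma)_{-j}$ for every $\Sigma\in\mathbb{S}$. If $c_\lambda(\ror{i}{j})=0$, then first $\overline{\lambda}_i\neq-\overline{\lambda}_{-j}$; assuming (WLOG) $\overline{\lambda}_i<-\overline{\lambda}_{-j}$, Theorem \ref{theorem for ror change inequality} leaves two options: either $j<M_i+1-(k_i-\overline{\lambda}_i)$, which by counting the $<$ and $\times$ symbols to the left of position $\overline{\lambda}_i$ forces $-\overline{\lambda}_{-j}<\overline{\lambda}_i$, a contradiction; or $j>M_i$, which gives $k_i<-\overline{\lambda}_{-j}$ and hence $(\overline{\lambda}_\Sigma)_i\le k_i<-\overline{\lambda}_{-j}\le-(\overline{\lambda}_\Sigma)_{-j}$ for \emph{every} $\Sigma\in\mathbb{S}$, so $(\overline{\lambda}_\Sigma|\alpha)\neq0$ uniformly (no split into $\alpha\in\Sigma''$ versus $\alpha\notin\Sigma''$ is needed). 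The case $-\overline{\lambda}_{-j}<\overline{\lambda}_i$ is dual, via Remark \ref{analogous arrow diagram}. Without this step, or an actually completed computation of your signed sum, your proof is incomplete.
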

\begin{proof}

\mbox{}

{$\boldsymbol{\Leftarrow}$:} Let $\lambda \in \distHW$ and $\ror{i}{j}\in\mathcal{R}$ such that $c_\lambda(\ror{i}{j})=1$. Then, as mentioned in Definition \ref{CTD}, $\ror{i}{j}$ changes the diagram of $\lambda$. Thus by definition \ref{changes the weight diagram of lambda} there exists some $\Sigma \in \mathbb{S}$ such that $\ror{i}{j}\in \Sigma$ and $\overline{\lambda}_\Sigma \neq \overline{\lambda}_{r_{i,j}\Sigma}$. But in that case, by Lemma \ref{lem:generalities_on_ror_changes} we have: $(\overline{\lambda}_\Sigma|\ror{i}{j})=0 $, as required.

{$\boldsymbol{\Rightarrow}$:} Let $\lambda \in \distHW$ and $\ror{i}{j}\in\mathcal{R}$ such that $c_\lambda(\ror{i}{j})=0$. Let us show that for any $\Sigma\in \mathbb{S}$, we have: $(\overline{\lambda}_\Sigma|\ror{i}{j})\neq0$.

    As $c_\lambda(\ror{i}{j})=0$ we deduce that $\overline{\lambda}_i \neq -\overline{\lambda}_{-j}$ (otherwise $j=M_i-(k_i-\overline{\lambda}_i-1)$ and so  $c_\lambda(\ror{i}{j})=1$ by Theorem \ref{theorem for ror change inequality}). 
    
    Assume $\overline{\lambda}_i < -\overline{\lambda}_{-j}$. 
    
    By our assumption, $c_\lambda(\ror{i}{j})=0$. By Theorem \ref{theorem for ror change inequality}, $c_{\lambda}(\ror{i}{j'})=1$ iff $M_i+1-(k_i-\overline{\lambda}_i)\le j' \le M_i$. Thus either $j>M_i$, or $j<M_i+1-(k_i-\overline{\lambda}_i)$.
    
    If $j<M_i+1-(k_i-\overline{\lambda}_i)$ then there were at least $j$ symbols $<$ in $D_{\lambda}$ to the left of position $\overline{\lambda}_i$. Thus $-\overline{\lambda}_{-j} <\overline{\lambda}_i$ contradicting our assumption.
    
    If $M_i< j$ then there were at most $j-1$ symbols $<$ in $D_{\lambda}$ to the left of position $k_i$. Thus $k_i<-\overline{\lambda}_{-j} $. But for any $\Sigma \in \mathbb{S}$, by Corollary \ref{diagram by base inequality} we have :
    $$(\overline{\lambda}_{\Sigma})_i \leq k_i <-\overline{\lambda}_{-j} \leq  -(\overline{\lambda}_{\Sigma})_{-j}$$ so $(\overline{\lambda}_\Sigma|\ror{i}{j})\neq0 $ for every $\Sigma \in \mathbb{S}$.

    The case where $-\overline{\lambda}_{-j} < \overline{\lambda}_i$ is analogous (as discussed in Remark \ref{analogous arrow diagram}).
\end{proof}
\subsection{The structure of a CTD}
We have now developed a language to speak of weight diagrams, using two primary methods: the arrow diagram and the CTD. We can now observe the general structure of a weight diagram in terms of these methods. 
\begin{example}
Let $\lambda\in \Lambda^+_{5|5}$ such that $\overline{\lambda} = (5\, 4\, 3\, 0\, -1| 1\, 0\, -3\, -4\, -5)$. The diagram $D_\lambda$ is
\begin{equation*}
    \begin{tikzpicture}
    
        \draw[thick, -] (-6.5 ,0)--(8, 0);
        
        \draw[thick, -] (-6,0)--(-6 + 0.25 ,0.25);
        \draw[thick, -] (-6,0)--(-6 - 0.25 ,0.25);
        \draw[thick, -] (-6,0)--(-6 + 0.25 ,-0.25);
        \draw[thick, -] (-6,0)--(-6 - 0.25 ,-0.25);
        \node[] at (-6,-0.5) {-1};
        
        \draw[thick, -] (-4.5,0)--(-4.5 + 0.25 ,0.25);
        \draw[thick, -] (-4.5,0)--(-4.5 - 0.25 ,0.25);
        \draw[thick, -] (-4.5,0)--(-4.5 + 0.25 ,-0.25);
        \draw[thick, -] (-4.5,0)--(-4.5 - 0.25 ,-0.25);
        \node[] at (-4.5,-0.5) {0};
        
        \draw[thick, fill=white] (-3,0) circle [radius=0.25cm];
        \node[] at (-3,-0.5) {1};
        
        \draw[thick, fill=white] (-1.5,0) circle [radius=0.25cm];
        \node[] at (-1.5,-0.5) {2};

        \draw[thick, -] (0,0)--(0 + 0.25 ,0.25);
        \draw[thick, -] (0,0)--(0 - 0.25 ,0.25);
        \draw[thick, -] (0,0)--(0 + 0.25 ,-0.25);
        \draw[thick, -] (0,0)--(0 - 0.25 ,-0.25);
        \node[] at (0,-0.5) {3};
        
        \draw[thick, -] (1.5,0)--(1.5 + 0.25 ,0.25);
        \draw[thick, -] (1.5,0)--(1.5 - 0.25 ,0.25);
        \draw[thick, -] (1.5,0)--(1.5 + 0.25 ,-0.25);
        \draw[thick, -] (1.5,0)--(1.5 - 0.25 ,-0.25);
        \node[] at (1.5,-0.5) {4};
        
        \draw[thick, -] (3,0)--(3 + 0.25 ,0.25);
        \draw[thick, -] (3,0)--(3 - 0.25 ,0.25);
        \draw[thick, -] (3,0)--(3 + 0.25 ,-0.25);
        \draw[thick, -] (3,0)--(3 - 0.25 ,-0.25);
        \node[] at (3,-0.5) {5};
        
        \draw[thick, fill=white] (4.5,0) circle [radius=0.25cm];
        \node[] at (4.5,-0.5) {6};
        
        \draw[thick, fill=white] (6,0) circle [radius=0.25cm];
        \node[] at (6,-0.5) {7};

        \draw[thick, fill=white] (7.5,0) circle [radius=0.25cm];
        \node[] at (7.5,-0.5) {8};

        \draw[thick, ->] (-6 ,0.4) .. controls (-6 ,1.65) and (-3 , 1.65) .. (-3 ,0.4);
        \draw[thick, ->] (-4.5 ,0.4) .. controls (-4.5 ,1.65) and (-1.5 , 1.65) .. (-1.5 ,0.4);

        \draw[thick, ->] (0 ,0.4) .. controls (0 ,1.65) and (4.5 , 1.65) .. (4.5 ,0.4);
        \draw[thick, ->] (1.5 ,0.4) .. controls (1.5 ,1.65) and (6 , 1.65) .. (6 ,0.4);
        \draw[thick, ->] (3 ,0.4) .. controls (3 ,1.65) and (7.5 , 1.65) .. (7.5 ,0.4);

        
        
        
    \end{tikzpicture}
\end{equation*}

One can see that the two parts of the weight diagram do not interact; that is to say, there are two sets of arrows that do not intersect. We can split the coordinates of $\lambda$ accordingly into two sets, $A_1:=\{4,5, -1,-2\}$ and $A_2:=\{1,2,3, -3,-4,-5\}$, so that $\overline{\lambda}_i, -\overline{\lambda}_{-j}$ are both greater than $ \overline{\lambda}_{i'},-\overline{\lambda}_{-j'}$ whenever $i, j\in A_2$, $i', j'\in A_1$.

It is easy to see (using CTD) that right odd roots of the form $\alpha = \ror{i}{j}$ where $i\in A_k$ and $-j\in A_{k'}$ for $k \neq k'$ have $c_\lambda (\alpha)=0$. 
\end{example}
Building on this observation we define a way to split $\lambda$ in to a smaller weight diagram which will be referred to as atoms of $\lambda$. 

\begin{definition}
\label{atom_def}
    Let $\lambda\in \distHW$. Consider the arrow diagram of $\lambda$ with arrow endpoints denoted by $k_1,\dots, k_m$. 
    
    Let $A\subseteq \{-n,\dots, -1, 1,\dots,m\}$, and let $I':=\min A\cap  \mathbb{Z}_{>0}$, $I'':= \max A\cap  \mathbb{Z}_{>0}$.

    Recall that for any $i\in A\cap  \mathbb{Z}_{>0}$, $[\overline{\lambda}_i, k_i] \subset [\overline{\lambda}_{I''},k_{I'}]$.
    Consider the following conditions on the set $A$:
    \begin{enumerate}
        \item We have $\bigcup_{i\in A, i>0}[\overline{\lambda}_i,k_i]=[\overline{\lambda}_{I''},k_{I'}]$ (as segments in $\mathbb{R}$). In other words, for any $r\in \mathbb{Z}$ such that $\overline{\lambda}_{I''}\leq r \leq k_{I'}$, there exists $i\in A, i>0$ such that $r\in[\overline{\lambda}_i,k_i]$ 
        (i.e. the $i$-th arrow in $D_{\lambda}$ goes over position $r$, perhaps beginning or ending at $r$).
        \item For any $-j\in A\cap \mathbb{Z}_{<0}$, $-\overline{\lambda}_{-j}\in [\overline{\lambda}_{I''},k_{I'}]$.
    \end{enumerate}
A maximal (with respect to inclusion) set $A$ satisfying the above properties is called \textbf{a $\lambda$-atom index set}. 

We denote by $\mathcal{A}_\lambda $ the set of all $\lambda$-atom index sets. 

\end{definition}

\begin{definition}
    Given a $\lambda$-atom set $A$.
    Let us write $A$ as $A=\{a_1,\dots, a_{m'},-b_1,\dots,-b_{n'}\}$ so that $a_i < a_{i+1}$, $-b_j < -b_{j+1}$. Define $\nu \in \Lambda^+_{m'|n'}$ to be given by $$\overline{\nu}_i := \lambda_{a_i}, \;\overline{\nu}_{-j}:=\lambda_{-b_j}.$$ 

    We then denote $\lambda^A:=\nu$ and call $\lambda^A$ \textbf{an atom of $\lambda$}.

\end{definition}
   \comment{ \Matan{As this is intended to appear before the equivallence classes of $\times-\circ$ sequence relation it shall remain defined like this. It may be worth adding a remark when this relation is described that atoms may be described using this relation}}
\begin{remark}\label{rmk:atom_index_sets_aer_intervals}
The set $\mathcal{A}_\lambda $ is in a natural bijection with the set of intervals as defined in Definition \ref{x o sequence}.

The correspondence is as follows: for a $\lambda$-atom index set $A$, the segment $[\overline{\lambda}_{I''}, k_{I'}]$ is an interval in the sense of Definition \ref{x o sequence}. 
Vice versa, given an interval $[p, k]$, we set $$A:=\{i, j\,| \,1\leq i\leq m, \;1\leq j\leq n, \; p\leq \overline{\lambda}_i \leq k_i\leq k, \;p\leq  -\overline{\lambda}_{-j} \leq k\}$$ and one can immediately see that $A$ is a $\lambda$-atom index set.
\end{remark}

\begin{remark}
    
   By the maximality of a $\lambda$-atom index set, any two $\lambda$-atom index sets are disjoint. 
    \end{remark}
    \begin{remark}\label{rmk:atom_index_set_conseq}
    Let $A$ be a $\lambda$-atom index set. For any $i,i'\in A \cap \{1,\dots,m\}$ where $i < i'$, any $i<a<i'$ must have $a\in A$. Similarly, if $-j,-j'\in A \cap \{-1,\dots,-n\}$ and $-j < -j'$, then any $-j<b<-j'$ must have $b\in A$.
    Thus any atom index set can be written as $$A=\{I', I'+1, I'+2, \dots, I'', -J',-J'-1,-J'-2,  \dots, -J''\}$$ for some $1\le I'\leq I''\le m$ and $1 \le J'\leq J'' \le n$.
    
    As such, $\overline{\lambda^A}=(\overline{\lambda}_{I'}\,\dots  \overline{\lambda}_{I''}\,| \overline{\lambda}_{-J'}\, \dots \, \overline{\lambda}_{-J''})$: that is, $\overline{\lambda}^A_{l+1} = \overline{\lambda}_{I'+l}$ and $\overline{\lambda}^A_{-l-1}= \overline{\lambda}_{-J'-l}$ for any $l$.

\end{remark}
\comment{
Given this definition and the above remark, we may define a map $\mathbb{S}\to \mathbb{S}_{k+1|l+1}$ given by $\Sigma \mapsto \Sigma^A$ where if the word corresponding to $\Sigma$ is given by $w$ then $\Sigma^A$ is given by the word whose relative positions of \Innas{what is the end of this sentence?}
}
The following statement follows directly from the definition of an atom of $\lambda$:
\begin{prop}\label{prop:weight_diagram_single_atom}
    Let $\lambda \in \distHW$ with $\lambda$-atom index set $A$. 
    The diagram of $\overline{\lambda^{A}}$ is given by 

    $$
    D_{\overline{\lambda^{A}}}(p) = 
    \begin{cases}
        D_{\overline{\lambda}}(p) &\text{ if } p = \overline{\lambda}_i \text{ for } i\in A \\
        \circ & \text{ otherwise}
    \end{cases}
    $$

    That is to say $D_{\overline{\lambda^{A}}}$ is given precisely by replacing all symbols that are given by $\overline{\lambda}_i$ for $i\notin A$ with $\circ$ symbols. 
\end{prop}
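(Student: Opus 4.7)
The plan is to invoke \cref{rmk:atom_index_set_conseq}, which identifies the coordinates of $\overline{\lambda^A}$ explicitly as a subtuple of those of $\overline{\lambda}$. Writing $A = \{I', I'+1, \ldots, I'', -J', -J'-1, \ldots, -J''\}$, that remark yields
$$\overline{\lambda^A} = (\overline{\lambda}_{I'}, \ldots, \overline{\lambda}_{I''} \mid \overline{\lambda}_{-J'}, \ldots, \overline{\lambda}_{-J''}).$$
Consequently, $D_{\overline{\lambda^A}}(p)$ is determined by the counts $k^A(p) := \#\{i\in A\cap\Z_{>0}: \overline{\lambda}_i=p\}$ and $s^A(p):=\#\{j : -j\in A,\, -\overline{\lambda}_{-j}=p\}$, and these are automatically bounded above by the corresponding counts $k(p), s(p)$ used to build $D_\lambda(p)$.

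For a position $p$ that is neither of the form $\overline{\lambda}_i$ for some $i\in A\cap\Z_{>0}$ nor of the form $-\overline{\lambda}_{-j}$ for some $-j\in A\cap\Z_{<0}$, both $k^A(p)$ and $s^A(p)$ vanish, so $D_{\overline{\lambda^A}}(p)=\circ$, as required. For the remaining positions, the strict monotonicity of $\overline{\lambda}$ on each of the $\eps$ and $\delta$ blocks (coming from the strict monotonicity of $\rho$ together with dominance of $\lambda$) forces $k(p), s(p) \in \{0,1\}$. So the equality $D_{\overline{\lambda^A}}(p) = D_\lambda(p)$ reduces to showing that no external $\eps$-coordinate $\overline{\lambda}_{i'}$ with $i'\notin A$ can coincide with a position $-\overline{\lambda}_{-j}$ for $-j\in A$, and symmetrically no external $\delta$-coordinate can land on an internal $\eps$-position.

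To rule out the first coincidence, I would use the maximality of $A$. Suppose $\overline{\lambda}_{i'} = -\overline{\lambda}_{-j} \in [\overline{\lambda}_{I''}, k_{I'}]$ with $i'\notin A$ and $-j\in A$. Strict monotonicity on the $\eps$ block forbids $I' \le i' \le I''$, and $i' > I''$ is impossible since then $\overline{\lambda}_{i'} < \overline{\lambda}_{I''}$, contradicting $\overline{\lambda}_{i'}\in[\overline{\lambda}_{I''}, k_{I'}]$. Hence $i' < I'$, and by the ``leftmost unused $\circ$ or $>$'' rule in \cref{Arrow diagram def} every $\circ$ or $>$ position in $[\overline{\lambda}_{I'}, k_{I'})$ is consumed by some arrow of index $\geq I' > i'$ before the arrow from $i'$ is drawn, so $k_{i'} > k_{I'}$. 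I then verify that $A \cup \{i'\}$ still satisfies both conditions of \cref{atom_def}: condition (2) is untouched, and condition (1) holds because $\overline{\lambda}_{i'} \le k_{I'}$ leaves no gap while $[\overline{\lambda}_{i'}, k_{i'}]$ extends the covered interval to $[\overline{\lambda}_{I''}, k_{i'}]$. This contradicts maximality of $A$. The symmetric case of an external $\delta$-coordinate landing on an internal $\eps$-position is strictly easier, since adding a negative index to $A$ does not alter the covering data (condition (1)) and condition (2) is immediate.

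The main obstacle is the bookkeeping in the maximality argument, in particular verifying that enlarging $A$ by $i'<I'$ does not break the covering condition (1) of \cref{atom_def}; the key inputs are the inclusion $\overline{\lambda}_{i'}\in[\overline{\lambda}_{I''}, k_{I'}]$ (no gap) and the monotonicity $k_{i'}>k_{I'}$ of arrow endpoints. Once the external coordinates are ruled out, the proposition follows immediately by equating the position-by-position counts defining the two weight diagrams.
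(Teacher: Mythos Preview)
Your proof is correct and follows essentially the same approach as the paper, which in fact states that the proposition ``follows directly from the definition of an atom of $\lambda$'' and gives no further argument. Your write-up supplies the details the paper omits: you correctly identify that the only nontrivial point is ruling out an external $\eps$-coordinate hitting an internal $\delta$-position (and vice versa), and you handle this via maximality of $A$, which is exactly what the definition is designed to give.

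One small simplification: your argument that $k_{i'}>k_{I'}$ via ``every $\circ$ or $>$ in $[\overline{\lambda}_{I'},k_{I'})$ is already consumed'' is correct but more than you need. The bare monotonicity $k_1>k_2>\cdots>k_m$, which holds in general for the arrow diagram (and is used elsewhere in the paper, e.g.\ in \cref{ctd inequality lower bound ie}), already gives $k_{i'}>k_{I'}$ from $i'<I'$. With that replacement your maximality contradiction for $A\cup\{i'\}$ goes through verbatim.
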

\comment{
\begin{proof}
    Let $\lambda \in \distHW$ with $\lambda$-atom index set $A= \{a_1,\dots, a_{\sharp A\cap \mathbb{Z}_{>0}}, -b_1,\dots, -b_{\sharp A\cap \mathbb{Z}_{>0}}\}$. 
    Observe that for $a_i$, $\overline{\lambda^A}_i=\overline{\lambda^A}_{a_i}$, and for $-b_j$, $\overline{\lambda^A}_{-j}=\overline{\lambda^A}_{b_j}$. 
    
    As such, if $D_{\overline{\lambda}}(\overline{\lambda}_{a_i})= >$ we deduce that 
    there exists no $1\le j \le n$ such that $\overline{\lambda}_{-j}= \overline{\lambda}_{a_i}$. 
    Thus there must exist no $-b_j$ such that $\overline{\lambda^A}_i = \overline{\lambda^A}_{-j}$.
    
    As such $D_{\overline{\lambda^A}}(\overline{\lambda}_{a_i}) = D_{\overline{\lambda^A}}(\overline{\lambda^A}_i) = >$. 

    Similarly $D_{\overline{\lambda}}(\overline{\lambda}_{-b_j})= <$ implies that, $D_{\overline{\lambda^A}}(\overline{\lambda}_{-b_j}) = <$. 

    Finally, $D_{\overline{\lambda}}(\overline{\lambda}_{a_i})= \times$, implies that there exists some $1\le j \le n$ such that $\overline{\lambda}_{a_i}=-\overline{\lambda}_{-j}$. 
    By Definition \ref{atom def}, $-j$ must be an element of $A$.
    Let $k$ such that $b_k= j$. As such $D_{\overline{\lambda^A}}(\overline{\lambda}_{a_i}) = D_{\overline{\lambda^A}}(\overline{\lambda^A}_i)=\times$.

    For any $i\notin A$, there exist no $i' \in A$ such that $\overline{\lambda}_i=\overline{\lambda^A}_{i'}$ as such $D_{\overline{\lambda^A}}(\overline{\lambda}_i)=\circ$.

    Finally for any $p \in \mathbb{Z}$ such that for every $1\le i \le m$ and $1\le j \le n$, $\overline{\lambda}_i\neq p$ and $\overline{\lambda})_{-j}=-p$. 
    Since $A\subseteq \{-n,\dots, -1, 1, \dots, m\}$ we deduce that $D_{\overline{\lambda^A}}(p)=\circ$.
\end{proof}}

\begin{example}\label{ex:atoms2}
    Let $\lambda \in \Lambda^+_{5|4}$ be such that $\overline{\lambda} = (6\, 5\, 4\, \,3 \,0|\,0 \, -1\, -4\, -6)$.
    From the definition above, $\lambda$ has 2 distinct $\lambda$-atom index sets: $A= \{5,-1,-2\}$ and $B= \{1,2,3,-3,-4\}$.
    
    Thus $\lambda$ has 2 atoms, given by $\overline{\lambda^A}=(0| 0\,-1)$ and $\overline{\lambda^B}=(6\, 5\, 4| -4\, -6)$. These atoms can be seen as part of the diagram $D_{\lambda}$:
     \begin{equation}
\label{arrow atom decomp example}
    \begin{tikzpicture}
    
        \draw[thick, -] (-6.5 ,0)--(6.5, 0);
        
        \draw[thick, -] (-6,0)--(-6 + 0.25 ,0.25);
        \draw[thick, -] (-6,0)--(-6 - 0.25 ,0.25);
        \draw[thick, -] (-6,0)--(-6 + 0.25 ,-0.25);
        \draw[thick, -] (-6,0)--(-6 - 0.25 ,-0.25);

        \draw[thick, -] (-4.5 -0.25, 0)--(-4.5 + 0.25, 0.25);
        \draw[thick, -] (-4.5 -0.25, 0)--(-4.5 + 0.25, -0.25);
        
        \draw[thick, fill=white] (-3,0) circle [radius=0.25cm];
        
        \draw[thick, -] (-1.5 +0.25, 0)--(-1.5 - 0.25, 0.25);
        \draw[thick, -] (-1.5 +0.25, 0)--(-1.5 - 0.25, -0.25);
        
        \draw[thick, -] (0,0)--(0 + 0.25 ,0.25);
        \draw[thick, -] (0,0)--(0 - 0.25 ,0.25);
        \draw[thick, -] (0,0)--(0 + 0.25 ,-0.25);
        \draw[thick, -] (0,0)--(0 - 0.25 ,-0.25);
        
        \draw[thick, -] (1.5 +0.25, 0)--(1.5 - 0.25, 0.25);
        \draw[thick, -] (1.5 +0.25, 0)--(1.5 - 0.25, -0.25);
        
        \draw[thick, -] (3,0)--(3 + 0.25 ,0.25);
        \draw[thick, -] (3,0)--(3 - 0.25 ,0.25);
        \draw[thick, -] (3,0)--(3 + 0.25 ,-0.25);
        \draw[thick, -] (3,0)--(3 - 0.25 ,-0.25);
        
        \draw[thick, fill=white] (4.5,0) circle [radius=0.25cm];
        
        \draw[thick, fill=white] (6,0) circle [radius=0.25cm];
        
        \draw[thick, ->] (-6 ,0.4) .. controls (-6 ,1.65) and (-3 , 1.65) .. (-3 ,0.4);
        \draw[thick, ->] (-1.5 -0.2 ,0.4) .. controls (-1.5 -0.2 ,1.65) and (-1.5 + 0.15 , 1.65) .. (-1.5 + 0.15 ,0.4);
        \draw[thick, ->] (0 ,0.4) .. controls (0 ,1.65) and (1.5 -0.2 , 1.65) .. (1.5 -0.2 ,0.4);
        \draw[thick, ->] (1.5 +0.15 ,0.4) .. controls (1.5 +0.15 ,1.65) and (4.5 , 1.65) .. (4.5 ,0.4);
        \draw[thick, ->] (3 ,0.4) .. controls (3 , 1.65) and (6 , 1.65) .. (6 ,0.4);
        
        
        \draw[thick, -] (-6 -0.5,-0.75).. controls (-6 -0.5 ,-1.25) and (-4.5,-0.75) .. (-4.5,-1.25);
        \draw[thick, -] (-3 +0.5,-0.75).. controls (-3 +0.5 ,-1.25) and (-4.5,-0.75) .. (-4.5,-1.25);
        \node[] at (-4.5,-1.5) {$\lambda^A$};

        \draw[thick, -] (-1.5 -0.5,-0.75).. controls (-1.5 -0.5 ,-1.25) and (-1.5 ,-0.75) .. (-1.5 ,-1.25);
        \draw[thick, -] (-1.5 +0.5,-0.75).. controls (-1.5 +0.5 ,-1.25) and (-1.5 ,-0.75) .. (-1.5 ,-1.25);
        \node[] at (-1.5 ,-1.5) {Not part of an atom};
        
        \draw[thick, -] ( -0.5,-0.75).. controls ( -0.5 ,-1.25) and (3 ,-0.75) .. (3 ,-1.25);
        \draw[thick, -] (6 +0.5,-0.75).. controls (6 +0.5 ,-1.25) and (3 ,-0.75) .. (3 ,-1.25);
        \node[] at (3 ,-1.5) {$\lambda^B$};
        
        
        
        
    \end{tikzpicture}
\end{equation}

    One may further examine the CTD of $\lambda$ and find that the CTDs of $\lambda^A$ and $\lambda^B$ appear as sub diagrams of the CTD $c_{\lambda}$ with coordinates corresponding to $A$ and $B$ respectively. Below we show the CTD $c_\lambda$ and inside it the CTDs of the atoms of $\lambda$.
    \begin{equation}
\label{subCTD example}
\begin{tikzpicture}[anchorbase,scale=1.1]
\node at (0,2) {$\bullet $};

\node at (0.5,1.5) {$\bullet $};
\node at (-0.5, 1.5) {$\bullet $};

\node at (1, 1) {$\circ $};
\node at (0,1) {$\bullet $};
\node at (-1, 1) {$\circ $};

\node at (1.5,0.5) {$\circ $};
\node at (0.5, 0.5) {$\bullet $};
\node at (-0.5, 0.5) {$\circ $};
\node at (-1.5, 0.5) {$\circ $};

\node at (2,0) {$\circ $};
\node at (1, 0) {$\circ $};
\node at (0, 0) {$\circ $};
\node at (-1, 0) {$\circ $};

\node at (1.5,-0.5) {$\circ $};
\node at (0.5, -0.5) {$\circ $};
\node at (-0.5, -0.5) {$\circ $};

\node at (1,-1) {$\bullet $};
\node at (0, -1) {$\circ $};

\node at (0.5,-1.5) {$\bullet $};

\draw[thick, red] (0,2.5)--(-1, 1.5);
\draw[thick, red] (0.5, 0)--(-1, 1.5);
\draw[thick, red] (0.5, 0)--(1.5, 1);
\draw[thick, red] (0,2.5)--(1.5, 1);
\node[] at (1.7 ,2.1) {CTD of $\lambda^B$};

\draw[thick, red] (1,-0.5)--(0, -1.5);
\draw[thick, red] (0.5, -2)--(0, -1.5);
\draw[thick, red] (0.5, -2)--(1.5, -1);
\draw[thick, red] (1,-0.5)--(1.5, -1);
\node[] at (2 ,-1.7) {CTD of $\lambda^A$};

\end{tikzpicture}
\end{equation} 
\end{example}
Let us give the precise formulation for the ``embedding of CTDs'' mentioned in the above example.

\begin{definition}
    Let $\lambda\in \distHW$, and let $A$ be a $\lambda$-atom index set. We denote: $$\mathcal{R}^A:=\{\ror{i}{j}\,|\,i,-j\in A\}, \;\;\;\mathcal{R}_{\lambda} := \bigcup_{A\in \mathcal{A}_\lambda} \mathcal{R}^A.$$ 
\end{definition}

\begin{example}
    Expanding on Example \ref{ex:atoms2}, 
    $A=\{5,-1,-2\}$ so $$\mathcal{R}^A=\{\ror{5}{-1},\ror{5}{-2}\}.$$

    Similarly, $B=\{1,2,3,-3,-4\}$, so $$\mathcal{R}^B=\{\ror{1}{-3},\ror{2}{-3},\ror{2}{-3},\ror{1}{-4},\ror{2}{-4},\ror{3}{-4}\}.$$
    
    As those are the only two atoms of $\lambda$, 
    $$\mathcal{R}_\lambda = \mathcal{R}^A \cup \mathcal{R}^B = \{\ror{5}{-1},\ror{5}{-2}, \ror{1}{-3},\ror{2}{-3},\ror{2}{-3},\ror{1}{-4},\ror{2}{-4},\ror{3}{-4}\}.$$
\end{example}

\begin{prop}
    Let $\lambda\in \distHW$ and let $A$, $B$ be two $\lambda$-index sets. Let $a:=\min A\cap \mathbb{Z}_{>0}$ and $b:=\min B\cap \mathbb{Z}_{>0}$. If $a>b$ then for any $\alpha\in \mathcal{R}^A$, $\beta\in \mathcal{R}^B$, we have: $\alpha<\beta$, i.e. $\mathcal{R}^A < \mathcal{R}^B$.
\end{prop}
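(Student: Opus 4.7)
The plan is to translate the statement into the ``block'' form of atom index sets given by \cref{rmk:atom_index_set_conseq} and then use disjointness together with the interval description in \cref{rmk:atom_index_sets_aer_intervals}. By \cref{rmk:atom_index_set_conseq} we may write $A=\{a,a+1,\dots,I''_A,-J'_A,\dots,-J''_A\}$ and $B=\{b,b+1,\dots,I''_B,-J'_B,\dots,-J''_B\}$ with $a=I'_A>I'_B=b$. Fix $\alpha=\ror{i_A}{j_A}\in\mathcal{R}^A$ and $\beta=\ror{i_B}{j_B}\in\mathcal{R}^B$; recall that $\alpha<\beta$ means $i_A>i_B$ and $j_A<j_B$, so we must establish both inequalities.

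For the $\varepsilon$-coordinate I would argue purely combinatorially. Since $\lambda$-atom index sets are pairwise disjoint (stated right after \cref{atom_def}), if we had $I''_B\ge a$, then $a$ would belong to the consecutive block $\{b,\dots,I''_B\}=B\cap\Z_{>0}$, contradicting $a\in A\cap\Z_{>0}$ and $A\cap B=\emptyset$. Therefore $I''_B<a$, and hence $i_B\le I''_B<a\le i_A$, giving $i_A>i_B$.

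For the $\delta$-coordinate I would use the bijection between $\lambda$-atom index sets and intervals from \cref{rmk:atom_index_sets_aer_intervals}: $A$ corresponds to $I_A:=[\overline{\lambda}_{I''_A},k_a]$ and $B$ to $I_B:=[\overline{\lambda}_{I''_B},k_b]$. Two distinct equivalence classes of $(\times\text{-}\circ)$-sequences give disjoint intervals, because if $I_A\cap I_B$ contained a point $r$, then $r$ would lie in a $(\times\text{-}\circ)$-sequence of each class, forcing those sequences to intersect and hence belong to the same class. Now $\overline{\lambda}_a\in I_A$ and $\overline{\lambda}_b\in I_B$; because $\overline{\lambda}$ is strictly decreasing in the $\varepsilon$-indices (dominant weight shifted by $\rho$), $\overline{\lambda}_a<\overline{\lambda}_b$. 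Since $I_A,I_B$ are disjoint intervals in $\Z$ witnessing a smaller point in $I_A$ than in $I_B$, we conclude $\max I_A<\min I_B$, i.e.\ $k_a<\overline{\lambda}_{I''_B}$.

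Finally, condition (2) of \cref{atom_def} gives $-\overline{\lambda}_{-j_A}\in I_A$ and $-\overline{\lambda}_{-j_B}\in I_B$, so
\[
-\overline{\lambda}_{-j_A}\le k_a<\overline{\lambda}_{I''_B}\le -\overline{\lambda}_{-j_B}.
\]
The map $j\mapsto -\overline{\lambda}_{-j}$ is strictly increasing (again because $\lambda$ is dominant and shifted by $\rho$), so $j_A<j_B$. Combined with $i_A>i_B$, this yields $\alpha<\beta$. The only subtlety I expect is verifying cleanly that distinct equivalence classes of $(\times\text{-}\circ)$-sequences correspond to \emph{disjoint} intervals in $\Z$; once that is in hand, the proof is just combining strict monotonicity of $\overline{\lambda}$ with the block structure of atom index sets.
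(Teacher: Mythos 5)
Your argument is correct and essentially the same as the paper's: the first half (consecutive blocks plus disjointness of atom index sets forcing $i_A>i_B$) and the second half (disjointness of the two atom intervals, membership of the values $-\overline{\lambda}_{-j_A}$, $-\overline{\lambda}_{-j_B}$ in those intervals, and strict monotonicity of the shifted coordinates forcing $j_A<j_B$) are exactly the steps of the paper's proof. The subtlety you flag, disjointness of the intervals attached to distinct equivalence classes, is genuine but closes just as you sketch (the union of the sequences in one class is itself an interval, since the class is connected under intersections), or even more directly from maximality in \cref{atom_def}: if the two intervals met, then $A\cup B$ would again satisfy conditions (1) and (2), contradicting maximality of $A$, which is the justification the paper itself implicitly invokes.
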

\begin{proof}
Assume $a>b$.

Then $a':=\max (A\cap \mathbb{Z}_{>0}) >b$ and by \cref{atom_def}, $b':=\max (B\cap \mathbb{Z}_{>0}) <a$. 

If $\ror{i}{j} \in\mathcal{R}^A$, $\ror{i'}{j'}\in \mathcal{R}^B$ then $$ b\leq i' \leq b'<a\leq i\leq a'.$$ Thus $i'<i$. 

By \cref{atom_def}, $-\overline{\lambda}_{-j'} \in [\overline{\lambda}_{b}, k_{b'}]$, $-\overline{\lambda}_{-j} \in [\overline{\lambda}_{a}, k_{a'}]$ and  $[\overline{\lambda}_{b}, k_{b'}] \cap [\overline{\lambda}_{a}, k_{a'}]=\emptyset$. Thus $k_{a'}<\overline{\lambda}_b$, and so $-\overline{\lambda}_{-j'}>-\overline{\lambda}_{-j} $ meaning that $\ror{i}{j}< \ror{i'}{j'}$ as required.
\end{proof}

The following two propositions explain that studying the CTD of $\lambda\in \distHW$ reduces to the study of the CTD of its atoms. This is extremely useful, making diagrams which are themselves a single atom into analogues of prime numbers.

\begin{prop}
\label{not same atom prop}
    Let $\lambda\in \distHW$. We have: $$\forall \, \alpha \in \mathcal{R}, \;\; c_{\lambda}(\alpha)= 1 \; \Longrightarrow \; \alpha \in  \mathcal{R}_{\lambda}. $$ 
    That is, if $\ror{i}{j}$ changes the diagram of $\lambda$ then $i,j$ must be part of the same $\lambda$-atom index set.
    
    \comment{
    }
\end{prop}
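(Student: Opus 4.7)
The plan is to translate the hypothesis $c_\lambda(\varepsilon_i-\delta_j)=1$ into a positional statement about the arrow diagram of $\lambda$, and then to place $-j$ in the same atom index set as $i$ by an enlarge-and-invoke-maximality argument using \cref{atom_def}.

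First, I would argue that $c_\lambda(\varepsilon_i-\delta_j)=1$ is equivalent to the sandwich inequality
\[ \overline{\lambda}_i \le -\overline{\lambda}_{-j} < k_i. \]
One implication is the content of \cref{under arrow changes diagram}. For the other implication, I can use the contrapositive argument from the proof of \cref{CTD 1 iff alpha perp nu}, or read it off directly from \cref{CTD}: recall from \cref{Mi def} that $M_i$ equals the number of $<$ and $\times$ symbols in $D_\lambda$ strictly to the left of position $k_i$, and since $-\overline{\lambda}_{-j}$ is strictly increasing in $j$, the indices $j$ in the range $M_i-(k_i-\overline{\lambda}_i-1)\le j \le M_i$ are exactly those with $-\overline{\lambda}_{-j}\in[\overline{\lambda}_i,k_i-1]$. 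Geometrically, this inequality says that the position $-\overline{\lambda}_{-j}$ lies strictly below the $i$-th arrow in the arrow diagram of $\lambda$ (in particular $\overline{\lambda}_i < k_i$).

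Next, I would observe that every positive index $i\in\{1,\dots,m\}$ lies in some $\lambda$-atom index set: the singleton $\{i\}$ satisfies both conditions of \cref{atom_def} (condition~(1) reads $[\overline{\lambda}_i,k_i]=[\overline{\lambda}_i,k_i]$, condition~(2) is vacuous), so by the maximality clause $i\in A$ for some $A\in\mathcal{A}_\lambda$. Write $I':=\min(A\cap\mathbb{Z}_{>0})$ and $I'':=\max(A\cap\mathbb{Z}_{>0})$, so that $I'\le i\le I''$ and the associated interval is $[\overline{\lambda}_{I''},k_{I'}]$.

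The main step is then to show $-j\in A$. Consider the enlargement $A':=A\cup\{-j\}$. Its positive part agrees with that of $A$, so $I'$, $I''$ and condition~(1) of \cref{atom_def} are preserved. For condition~(2) I only need to verify the new element $-j$, i.e.\ to check $-\overline{\lambda}_{-j}\in[\overline{\lambda}_{I''},k_{I'}]$. Since $\overline{\lambda}$ has strictly decreasing positive coordinates and $i\le I''$, one has $\overline{\lambda}_{I''}\le\overline{\lambda}_i$; and since $i\in A$ and $I'\le i$, condition~(1) of \cref{atom_def} forces $[\overline{\lambda}_i,k_i]\subseteq[\overline{\lambda}_{I''},k_{I'}]$, hence $k_i\le k_{I'}$. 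Combining these with the sandwich inequality yields
\[ \overline{\lambda}_{I''} \le \overline{\lambda}_i \le -\overline{\lambda}_{-j} < k_i \le k_{I'}, \]
so condition~(2) holds for $A'$. By the maximality of $A$, $A'=A$, which gives $-j\in A$ and therefore $\alpha=\varepsilon_i-\delta_j\in\mathcal{R}^A\subseteq\mathcal{R}_\lambda$.

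The only nontrivial obstacle is the first step: recasting $c_\lambda(\alpha)=1$ as the clean geometric condition that $-\overline{\lambda}_{-j}$ lies strictly under the $i$-th arrow. Once that translation is established, the atom-index-set argument is a routine verification of the two defining properties followed by a one-line appeal to maximality.
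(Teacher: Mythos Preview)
Your overall strategy---enlarge $A$ to $A\cup\{-j\}$ and invoke maximality---is sound, and it is essentially how the paper concludes as well. The gap is in your first step: the claimed equivalence
\[
c_\lambda(\varepsilon_i-\delta_j)=1 \quad\Longleftrightarrow\quad \overline{\lambda}_i\le -\overline{\lambda}_{-j}<k_i
\]
is false. Only the $\Leftarrow$ direction holds (this is \cref{under arrow changes diagram}). Your argument for $\Rightarrow$ tacitly assumes that every position in $[\overline{\lambda}_i,k_i)$ carries a $<$ or $\times$ symbol in $D_\lambda$, so that the $k_i-\overline{\lambda}_i$ indices $j$ in the CTD range correspond bijectively to the positions of that interval. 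But $[\overline{\lambda}_i,k_i)$ may contain $>$ or $\circ$ symbols that are endpoints $k_{i'}$ of earlier arrows ($i'>i$); in that case there are \emph{fewer} than $k_i-\overline{\lambda}_i$ symbols $<,\times$ in $[\overline{\lambda}_i,k_i)$, and the lower end of the CTD range picks up $j$'s with $-\overline{\lambda}_{-j}<\overline{\lambda}_i$. Concretely, take $\overline{\lambda}=(4\,3\,1\,0\,|\,0\,-1\,-4\,-5)\in\Lambda^+_{4|4}$ (the paper's own running example): for $i=2$ one has $\overline{\lambda}_2=3$, $k_2=6$, $M_2=4$, so the CTD range is $2\le j\le 4$ and hence $c_\lambda(\varepsilon_2-\delta_2)=1$; yet $-\overline{\lambda}_{-2}=1<3=\overline{\lambda}_2$.

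What survives is the upper bound: $j\le M_i$ does give $-\overline{\lambda}_{-j}<k_i\le k_{I'}$. The lower bound $-\overline{\lambda}_{-j}\ge\overline{\lambda}_{I''}$ is exactly where the work lies, and your sandwich inequality does not supply it. The paper handles this by working in $D^{\Sigma^i}_\lambda$ rather than $D_\lambda$: there, \cref{theorem for ror change inequality}(2) guarantees that $[\overline{\lambda}_i,k_i)$ \emph{is} filled with $<,\times,\times>$ symbols, so $c_\lambda(\varepsilon_i-\delta_j)=1$ translates to $\overline{\lambda}_i\le -\overline{\lambda}^i_{-j}<k_i$. One then ``unwinds'' the passage from $D_\lambda$ to $D^{\Sigma^i}_\lambda$ to produce an index $i'\ge i$ with $\overline{\lambda}_{i'}\le -\overline{\lambda}_{-j}$ and with the arrows $i,i+1,\dots,i'$ forming a connected chain ($\overline{\lambda}_s\le k_{s+1}$ for each $s$). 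That chain forces $i'\in A$, hence $\overline{\lambda}_{I''}\le\overline{\lambda}_{i'}\le -\overline{\lambda}_{-j}$, after which your maximality argument goes through verbatim.
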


\begin{proof}
   
Let $\ror{i}{j}\in \mathcal{R}$ be such that $c_{\lambda}(\ror{i}{j})=1$: by the proof of Theorem \ref{theorem for ror change inequality}, this means that $\overline{\lambda}_i\leq -\overline{\lambda}^i_{-j}<k_i$ and that $\overline{\lambda}_i <-\overline{\lambda}^{i-1}_{-j} = -\overline{\lambda}_{-j}+1 \leq k_i$. 

In other words, position $ -\overline{\lambda}^i_{-j}$ is between the endpoints of the $i$-th arrow, and so is $ -\overline{\lambda}^{i-1}_{-j}$. 

Unwinding this, we obtain that there exists $i'\geq i$ so that $\overline{\lambda}_{s}\leq k_{s+1}$ for every $s=i'-1, \ldots, i$, and $\overline{\lambda}_{i'}\leq -\overline{\lambda}_{-j}<k_i $. 

This immediately implies that $i,i', -j$ are in the same $\lambda$-atom index set, as required.
\end{proof}

\begin{prop}
\label{atom ctd prop}
    Let $\lambda\in \distHW$ and let $A$ be a $\lambda$-atom index set. 
    
    Denote the elements of $A$ by $ \{a_1,\dots, a_{m'}, -b_1,\dots, -b_{n'}\}$ so that $0<a_1<a_2<\ldots<a_{m'}$, $0<b_1<b_2<\ldots<b_{n'}$ for all $i, j$.
    Then the CTD of the atom $\lambda^A$ is given by 
    $$ c_{\lambda^A}: \{\ror{i}{j} \, |\, 1\leq i\leq m', \, 1\leq j\leq n'\} \to \{0, 1\},\;\;\;
        c_{\lambda^A}(\ror{i}{j})= c_{\lambda}(\ror{a_i}{b_j})
    $$
\end{prop}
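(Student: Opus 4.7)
The plan is to verify the equality $c_{\lambda^A}(\varepsilon_i-\delta_j) = c_\lambda(\varepsilon_{a_i}-\delta_{b_j})$ directly via the combinatorial characterization of Theorem \ref{theorem for ror change inequality}, which expresses each CTD value in terms of the arrow data $(\overline{\lambda}_\bullet, k_\bullet, M_\bullet)$. The identifications $\overline{\lambda^A}_i = \overline{\lambda}_{a_i}$ and $\overline{\lambda^A}_{-j} = \overline{\lambda}_{-b_j}$ follow immediately from Remark \ref{rmk:atom_index_set_conseq}, so it remains to prove: (I) $k^A_i = k_{a_i}$ for every $i \in \{1, \ldots, m'\}$; and (II) the inequality $M^A_i - (k^A_i - \overline{\lambda^A}_i - 1) \le j \le M^A_i$ is equivalent to $M_{a_i} - (k_{a_i} - \overline{\lambda}_{a_i} - 1) \le b_j \le M_{a_i}$ under the substitution $j \leftrightarrow b_j$. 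Note that the trivial case $\overline{\lambda}_{a_i} = k_{a_i}$ is handled uniformly, since (I) then forces $\overline{\lambda^A}_i = k^A_i$ and both CTD values vanish.

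I plan to prove (I) by downward induction on $i$, mirroring the left-to-right order in which arrows are drawn. The essential ingredient is an \emph{isolation property} of the atom interval $\mathcal{J} := [\overline{\lambda}_{I''}, k_{I'}]$ (with $I' = a_1$ and $I'' = a_{m'}$): (a) the endpoint-eligibility of each position $p \in \mathcal{J}$ (whether $D_\lambda(p) \in \{\circ, >\}$ or $D_\lambda(p) \in \{\times, <\}$) coincides with that of $D_{\lambda^A}(p)$; and (b) no arrow of $\lambda$ whose starting index lies outside $A$ terminates inside $\mathcal{J}$. Both are consequences of the maximality of $A$ in Definition \ref{atom_def}: adjoining to $A$ any index outside $A$ that contributes a non-$\circ$ symbol or an arrow endpoint inside $\mathcal{J}$ — together with any intermediate indices required for contiguity per Remark \ref{rmk:atom_index_set_conseq} — would produce a strictly larger atom index set. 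Granted the isolation property, the inductive hypothesis identifies the previously drawn endpoints inside $\mathcal{J}$ as $\{k_{a_{i+1}}, \ldots, k_{a_{m'}}\}$ in both arrow diagrams, so the rightward scans from $\overline{\lambda}_{a_i}$ in $D_{\lambda^A}$ and $D_\lambda$ encounter the same positions with the same obstructions and terminate at the common first admissible endpoint $k_{a_i}$; condition (1) of Definition \ref{atom_def} ensures $k_{a_i} \in \mathcal{J}$ so neither scan ever leaves the region of agreement.

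For (II), once (I) is known, the upper inequality $j \le M^A_i \iff b_j \le M_{a_i}$ follows immediately from the strict monotonicity of $(b_{j'})$ and $(-\overline{\lambda}_{-j})$, using $M^A_i = \#\{j' : -\overline{\lambda}_{-b_{j'}} < k_{a_i}\}$. For the lower inequality, I plan to show that the $k_{a_i} - \overline{\lambda}_{a_i}$ integers $\{M_{a_i} - (k_{a_i} - \overline{\lambda}_{a_i} - 1), \ldots, M_{a_i}\}$ are all $b$-indices: by tracing through the proof of Theorem \ref{theorem for ror change inequality}, these indices correspond to the $<$ and $\times$ symbols of $D^{\Sigma^{a_i}}_\lambda$ at positions $[\overline{\lambda}_{a_i}, k_{a_i}) \subseteq \mathcal{J}$, and the isolation property forces the underlying negative coordinates to lie in $A$. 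The main obstacle I anticipate is a rigorous proof of the isolation property in the delicate case of a positive index $i' \notin A$ with $\overline{\lambda}_{i'} \in (\overline{\lambda}_{I'}, k_{I'}]$: adjoining such $i'$ may force adjoining all of $i'+1, \ldots, I'-1$ to preserve contiguity, so the argument will invoke the nesting fact that $k_{i'} > k_{I'}$ whenever $\overline{\lambda}_{i'} \le k_{I'}$ (forced because arrow $i'$ is drawn after arrow $I'$ and must skip the used endpoint $k_{I'}$) to verify that the enlarged set still satisfies the axioms of Definition \ref{atom_def}, contradicting the maximality of $A$.
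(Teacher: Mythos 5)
Your proposal is correct and follows essentially the same route as the paper: your ``isolation property'' is precisely the paper's key claim that, by maximality of the $\lambda$-atom index set, any arrow of $D_{\lambda}$ that begins at, ends at or goes over a point of $[\overline{\lambda}_{a_{m'}},k_{a_1}]$ lies entirely inside it, so the arrows (and hence the data $k_\bullet$, $\overline{\lambda}_\bullet$ entering \cref{theorem for ror change inequality}) coincide for $D_{\lambda}$ and $D_{\lambda^A}$ on that segment. You merely spell out the details the paper leaves implicit, namely the maximality argument for isolation and the index bookkeeping $M_{a_i}=M^A_i+(J'-1)$, $b_j=j+(J'-1)$ that matches the two windows.
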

\begin{proof}
    
    Recall that $\overline{\lambda^A}=(\overline{\lambda}_{a_1}\,\dots  \overline{\lambda}_{a_{m'}}\,| \overline{\lambda}_{-b_1}\, \dots \, \overline{\lambda}_{-b_{n'}})$, so the weight diagram of $\lambda^A$ is given by removing all the symbols whose coordinates lie outside the segment $[\overline{\lambda}_{a_{m'}}, k_{a_{1}}]$. By the definition of a $\lambda$-atom index set, if an arrow of the diagram $D_{\lambda}$ begins at, ends at or goes over any point in $[\overline{\lambda}_{a_{m'}}, k_{a_{1}}]$, then the said arrow must lie entirely in $[\overline{\lambda}_{a+m'-1}, k_a]$. Thus any such arrow of the diagram $D_{\lambda}$ remains intact as an arrow in $D_{\lambda^A}$ and any arrow in $D_{\lambda^A}$ was an arrow in $D_{\lambda}$ as well.
    
    This implies the required statement.
\end{proof}

\begin{remark}
    The corollary has interesting implication for weight diagrams. 
    
    Let $\lambda \in \distHW$ with $\mathcal{A}_{\lambda}=\{A_1,\dots, A_s\}$, so that $\max A_i\cap \mathbb{Z}_{>0} > \max A_{i+1}\cap \mathbb{Z}_{>0}$ for every $i=1, \ldots, s-1$.

Set $[d_i, d'_i]$ to be the endpoints of the segment in $\mathbb{R}$ corresponding to the $\lambda$-atom $\overline{\lambda}^{A_i}$ (that is, the smallest segment containing all the coordinates of $\overline{\lambda}^{A_i}$ and its arrow endpoints).
    
    For any $\Sigma\in \mathbb{S}$, we have: $D^{\Sigma}_\lambda(p)=D_\lambda(p)$ for $p\notin \bigcup_i A_i$, and there exists $0\leq k \leq s+1$ so that the subdiagrams $D^{\Sigma}_\lambda([d_i, d'_i])$ coincide with $D^{\distinguished}_{\lambda^{A_i}}$ for $i> k$ the subdiagrams $D^{\Sigma}_\lambda([d_i, d'_i])$ coincide with $D^{\antidist}_{\lambda^{A_i}}$ for $i< k$.

    In other words, in each $\lambda$-atom segment except (perhaps) for one, the diagram $D^{\Sigma}_\lambda$ looks either like the diagram of the corresponding atom in the distinguished base, or the anti-distinguished base.
\end{remark}

\section{Counterexample and Correction to Tail Conjecture}
\label{tail conj section}

The following section is intended as a response to \cite[Conjecture 6.3.1]{tailconjpaper}.
\subsection{Iso-sets}

\begin{definition}
We call $S\subset\Delta_1$ an \textit{iso-set} if $S$ forms a basis of an isotropic subspace in $\mathfrak{h}^*$, i.e. if $S$ is linearly independent and $(S|S)=0$. 
\end{definition}

\begin{remark}\label{rmk:iso-set_into_a_base}
      Let $S$ be a subset $S\subset \Delta_1$ such that $(\alpha|\beta)=0$ for any $\alpha, \beta\in S$. The following conditions are equivalent:
    \begin{itemize}
        \item The set $S$ is an iso-set.
        \item The set $S$ is linearly independent in $\mathfrak{h}^*$.
        \item $\forall \alpha\in S, \;\; -\alpha\notin S$.
        \item There exists a base $\Sigma$ (not necessarily in $\mathbb{S}$) such that $S\subset \Sigma$.
    \end{itemize}

\end{remark}

\begin{remark}
    Let $S$ be an iso-set. Then any $\alpha\in S$ may be replaced by $-\alpha$, and we again obtain an iso-set of the same size as $S$: indeed, every $\beta\in S$ satisfied: $(\beta|\alpha)=0$, so $(\pm \beta|\pm \alpha)=0$.
\end{remark}
    In particular, we may define:
    
    \begin{definition}
        Given an iso-set $S$, we may define the iso-set $||S|| \subset \mathcal{R}$ such that: 
    \begin{itemize}
        \item $\forall \alpha \in S$ either $\alpha \in ||S||$ or $-\alpha \in ||S||$,
        \item $\forall \beta\in ||S||$, exactly one of the roots $\pm \beta$ belongs to $S$.
    \end{itemize} 

    \end{definition}
The following lemma is obvious:
\begin{lemma}
   For any iso-set $S$, $\sharp S=\sharp||S||$. 
\end{lemma}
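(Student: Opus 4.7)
The plan is to exhibit an explicit bijection $\phi: S \to \|S\|$ and verify that it is well-defined and injective; surjectivity is immediate from the defining conditions of $\|S\|$.

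First I would define $\phi$ as follows: for $\alpha \in S$, set $\phi(\alpha) := \alpha$ if $\alpha \in \mathcal{R}$, and $\phi(\alpha) := -\alpha$ otherwise. This is well-defined because $\mathcal{R} = \odd{\Delta^+_{dist}}$ and every odd root $\alpha \in \odd{\Delta}$ satisfies exactly one of $\alpha \in \mathcal{R}$ or $-\alpha \in \mathcal{R}$ (since $\odd{\Delta} = \mathcal{R} \sqcup (-\mathcal{R})$). Moreover $\phi(\alpha) \in \|S\|$ by the first defining property of $\|S\|$.

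Next I would show injectivity. Suppose $\alpha_1, \alpha_2 \in S$ satisfy $\phi(\alpha_1) = \phi(\alpha_2) =: \beta \in \mathcal{R}$. Then each $\alpha_i$ is either $\beta$ or $-\beta$. If $\alpha_1 \neq \alpha_2$, then (up to relabeling) $\alpha_1 = \beta$ and $\alpha_2 = -\beta$, so both $\beta$ and $-\beta$ would lie in $S$. But by Remark \ref{rmk:iso-set_into_a_base}, an iso-set $S$ satisfies $-\alpha \notin S$ whenever $\alpha \in S$, contradicting this. Hence $\alpha_1 = \alpha_2$, and $\phi$ is injective.

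Finally, surjectivity is built into the definition of $\|S\|$: for every $\beta \in \|S\|$, exactly one of $\pm \beta$ lies in $S$, and that element maps to $\beta$ under $\phi$. Therefore $\phi$ is a bijection and $\sharp S = \sharp \|S\|$. The only step requiring any care is the injectivity argument, which reduces immediately to the linear independence property of an iso-set recorded in Remark \ref{rmk:iso-set_into_a_base}; no real obstacle arises.
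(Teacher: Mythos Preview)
Your argument is correct: the map $\phi$ you define is precisely the natural bijection, and your use of Remark \ref{rmk:iso-set_into_a_base} to rule out $\beta,-\beta\in S$ handles the only nontrivial point. The paper itself gives no proof, declaring the lemma ``obvious,'' so your writeup simply makes explicit what the author left to the reader.
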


\begin{remark}
        In terms of the rectangular visualization of $\mathcal{R}$, an iso-set $S\subset \mathcal{R}$ is any subset of $\mathcal{R}$ where no two elements lie in the same "row" or "column" of the rectangle. 
        
        In particular, every incomparable set $S\subset \mathcal{R}$ is an iso-set.
\end{remark}

\begin{example}
    In this example we consider the superalgebra $\mathfrak{gl}(2|2)$. 
    
    Let $S_1=\{\ror{1}{1},\ror{2}{2}\}$. The set $S_1$ is an iso-set, and $S_1\subset \mathcal{R}$. Further, we may define a base $\Sigma\in \mathbb{S}$ such that $S_1\subset \Sigma$: we take $\Sigma$ to be the base given by the word $\varepsilon \delta \varepsilon \delta$. In fact, we have: $S_1= \Sigma\cap \mathcal{R}$.
     
    Iso-sets need not be subsets of $\mathcal{R}$. 
    Consider $S_2 = \{\delta_1-\varepsilon_2\}\subset \Sigma$ which is again a subset of the same base $\Sigma$ as above. This is obviously an iso-set. Notice that $\delta_1-\varepsilon_2$ is not a right odd root, as such $S_1\not\subset \mathcal{R}$.

    Finally, we may consider iso-sets that can not be embedded into any $\Sigma\in \mathbb{S}$ (although this section focuses on iso-sets that can be embedded into such bases). For example if we consider the base 
    $\Sigma = \{\varepsilon_2-\delta_1, -\varepsilon_1+\delta_1, \varepsilon_1-\delta_2\}$ (here $\Sigma = r_{\varepsilon_1-\delta_1} \circ s_{\varepsilon_1-\varepsilon_2} (\distinguished)$). Take $S_3 = \{\ror{2}{1}, \ror{1}{2}\}$. Notice that $S_3$ is an iso-set but $\ror{2}{1} < \ror{1}{2}$ as such $S_3$ by Lemma \ref{ror are incomparable} does not correspond to a base in $\mathbb{S}$.
\end{example}

Let $\nu\in\Lambda_{m|n}$. 
\begin{definition}
    We define $s(\nu)$ to be the cardinality of the largest iso-set that can be embedded into some $\Sigma\in \mathbb{S}$ and is orthogonal to $\nu$ with respect to the pairing $(-|-)$. 
\end{definition} 

It is not immediately obvious how to compute $s(\nu)$ (as one may need to consider every single base). The following lemma simplifies this problem by implying one may consider only iso-sets composed of right odd roots. 

\begin{lemma}
\label{iso set absolute value lemma}
    Let $S$ be an iso-set. Then the following are equivalent:
    \begin{enumerate}
        \item There exists a base $\Sigma \in \mathbb{S}$ such that $S\subset \Sigma$. 
        \item The set $||S||$ can be embedded into some $\Sigma'\in \mathbb{S}$.
        \item The set $||S||$ is incomparable.
        
    \end{enumerate}
\end{lemma}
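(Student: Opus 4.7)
The plan is to establish (3)$\Rightarrow$(2)$\Rightarrow$(1)$\Rightarrow$(3), using \cref{ror are incomparable} to convert between bases in $\mathbb{S}$ and incomparable subsets of $\mathcal{R}$, and the formula in \cref{odd ref def} to convert between $S$ and $||S||$ via odd reflections.

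The implication (3)$\Rightarrow$(2) is immediate from \cref{ror are incomparable}: an incomparable subset $||S||\subset\mathcal{R}$ equals $\Sigma'\cap\mathcal{R}$ for some $\Sigma'\in\mathbb{S}$, so in particular $||S||\subset\Sigma'$.

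The main step is (2)$\Rightarrow$(1). I would partition $S=S_+\sqcup S_-$ with $S_+:=S\cap\mathcal{R}$ and $S_-:=S\setminus\mathcal{R}$, so that $||S||=S_+\sqcup\{-\beta\,|\,\beta\in S_-\}$. Given $||S||\subset\Sigma'\in\mathbb{S}$, I would enumerate $S_-=\{\beta_1,\dots,\beta_k\}$ and apply the odd reflections $r_{-\beta_1},r_{-\beta_2},\dots,r_{-\beta_k}$ successively to $\Sigma'$. The key observation is that because $S$ is an iso-set, combined with \cref{rmk:iso-set_into_a_base} (so $-\alpha\notin S$ for any $\alpha\in S$), the roots $\{-\beta_1,\dots,-\beta_k\}\cup S_+$ are pairwise distinct and pairwise orthogonal. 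By \cref{odd ref def}, each $r_{-\beta_i}$ then fixes every element of $S_+$ and every $-\beta_j$ with $j\neq i$. This guarantees that at each stage of the sequence, the next root $-\beta_{i+1}$ is still in the current base (so the reflection is valid) and that $S_+$ is preserved throughout. After the last reflection the resulting base contains $S_+\cup\{r_{-\beta_i}(-\beta_i)\,|\,1\leq i\leq k\}=S_+\cup\{\beta_1,\dots,\beta_k\}=S$, and it lies in $\mathbb{S}$ since odd reflections do not alter the set of positive even roots.

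The implication (1)$\Rightarrow$(3) follows by the mirror-image argument: starting from $S\subset\Sigma\in\mathbb{S}$, applying the odd reflections $r_{\beta_1},\dots,r_{\beta_k}$ for $\beta_i\in S_-$ (again valid by the iso-set condition) yields a base $\Sigma''\in\mathbb{S}$ containing $||S||$. Then $||S||\subset\Sigma''\cap\mathcal{R}$, which is incomparable by \cref{ror are incomparable}, and hence so is $||S||$.

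The only subtle point is verifying at each stage that the root we reflect by is still in the current base; this is exactly where the iso-set hypothesis enters, via the three-case formula in \cref{odd ref def} applied to pairs of distinct mutually orthogonal odd roots. Once this routine bookkeeping is done, the proof reduces to a single application of \cref{ror are incomparable} together with the invariance of $\mathbb{S}$ under odd reflections.
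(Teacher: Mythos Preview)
Your proof is correct and follows essentially the same approach as the paper: both arguments use \cref{ror are incomparable} to handle the equivalence between (2) and (3), and both convert between $S$ and $||S||$ by successively applying odd reflections with respect to the elements of $S\setminus\mathcal{R}$ (or their negatives), exploiting the iso-set condition $(S|S)=0$ together with \cref{odd ref def} to ensure that the remaining elements of the set stay simple at each step. The only cosmetic difference is that you organize the argument as a cycle (3)$\Rightarrow$(2)$\Rightarrow$(1)$\Rightarrow$(3), whereas the paper proves (2)$\Leftrightarrow$(3) and then (1)$\Leftrightarrow$(2) separately.
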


\begin{proof}
    We first note that (2) and (3) are equivalent by Lemma \ref{ror are incomparable}: by this Lemma, there is a bijection $\Sigma \longmapsto \Sigma\cap \mathcal{R}$ between bases $\Sigma \in \mathbb{S}$ and sets of incomparable right odd roots.

    We now prove (1) $\implies$ (2). Assume $S\subset \Sigma \in \mathbb{S}$. 
    If $S \subset \mathcal{R}$ then $||S|| = S$ and there is nothing to prove. Otherwise, let $\delta_j - \varepsilon_i \in \Sigma$. Consider the base $r_{\delta_j - \varepsilon_i}\Sigma$ (which is of course also in $\mathbb{S}$). As $(S|S)=0$ we know that every $\alpha \in S$ such that $\delta_j - \varepsilon_i \neq \alpha$ is an element of $r_{\delta_j - \varepsilon_i}\Sigma$. Additionally, $r_{\delta_j - \varepsilon_i}(\delta_j - \varepsilon_i)=\ror{i}{j}$. Thus we obtain that $$S' = (S\setminus \{\delta_j - \varepsilon_i\})\cup \{\ror{i}{j}\}$$ is an iso-set that can be embedded into $r_{\delta_j - \varepsilon_i}\Sigma$. 
    By performing this process for every element of $S$ that is not a right odd root, we construct an iso-set $S''\subset \mathcal{R}$ such that for every $\alpha\in\mathbb{S}$ either $\alpha$ or $-\alpha$ is in $S''$. Similarly, for every $\alpha \in S''$, either $\alpha\in S$ or $-\alpha \in  S$. Thus $S''=||S||$, and we may embed it into the base $\Sigma'\in \mathbb{S}$ which is obtained by performing odd reflections on $\Sigma$ by every root of the form $\delta_j-\varepsilon_i\in S$. 

    Showing (2) $\implies$ (1) is similar. Assume $||S|| \subset \Sigma \in \mathbb{S}$. We simply perform reflections by right odd roots $\alpha \in ||S|| \setminus S$ on $\Sigma$ to find a base $\Sigma'$ that contains $S$.
\end{proof}

\begin{lemma}
\label{iso-set of ror}
    Let $\nu\in \Lambda_{m|n}$, and let $S$ be the largest cardinality iso-set such that $(S|\nu)=0$ and $S$ can be embedded into some base $\Sigma \in \mathbb{S}$. Then there exists an iso-set $S' \subset \mathcal{R}$ of right odd roots such that $\# S'=\# S$, $(S'|\nu)=0$ and $S'$ can be embedded in some $\Sigma'\in \mathbb{S}$. 
\end{lemma}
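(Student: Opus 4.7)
The plan is to take $S':=||S||$ and verify the three required properties directly, leveraging \cref{iso set absolute value lemma} for two of them.

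First, observe that by hypothesis $S$ is an iso-set embedded in some base $\Sigma\in\mathbb{S}$. Therefore condition (1) of \cref{iso set absolute value lemma} holds, which gives us conditions (2) and (3) for free: $||S||$ is an incomparable subset of $\mathcal{R}$, and there exists some $\Sigma'\in\mathbb{S}$ such that $||S||\subset\Sigma'$. This already takes care of the ``embeds into $\mathbb{S}$'' requirement and shows $S'\subset\mathcal{R}$.

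Next, the equality $\sharp S'=\sharp S$ is immediate from the definition of $||S||$, which is recorded just before the lemma: the map $S\to||S||$ sending $\alpha\mapsto\alpha$ if $\alpha\in\mathcal{R}$ and $\alpha\mapsto-\alpha$ otherwise is a bijection.

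Finally, I would verify orthogonality to $\nu$. For each $\alpha\in S'=||S||$, either $\alpha\in S$ or $-\alpha\in S$ by the very definition of $||S||$. Since the bilinear form is linear in each slot, $(\alpha|\nu)=-(-\alpha|\nu)$, so $(\alpha|\nu)=0$ if and only if $(-\alpha|\nu)=0$. The hypothesis $(S|\nu)=0$ therefore transfers to $(S'|\nu)=0$.

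There is really no serious obstacle here: the lemma is essentially a bookkeeping consequence of \cref{iso set absolute value lemma} combined with the elementary observation that the orthogonality condition $(\alpha|\nu)=0$ is invariant under $\alpha\mapsto-\alpha$. The only thing worth flagging is that one should not confuse ``iso-set'' (which in general need not lie in $\mathcal{R}$) with ``incomparable subset of $\mathcal{R}$''; the bridge between the two is precisely the operation $S\mapsto ||S||$, and \cref{iso set absolute value lemma} guarantees that this bridge respects embeddability in $\mathbb{S}$.
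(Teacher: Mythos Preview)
Your proof is correct and follows exactly the same approach as the paper: set $S':=||S||$, invoke \cref{iso set absolute value lemma} for incomparability and embeddability in $\mathbb{S}$, and observe that orthogonality to $\nu$ is preserved under $\alpha\mapsto -\alpha$.
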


\begin{proof}

    Taking $S':=||S||$, we have, by the previous Lemma \ref{iso set absolute value lemma}: $S'\subset \mathcal{R}$ is an incomparable set, so it can be embedded into some $\Sigma'\in \mathbb{S}$ (One such $\Sigma'$ is given by: $\Sigma'\cap \mathcal{R}=S$). Furthermore, for any $\beta\in S'=||S||$, one of the roots $\pm \beta$ belongs to $S$, so $(\pm \beta|\nu)=0$. Thus $(S'| \nu)=0$ as required.

\end{proof}

By Lemma \ref{iso-set of ror} we can always assume that any maximal cardinality iso-set perpendicular to $\nu$ is a set of right odd roots:

\begin{cor}\label{cor:s_nu_second_definition}
For any $\nu\in \Lambda_{m|n}$, $s(\nu)$ is the size of the largest incomparable set in $\mathcal{R}$ which is orthogonal to $\nu$.
\end{cor}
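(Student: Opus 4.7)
The statement is an essentially immediate consequence of the two preceding lemmas, so my plan is to spell out the two inequalities and cite precisely which lemma gives each one.

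First, by definition $s(\nu)$ is the largest cardinality of an iso-set $S$ orthogonal to $\nu$ that embeds into some $\Sigma \in \mathbb{S}$. Let $M$ denote the largest cardinality of an incomparable subset of $\mathcal{R}$ orthogonal to $\nu$. I will prove $s(\nu) = M$ by showing $s(\nu) \le M$ and $M \le s(\nu)$.

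For the inequality $s(\nu) \le M$: take a maximal iso-set $S$ realizing $s(\nu)$, embedded in some $\Sigma \in \mathbb{S}$ with $(S|\nu) = 0$. Apply Lemma \ref{iso-set of ror} to produce a set $S' \subset \mathcal{R}$ with $\#S' = \#S$, $(S'|\nu) = 0$, that embeds into some $\Sigma' \in \mathbb{S}$. Then by Lemma \ref{iso set absolute value lemma} (equivalently, by Lemma \ref{ror are incomparable}), the fact that $S' \subset \mathcal{R}$ embeds into a base in $\mathbb{S}$ forces $S'$ to be incomparable. Hence $s(\nu) = \#S' \le M$.

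For the reverse inequality $M \le s(\nu)$: let $S'' \subset \mathcal{R}$ be an incomparable subset of size $M$ with $(S''|\nu) = 0$. Being a subset of $\mathcal{R}$ consisting of distinct right odd roots lying in no common "row" or "column" (which is what incomparability gives us after unpacking), $S''$ is automatically an iso-set. By Lemma \ref{ror are incomparable} the incomparable set $S''$ is of the form $\Sigma \cap \mathcal{R}$ for some $\Sigma \in \mathbb{S}$, so $S'' \subset \Sigma$. Thus $S''$ is an iso-set embedded in a base in $\mathbb{S}$ and orthogonal to $\nu$, so $M = \#S'' \le s(\nu)$.

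Combining the two inequalities gives $s(\nu) = M$. There is no substantive obstacle here; the only thing to be careful about is that "incomparable $\subset \mathcal{R}$" automatically implies "iso-set" (because distinct incomparable right odd roots $\eps_i - \delta_j$, $\eps_{i'} - \delta_{j'}$ necessarily have $i \ne i'$ and $j \ne j'$, so $(\eps_i - \delta_j \mid \eps_{i'} - \delta_{j'}) = 0$), which is the mild observation that makes the second direction work.
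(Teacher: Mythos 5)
Your proof is correct and follows essentially the same route as the paper: the paper derives the corollary directly from Lemma \ref{iso-set of ror} (which replaces a maximal iso-set by $||S||\subset\mathcal{R}$) together with the correspondence between incomparable subsets of $\mathcal{R}$ and bases in $\mathbb{S}$ from Lemmas \ref{ror are incomparable} and \ref{iso set absolute value lemma}, exactly the two inequalities you spell out.
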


\begin{remark}
   Recall that the atypicality of $\nu \in \Lambda_{m|n}$ is defined as the maximal cardinality of a iso-set in $\Delta$ which is orthogonal to $\nu$. Such an iso-set $S$ corresponds to an iso-set $||S||\subset\mathcal{R}$ which is again orthogonal to $\nu$, but might not be incomparable.

    This immediately implies:
    $$atyp(\nu)\geq s(\nu)$$
    and this inequality is often strict.
\end{remark}
\cref{cor:s_nu_second_definition} means we may apply the tools developed in Section \ref{b sigma section} to solve questions relating to $s(\nu)$.

In order to simplify the search for a base in $\mathbb{S}$ with the most right odd roots perpendicular to $\nu$, we may consider a tool similar to the CTD shown in Section \ref{arrow diagram section}. 

\begin{definition}\label{def:f_nu}
    Let $\nu\in \Lambda_{m|n}$. We define a map $f_\nu:\mathcal{R}\to \{0,1\}$ such that $f(\ror{i}{j})=1$ if $\nu_i=-\nu_j$ (that is, if $(\nu|\eps_i-\delta_j)=0$) and $f_\nu(\ror{i}{j})=0$ otherwise. 
\end{definition}

\begin{cor}
For any $\nu\in\Lambda_{m|n}$, 
$$s(\nu) = \max_{\Sigma \in \mathbb{S}}\sharp \{ \alpha\in\Sigma \cap \mathcal{R}|f_\nu(\alpha)=1\}.
$$
\end{cor}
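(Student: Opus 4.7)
The plan is to combine two earlier results: Corollary \ref{cor:s_nu_second_definition}, which reformulates $s(\nu)$ as the maximum cardinality of an incomparable subset of $\mathcal{R}$ orthogonal to $\nu$, and Lemma \ref{ror are incomparable}, which gives a bijection between $\mathbb{S}$ and the collection of incomparable subsets of $\mathcal{R}$ via $\Sigma \mapsto \Sigma \cap \mathcal{R}$. The key observation is that by Definition \ref{def:f_nu}, $f_\nu(\alpha) = 1$ is literally the condition $(\alpha|\nu) = 0$, so the right-hand side is counting, for each base $\Sigma\in\mathbb{S}$, the number of right odd roots in $\Sigma$ which are orthogonal to $\nu$.

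First I would prove the inequality $\max_{\Sigma\in\mathbb{S}} \sharp\{\alpha\in \Sigma\cap\mathcal{R}\mid f_\nu(\alpha)=1\} \le s(\nu)$. Fix any $\Sigma\in\mathbb{S}$ and set $T := \{\alpha\in \Sigma\cap\mathcal{R}\mid f_\nu(\alpha)=1\}$. Since $\Sigma\cap\mathcal{R}$ is incomparable by Lemma \ref{ror are incomparable}, its subset $T$ is also incomparable. By the definition of $f_\nu$, every element of $T$ is orthogonal to $\nu$. Hence $T$ is an incomparable subset of $\mathcal{R}$ orthogonal to $\nu$, and Corollary \ref{cor:s_nu_second_definition} yields $\sharp T \le s(\nu)$. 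Taking the maximum over $\Sigma$ proves the inequality.

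For the reverse direction, Corollary \ref{cor:s_nu_second_definition} provides an incomparable subset $S\subset\mathcal{R}$ with $(S|\nu)=0$ and $\sharp S = s(\nu)$. By Lemma \ref{ror are incomparable}, there exists a (unique) $\Sigma\in\mathbb{S}$ such that $\Sigma\cap\mathcal{R} = S$. Since every element of $S$ is orthogonal to $\nu$, we have $f_\nu(\alpha) = 1$ for every $\alpha\in \Sigma\cap\mathcal{R}$, and therefore
$$\sharp\{\alpha\in\Sigma\cap\mathcal{R}\mid f_\nu(\alpha)=1\} = \sharp S = s(\nu),$$
so the maximum on the right-hand side is at least $s(\nu)$.

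Combining both inequalities gives the claimed equality. There is no substantial obstacle here: once Corollary \ref{cor:s_nu_second_definition} and Lemma \ref{ror are incomparable} are in place, the statement is essentially a rephrasing, and the only thing to verify is that passing from an arbitrary base $\Sigma$ to the subset of its right odd roots orthogonal to $\nu$ preserves incomparability, which is immediate.
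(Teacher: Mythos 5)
Your proposal is correct and follows exactly the route the paper intends: the corollary is an immediate consequence of Corollary \ref{cor:s_nu_second_definition} together with the bijection of Lemma \ref{ror are incomparable}, with $f_\nu(\alpha)=1$ being by definition the orthogonality condition $(\alpha|\nu)=0$. Your two inequalities spell out precisely the argument the paper leaves implicit, so there is nothing to add or correct.
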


This function $f_{\nu}$ can be visualized by drawing a diagram similar to the CTD. The right odd roots $\nu$ such that $f(\nu)=1$ are marked with a $\star$.

\begin{example}
    Consider the weight $\nu=(5\,5\,4\,1\,0|0\,-1\,-3\,-5\,-5)$ for $\mathfrak{gl}(5|5)$. Note that $\nu=\overline{\lambda}_\Sigma$ for $\lambda\in \distHW$ with $\overline{\lambda}=(5\, 4\, 2\, -1\, -2| 2\, 1\, -2\, -3\, -5)$ and $\Sigma$ given by $\eps \delta \eps \delta^4 \eps^3$. The diagram for $f_\nu$ is given by 

\begin{equation*}
\label{f nu}
\begin{tikzpicture}[anchorbase,scale=1.1]
\node at (0,1.5) {$\star$};

\node at (0.5,1) {$\star $};
\draw[thick, blue] (0.5,1) circle [radius=0.25cm];
\node at (-0.5, 1) {$\star $};
\draw[thick, blue] (-0.5,1) circle [radius=0.25cm];

\node at (1, 0.5) {$\circ $};
\node at (0, 0.5) {$\star $};
\node at (-1, 0.5) {$\circ $};

\node at (1.5, 0) {$\circ $};
\node at (0.5, 0) {$\circ $};
\node at (-0.5, 0) {$\circ $};
\node at (-1.5, 0) {$\circ $};

\node at (2, -0.5) {$\circ $};
\node at (1, -0.5) {$\circ $};
\node at (0, -0.5) {$\circ $};
\node at (-1, -0.5) {$\circ $};
\node at (-2, -0.5) {$\circ $};

\node at (1.5, -1) {$\circ $};
\node at (0.5, -1) {$\circ $};
\node at (-0.5, -1) {$\circ $};
\node at (-1.5, -1) {$\circ $};

\node at (1, -1.5) {$\circ $};
\node at (0, -1.5) {$\star $};
\node at (-1, -1.5) {$\circ $};

\node at (0.5, -2) {$\circ $};
\node at (-0.5, -2) {$\circ $};

\node at (0, -2.5) {$\star $};


\end{tikzpicture}
\end{equation*} 

The set $S:=\{\ror{1}{4},\ror{2}{5}\}$ (circled in blue in diagram above) of right odd roots is clearly incomparable, and thus defines an iso-set which is orthogonal to $\nu$ and can be embedded into a base in $\mathbb{S}$. It is easy to see that there is no larger incomparable set consisting of odd roots marked by $\star$ (that is, orthogonal to $\nu$), so $s(\nu)=2$. Notice that $atyp(\lambda)=4> s(\nu)$ Since we may take an iso-set $S'=S \cup \{\ror{5}{1},\ror{4}{2}\}$ which is orthogonal to $\nu$ but cannot be embedded into any $\Sigma\in \mathbb{S}$.

\end{example}

\subsection{Longtail}
\mbox{}

\begin{definition}\label{def:longtail}
    Let $\lambda\in \distHW$. Denote:
    $$longtail(\lambda) := \max_{\nu\in Hwt(\lambda)} s(\nu) $$
\end{definition}

We now give our first bound on $longtail(\lambda)$.

Let $\lambda\in \distHW$, and consider the maximal number of $\times$ symbols stacked on top of one another in a single position in $D_{\lambda}^{\Sigma}$ when $\Sigma$ runs over all the bases in $\mathbb{S}$. The following lemma shows that this number is at most $longtail(\lambda)$.

\begin{lemma}\label{lem:longtail_ineq_number_x_stacked}
    Let $\lambda\in \distHW$, and let $\Sigma\in \mathbb{S}$. Consider the weight diagram $D_{\lambda}^{\Sigma}$ and denote by $k$ the maximal number of $\times$ symbols stacked on top of one another in a single position in $D_{\lambda}^{\Sigma}$. Then $$s(\overline{\lambda}_{\Sigma}) \geq k,$$
    and thus $longtail(\lambda)\geq k$.
\end{lemma}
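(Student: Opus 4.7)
The plan is to unpack what $k$ stacked $\times$ symbols in a single position means, and then explicitly construct an incomparable set of $k$ right odd roots orthogonal to $\overline{\lambda}_{\Sigma}$. By \cref{cor:s_nu_second_definition}, producing such a set gives $s(\overline{\lambda}_{\Sigma}) \geq k$, which together with \cref{def:longtail} yields $longtail(\lambda)\geq k$.

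Concretely, suppose position $p\in\Z$ in $D_{\lambda}^{\Sigma}$ contains $k$ symbols $\times$. By the definition of a weight diagram, this means there exist $k$ distinct indices $i_1<i_2<\ldots<i_k$ in $\{1,\ldots,m\}$ and $k$ distinct indices $j_1<j_2<\ldots<j_k$ in $\{1,\ldots,n\}$ with
\[
(\overline{\lambda}_{\Sigma})_{i_s}=p \quad\text{and}\quad -(\overline{\lambda}_{\Sigma})_{-j_s}=p \quad\text{for every } s=1,\ldots,k.
\]
I would then set $\alpha_s:=\eps_{i_s}-\delta_{j_s}\in\mathcal{R}$ and verify the three required properties of $S:=\{\alpha_1,\ldots,\alpha_k\}$.

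First, $S$ is an iso-set: for $s\neq t$, using $(\eps_i-\delta_j|\eps_{i'}-\delta_{j'})=\delta_{i,i'}-\delta_{j,j'}$, the choice of distinct $i_s$ and distinct $j_s$ forces $(\alpha_s|\alpha_t)=0$, and of course $(\alpha_s|\alpha_s)=0$. Second, $S$ is orthogonal to $\overline{\lambda}_{\Sigma}$: one computes $(\overline{\lambda}_{\Sigma}|\alpha_s)=(\overline{\lambda}_{\Sigma})_{i_s}+(\overline{\lambda}_{\Sigma})_{-j_s}=p+(-p)=0$. Third, $S$ is incomparable in $\mathcal{R}$: since both sequences $(i_s)$ and $(j_s)$ are strictly increasing, for $s<t$ neither $i_s\geq i_t$ nor $j_s\geq j_t$ holds, so $\alpha_s$ and $\alpha_t$ cannot be compared under the partial order on $\mathcal{R}$.

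There is no real obstacle here — the only care needed is in matching the index conventions (that in a single position containing $k$ stacked $\times$'s, we genuinely get $k$ distinct indices on each side, which is just the definition of the diagram). Once the set $S$ is produced, by \cref{cor:s_nu_second_definition} we conclude $s(\overline{\lambda}_{\Sigma})\geq \sharp S=k$, and then $longtail(\lambda)=\max_{\Sigma'\in\mathbb{S}}s(\overline{\lambda}_{\Sigma'})\geq s(\overline{\lambda}_{\Sigma})\geq k$, as claimed.
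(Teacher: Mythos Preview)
Your proof is correct and essentially identical to the paper's own argument: both pick a position $p$ with $k$ stacked $\times$ symbols, extract indices $i_1<\cdots<i_k$ and $j_1<\cdots<j_k$ with $(\overline{\lambda}_\Sigma)_{i_s}=p=-(\overline{\lambda}_\Sigma)_{-j_s}$, and observe that $S=\{\eps_{i_s}-\delta_{j_s}\}$ is an incomparable set of right odd roots orthogonal to $\overline{\lambda}_\Sigma$. Your version is slightly more explicit in invoking \cref{cor:s_nu_second_definition} and in verifying the pairwise orthogonality, but the substance is the same.
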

\begin{proof}
   Let $p\in \mathbb{Z}$ such that $D^\Sigma _\lambda(p)\in \{\times^k, \times^k >, \times^k <\}$ (which is to say $p$ is the position with the most $\times$ symbols stack on top of each other). 

    Then there exist indices $1\le i_1 < \dots < i_k \le m$ and $1\le j_1 < \dots < j_k \le n$ such that $$-(\overline{\lambda}_\Sigma)_{-j_l}=(\overline{\lambda}_\Sigma)_{i_l} = p$$ for $1\le l \le k$.  

    Define $S = \{\ror{i_1}{j_1},\dots, \ror{i_k}{j_k}\}$. This is an incomparable set, since for $l< l'$ for $i_{l} < i_{l'}$ and $j_{l} < j_{l'}$, the right odd roots $\ror{i_l}{j_l}$ and $\ror{i_{l'}}{j_{l'}}$ are incomparable. Moreover, any $\ror{i_l}{j_l}\in S$ is orthogonal to $\overline{\lambda}_{\Sigma}$.

    Thus $S$ is an iso-set orthogonal to $\overline{\lambda}_{\Sigma}$ and so $$s(\overline{\lambda}_\Sigma) \ge \sharp S = k.$$
\end{proof}

In \cite[6.3]{tailconjpaper}, it is stated that that the above is an equality. This is not the case, as can be seen in the following example:

\begin{example}
    To show that the equality need not hold, consider $\lambda\in \distHW$ for $\fgl{4}{4}$ given by $\overline{\lambda} = (4\,3\,1\,0|0\, -1\,-3\,-4)$. Let $\Sigma\in \mathbb{S}$ be the base given by the word $\varepsilon \delta^4 \varepsilon^3$. Then $$\overline{\lambda}_\Sigma = (4\, 6\, 5\, 2|-2\, -4\,-5 \, -6).$$ Then $D^{\Sigma}_\lambda$ is given by  
\begin{equation*}
    \begin{tikzpicture}
    
        \draw[thick, -] (-6.5 ,0)--(6.5, 0);
        
        \draw[thick, fill=white] (-6,0) circle [radius=0.25cm];
        \node[] at (-6,-0.5) {-1};
        
        \draw[thick, fill=white] (-4.5,0) circle [radius=0.25cm];
        \node[] at (-4.5,-0.5) {0};
        
        \draw[thick, fill=white] (-3,0) circle [radius=0.25cm];
        \node[] at (-3,-0.5) {1};

        \draw[thick, -] (-1.5,0)--(-1.5 + 0.25 ,0.25);
        \draw[thick, -] (-1.5,0)--(-1.5 - 0.25 ,0.25);
        \draw[thick, -] (-1.5,0)--(-1.5 + 0.25 ,-0.25);
        \draw[thick, -] (-1.5,0)--(-1.5 - 0.25 ,-0.25);
        \node[] at (-1.5,-0.5) {2};
        
        \draw[thick, fill=white] (0,0) circle [radius=0.25cm];
        \node[] at (0,-0.5) {3};
        
        \draw[thick, -] (1.5,0)--(1.5 + 0.25 ,0.25);
        \draw[thick, -] (1.5,0)--(1.5 - 0.25 ,0.25);
        \draw[thick, -] (1.5,0)--(1.5 + 0.25 ,-0.25);
        \draw[thick, -] (1.5,0)--(1.5 - 0.25 ,-0.25);
        \node[] at (1.5,-0.5) {4};
        
        \draw[thick, -] (3,0)--(3 + 0.25 ,0.25);
        \draw[thick, -] (3,0)--(3 - 0.25 ,0.25);
        \draw[thick, -] (3,0)--(3 + 0.25 ,-0.25);
        \draw[thick, -] (3,0)--(3 - 0.25 ,-0.25);
        \node[] at (3,-0.5) {5};
        
        \draw[thick, -] (4.5,0)--(4.5 + 0.25 ,0.25);
        \draw[thick, -] (4.5,0)--(4.5 - 0.25 ,0.25);
        \draw[thick, -] (4.5,0)--(4.5 + 0.25 ,-0.25);
        \draw[thick, -] (4.5,0)--(4.5 - 0.25 ,-0.25);
        \node[] at (4.5,-0.5) {6};
        
        \draw[thick, fill=white] (6,0) circle [radius=0.25cm];
        \node[] at (6,-0.5) {7};
        
        
        
        
    \end{tikzpicture}
\end{equation*}
  and $f_{\overline{\lambda}_\Sigma}$ is given by the diagram 
    
\begin{equation*}
    \begin{tikzpicture}[anchorbase,scale=1.1]
\node at (0,1.5) {$\circ$};

\node at (0.5,1) {$\star $};
\node at (-0.5, 1) {$\circ $};

\node at (1, 0.5) {$\circ $};
\node at (0, 0.5) {$\circ $};
\node at (-1, 0.5) {$\circ $};

\node at (1.5, 0) {$\circ $};
\node at (0.5, 0) {$\star $};
\node at (-0.5, 0) {$\circ $};
\node at (-1.5, 0) {$\star $};

\node at (1, -0.5) {$\circ $};
\node at (0, -0.5) {$\circ $};
\node at (-1, -0.5) {$\circ $};

\node at (0.5, -1) {$\circ $};
\node at (-0.5, -1) {$\circ $};

\node at (0, -1.5) {$\star $};


\end{tikzpicture}
\end{equation*} 

Notice that the most $\times$ symbols in any given position of $D^{\Sigma}_\lambda$ is $1$. On the other hand, the iso-set $\Sigma \cap \mathcal{R}=\{\ror{1}{1},\ror{2}{4}\}$ is orthogonal to $\overline{\lambda}_\Sigma$ and $s(\overline{\lambda}_{\Sigma})\geq \sharp \Sigma \cap \mathcal{R} = 2$.
\end{example}

\subsection{The Tail Conjecture}

\begin{definition}
\label{dagger space}
We denote by $\Lambda^\dagger$ the set of weight diagrams with the following properties: 
\begin{itemize} 
    
    \item At most one position in the diagram contains more than one of the symbols $>,<,\times$. If such a position exists, it is given by $\times^k$ for some $k\ge 2$, and there are no $\times$ symbols to the left of this position.
    
    \item The symbol $\circ$ appears between the two leftmost positions containing symbols $\times$.
\end{itemize}
\end{definition}

\begin{remark}
    Notice that for every $D\in \Lambda^\dagger$ there exist $\lambda \in \distHW$ and $\Sigma \in \mathbb{S}$ such that $D= D^\Sigma _\lambda$.
\end{remark}
To clarify, here are some examples of diagrams contained in $\Lambda^\dagger$

    \begin{equation*}
    \begin{tikzpicture}
    
        \draw[thick, -] (-6.5 ,0)--(6.5, 0);
        
        \draw[thick, fill=white] (-6,0) circle [radius=0.25cm];
        \node[] at (-6,-0.5) {-1};
        
        \draw[thick, -] (-4.5 -0.25, 0)--(-4.5 + 0.25, 0.25);
        \draw[thick, -] (-4.5 -0.25, 0)--(-4.5 + 0.25, -0.25);
        \node[] at (-4.5,-0.5) {0};
        
        \draw[thick, -] (-3 +0.25, 0)--(-3 - 0.25, 0.25);
        \draw[thick, -] (-3 +0.25, 0)--(-3 - 0.25, -0.25);
        \node[] at (-3,-0.5) {1};
        
        \draw[thick, -] (-1.5,0)--(-1.5 + 0.25 ,0.25);
        \draw[thick, -] (-1.5,0)--(-1.5 - 0.25 ,0.25);
        \draw[thick, -] (-1.5,0)--(-1.5 + 0.25 ,-0.25);
        \draw[thick, -] (-1.5,0)--(-1.5 - 0.25 ,-0.25);
        \node[] at (-1.5,-0.5) {2};

        \draw[thick, -] (-1.5, 0 + 0.65)--(-1.5 + 0.25 ,0.25 + 0.65);
        \draw[thick, -] (-1.5, 0 + 0.65)--(-1.5 - 0.25 ,0.25 + 0.65);
        \draw[thick, -] (-1.5, 0 + 0.65)--(-1.5 + 0.25 ,-0.25 + 0.65);
        \draw[thick, -] (-1.5, 0 + 0.65)--(-1.5 - 0.25 ,-0.25 + 0.65);
        
        \draw[thick, -] (0 +0.25, 0)--(0 - 0.25, 0.25);
        \draw[thick, -] (0 +0.25, 0)--(0 - 0.25, -0.25);
        \node[] at (0,-0.5) {3};
        
        \draw[thick, fill=white] (1.5,0) circle [radius=0.25cm];
        \node[] at (1.5,-0.5) {4};
        
        \draw[thick, -] (3,0)--(3 + 0.25 ,0.25);
        \draw[thick, -] (3,0)--(3 - 0.25 ,0.25);
        \draw[thick, -] (3,0)--(3 + 0.25 ,-0.25);
        \draw[thick, -] (3,0)--(3 - 0.25 ,-0.25);
        \node[] at (3,-0.5) {5};

        \draw[thick, fill=white] (4.5,0) circle [radius=0.25cm];
        \node[] at (4.5,-0.5) {6};
        
        \draw[thick, fill=white] (6,0) circle [radius=0.25cm];
        \node[] at (6,-0.5) {7};
        
        
        
        
    \end{tikzpicture}
\end{equation*}

    \begin{equation*}
    \begin{tikzpicture}
    
        \draw[thick, -] (-6.5 ,0)--(6.5, 0);
        
        \draw[thick, fill=white] (-6,0) circle [radius=0.25cm];
        \node[] at (-6,-0.5) {-1};
        
        \draw[thick, fill=white] (-4.5,0) circle [radius=0.25cm];
        \node[] at (-4.5,-0.5) {0};
        
        \draw[thick, fill=white] (-3,0) circle [radius=0.25cm];
        \node[] at (-3,-0.5) {1};
        
        \draw[thick, -] (-1.5,0)--(-1.5 + 0.25 ,0.25);
        \draw[thick, -] (-1.5,0)--(-1.5 - 0.25 ,0.25);
        \draw[thick, -] (-1.5,0)--(-1.5 + 0.25 ,-0.25);
        \draw[thick, -] (-1.5,0)--(-1.5 - 0.25 ,-0.25);
        \node[] at (-1.5,-0.5) {2};

        \draw[thick, -] (-1.5, 0 + 0.65)--(-1.5 + 0.25 ,0.25 + 0.65);
        \draw[thick, -] (-1.5, 0 + 0.65)--(-1.5 - 0.25 ,0.25 + 0.65);
        \draw[thick, -] (-1.5, 0 + 0.65)--(-1.5 + 0.25 ,-0.25 + 0.65);
        \draw[thick, -] (-1.5, 0 + 0.65)--(-1.5 - 0.25 ,-0.25 + 0.65);
        
        \draw[thick, -] (-1.5, 0 + 2*0.65)--(-1.5 + 0.25 ,0.25 + 2*0.65);
        \draw[thick, -] (-1.5, 0 + 2*0.65)--(-1.5 - 0.25 ,0.25 + 2*0.65);
        \draw[thick, -] (-1.5, 0 + 2*0.65)--(-1.5 + 0.25 ,-0.25 + 2*0.65);
        \draw[thick, -] (-1.5, 0 + 2*0.65)--(-1.5 - 0.25 ,-0.25 + 2*0.65);
        
        \draw[thick, fill=white] (0,0) circle [radius=0.25cm];
        \node[] at (0,-0.5) {3};
        
        \draw[thick, fill=white] (1.5,0) circle [radius=0.25cm];
        \node[] at (1.5,-0.5) {4};
        
        \draw[thick, fill=white] (3,0) circle [radius=0.25cm];
        \node[] at (3,-0.5) {5};

        \draw[thick, fill=white] (4.5,0) circle [radius=0.25cm];
        \node[] at (4.5,-0.5) {6};
        
        \draw[thick, fill=white] (6,0) circle [radius=0.25cm];
        \node[] at (6,-0.5) {7};
        
        
        
        
    \end{tikzpicture}
\end{equation*}

And some examples of diagrams not contained in $\Lambda^\dagger$ 

\begin{equation*}
    \begin{tikzpicture}
    
        \draw[thick, -] (-6.5 ,0)--(6.5, 0);
        
        \draw[thick, fill=white] (-6,0) circle [radius=0.25cm];
        \node[] at (-6,-0.5) {-1};
        
        \draw[thick, -] (-4.5,0)--(-4.5 + 0.25 ,0.25);
        \draw[thick, -] (-4.5,0)--(-4.5 - 0.25 ,0.25);
        \draw[thick, -] (-4.5,0)--(-4.5 + 0.25 ,-0.25);
        \draw[thick, -] (-4.5,0)--(-4.5 - 0.25 ,-0.25);
        \node[] at (-4.5,-0.5) {0};
        
        \draw[thick, -] (-3 +0.25, 0)--(-3 - 0.25, 0.25);
        \draw[thick, -] (-3 +0.25, 0)--(-3 - 0.25, -0.25);
        \node[] at (-3,-0.5) {1};
        
        \draw[thick, -] (-1.5,0)--(-1.5 + 0.25 ,0.25);
        \draw[thick, -] (-1.5,0)--(-1.5 - 0.25 ,0.25);
        \draw[thick, -] (-1.5,0)--(-1.5 + 0.25 ,-0.25);
        \draw[thick, -] (-1.5,0)--(-1.5 - 0.25 ,-0.25);
        \node[] at (-1.5,-0.5) {2};
        
        \draw[thick, -] (0 +0.25, 0)--(0 - 0.25, 0.25);
        \draw[thick, -] (0 +0.25, 0)--(0 - 0.25, -0.25);
        \node[] at (0,-0.5) {3};
        
        \draw[thick, fill=white] (1.5,0) circle [radius=0.25cm];
        \node[] at (1.5,-0.5) {4};
        
        \draw[thick, -] (3,0)--(3 + 0.25 ,0.25);
        \draw[thick, -] (3,0)--(3 - 0.25 ,0.25);
        \draw[thick, -] (3,0)--(3 + 0.25 ,-0.25);
        \draw[thick, -] (3,0)--(3 - 0.25 ,-0.25);
        \node[] at (3,-0.5) {5};

        \draw[thick, fill=white] (4.5,0) circle [radius=0.25cm];
        \node[] at (4.5,-0.5) {6};
        
        \draw[thick, fill=white] (6,0) circle [radius=0.25cm];
        \node[] at (6,-0.5) {7};
        
        
        
        
    \end{tikzpicture}
\end{equation*}
(In the diagram above the two leftmost positions containing a $\times$ do not have a $\circ$ between them)

\begin{equation*}
    \begin{tikzpicture}
    
        \draw[thick, -] (-6.5 ,0)--(6.5, 0);
        
        \draw[thick, fill=white] (-6,0) circle [radius=0.25cm];
        \node[] at (-6,-0.5) {-1};
        
        \draw[thick, fill=white] (-4.5,0) circle [radius=0.25cm];
        \node[] at (-4.5,-0.5) {0};
        
        \draw[thick, fill=white] (-3,0) circle [radius=0.25cm];
        \node[] at (-3,-0.5) {1};
        
        \draw[thick, -] (-1.5,0)--(-1.5 + 0.25 ,0.25);
        \draw[thick, -] (-1.5,0)--(-1.5 - 0.25 ,0.25);
        \draw[thick, -] (-1.5,0)--(-1.5 + 0.25 ,-0.25);
        \draw[thick, -] (-1.5,0)--(-1.5 - 0.25 ,-0.25);
        \node[] at (-1.5,-0.5) {2};

        \draw[thick, -] (-1.5, 0 + 0.65)--(-1.5 + 0.25 ,0.25 + 0.65);
        \draw[thick, -] (-1.5, 0 + 0.65)--(-1.5 - 0.25 ,0.25 + 0.65);
        \draw[thick, -] (-1.5, 0 + 0.65)--(-1.5 + 0.25 ,-0.25 + 0.65);
        \draw[thick, -] (-1.5, 0 + 0.65)--(-1.5 - 0.25 ,-0.25 + 0.65);

        \draw[thick, fill=white] (0,0) circle [radius=0.25cm];
        \node[] at (0,-0.5) {3};
        
        \draw[thick, -] (1.5,0)--(1.5 + 0.25 ,0.25);
        \draw[thick, -] (1.5,0)--(1.5 - 0.25 ,0.25);
        \draw[thick, -] (1.5,0)--(1.5 + 0.25 ,-0.25);
        \draw[thick, -] (1.5,0)--(1.5 - 0.25 ,-0.25);
        \node[] at (1.5,-0.5) {4};

        \draw[thick, -] (1.5, 0 + 0.65)--(1.5 + 0.25 ,0.25 + 0.65);
        \draw[thick, -] (1.5, 0 + 0.65)--(1.5 - 0.25 ,0.25 + 0.65);
        \draw[thick, -] (1.5, 0 + 0.65)--(1.5 + 0.25 ,-0.25 + 0.65);
        \draw[thick, -] (1.5, 0 + 0.65)--(1.5 - 0.25 ,-0.25 + 0.65);

        \draw[thick, fill=white] (3,0) circle [radius=0.25cm];
        \node[] at (3,-0.5) {5};

        \draw[thick, fill=white] (4.5,0) circle [radius=0.25cm];
        \node[] at (4.5,-0.5) {6};
        
        \draw[thick, fill=white] (6,0) circle [radius=0.25cm];
        \node[] at (6,-0.5) {7};
        
        
        
        
    \end{tikzpicture}
\end{equation*}
(In the diagram above there is more than one position with more than one $\times$ symbol)

There exists a natural map $\Psi:\Lambda^\dagger \to \Lambda^{+}=\bigsqcup_{m, n\geq 0} \distHW $ (see  \cite[6.1.1]{tailconjpaper}).

Let us describe this map. Given $D\in \Lambda^\dagger$, we have at most one position with exactly $i$ symbols $\times$ where $i>1$. 

If such a position does not exist, then $D = D^{\distinguished}_{\lambda}$ for some $\lambda \in \distHW$, and we set $\lambda:=\Psi(D)$. 

If such a position $p$ in $D$ exists, let $D'$ be the diagram obtained from $D$ by moving $i-1$ symbols $\times$ from position $p$ to the previous $i-1$ empty positions in $D$ (to the left of $p$). We remind that an empty position is a position with the symbol $\circ$ inside. We then obtain a diagram $D'$ which is $D^{\distinguished}_{\lambda}$ for some $\lambda \in \distHW$, and we set $\lambda:=\Psi(D)$. 

We may also define an inverse map $\Phi: \Lambda^{+}=\bigsqcup_{m, n\geq 0} \distHW  \to  \Lambda^\dagger$. 

Given $\lambda\in \distHW$, let $D_{\overline{\lambda}}$ be its weight diagram with respect to the distinguished base.
Recall that since $\lambda\in\distHW$, every position $p$ has $D_{\lambda}(p)\in \{\times, \circ, >, <\}$ (no symbols are stacked). 

If $D_{\lambda}$ has no $\times$ symbols (i.e. $atyp(\lambda)=0$), then set $\Phi(\lambda):=D_\lambda$. 

Otherwise, let $\times_1$ be the position of the leftmost $\times$ symbol of the diagram $D_{\lambda}$.

Denote by $$d := \min \{p\in\mathbb{Z}|D_{\lambda}(p)=\circ \text{ and } p > \times_1\}$$ the first position to the right of $\times_1$ which has a $\circ$ symbol, and denote by $$s := \sharp \{p\in \{\times_1,\dots, d-1\}|D_{\lambda}(p)=\times\}$$ the number of $\times$ symbols between the positions $\times_1$ and $d$ (including the symbol in position $\times_1$). Next, we denote by
$$\times_\dagger := \max \{p\in \mathbb{Z}|D_{\overline{\lambda}}(p)=\times \text{ and } \times_1 \le p < d\}$$ the position of the rightmost $\times$ symbol appearing before $d$ and after $\times_1 - 1$ (note that we may have $\times_1=\times_\dagger$).

We then define the weight diagram $D'$
such that 
$$
D'(p):=
\begin{cases}
    \times^s & p = \times_\dagger \\
    \circ & D_{\overline{\lambda}}(p)=\times \text{ and } p\neq \times_\dagger \\
    D_{\lambda}(p) & \text{otherwise}
\end{cases}
$$
and clearly $D' \in \Lambda^{\dagger}$. We set: $D':=\Phi(\lambda)$.

The following lemma is straightforward:
\begin{lemma}
The maps $\Phi: \Lambda^{+}=\bigsqcup_{m, n\geq 0} \distHW  \rightleftarrows \Lambda^\dagger:\Psi$ are mutually inverse and define a bijection between the sets $\Lambda^+, \Lambda^{\dagger}$.
\end{lemma}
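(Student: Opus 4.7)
The plan is to verify that the maps $\Phi$ and $\Psi$ are well-defined and mutually inverse by carefully tracking positions in the weight diagrams. I begin with well-definedness. For $\Psi$, given $D\in \Lambda^{\dagger}$ with a position $p$ carrying $\times^{i}$ for some $i\geq 2$, I need to check that there are at least $i-1$ empty positions to the left of $p$ in $D$; this is automatic since every weight diagram has only finitely many non-$\circ$ symbols. Then I need to check that the resulting diagram corresponds to some $\lambda\in \distHW$, i.e. that its coordinates, read off in standard order, form an integral dominant weight with respect to $\distinguished$. This is straightforward because the $\varepsilon$-coordinates and $\delta$-coordinates of the shifted weight are determined by the positions of the $\times,>$ and $\times,<$ symbols respectively, and moving $\times$ symbols to the left into previously empty positions only decreases $\varepsilon$-coordinates (or increases $-\delta$-coordinates) while preserving strict decrease. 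Symmetrically for $\Phi$: I verify that $\Phi(\lambda)$ has at most one position with more than one symbol (namely $\times_{\dagger}$), has no $\times$ to the left of $\times_{\dagger}$ (by definition $\times_{\dagger}$ is the rightmost $\times$ in $[\times_{1},d)$, and the positions $\times_{1},\ldots$ to its left were all replaced with $\circ$), and the condition on a $\circ$ between the two leftmost $\times$ positions is witnessed by position $d$, which carries $\circ$ in $D_{\lambda}$ and is untouched by $\Phi$.

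Next I prove $\Psi\circ \Phi=\mathrm{id}_{\Lambda^{+}}$. Fix $\lambda \in \distHW$ of atypicality $\geq 1$ (the case $atyp(\lambda)=0$ is trivial, since then $\Phi(\lambda)=D_{\lambda}$ and $\Psi$ acts as the identity). Let $\times_{1}<\times_{2}<\ldots<\times_{s}=\times_{\dagger}$ be the positions of the $\times$ symbols of $D_{\lambda}$ lying in $[\times_{1},d)$. The key observation is that $D_{\lambda}$ has no $\circ$ in the open interval $(\times_{1},d)$ by the very definition of $d$, so in $\Phi(\lambda)$ the only $\circ$ symbols strictly to the left of $\times_{\dagger}$ that are also to the right of $\times_{1}-1$ are precisely the former $\times$ positions $\times_{1},\ldots,\times_{s-1}$. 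Hence the ``previous $s-1$ empty positions'' prescribed by $\Psi$ are exactly $\times_{1},\ldots,\times_{s-1}$, so $\Psi$ redistributes the stack at $\times_{\dagger}$ back to the original $\times$ positions, recovering $D_{\lambda}$ and hence $\lambda$.

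For the reverse composition $\Phi\circ \Psi=\mathrm{id}_{\Lambda^{\dagger}}$, start with $D\in \Lambda^{\dagger}$. If $D$ has no stacked $\times$, then $D=D_{\lambda}$ for $\lambda:=\Psi(D)$ and $\Phi(\lambda)=D$. Otherwise let $p$ be the unique position with $\times^{i}$, $i\geq 2$, and let $q_{1}<q_{2}<\ldots<q_{i-1}<p$ be the $i-1$ empty positions of $D$ immediately to the left of $p$. Set $\lambda:=\Psi(D)$, so that $D_{\lambda}$ agrees with $D$ except that it has a $\times$ at each $q_{j}$ and a single $\times$ at $p$. I then identify the quantities entering $\Phi(\lambda)$: by the defining property of $\Lambda^{\dagger}$ there is no $\times$ in $D$ to the left of $p$, hence the leftmost $\times$ of $D_{\lambda}$ is exactly $\times_{1}=q_{1}$; moreover every position strictly between $q_{1}$ and $p$ in $D$ carries a non-$\circ$ symbol (by minimality of the list $q_{1},\ldots,q_{i-1}$ once $q_{1}$ is fixed, together with the $\Lambda^{\dagger}$-condition forcing a $\circ$ only outside $[q_{1},p]$). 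Thus in $D_{\lambda}$ there is no $\circ$ in $(q_{1},p)$, so $d=d(\lambda)$ is the first $\circ$ of $D$ strictly to the right of $p$, the integer $s=s(\lambda)$ equals $i$, and $\times_{\dagger}(\lambda)=p$. Plugging back into the definition of $\Phi$ returns $\Phi(\lambda)=D$.

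The main obstacle is the ``bookkeeping'' step in both directions: verifying that the set of positions moved by $\Psi$ (respectively, collected at $\times_{\dagger}$ by $\Phi$) is exactly the set of positions one would expect from the inverse construction. This reduces to the single combinatorial fact that the window $[\times_{1},d)$ in $D_{\lambda}$ contains no $\circ$, which ties together the two $\Lambda^{\dagger}$-conditions (unique stacked $\times$ position with no $\times$ to its left, and a $\circ$ between the two leftmost $\times$-positions) with the definition of $d$; once this is in hand the two compositions are seen to be inverse by direct inspection.
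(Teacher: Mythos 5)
Your proof is correct, and it is essentially the direct verification that the paper leaves implicit (the paper states the lemma without proof, calling it straightforward): well-definedness of both maps plus the key observation that the window between $\times_1$ and the first empty position $d$ contains no $\circ$, which forces both compositions to be identities. Two small points to tighten: in the $\Phi\circ\Psi$ direction the positions $q_2,\dots,q_{i-1}$ are themselves $\circ$ in $D$, so your claim that every position strictly between $q_1$ and $p$ is non-empty should be asserted for $D_\lambda$ rather than $D$ (as your conclusion correctly is), and the equalities $s=i$ (and $s=1$ in the unstacked case) are exactly where the second $\Lambda^\dagger$-condition enters, ruling out a $\times$ of $D$ strictly between $p$ and $d$ -- this is worth stating explicitly at that step rather than only in your closing remark.
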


Here is an example of this correspondence. 
\begin{example}
Consider the following diagram $D$ in $\Lambda^{\dagger}$:

\begin{equation*}
    \label{first dagger}
    \begin{tikzpicture}
    
        \draw[thick, -] (-6.5 ,0)--(6.5, 0);
        
        \draw[thick, fill=white] (-6,0) circle [radius=0.25cm];
        \node[] at (-6,-0.5) {-1};
        
        \draw[thick, fill=white] (-4.5,0) circle [radius=0.25cm];
        \node[] at (-4.5,-0.5) {0};
        
        \draw[thick, fill=white] (-3,0) circle [radius=0.25cm];
        \node[] at (-3,-0.5) {1};
        
        \draw[thick, -] (-1.5 -0.25, 0)--(-1.5 + 0.25, 0.25);
        \draw[thick, -] (-1.5 -0.25, 0)--(-1.5 + 0.25, -0.25);
        \node[] at (-1.5,-0.5) {2};

        \draw[thick, fill=white] (0,0) circle [radius=0.25cm];
        \node[] at (0,-0.5) {3};
        
        \draw[thick, -] (1.5,0)--(1.5 + 0.25 ,0.25);
        \draw[thick, -] (1.5,0)--(1.5 - 0.25 ,0.25);
        \draw[thick, -] (1.5,0)--(1.5 + 0.25 ,-0.25);
        \draw[thick, -] (1.5,0)--(1.5 - 0.25 ,-0.25);

        \draw[thick, -] (1.5,0 + 0.65)--(1.5 + 0.25 ,0.25 + 0.65);
        \draw[thick, -] (1.5,0 + 0.65)--(1.5 - 0.25 ,0.25 + 0.65);
        \draw[thick, -] (1.5,0 + 0.65)--(1.5 + 0.25 ,-0.25 + 0.65);
        \draw[thick, -] (1.5,0 + 0.65)--(1.5 - 0.25 ,-0.25 + 0.65);
        \draw[thick, -] (1.5,0 + 2* 0.65)--(1.5 + 0.25 ,0.25 + 2* 0.65);
        \draw[thick, -] (1.5,0 + 2* 0.65)--(1.5 - 0.25 ,0.25 + 2* 0.65);
        \draw[thick, -] (1.5,0 + 2* 0.65)--(1.5 + 0.25 ,-0.25 + 2* 0.65);
        \draw[thick, -] (1.5,0 + 2* 0.65)--(1.5 - 0.25 ,-0.25 + 2* 0.65);
        \node[] at (1.5,-0.5) {4};
        
        \draw[thick, fill=white] (3,0) circle [radius=0.25cm];
        \node[] at (3,-0.5) {5};
        
        \draw[thick, -] (4.5 +0.25, 0)--(4.5 - 0.25, 0.25);
        \draw[thick, -] (4.5 +0.25, 0)--(4.5 - 0.25, -0.25);
        \node[] at (4.5,-0.5) {6};
        
        \draw[thick, fill=white] (6,0) circle [radius=0.25cm];
        \node[] at (6,-0.5) {7};
        
        
        
        
    \end{tikzpicture}
\end{equation*}

 Then $\Psi(D) = \lambda$ for $\lambda\in \Lambda^+_{4,4}$ with the weight diagram $D_{\overline{\lambda}}$ given by:
\begin{equation*}
    \label{reverse dagger}
    \begin{tikzpicture}
    
        \draw[thick, -] (-6.5 ,0)--(6.5, 0);
        
        \draw[thick, fill=white] (-6,0) circle [radius=0.25cm];
        \node[] at (-6,-0.5) {-1};
        
        \draw[thick, fill=white] (-4.5,0) circle [radius=0.25cm];
        \node[] at (-4.5,-0.5) {0};
        
        \draw[thick, -] (-3,0)--(-3 + 0.25 ,0.25);
        \draw[thick, -] (-3,0)--(-3 - 0.25 ,0.25);
        \draw[thick, -] (-3,0)--(-3 + 0.25 ,-0.25);
        \draw[thick, -] (-3,0)--(-3 - 0.25 ,-0.25);
        \node[] at (-3,-0.5) {1};
        
        \draw[thick, -] (-1.5 -0.25, 0)--(-1.5 + 0.25, 0.25);
        \draw[thick, -] (-1.5 -0.25, 0)--(-1.5 + 0.25, -0.25);
        \node[] at (-1.5,-0.5) {2};

        \draw[thick, -] (0,0)--(0 + 0.25 ,0.25);
        \draw[thick, -] (0,0)--(0 - 0.25 ,0.25);
        \draw[thick, -] (0,0)--(0 + 0.25 ,-0.25);
        \draw[thick, -] (0,0)--(0 - 0.25 ,-0.25);
        \node[] at (0,-0.5) {3};
        
        \draw[thick, -] (1.5,0)--(1.5 + 0.25 ,0.25);
        \draw[thick, -] (1.5,0)--(1.5 - 0.25 ,0.25);
        \draw[thick, -] (1.5,0)--(1.5 + 0.25 ,-0.25);
        \draw[thick, -] (1.5,0)--(1.5 - 0.25 ,-0.25);
        \node[] at (1.5,-0.5) {4};
        
        \draw[thick, fill=white] (3,0) circle [radius=0.25cm];
        \node[] at (3,-0.5) {5};
        
        \draw[thick, -] (4.5 +0.25, 0)--(4.5 - 0.25, 0.25);
        \draw[thick, -] (4.5 +0.25, 0)--(4.5 - 0.25, -0.25);
        \node[] at (4.5,-0.5) {6};
        
        \draw[thick, fill=white] (6,0) circle [radius=0.25cm];
        \node[] at (6,-0.5) {7};
        
        
        
        
    \end{tikzpicture}
\end{equation*}
Thus $\overline{\lambda} = (6\,4\,3\,1|-1\,-2\,-3\,-4)$, and $\lambda = (3\,2\,1\,1|-1\,-1\,-2\,-3)$.  Further, we have: $D = D_{\lambda}^{\Sigma}$ for $\Sigma \in \mathbb{S} $ given by word $\varepsilon^2 \delta \varepsilon \delta^2 \varepsilon \delta$. This can be done by computing the CTD of $\overline{\lambda}$ as is described in Definition \ref{CTD}.  
\end{example}

\begin{lemma}
\label{to dagger is a weight diagram}
    $\Phi(\lambda)$ is the weight diagram $D_{\lambda}^{\Sigma}$ for some $\Sigma\in \mathbb{S}$.
\end{lemma}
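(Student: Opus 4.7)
The plan is to exhibit an explicit base $\Sigma\in\mathbb{S}$ satisfying $D^{\Sigma}_\lambda = \Phi(\lambda)$. If $D_\lambda$ has no $\times$ symbol then $\Phi(\lambda)=D_\lambda=D^{\distinguished}_\lambda$, so I may take $\Sigma = \distinguished$. Assume now that $D_\lambda$ has at least one $\times$, and retain the notation $\times_1, d, s, \times_\dagger$ from the construction of $\Phi$. Let $p_1 = \times_1 < p_2 < \dots < p_s = \times_\dagger$ be the positions of the $\times$ symbols of $D_\lambda$ lying in the interval $[\times_1, d-1]$. Because the positive coordinates of $\overline{\lambda}$ are strictly decreasing and the negative coordinates are strictly increasing, there are unique indices $i_1 > i_2 > \dots > i_s$ and $j_1 < j_2 < \dots < j_s$ with $\overline{\lambda}_{i_k}=p_k$ and $-\overline{\lambda}_{-j_k}=p_k$ for every $k$.

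I would then define $\Sigma$ by prescribing $\Sigma\cap\mathcal{R} := \{\ror{i_k}{j_{s+1-k}} \mid 1\le k \le s\}$. For $k<k'$ one has $i_k > i_{k'}$ and $j_{s+1-k} > j_{s+1-k'}$, so any two distinct elements of this set are incomparable in the poset $\mathcal{R}$; hence Lemma~\ref{ror are incomparable} yields a unique $\Sigma\in\mathbb{S}$ with this prescribed $\Sigma\cap\mathcal{R}$. Next, I would compute $B_\Sigma$ via Theorem~\ref{B Sigma}, determine the subset $\{\alpha\in B_\Sigma : c_\lambda(\alpha)=1\}$ using the explicit description of $c_\lambda$ in Theorem~\ref{theorem for ror change inequality}, and apply Corollary~\ref{CTD formula} to compute $\overline{\lambda}_\Sigma$. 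Reading off its weight diagram, the plan is to check that all positions outside $[\times_1, d-1]$ carry the same symbol as in $D_\lambda$, that the $\times$ symbols at $p_1,\dots, p_{s-1}$ have become $\circ$, and that the stack $\times^s$ forms at $p_s=\times_\dagger$ because precisely $s$ positive coordinates of $\overline{\lambda}_\Sigma$ equal $p_s$ and precisely $s$ negative coordinates equal $-p_s$; this is exactly $\Phi(\lambda)$.

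The main obstacle will be the combinatorial verification in the previous paragraph. Positions in $[\times_1, d-1]$ carrying $>$ or $<$ symbols of $D_\lambda$ involve coordinates indexed outside the two sequences $\{i_k\}$ and $\{j_k\}$; although the weight diagram must be preserved at such positions, the individual indices of the coordinates that land on a given position can shuffle during the sequence of reflections. Confirming that the net effect is exactly the prescribed migration of $\times$ symbols (and nothing more) will require a careful position-by-position analysis based on the arrow diagram of $\lambda$, applying Theorem~\ref{theorem for ror change inequality} to track the $c_\lambda$-value of each relevant right odd root in $B_\Sigma$.
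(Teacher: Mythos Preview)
Your proposed base $\Sigma\cap\mathcal{R}=\{\ror{i_k}{j_{s+1-k}}:1\le k\le s\}$ does not work in general. The problem arises when there is a $<$ symbol strictly between two of the $\times$ positions $p_1,\dots,p_s$; in that case the indices $j_1<\dots<j_s$ are not consecutive integers, and your anti-diagonal pairing does not put enough right odd roots into $B_\Sigma$ to carry each $\times$ all the way to $\times_\dagger$. Concretely, take $\lambda\in\Lambda^+_{4|4}$ with $\overline{\lambda}=(4,3,1,0\mid 0,-1,-2,-3)$, so $D_\lambda$ reads $\times,\times,<,\times,>$ at positions $0,1,2,3,4$. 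Here $s=3$, $(i_1,i_2,i_3)=(4,3,2)$ and $(j_1,j_2,j_3)=(1,2,4)$, and your base is $\Sigma\cap\mathcal{R}=\{\ror{4}{4},\ror{3}{2},\ror{2}{1}\}$. Then $B_\Sigma=\{\ror{3}{1},\ror{4}{1},\ror{4}{2},\ror{4}{3}\}$, all with $c_\lambda=1$, and one computes $\overline{\lambda}_\Sigma=(4,3,2,3\mid -2,-2,-3,-3)$. The resulting diagram has $\times<$ at position $2$ and only $\times^2$ at position $3$, whereas $\Phi(\lambda)$ requires $\times^3$ at position $3$ and $<$ at position $2$. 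The $\times$ originally at position $1$ only advanced one step because $\ror{3}{2}$ is \emph{not} in your $B_\Sigma$ (it lies above $\ror{3}{2}\in\Sigma\cap\mathcal{R}$), so there is no reflection available to push it past the $<$ at position $2$.

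The paper's proof avoids this by not referring to the individual $\times$-indices $(i_k,j_k)$ at all. Instead it uses the arrow diagram: it lets $I=\{i:\overline{\lambda}_i<\times_\dagger<k_i\}$ and $I'=\{i:\overline{\lambda}_i<\times_\dagger,\ k_i<\times_\dagger\}$, and defines $B_\Sigma$ so that for each $i\in I$ one reflects by $\ror{i}{j}$ for $j$ up to $M_i-(\times_\dagger-\overline{\lambda}_i-1)$, and for $i\in I'$ one reflects by all $\ror{i}{j}$. The point is that the arrow endpoints $k_i$ and the counts $M_i$ already absorb any intervening $<$ or $>$ symbols, so that the $\times$ in position $\overline{\lambda}_i$ is carried exactly to $\times_\dagger$ (Theorem~\ref{theorem for ror change inequality} and Lemma~\ref{ctd inequality lower bound ie} supply the needed monotonicity). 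In the example above, the correct base has $\Sigma\cap\mathcal{R}=\{\ror{2}{1},\ror{3}{3},\ror{4}{4}\}$: the middle entry is $\ror{3}{3}$ rather than your $\ror{3}{2}$, precisely to include $\ror{3}{2}$ in $B_\Sigma$ and push the $\times$ through the $<$ at position $2$. Your plan can be salvaged, but the second index in each pair must count \emph{all} $<$ and $\times$ symbols to the left of $\times_\dagger$, not just the $\times$ symbols; this is exactly the role played by the quantities $M_i$ in the paper's argument.
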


\begin{proof}
    Denote $D:=\Phi(\lambda)$.
    
    As before, define $$d:= \min\{p\in\Z| D_\lambda(p)=\circ \text{ and } p>\times_1\},\;\; \;  \times_\dagger := \max \{p\in \mathbb{Z}|D_{\lambda}(p)=\times \text{ and } \times_1 \le p < d\}.$$
    Recall that by the definition of the map $\Phi$, for any $p\neq \times_\dagger$, we have: $D(p)\in \{>, <, \times, \circ\}$, and $D(\times_\dagger)$ has either $>$ or several stacked symbols $\times$. 
    If $D(\times_\dagger)\in \{>, \times\}$, then a simple verification shows that $D= D_\lambda$.

    So from now on we assume that $D(\times_\dagger) = \times^s$ for some $s>1$. Note that $s$ is the number of symbols $\times$ in positions less or equal to $\times_\dagger$ in $D_\lambda$. 
    
    We also let $k\in\{1,\ldots,m\}$ be such that $\overline{\lambda}_{m-k+1}=\times_\dagger$; in particular, $k\geq s$.

    Consider the arrow diagram of $\lambda$ with arrow endpoints given by $k_1,\dots, k_m$. Denote $$I:= \{i\in \{1,\dots, m\}\,|\,\overline{\lambda}_i < \times_\dagger < k_i\} \;\; \text{ and } \;\; I':=\{i\in \{1,\dots, m\}\,|\,\overline{\lambda}_i < \times_\dagger \text{ and } k_i < \times_\dagger\}.$$ 
    Notice that for every $i \in I'$, we have: $D_\lambda(k_i)$ is $>$ (as $d$ is the first position to the right of $\times_1$ that is empty). Furthermore, since there are no $\circ$ symbols in $D_{\lambda}$ between positions $\times_1, \times_\dagger$, we have: $\times_\dagger$ is between the endpoints of every $(\times-\circ)$ sequence starting at $\times_1, \ldots, \times_{s-1}$. Hence $\sharp I \geq s>1$.

    For every $i>i'$ such that $i, i'\in I$, observe that we have $k_i > k_{i'}$ and so $k_i - \times_\dagger < k_{i'} - \times_\dagger$. Additionally, by \cref{ctd inequality lower bound ie},
     $M_i - (k_i-\overline{\lambda}_i-1) \le M_{i'} - (k_{i'}-\overline{\lambda}_{i'}-1)$. Due to this, we obtain:
    \begin{align}
    \label{ie for proof}
        M_i - (\times_\dagger -\overline{\lambda}_i-1) &= M_i - (k_i-\overline{\lambda}_i-1) + k_i - \times_\dagger \le M_{i'} - (k_{i'}-\overline{\lambda}_{i'}-1) + k_{i'} - \times_\dagger \nonumber\\&= M_{i'} - (\times_\dagger -\overline{\lambda}_{i'}-1)
    \end{align}
    
    Observe that for $i\in I$, $M_i - (k_i-\overline{\lambda}_i-1) \leq M_i - (\times_\dagger -\overline{\lambda}_i-1) < M_i$. 
    As such, for every $M_i - (k_i-\overline{\lambda}_i-1) \le j \le  M_i - (\times_\dagger -\overline{\lambda}_i-1)$ we have: $c_\lambda(\ror{i}{j}) = 1$.

    
    Take $\Sigma \in \mathbb{S}$ given by 
    $$B_\Sigma = \{\ror{i}{j}|i\in I \text{ and } 1\le j \le M_i - (\times_\dagger -\overline{\lambda}_i-1)\}\cup \{\ror{i}{j}|i\in I', 1\le j \le n\}$$ (note that this is a valid set by the inequality given in \cref{ie for proof}).

    We now consider the diagram $D^{\Sigma}_\lambda$. Notice that for every $i\in I'$ we have $(\overline{\lambda}_\Sigma)_i= k_i$ and for every $i\in I$ we have $(\overline{\lambda}_\Sigma)_i= \times_\dagger$. We deduce that $D^{\Sigma}_{\lambda}$ was obtained from $D_{\lambda}$ by moving the $s-1$ symbols from positions $\times_1, \ldots, \times_{s-1}$ to the position $\times_\dagger$, which implies: $D = D_{\lambda}^{\Sigma}$.
    
\end{proof}

With Lemma \ref{to dagger is a weight diagram} in mind, we give the following definition:
\begin{definition}\label{def:tail}
Let $\lambda \in \distHW$.
\begin{itemize}
    \item We denote by $\Sigma_\lambda \in \mathbb{S}$ the base for which  $\Phi(\lambda) = D_\lambda^{\Sigma_\lambda}$.
    \item We denote: $\lambda^{\dagger} := \lambda_{\Sigma_{\lambda}}$ (that is, $\lambda^{\dagger}$ is the highest weight of $L_{\distinguished}(\lambda)$ with respect to the base $\Sigma_{\lambda}$), and $\overline{\lambda}^{\dagger} := \lambda^{\dagger}+\rho_{\Sigma_{\lambda}}$.
    \item We denote: $$tail(\lambda):=s\left(\overline{\lambda}^{\dagger}\right)$$

\end{itemize}
\end{definition}

\begin{remark}
    Given $\lambda\in \distHW$, $\Phi(\lambda)$ is clearly the weight diagram of the ($\rho_{\Sigma_{\lambda}}$-shifted) weight $\overline{\lambda}^{\dagger}$.
\end{remark}

Recall that $longtail(\lambda):=\max_{\nu\in Hwt(\lambda)} s(\nu)$. The following was conjectured in \cite[Conjecture 6.3.1]{tailconjpaper}:
\begin{conj}[Tail Conjecture]\label{tail_conj} For any $\lambda\in \distHW$, we have: $$ tail(\lambda) = longtail(\lambda).$$
\end{conj}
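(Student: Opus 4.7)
The plan is to first unpack what both sides of the claimed equality really compute, and only then decide whether to attempt a proof or to search for a counterexample. On the one hand, $tail(\lambda) = s(\overline{\lambda}^\dagger)$ is the size of the largest incomparable $S \subset \mathcal{R}$ that is orthogonal to the single weight $\overline{\lambda}^\dagger$, whose diagram $\Phi(\lambda)$ is obtained by sliding all the $\times$ symbols of the leftmost $(\times\text{-}\circ)$ cluster of $D_\lambda$ into one stack at position $\times_\dagger$. By \cref{lem:longtail_ineq_number_x_stacked}, this gives immediately $tail(\lambda) \ge s$, where $s$ is the number of $\times$ in that leftmost cluster, and a short check using the CTD shows that this lower bound is actually achieved, so $tail(\lambda) = s$. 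On the other hand, $longtail(\lambda) = \max_{\Sigma} s(\overline{\lambda}_\Sigma)$ ranges over \emph{all} bases in $\mathbb{S}$. The trivial direction $tail(\lambda) \le longtail(\lambda)$ is free, since $\Sigma_\lambda \in \mathbb{S}$.

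For the converse inequality, I would first try to translate $s(\overline{\lambda}_\Sigma)$ into arrow-diagram language via \cref{CTD 1 iff alpha perp nu}: if $\alpha \in \Sigma \cap \mathcal{R}$ is orthogonal to $\overline{\lambda}_\Sigma$, then by \cref{cor:things_orthog_to_hwt_lam} we must have $c_\lambda(\alpha) = 1$, so an iso-set realizing $s(\overline{\lambda}_\Sigma)$ corresponds to an incomparable subset of $c_\lambda^{-1}(1)$. Using \cref{theorem for ror change inequality}\eqref{itm:ror_change_main}, the condition $c_\lambda(\eps_i - \delta_j) = 1$ means $j$ lies in a specific interval attached to the $i$-th arrow. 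The natural attempt is then to show that any incomparable chain of black nodes in the CTD can be rearranged to lie inside a single arrow-cluster, whose $\times$-symbols are precisely those collapsed by $\Phi$ into the stack at $\times_\dagger$; this would force $longtail(\lambda) \le tail(\lambda)$.

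The hard part, and the step where I expect the proof to actually fail, is the rearrangement claim. The map $\Phi$ only manipulates the leftmost $(\times\text{-}\circ)$ cluster, whereas the lattice $c_\lambda^{-1}(1)$ in general has \emph{several} incomparable clusters coming from different atoms of $\lambda$ in the sense of \cref{atom_def}; by \cref{not same atom prop}, incomparable blacks must live inside a single atom, but nothing forces that atom to be the one chosen by $\Phi$. In particular, if $\lambda$ has an atom $A'$ lying to the right of the leftmost atom $A$, and if a suitable base $\Sigma$ stacks the $\times$-symbols of $A'$ into one position, then $s(\overline{\lambda}_\Sigma)$ will detect the arrows of $A'$ while $\Phi(\lambda)$ is blind to them. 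Based on this heuristic I would therefore \emph{abandon} the attempted proof and instead construct a counterexample: pick $\lambda$ with two (or more) disjoint atoms whose arrow-stacks are different sizes, arranged so that $\Phi$ collapses the smaller-stack atom and leaves the larger-stack atom essentially untouched. Then $tail(\lambda)$ sees only the small stack, while choosing $\Sigma$ so as to collapse the larger atom yields a strictly larger $s(\overline{\lambda}_\Sigma)$, hence $longtail(\lambda) > tail(\lambda)$.

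Concretely, I would try $\lambda \in \distHW$ whose $D_\lambda$ has a short leftmost $(\times\text{-}\circ)$ cluster (say $s = 2$) followed by a much longer one (say with three $\times$ symbols all stackable via overlapping arrows). Verifying the counterexample reduces to two routine calculations: compute $\Phi(\lambda)$ and count stacked $\times$'s to read off $tail(\lambda)$, and then use the CTD to exhibit an incomparable set of size $> tail(\lambda)$ in $c_\lambda^{-1}(1) \cap \mathcal{R}^{A'}$ for the larger atom $A'$, which by \cref{CTD 1 iff alpha perp nu} lifts to a base $\Sigma$ with $s(\overline{\lambda}_\Sigma) > tail(\lambda)$. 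The correct statement, suggested by this analysis and matching \cref{introthrm:4}, should be that $longtail(\lambda)$ equals the maximum number of arrows passing over any single point of the arrow diagram of $\lambda$ — i.e.\ the maximum is taken atom-by-atom, not globally via $\Phi$.
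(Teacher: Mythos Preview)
Your analysis is correct and matches the paper's treatment essentially exactly: the paper likewise disproves the conjecture via a counterexample with precisely the structure you predict, taking $\overline{\lambda} = (5\,4\,3\,0\,-1 \mid 1\,0\,-3\,-4\,-5)$ in $\Lambda^+_{5|5}$, whose leftmost two-$\times$ cluster is collapsed by $\Phi$ to give $tail(\lambda)=2$ while an incomparable size-$3$ subset of $c_\lambda^{-1}(1)$ coming from the three-$\times$ atom to the right witnesses $longtail(\lambda)\geq 3$. One minor remark: your claim that $tail(\lambda)$ equals the stack height $s$ is not argued via the CTD of $\lambda$ in the paper but by directly computing $f_{\overline{\lambda}^\dagger}$ for the specific example; this does not affect your strategy, and your identification of the corrected statement (maximal number of arrows over a point) agrees with the paper's \cref{thrm:max tail arrow}.
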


The following is an easy consequence of \cref{lem:longtail_ineq_number_x_stacked} and the fact that $\overline{\lambda}^\dagger \in Hwt(\lambda)$:
\begin{lemma}
    Let $\lambda\in \distHW$, and let $i\geq 1$ be the maximal number of $\times$ symbols stacked in a position in $\Phi(\lambda)$. Then  we have: $$ i\leq tail(\lambda) \leq longtail(\lambda).$$
\end{lemma}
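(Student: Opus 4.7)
The plan is to observe that the statement is essentially a two-line consequence of \cref{lem:longtail_ineq_number_x_stacked} together with the very definitions of $tail$ and $longtail$; no new machinery is needed.

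First I would establish the inequality $i\leq tail(\lambda)$. By \cref{to dagger is a weight diagram} and \cref{def:tail} we know that $\Phi(\lambda) = D_\lambda^{\Sigma_\lambda}$, so $\Phi(\lambda)$ is literally the weight diagram of the $\rho_{\Sigma_\lambda}$-shifted highest weight $\overline{\lambda}^\dagger$ with respect to the base $\Sigma_\lambda\in\mathbb{S}$. Hence if $\Phi(\lambda)$ contains a position carrying $i$ stacked $\times$ symbols, then the same is true of the diagram $D_\lambda^{\Sigma_\lambda}$. Applying \cref{lem:longtail_ineq_number_x_stacked} to the base $\Sigma_\lambda$ yields
\[
s(\overline{\lambda}^\dagger) = s(\overline{\lambda}_{\Sigma_\lambda}) \geq i,
\]
and by \cref{def:tail} the left-hand side equals $tail(\lambda)$, giving $i \leq tail(\lambda)$.

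Next I would establish $tail(\lambda) \leq longtail(\lambda)$. This is immediate from \cref{def:longtail}: since $\overline{\lambda}^\dagger = \overline{\lambda}_{\Sigma_\lambda} \in \mathrm{Hwt}(\lambda)$, the maximum defining $longtail(\lambda)$ is at least $s(\overline{\lambda}^\dagger) = tail(\lambda)$. Concatenating the two inequalities gives the chain $i \leq tail(\lambda) \leq longtail(\lambda)$. There is no substantial obstacle here — the only mild subtlety is making sure one correctly cites that $\Phi(\lambda)$ really is $D_\lambda^{\Sigma_\lambda}$ (so that stacked $\times$ symbols give an incomparable isotropic set via the explicit construction in the proof of \cref{lem:longtail_ineq_number_x_stacked}), rather than some abstract diagram disconnected from the highest weights in $\mathrm{Hwt}(\lambda)$.
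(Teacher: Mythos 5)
Your argument is correct and is exactly the route the paper takes: it invokes \cref{lem:longtail_ineq_number_x_stacked} for the base $\Sigma_\lambda$ (using $\Phi(\lambda)=D_\lambda^{\Sigma_\lambda}$ from \cref{to dagger is a weight diagram}) to get $i\le s(\overline{\lambda}^{\dagger})=tail(\lambda)$, and then uses $\overline{\lambda}^{\dagger}\in \mathrm{Hwt}(\lambda)$ together with \cref{def:longtail} for $tail(\lambda)\le longtail(\lambda)$. Nothing to add.
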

In the following subsection, we will disprove the Tail Conjecture, and show that this inequality is not always an equality.

\subsection{Construction of the counterexample to the Tail Conjecture} \label{tail conj counterexample}
\mbox{}

Consider the diagram $D\in \Lambda^{\dagger}$ given by 
\begin{equation*}
    \label{dagger counterexample}
    \begin{tikzpicture}
    
        \draw[thick, -] (-6.5 ,0)--(6.5, 0);
        
        \draw[thick, fill=white] (-6,0) circle [radius=0.25cm];
        \node[] at (-6,-0.5) {-1};
        
        \draw[thick, -] (-4.5,0)--(-4.5 + 0.25 ,0.25);
        \draw[thick, -] (-4.5,0)--(-4.5 - 0.25 ,0.25);
        \draw[thick, -] (-4.5,0)--(-4.5 + 0.25 ,-0.25);
        \draw[thick, -] (-4.5,0)--(-4.5 - 0.25 ,-0.25);

        \draw[thick, -] (-4.5,0 + 0.65)--(-4.5 + 0.25 ,0.25 + 0.65);
        \draw[thick, -] (-4.5,0 + 0.65)--(-4.5 - 0.25 ,0.25 + 0.65);
        \draw[thick, -] (-4.5,0 + 0.65)--(-4.5 + 0.25 ,-0.25 + 0.65);
        \draw[thick, -] (-4.5,0 + 0.65)--(-4.5 - 0.25 ,-0.25 + 0.65);
        \node[] at (-4.5,-0.5) {0};
        
        \draw[thick, fill=white] (-3,0) circle [radius=0.25cm];
        \node[] at (-3,-0.5) {1};
        
        \draw[thick, fill=white] (-1.5,0) circle [radius=0.25cm];
        \node[] at (-1.5,-0.5) {2};

        \draw[thick, -] (0,0)--(0 + 0.25 ,0.25);
        \draw[thick, -] (0,0)--(0 - 0.25 ,0.25);
        \draw[thick, -] (0,0)--(0 + 0.25 ,-0.25);
        \draw[thick, -] (0,0)--(0 - 0.25 ,-0.25);
        \node[] at (0,-0.5) {3};
        
        \draw[thick, -] (1.5,0)--(1.5 + 0.25 ,0.25);
        \draw[thick, -] (1.5,0)--(1.5 - 0.25 ,0.25);
        \draw[thick, -] (1.5,0)--(1.5 + 0.25 ,-0.25);
        \draw[thick, -] (1.5,0)--(1.5 - 0.25 ,-0.25);
        \node[] at (1.5,-0.5) {4};
        
        \draw[thick, -] (3,0)--(3 + 0.25 ,0.25);
        \draw[thick, -] (3,0)--(3 - 0.25 ,0.25);
        \draw[thick, -] (3,0)--(3 + 0.25 ,-0.25);
        \draw[thick, -] (3,0)--(3 - 0.25 ,-0.25);
        \node[] at (3,-0.5) {5};
        
        \draw[thick, fill=white] (4.5,0) circle [radius=0.25cm];
        \node[] at (4.5,-0.5) {6};
        
        \draw[thick, fill=white] (6,0) circle [radius=0.25cm];
        \node[] at (6,-0.5) {7};
        
        
        
        
    \end{tikzpicture}
\end{equation*}

Computing the weight $\lambda:=\Psi(D) \in \Lambda^+_{5|5}$ we find that $\overline{\lambda}= (5\,4\,3\,0\,-1|1\,0\,-3\,-4\,-5)$, and $D_{\lambda}:=D_{\lambda}^{\distinguished}$ is 

\begin{equation*}
    \begin{tikzpicture}
    
        \draw[thick, -] (-6.5 ,0)--(6.5, 0);
        
        \draw[thick, -] (-6,0)--(-6 + 0.25 ,0.25);
        \draw[thick, -] (-6,0)--(-6 - 0.25 ,0.25);
        \draw[thick, -] (-6,0)--(-6 + 0.25 ,-0.25);
        \draw[thick, -] (-6,0)--(-6 - 0.25 ,-0.25);
        \node[] at (-6,-0.5) {-1};
        
        \draw[thick, -] (-4.5,0)--(-4.5 + 0.25 ,0.25);
        \draw[thick, -] (-4.5,0)--(-4.5 - 0.25 ,0.25);
        \draw[thick, -] (-4.5,0)--(-4.5 + 0.25 ,-0.25);
        \draw[thick, -] (-4.5,0)--(-4.5 - 0.25 ,-0.25);
        \node[] at (-4.5,-0.5) {0};
        
        \draw[thick, fill=white] (-3,0) circle [radius=0.25cm];
        \node[] at (-3,-0.5) {1};
        
        \draw[thick, fill=white] (-1.5,0) circle [radius=0.25cm];
        \node[] at (-1.5,-0.5) {2};

        \draw[thick, -] (0,0)--(0 + 0.25 ,0.25);
        \draw[thick, -] (0,0)--(0 - 0.25 ,0.25);
        \draw[thick, -] (0,0)--(0 + 0.25 ,-0.25);
        \draw[thick, -] (0,0)--(0 - 0.25 ,-0.25);
        \node[] at (0,-0.5) {3};
        
        \draw[thick, -] (1.5,0)--(1.5 + 0.25 ,0.25);
        \draw[thick, -] (1.5,0)--(1.5 - 0.25 ,0.25);
        \draw[thick, -] (1.5,0)--(1.5 + 0.25 ,-0.25);
        \draw[thick, -] (1.5,0)--(1.5 - 0.25 ,-0.25);
        \node[] at (1.5,-0.5) {4};
        
        \draw[thick, -] (3,0)--(3 + 0.25 ,0.25);
        \draw[thick, -] (3,0)--(3 - 0.25 ,0.25);
        \draw[thick, -] (3,0)--(3 + 0.25 ,-0.25);
        \draw[thick, -] (3,0)--(3 - 0.25 ,-0.25);
        \node[] at (3,-0.5) {5};
        
        \draw[thick, fill=white] (4.5,0) circle [radius=0.25cm];
        \node[] at (4.5,-0.5) {6};
        
        \draw[thick, fill=white] (6,0) circle [radius=0.25cm];
        \node[] at (6,-0.5) {7};
        
        
        
        
    \end{tikzpicture}
\end{equation*}

Computing $\Sigma_\lambda$ as shown in Lemma \ref{to dagger is a weight diagram}, we find that $\Sigma_\lambda$ is given by the word $\eps^4 \delta \eps \delta^4$.

We now compute $\lambda^{\dagger}$ by  \cref{def:tail}, and obtain: $\overline{\lambda}^\dagger = (5\,4\,3\,0\,0|0\,0\,-3\,-4\,-5)$. Next, we obtain the following visualization diagram of $f_{\lambda^\dagger}$ (see \cref{def:f_nu}):
\begin{equation*}
    \label{dagger fnu}
\begin{tikzpicture}[anchorbase,scale=1.1]
\node at (0,1.5) {$\star $};

\node at (0.5,1) {$\circ $};
\node at (-0.5, 1) {$\circ $};

\node at (1, 0.5) {$\circ $};
\node at (0, 0.5) {$\star $};
\node at (-1, 0.5) {$\circ $};

\node at (1.5, 0) {$\circ $};
\node at (0.5, 0) {$\circ $};
\node at (-0.5, 0) {$\circ $};
\node at (-1.5, 0) {$\circ $};

\node at (2, -0.5) {$\circ $};
\node at (1, -0.5) {$\circ $};
\node at (0, -0.5) {$\star $};
\node at (-1, -0.5) {$\circ $};
\node at (-2, -0.5) {$\circ $};

\node at (1.5, -1) {$\circ $};
\node at (0.5, -1) {$\circ $};
\node at (-0.5, -1) {$\circ $};
\node at (-1.5, -1) {$\circ $};

\node at (1, -1.5) {$\circ $};
\node at (0, -1.5) {$\star $};
\node at (-1, -1.5) {$\circ $};

\node at (0.5, -2) {$\star $};
\node at (-0.5, -2) {$\star $};

\node at (0, -2.5) {$\star $};


\end{tikzpicture}
\end{equation*} 

As such, a largest iso-set  orthogonal to $\overline{\lambda}^\dagger$ is $\{\ror{4}{1},\ror{5}{2}\}$ and so $$tail(\lambda):=s\left(\overline{\lambda}^{\dagger}\right)=2.$$

Since $\lambda \in \Lambda^+_{5|5}$ we may construct a CTD $c_{\lambda}$ for it. This CTD is given by the diagram

\begin{equation*}
\begin{tikzpicture}[anchorbase,scale=1.1]
\node at (0,1.5) {$\bullet $};

\node at (0.5,1) {$\bullet $};
\node at (-0.5, 1) {$\bullet $};

\node at (1, 0.5) {$\bullet $};
\draw[thick, red] (1,0.5) circle [radius=0.25cm];
\node at (0, 0.5) {$\bullet $};
\draw[thick, red] (0,0.5) circle [radius=0.25cm];
\node at (-1, 0.5) {$\bullet $};
\draw[thick, red] (-1,0.5) circle [radius=0.25cm];

\node at (1.5, 0) {$\circ $};
\node at (0.5, 0) {$\bullet $};
\node at (-0.5, 0) {$\bullet $};
\node at (-1.5, 0) {$\circ $};

\node at (2, -0.5) {$\circ $};
\node at (1, -0.5) {$\circ $};
\node at (0, -0.5) {$\bullet $};
\node at (-1, -0.5) {$\circ $};
\node at (-2, -0.5) {$\circ $};

\node at (1.5, -1) {$\circ $};
\node at (0.5, -1) {$\circ $};
\node at (-0.5, -1) {$\circ $};
\node at (-1.5, -1) {$\circ $};

\node at (1, -1.5) {$\circ $};
\node at (0, -1.5) {$\bullet $};
\node at (-1, -1.5) {$\circ $};

\node at (0.5, -2) {$\bullet $};
\node at (-0.5, -2) {$\bullet $};

\node at (0, -2.5) {$\bullet $};


\end{tikzpicture}
\end{equation*} 

Based on this diagram, consider the incomparable subset $S =\{\ror{1}{3},\ror{2}{4},\ror{3}{5}\}$ of $\mathcal{R}$ (the elements of $S$ are marked by red circles in the diagram above). We have: $c_{\lambda}(\alpha)=1$ for all $\alpha\in S$. 

If we consider $\Sigma \in \mathbb{S}$ such that $\Sigma\cap \mathcal{R} = S$ we find that 
$\overline{\lambda}_\Sigma=(5\, 5\, 5\, 2\, 1|-1\,-2\,-5\,-5\,-5)$ and $D_{\lambda}^{\Sigma}$ is

\begin{equation*}
    \begin{tikzpicture}
    
        \draw[thick, -] (-6.5 ,0)--(6.5, 0);
        
        \draw[thick, fill=white] (-6,0) circle [radius=0.25cm];
        \node[] at (-6,-0.5) {-1};
        
        \draw[thick, fill=white] (-4.5,0) circle [radius=0.25cm];
        \node[] at (-4.5,-0.5) {0};
        
        \draw[thick, -] (-3,0)--(-3 + 0.25 ,0.25);
        \draw[thick, -] (-3,0)--(-3 - 0.25 ,0.25);
        \draw[thick, -] (-3,0)--(-3 + 0.25 ,-0.25);
        \draw[thick, -] (-3,0)--(-3 - 0.25 ,-0.25);
        \node[] at (-3,-0.5) {1};
        
        \draw[thick, -] (-1.5,0)--(-1.5 + 0.25 ,0.25);
        \draw[thick, -] (-1.5,0)--(-1.5 - 0.25 ,0.25);
        \draw[thick, -] (-1.5,0)--(-1.5 + 0.25 ,-0.25);
        \draw[thick, -] (-1.5,0)--(-1.5 - 0.25 ,-0.25);
        \node[] at (-1.5,-0.5) {2};

        \draw[thick, fill=white] (0,0) circle [radius=0.25cm];
        \node[] at (0,-0.5) {3};
        
        \draw[thick, fill=white] (1.5,0) circle [radius=0.25cm];
        \node[] at (1.5,-0.5) {4};
        
        \draw[thick, -] (3,0)--(3 + 0.25 ,0.25);
        \draw[thick, -] (3,0)--(3 - 0.25 ,0.25);
        \draw[thick, -] (3,0)--(3 + 0.25 ,-0.25);
        \draw[thick, -] (3,0)--(3 - 0.25 ,-0.25);

        \draw[thick, -] (3,0 + 0.65)--(3 + 0.25 ,0.25 + 0.65);
        \draw[thick, -] (3,0 + 0.65)--(3 - 0.25 ,0.25 + 0.65);
        \draw[thick, -] (3,0 + 0.65)--(3 + 0.25 ,-0.25 + 0.65);
        \draw[thick, -] (3,0 + 0.65)--(3 - 0.25 ,-0.25 + 0.65);
        \draw[thick, -] (3,0 + 2* 0.65)--(3 + 0.25 ,0.25 + 2* 0.65);
        \draw[thick, -] (3,0 + 2* 0.65)--(3 - 0.25 ,0.25 + 2* 0.65);
        \draw[thick, -] (3,0 + 2* 0.65)--(3 + 0.25 ,-0.25 + 2* 0.65);
        \draw[thick, -] (3,0 + 2* 0.65)--(3 - 0.25 ,-0.25 + 2* 0.65);
        \node[] at (3,-0.5) {5};
        
        \draw[thick, fill=white] (4.5,0) circle [radius=0.25cm];
        \node[] at (4.5,-0.5) {6};
        
        \draw[thick, fill=white] (6,0) circle [radius=0.25cm];
        \node[] at (6,-0.5) {7};
        
        
        
        
    \end{tikzpicture}
\end{equation*}

We have $3$ stacked symbols $\times$ in a single position here, so $s(\overline{\lambda}_{\Sigma}) \geq 3$ by \cref{lem:longtail_ineq_number_x_stacked}. This shows that 
$longtail(\lambda) = \max_{\nu\in \text{Hwt}(\lambda)}{s(\nu)}\ge s(\overline{\lambda}_\Sigma)\ge 3$.
So we see that in this example, $longtail(\lambda) > tail(\lambda)$, disproving the Tail Conjecture \ref{tail_conj}.

In fact, we will see in \cref{cor:CTD_vs_longtail_eq} that
$$ longtail(\lambda) = \sharp S = 3.$$

\subsection{Corrected Tail Conjecture: description in terms of CTD}

We would now like to discuss a correction to the Tail Conjecture \ref{tail_conj}. To be more specific, we will develop explicit formulas for $longtail(\lambda)$ which will allow us to compute this value without actually checking every base $\Sigma\in \mathbb{S}$. 

In this subsection, we will provide our first explicit formula for the value of $longtail(\lambda)$, using the change tracking diagram (CTD) $c_{\lambda}$ of $\lambda$. This formula is a very efficient tool for computing the value $longtail(\lambda)$, since it only requires us to compute the CTD of $\lambda$, compared to the original definition of $longtail(\lambda)$ which required us to consider all the bases in $\mathbb{S}$ one by one.

Recall that for $\alpha\in \mathcal{R}$, we have $c_{\lambda}(\alpha) = 1$ iff $\alpha$ changes the diagram of $\lambda$.

\begin{definition}
For any $\lambda\in \distHW$, denote by $$C_{\lambda}:=\{\alpha\in \mathcal{R}\,|\, c_{\lambda}(\alpha)=1\}$$ the set of right odd roots changing the diagram of $\lambda$. 
\end{definition}

\begin{lemma}\label{lem:CTD_vs_s_lam_ineq}
    Let $\lambda\in \distHW$, and consider its CTD $c_{\lambda}$. Let $S\subset C_{\lambda}$ be an incomparable set, and let $\Sigma \in \mathbb{S}$ such that $\Sigma\cap \mathcal{R} = S$ (see Lemma \ref{ror are incomparable})
    
    Then $s(\overline{\lambda}_\Sigma)\ge \sharp S$.
\end{lemma}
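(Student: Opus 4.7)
The plan is to show that the set $S$ itself witnesses the inequality $s(\overline{\lambda}_\Sigma)\ge \sharp S$. That is, $S$ already gives an iso-set of the right size which is orthogonal to $\overline{\lambda}_\Sigma$ and which can be embedded into a base in $\mathbb{S}$ (namely, into $\Sigma$ itself).

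First I would verify that $S$ is an iso-set in the sense of the paper. Since $S\subset \mathcal{R}$ consists of right odd roots, any two distinct elements $\ror{i}{j},\ror{i'}{j'}\in S$ satisfy $(\ror{i}{j}\mid \ror{i'}{j'}) = 0$ precisely when $i\neq i'$ and $j\neq j'$, which is forced by incomparability (otherwise one root would dominate the other in the partial order on $\mathcal{R}$). Linear independence of the elements of $S$ is automatic for distinct right odd roots. Hence $S$ is an iso-set, and by hypothesis it can be embedded into the base $\Sigma\in\mathbb{S}$.

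Next, I would show that $S$ is orthogonal to $\overline{\lambda}_\Sigma$. Fix $\alpha\in S$; since $\alpha\in C_\lambda$, Proposition \ref{CTD 1 iff alpha perp nu} guarantees the existence of some base $\Sigma'\in\mathbb{S}$ with $\alpha\in\Sigma'$ and $(\overline{\lambda}_{\Sigma'}\mid \alpha)=0$. But $\alpha$ also belongs to our chosen base $\Sigma$, so Corollary \ref{cor:things_orthog_to_hwt_lam} (applied to the pair $\Sigma',\Sigma$ and the root $\alpha$) yields $(\overline{\lambda}_\Sigma\mid\alpha)=0$. Running this over all $\alpha\in S$ gives $(\overline{\lambda}_\Sigma\mid S)=0$.

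Finally, combining the two paragraphs, $S$ is an iso-set of cardinality $\sharp S$ that is orthogonal to $\overline{\lambda}_\Sigma$ and embeds into the base $\Sigma\in\mathbb{S}$. By the definition of $s(\overline{\lambda}_\Sigma)$, this forces $s(\overline{\lambda}_\Sigma)\ge \sharp S$, as required. There is no substantial obstacle here: all the nontrivial content has been packaged into Proposition \ref{CTD 1 iff alpha perp nu} and Corollary \ref{cor:things_orthog_to_hwt_lam}, so the lemma is essentially a direct unwinding of the CTD's defining property together with the base-independence of orthogonality to $\overline{\lambda}$ for a fixed right odd root belonging to the base.
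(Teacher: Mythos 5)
Your proposal is correct and follows essentially the same route as the paper: each $\alpha\in S\subset C_{\lambda}$ is orthogonal to $\overline{\lambda}_{\Sigma}$ because orthogonality of a fixed right odd root to the shifted highest weight is independent of the base in $\mathbb{S}$ containing it, and then the incomparable set $S=\Sigma\cap\mathcal{R}$ itself is the witnessing iso-set. One pedantic note: the statement of Proposition \ref{CTD 1 iff alpha perp nu} does not literally assert that the base $\Sigma'$ it produces contains $\alpha$, so to feed Corollary \ref{cor:things_orthog_to_hwt_lam} you should instead extract $\Sigma'$ with $\alpha\in\Sigma'$ directly from the definition of ``changes the weight diagram of $\lambda$'' together with Lemma \ref{lem:generalities_on_ror_changes} (which is exactly how that proposition's proof goes), after which your transfer argument is precisely the base-independence the paper invokes.
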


\begin{proof}
Recall that for any $\alpha\in \mathcal{R}$, $$c_{\lambda}(\alpha)=1 \;\;\;\Longleftrightarrow \;\;\; \forall \Sigma \in \mathbb{S} \; \text{ such that }\alpha \in \Sigma, \; (\alpha|\overline{\lambda}_\Sigma)=0.$$

As such it is immediately obvious that for any $\alpha\in S$,  $(\alpha|\overline{\lambda}_\Sigma) = 0$. Since $S$ is an incomparable set, we conclude that $s(\lambda_\Sigma)\ge \sharp S$. 
\end{proof}

The next corollary gives the first explicit way of computing $longtail(\lambda)$: it states that $longtail(\lambda)$ is the size of the maximal incomparable subset of $\mathcal{R}$ all of whose elements change the diagram of $\lambda$.
\begin{cor}\label{cor:CTD_vs_longtail_eq}
    Let $\lambda\in \distHW$. We have:
    \begin{equation}\label{eq:longtail_CTD}
    longtail(\lambda) = \max_{\Sigma\in \mathbb{S}} \sharp\Sigma\cap C_{\lambda}
    \end{equation}
\end{cor}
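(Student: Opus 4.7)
The plan is to reinterpret both sides of \eqref{eq:longtail_CTD} as the size of a maximal incomparable subset of $\mathcal{R}$ satisfying some condition, and observe these conditions agree. The heavy lifting is essentially done by \cref{lem:CTD_vs_s_lam_ineq}, \cref{CTD 1 iff alpha perp nu}, and \cref{cor:s_nu_second_definition}; the corollary is a direct repackaging.

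First, I would use the bijection of \cref{ror are incomparable} to rewrite the right-hand side. Since $C_\lambda \subseteq \mathcal{R}$, and since $\Sigma \mapsto \Sigma \cap \mathcal{R}$ is a bijection between $\mathbb{S}$ and the incomparable subsets of $\mathcal{R}$, one has
$$\max_{\Sigma \in \mathbb{S}} \sharp (\Sigma \cap C_\lambda) \;=\; \max\{\sharp S \mid S \subseteq C_\lambda \text{ incomparable}\},$$
because for any incomparable $S \subseteq C_\lambda$ there is a $\Sigma \in \mathbb{S}$ with $\Sigma \cap \mathcal{R} = S$, whence $\Sigma \cap C_\lambda = S$, and conversely $\Sigma \cap C_\lambda$ is always an incomparable subset of $C_\lambda$.

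For the inequality $longtail(\lambda) \geq \max_{\Sigma} \sharp (\Sigma \cap C_\lambda)$, I would pick an incomparable $S \subseteq C_\lambda$ realising the maximum. Applying \cref{lem:CTD_vs_s_lam_ineq} directly produces a base $\Sigma' \in \mathbb{S}$ with $\Sigma' \cap \mathcal{R} = S$ and $s(\overline{\lambda}_{\Sigma'}) \geq \sharp S$. Since $\overline{\lambda}_{\Sigma'} \in \mathrm{Hwt}(\lambda)$, the definition \ref{def:longtail} gives $longtail(\lambda) \geq s(\overline{\lambda}_{\Sigma'}) \geq \sharp S$, which is the desired bound.

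For the reverse inequality, I would choose $\Sigma \in \mathbb{S}$ such that $s(\overline{\lambda}_\Sigma) = longtail(\lambda)$. By \cref{cor:s_nu_second_definition} applied to $\nu = \overline{\lambda}_\Sigma$, there exists an incomparable $T \subseteq \mathcal{R}$ with $\sharp T = s(\overline{\lambda}_\Sigma)$ and $(\alpha \mid \overline{\lambda}_\Sigma) = 0$ for every $\alpha \in T$. Now the key point is that \cref{CTD 1 iff alpha perp nu} shows: since there exists some $\Sigma \in \mathbb{S}$ (namely this one) with $(\overline{\lambda}_\Sigma \mid \alpha) = 0$, we have $c_\lambda(\alpha) = 1$, i.e.\ $\alpha \in C_\lambda$. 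Thus $T \subseteq C_\lambda$ is an incomparable subset, contributing $\sharp T = longtail(\lambda)$ to the right-hand side of \eqref{eq:longtail_CTD}, establishing the reverse bound.

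There is essentially no obstacle here beyond wiring together the three earlier results through the incomparable-subset bijection; the proof amounts to a short bookkeeping argument.
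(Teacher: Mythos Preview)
Your proposal is correct and follows essentially the same approach as the paper's proof: both directions rely on \cref{lem:CTD_vs_s_lam_ineq} and \cref{CTD 1 iff alpha perp nu}, tied together via the bijection of \cref{ror are incomparable} between bases in $\mathbb{S}$ and incomparable subsets of $\mathcal{R}$. You are slightly more explicit in invoking \cref{cor:s_nu_second_definition} to pass from $s(\overline{\lambda}_\Sigma)$ to an incomparable set, but the argument is otherwise the same bookkeeping.
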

\begin{proof}
By \cref{lem:CTD_vs_s_lam_ineq}, any incomparable subset of $C_{\lambda}$ is of cardinality at most $longtail(\lambda)$. This shows that the left hand side of \eqref{eq:longtail_CTD} is greater or equal to the right hand side. 

    Let $\Sigma\in \mathbb{S}$, and let $S\subset \mathcal{R}$ be an incomparable set of maximal cardinality such that $(S|\overline{\lambda}_{\Sigma})=0$. Then by Proposition \ref{CTD 1 iff alpha perp nu}, every $\alpha \in S$ has $c_\lambda(\alpha)=1$ and therefore $S$ is an incomparable subset of $C_{\lambda}$. 
    
    Finally, any incomparable subset $S\subset C_{\lambda}$ can be written as $
S=\Sigma' \cap \mathcal{R}$ for some $\Sigma'\in \mathbb{S}$, proving that $$\sharp S \leq \max_{\Sigma\in \mathbb{S}} \sharp\Sigma\cap C_{\lambda}$$ This proves that the right hand side in \eqref{eq:longtail_CTD} is greater or equal to the left hand side, completing the proof. 
\end{proof}

\subsection{Corrected Tail Conjecture: description in terms of arrow and cap diagrams}

In this subsection, we will give explicit formulas for $longtail(\lambda)$ in terms of arrow and cap diagrams of $\lambda$. First, $longtail(\lambda)$ will be described in terms of the arrow diagram of $\lambda$. Secondly, we will describe it in terms of the cap diagrams of $\lambda$.

We begin with some auxiliary terminology and lemmas.

\begin{definition}
Let $\lambda\in \distHW$, and let $r\in \mathbb{Z}$. We say that an arrow or a cap in $D_{\lambda}$ {\bf goes over $r$} if the said arrow or cap starts in a position $p\leq r$ and ends in a position $p'>r$.
\end{definition}

\begin{lemma}\label{lem:rectangle_in_CTD}
    Let $\lambda \in \distHW$.
    
    Then for any incomparable $S \subset C_{\lambda}$, there exist $1\leq I'\leq I''\leq m $, $1\leq J'\leq J''\leq n$ such that $$S\subset \{\ror{i}{j} \, |\, I'\leq i\leq I'', \, J'\leq j\leq J''\}\subset C_{\lambda}.$$ Diagrammatically, this means that $C_{\lambda}$ contains a "rectangle" of right odd roots, which in turn contains $S$. 
\end{lemma}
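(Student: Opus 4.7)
The plan is to realize $I', I'', J', J''$ as the extreme indices appearing in $S$, and then to use the monotonicity property of the two endpoints of the ``$j$-interval'' $[M_i-(k_i-\overline{\lambda}_i-1),\, M_i]$ supplied by \cref{ctd inequality lower bound ie} together with the characterization of $C_\lambda$ given by \cref{theorem for ror change inequality}\eqref{itm:ror_change_main}.

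First I would unpack the incomparability condition. Writing $S=\{\ror{i_1}{j_1},\dots,\ror{i_k}{j_k}\}$ with $i_1<i_2<\cdots<i_k$, the recollection that $\ror{i}{j}\leq \ror{i'}{j'}$ iff $i\geq i'$ and $j\leq j'$ forces $j_1<j_2<\cdots<j_k$ (otherwise some pair would be comparable). I then take
\[
I':=i_1,\quad I'':=i_k,\quad J':=j_1,\quad J'':=j_k,
\]
so that the inclusion $S\subset\{\ror{i}{j}\mid I'\le i\le I'',\,J'\le j\le J''\}$ is immediate.

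The main content is the reverse inclusion: for every $(i,j)$ with $i_1\le i\le i_k$ and $j_1\le j\le j_k$ one has $c_{\lambda}(\ror{i}{j})=1$. By \cref{theorem for ror change inequality}\eqref{itm:ror_change_main}, this amounts to verifying the two inequalities
\[
M_i-(k_i-\overline{\lambda}_i-1)\le j\le M_i.
\]
For the upper bound I would use $i\le i_k$ and apply the first inequality of \cref{ctd inequality lower bound ie} to get $M_{i_k}\le M_i$; since $c_\lambda(\ror{i_k}{j_k})=1$ gives $j_k\le M_{i_k}$, we conclude $j\le j_k\le M_{i_k}\le M_i$. For the lower bound I would use $i_1\le i$ and the second inequality of \cref{ctd inequality lower bound ie} to get $M_i-(k_i-\overline{\lambda}_i-1)\le M_{i_1}-(k_{i_1}-\overline{\lambda}_{i_1}-1)$; combined with $j\ge j_1\ge M_{i_1}-(k_{i_1}-\overline{\lambda}_{i_1}-1)$, which comes from $c_\lambda(\ror{i_1}{j_1})=1$, this yields the desired inequality.

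There is essentially no obstacle here: once the indexing of $S$ is fixed and $(I',I'',J',J'')$ chosen as the extremes, the result is a direct consequence of the monotonicity lemma. The only mild subtlety is to note that the existence of a $j$ satisfying the two inequalities already forces the interval to be nonempty, so the side condition $\overline{\lambda}_i\neq k_i$ in the definition of $C_\lambda$ is automatically satisfied.
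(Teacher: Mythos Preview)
Your proposal is correct and follows essentially the same route as the paper: both take $I',I'',J',J''$ to be the extreme indices appearing in $S$, and both derive the inclusion $\{\ror{i}{j}\mid I'\le i\le I'',\,J'\le j\le J''\}\subset C_\lambda$ by combining the characterization $M_i-(k_i-\overline{\lambda}_i-1)\le j\le M_i$ from \cref{theorem for ror change inequality}\eqref{itm:ror_change_main} (applied at the two extreme elements $\ror{i_1}{j_1}$ and $\ror{i_k}{j_k}$) with the monotonicity of the interval endpoints from \cref{ctd inequality lower bound ie}. Your closing remark that nonemptiness of the $j$-interval forces $\overline{\lambda}_i\neq k_i$ is a nice explicit check that the paper leaves implicit.
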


Before proving Lemma \ref{lem:rectangle_in_CTD}, we provide an example to demonstrate the meaning of this lemma in terms of CTDs.

\begin{example}
    Let $\lambda \in \Lambda^+_{4|4}$ given by $\overline{\lambda}=(4\, 3\, 2\, 0|0\, -2\, -3\, -4)$. The diagram $D_\lambda$ is 

    \begin{equation*}
    \begin{tikzpicture}
    
        \draw[thick, -] (-6.5 ,0)--(6.5, 0);
        
        \draw[thick, fill=white] (-6,0) circle [radius=0.25cm];
        \node[] at (-6,-0.5) {-1};
        
        \draw[thick, -] (-4.5,0)--(-4.5 + 0.25 ,0.25);
        \draw[thick, -] (-4.5,0)--(-4.5 - 0.25 ,0.25);
        \draw[thick, -] (-4.5,0)--(-4.5 + 0.25 ,-0.25);
        \draw[thick, -] (-4.5,0)--(-4.5 - 0.25 ,-0.25);
        \node[] at (-4.5,-0.5) {0};
        
        \draw[thick, fill=white] (-3,0) circle [radius=0.25cm];
        \node[] at (-3,-0.5) {1};
        
        \draw[thick, -] (-1.5,0)--(-1.5 + 0.25 ,0.25);
        \draw[thick, -] (-1.5,0)--(-1.5 - 0.25 ,0.25);
        \draw[thick, -] (-1.5,0)--(-1.5 + 0.25 ,-0.25);
        \draw[thick, -] (-1.5,0)--(-1.5 - 0.25 ,-0.25);
        \node[] at (-1.5,-0.5) {2};

        \draw[thick, -] (0,0)--(0 + 0.25 ,0.25);
        \draw[thick, -] (0,0)--(0 - 0.25 ,0.25);
        \draw[thick, -] (0,0)--(0 + 0.25 ,-0.25);
        \draw[thick, -] (0,0)--(0 - 0.25 ,-0.25);
        \node[] at (0,-0.5) {3};
        
        \draw[thick, -] (1.5,0)--(1.5 + 0.25 ,0.25);
        \draw[thick, -] (1.5,0)--(1.5 - 0.25 ,0.25);
        \draw[thick, -] (1.5,0)--(1.5 + 0.25 ,-0.25);
        \draw[thick, -] (1.5,0)--(1.5 - 0.25 ,-0.25);
        \node[] at (1.5,-0.5) {4};
        
        \draw[thick, fill=white] (3,0) circle [radius=0.25cm];
        \node[] at (3,-0.5) {5};
        
        \draw[thick, fill=white] (4.5,0) circle [radius=0.25cm];
        \node[] at (4.5,-0.5) {6};
        
        \draw[thick, fill=white] (6,0) circle [radius=0.25cm];
        \node[] at (6,-0.5) {7};
        
        
        
        
    \end{tikzpicture}
\end{equation*}

    Consider the incomparable set $S = \{\ror{3}{2}, \ror{2}{3}, \ror{1}{4}\} \subset \mathcal{R}$. The diagram below depicts the CTD $c_\lambda$ where the elements of $S$ are circled in red. We draw in blue the smallest rectangle of right odd roots containing $S$:

    \begin{equation*}
\begin{tikzpicture}[anchorbase,scale=1.1]
\node at (0,1.5) {$\bullet $};

\node at (0.5,1) {$\bullet $};
\node at (-0.5, 1) {$\bullet $};

\node at (1, 0.5) {$\bullet $};
\draw[thick, red] (1,0.5) circle [radius=0.25cm];
\node at (0, 0.5) {$\bullet $};
\draw[thick, red] (0,0.5) circle [radius=0.25cm];
\node at (-1, 0.5) {$\bullet $};
\draw[thick, red] (-1,0.5) circle [radius=0.25cm];

\node at (1.5, 0) {$\circ $};
\node at (0.5, 0) {$\bullet $};
\node at (-0.5, 0) {$\bullet $};
\node at (-1.5, 0) {$\circ $};

\node at (1, -0.5) {$\circ $};
\node at (0, -0.5) {$\bullet $};
\node at (-1, -0.5) {$\circ $};

\node at (0.5, -1) {$\circ $};
\node at (-0.5, -1) {$\circ $};

\node at (0, -1.5) {$\bullet $};

\draw[thick, blue] (1.55,0.5)--(0,2.05);
\draw[thick, blue] (-1.55,0.5)--(0,2.05);
\draw[thick, blue] (1.55,0.5)--(0,-1.05);
\draw[thick, blue] (-1.55,0.5)--(0,-1.05);

\end{tikzpicture}
\end{equation*} 
\end{example}

\begin{proof}[Proof of Lemma \ref{lem:rectangle_in_CTD}]
    Let $S \subset C_{\lambda}$ be an incomparable subset. Since it is an incomparable set of right odd roots, it can be written as 
$S=\{\ror{i_1}{j_1}, \ror{i_2}{j_2},\ldots, \ror{i_K}{j_K}\}$ for some $K\geq 0$, where $i_1<i_2<\ldots<i_{K} $ and $j_1<j_2<\ldots<j_K$. 

Now, we use the fact that $S \subset C_{\lambda}$; Theorem \ref{theorem for ror change inequality} implies: 
\begin{equation}\label{eq:rect_ctd_eq_2}
    M_{i_l}-(k_{i_l} - \overline{\lambda}_{i_l} - 1) \leq j_l\leq M_{i_l}
\end{equation}
for every $l=1, \ldots, K$. 

We set: $$I':= i_1,\; J':=j_1,\; I'':=i_K, \; J'':=j_K. $$
Clearly, $I'<I''$ and $J'<J''$.

Next, $c_\lambda(\ror{I'}{J'})=c_\lambda(\ror{I''}{J''})=1$ so by \eqref{eq:rect_ctd_eq_2}, 
$$M_{I'}-(k_{I'} - \overline{\lambda}_{I'} - 1) \leq J', \;\; J''\leq M_{I''}$$
Thus for any $J'\leq j\leq J''$, we have: 
$$M_{I'}-(k_{I'} - \overline{\lambda}_{I'} - 1) \leq j\leq M_{I''}.$$

By \cref{ctd inequality lower bound ie}, for every $I'\leq i\leq I''$, we have:
$$M_i\geq M_{I''}, \;\;  M_{I'}-(k_{I'} - \overline{\lambda}_{I'} - 1)\geq M_i - (k_i - \overline{\lambda}_i - 1).$$ 

So for every $I'\leq i\leq I''$, $J'\leq j\leq J''$, we have: 
$$M_i - (k_i - \overline{\lambda}_i - 1) \leq j\leq M_i.$$

We conclude that 
$$S\subset \{\ror{i}{j} \, |\, I'\leq i\leq I'', \, J'\leq j\leq J''\}\subset C_{\lambda}$$ as required.

\end{proof}

\begin{remark}\label{rmk:rectangle_iso_set}
    In the setting of \cref{lem:rectangle_in_CTD}, let $k:=\min\{I''-I', J''-J'\}$(clearly, $k+1\geq \sharp S$). Consider the set $\{\ror{I'}{J'}, \ror{I'+1}{J'+1}, \ldots, \ror{I'+k}{J'+k}\}$. This set is incomparable, has cardinality $k+1$ and is contained in $\{\ror{i}{j} \, |\, I'\leq i\leq I'', \, J'\leq j\leq J''\}\subset C_{\lambda}$.

    In particular, if we take $K:=longtail(\lambda)$, the above argument implies that we can always choose an incomparable subset of $C_{\lambda}$ of size $K$ which has the form above. Formally, that means that there exist $1\leq I'\leq m - K$, $1\leq J'\leq n - K$ such that $$S':=\{\ror{I'}{J'}, \ror{I'+1}{J'+1}, \ldots, \ror{I'+K}{J'+K}\} \subset C_{\lambda}$$ is a maximal cardinality incomparable subset of $C_{\lambda}$. Moreover, $S'$ is contained (as a "diagonal") in a "rectangle" inside $C_{\lambda}$: $$S' \subset \{\ror{i}{j} \, |\, I'\leq i\leq I'+K, \, J'\leq j\leq J'+K\}\subset C_{\lambda}.$$ 

    Expanding on Corollary \ref{cor:CTD_vs_longtail_eq}, this lemma implies that $longtail(\lambda)$ is given by width of the largest black rectangle appearing in $c_\lambda$.
\end{remark}

\begin{cor}
    We have:
    $$longtail(\lambda) = \max_{A\in \mathcal{A}_{\lambda}} longtail(\lambda^A).$$
\end{cor}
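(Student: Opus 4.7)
The plan is to use the CTD-based formula $longtail(\lambda) = \max_{\Sigma\in \mathbb{S}} \sharp(\Sigma\cap C_{\lambda})$ from Corollary \ref{cor:CTD_vs_longtail_eq}, combined with the bijection $\Sigma \leftrightarrow \Sigma\cap \mathcal{R}$ from Lemma \ref{ror are incomparable}. Under this rewriting, $longtail(\lambda)$ becomes the maximal cardinality of an incomparable subset of $C_\lambda$, and likewise for each atom $\lambda^A$. So it suffices to show that the maximal size of an incomparable subset of $C_\lambda$ equals the maximum over $A \in \mathcal{A}_\lambda$ of the maximal size of an incomparable subset of $C_{\lambda^A}$.

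For the inequality $longtail(\lambda) \le \max_{A} longtail(\lambda^A)$, I would argue as follows. Let $S \subset C_\lambda$ be an incomparable subset of maximal cardinality. By Proposition \ref{not same atom prop}, $S \subset \mathcal{R}_\lambda = \bigsqcup_{A\in\mathcal{A}_\lambda} \mathcal{R}^A$. By the proposition preceding Proposition \ref{not same atom prop}, if $A\neq B$ are distinct $\lambda$-atom index sets then every element of $\mathcal{R}^A$ is comparable to every element of $\mathcal{R}^B$. Since $S$ is incomparable, it must therefore be contained in a single $\mathcal{R}^A$. Writing $A=\{a_1<\dots<a_{m'}, -b_1<\dots<-b_{n'}\}$, the bijection $\ror{i}{j} \mapsto \ror{a_i}{b_j}$ from $\{1,\ldots,m'\}\times\{1,\ldots,n'\}$ to $\mathcal{R}^A$ is order-preserving (in both coordinates), so it preserves incomparability in both directions. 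By Proposition \ref{atom ctd prop}, this bijection also sends $C_{\lambda^A}$ to $C_\lambda \cap \mathcal{R}^A$. Hence the preimage $S' \subset C_{\lambda^A}$ of $S$ is incomparable of the same cardinality, yielding $|S| \leq longtail(\lambda^A)$.

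For the reverse inequality, fix any $A\in\mathcal{A}_\lambda$ and let $S'\subset C_{\lambda^A}$ be a maximal incomparable subset. Using the same order-preserving bijection and Proposition \ref{atom ctd prop} in the other direction, the image $S\subset C_\lambda\cap \mathcal{R}^A$ is an incomparable subset of $C_\lambda$ with $|S|=|S'|=longtail(\lambda^A)$. Therefore $longtail(\lambda)\geq longtail(\lambda^A)$, and taking the maximum over $A$ gives the opposite inequality.

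The only subtlety is verifying that the identification $\ror{i}{j} \mapsto \ror{a_i}{b_j}$ really preserves the partial order on right odd roots (and hence incomparability), but this is immediate: $\ror{i}{j}\le \ror{i'}{j'}$ iff $i\ge i'$ and $j\le j'$, and since the sequences $(a_k)$ and $(b_l)$ are strictly increasing, the same inequalities hold on the image. Combining the two inequalities yields the stated equality.
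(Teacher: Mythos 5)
Your proposal is correct, and its overall skeleton matches the paper's: reduce $longtail$ to the maximal size of an incomparable subset of $C_\lambda$ via \cref{cor:CTD_vs_longtail_eq} and \cref{ror are incomparable}, localize such a subset to a single $\mathcal{R}^A$, and transfer back and forth using \cref{atom ctd prop}. The one place where you genuinely diverge is the localization step. The paper first invokes the rectangle lemma (\cref{lem:rectangle_in_CTD}) to enclose $S$ in a rectangle inside $C_\lambda$ and then argues (via \cref{not same atom prop}) that this whole rectangle sits in one $\mathcal{R}^A$; you instead apply \cref{not same atom prop} directly to $S$ to get $S\subset\mathcal{R}_\lambda$ and then use the proposition preceding \cref{not same atom prop} (distinct atoms' sets $\mathcal{R}^A$, $\mathcal{R}^B$ are mutually comparable, $\mathcal{R}^A<\mathcal{R}^B$) to force $S$ into a single $\mathcal{R}^A$. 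Your route is more economical for this corollary, since it bypasses the rectangle lemma entirely, and it makes explicit the point the paper leaves implicit, namely that the re-indexing $\ror{i}{j}\mapsto\ror{a_i}{b_j}$ is an order isomorphism onto $\mathcal{R}^A$, so incomparability is preserved in both directions; the paper's rectangle-based route, on the other hand, reuses machinery it needs anyway for the later arrow-diagram formula. The reverse inequality is handled identically in both proofs.
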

\begin{proof}
    Let $S$ be an incomparable subset of $C_{\lambda}$. By \cref{lem:rectangle_in_CTD}, there exist $I'\leq  I''$, $J'\leq J''$ such that the "rectangle" 
    $$\{\ror{i}{j} \,|\, I'\leq i\leq I'', \, J'<j<J'' \} $$ contains $S$ and is itself contained in $C_{\lambda}$. Due to \cref{not same atom prop}, this entire rectangle lies in $\mathcal{R}^A$ for a single $\lambda$-atom $\lambda^A$. 
    
    Thus is we take an incomparable subset of $C_{\lambda}$ of maximal cardinality, it will also be an incomparable subset of maximal cardinality in $C_{\lambda^A}$ for some $\lambda$-atom $\lambda^A$, proving that  
     $$longtail(\lambda) \leq \max_{A\in \mathcal{A}_{\lambda}} longtail(\lambda^A).$$
     
     On the other hand, given any $\lambda$-atom $\lambda^A$ and an incomparable subset $S$ of $C_{\lambda^A}$, we have a corresponding incomparable subset $S'$ of $C_{\lambda} \cap \mathcal{R}^A$ by \cref{atom ctd prop}, and $\sharp S' = \sharp S$. Thus      $$longtail(\lambda) \geq \max_{A\in \mathcal{A}_{\lambda}} longtail(\lambda^A)$$
     and the statement is proved.
\end{proof}

\begin{lemma}\label{lem:max_arrows_over_point_and_rectangle}
    Let $\lambda\in \distHW$. Assume there exist $1\leq I'\leq I''\leq m$, $1\leq J'\leq J''\leq n$ such that $I''-I'=J''-J'$, $\{\ror{I'}{J'}, \ror{I'+1}{J'+1}, \ldots, \ror{I''}{J''}\} \subset C_{\lambda}$. Then the intersection of the half-closed intervals $[\overline{\lambda}_i, k_i)\subset \mathbb{R}$ for $I'\leq i\leq I''$ contains an integer point; that is, $$\bigcap_{i=I'}^{I''} \{r\in \Z\,|\, \overline{\lambda}_i\leq r< k_i\}  \neq \emptyset.$$
    Diagrammatically, if the CTD $c_\lambda$ contains a black rectangle whose corners are $\ror{I'}{J'}, \ror{I'}{J''}, \ror{I''}{J'},\ror{I''}{J''}$, then there exists a point $r\in\mathbb{Z}$ such that at least $\min\{I''-I',J''-J'\}$ arrows go over it in the arrow diagram of $\lambda$.  
\end{lemma}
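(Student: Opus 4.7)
The plan is to exhibit an explicit integer $r\in\bigcap_{i=I'}^{I''}[\overline{\lambda}_i, k_i)$ by tracking how the position of a specific $<$ or $\times$ symbol of $D^{\Sigma^i}_\lambda$ evolves as $i$ runs through $I''$ down to $I'$. Writing $\overline{\lambda}^i := \overline{\lambda}_{\Sigma^i}$ and $T:=I''-I'=J''-J'$, I would begin by recasting the hypothesis geometrically. Combining the characterization from \cref{theorem for ror change inequality}(4) with the explicit formulas $M_i=\sharp\{j':-\overline{\lambda}^i_{-j'}<k_i\}$ and $L_i:=M_i-(k_i-\overline{\lambda}_i-1)=\sharp\{j':-\overline{\lambda}^i_{-j'}\leq \overline{\lambda}_i\}$ appearing in the proof of \cref{ctd inequality lower bound ie}, and using the strict monotonicity of $j'\mapsto -\overline{\lambda}^i_{-j'}$ from \cref{theorem for ror change inequality}(1), one obtains the clean equivalence
$$c_\lambda(\ror{i}{j})=1 \;\;\Longleftrightarrow\;\; \overline{\lambda}_i\leq -\overline{\lambda}^i_{-j}<k_i.$$
Moreover, \cref{theorem for ror change inequality}(2) forces every position in $[\overline{\lambda}_i, k_i)$ of $D^{\Sigma^i}_\lambda$ to carry some $<$ or $\times$ symbol, so the positions $-\overline{\lambda}^i_{-j}$ for $L_i\leq j\leq M_i$ exhaust exactly $\overline{\lambda}_i, \overline{\lambda}_i+1, \ldots, k_i-1$.

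The crux is an index-shift formula describing how the negative-coordinate positions re-index under $\Sigma^{i}\to\Sigma^{i-1}$. By \cref{Sigma i diagarm} this step relocates a single $\times$ from $\overline{\lambda}_i=-\overline{\lambda}^i_{-L_i}$ to $k_i$; resorting the list of $<$ or $\times$ positions from left to right then yields
$$-\overline{\lambda}^{i-1}_{-j}=-\overline{\lambda}^{i}_{-(j+1)}\qquad\text{for every } L_i\leq j<M_i.$$
I would apply this formula at $i=I'+t,\, j=J'+t-1$ for $t=1,\ldots,T$. The lower bound $L_{I'+t}\leq J'+t-1$ follows from $c_\lambda(\ror{I'+t-1}{J'+t-1})=1$ together with the monotonicity $L_{I'+t}\leq L_{I'+t-1}$ from \cref{ctd inequality lower bound ie}, while the strict upper bound $J'+t-1<M_{I'+t}$ is immediate from $c_\lambda(\ror{I'+t}{J'+t})=1$. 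Telescoping over $t$ then gives the central identity
$$-\overline{\lambda}^{I'}_{-J'}=-\overline{\lambda}^{I'+1}_{-(J'+1)}=\cdots=-\overline{\lambda}^{I''}_{-J''}.$$

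Setting $r:=-\overline{\lambda}^{I''}_{-J''}$, the hypotheses $c_\lambda(\ror{I'}{J'})=1$ and $c_\lambda(\ror{I''}{J''})=1$ combined with the identity above yield $\overline{\lambda}_{I'}\leq -\overline{\lambda}^{I'}_{-J'}=r<k_{I''}$. Since $\overline{\lambda}_i$ strictly decreases in $i$ (by dominance of $\lambda$ with respect to $\distinguished$) and $k_i$ strictly decreases in $i$ on $\{I',\ldots,I''\}$ (an easy consequence of the left-to-right construction of the arrow diagram, used implicitly throughout \cref{arrow diagram section}), one concludes $\overline{\lambda}_i\leq\overline{\lambda}_{I'}\leq r<k_{I''}\leq k_i$ for every $i\in\{I',\ldots,I''\}$, so $r$ lies in the required intersection. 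The main obstacle will be to set up the shift formula cleanly from \cref{Sigma i diagarm} and to verify its applicability at every step of the telescope; once these are in place, the remaining monotonicity bookkeeping is straightforward.
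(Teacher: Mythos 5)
Your proof is correct, and although it rests on the same pillars as the paper's argument --- passing through the chain of bases $\Sigma^{I''},\Sigma^{I''-1},\dots,\Sigma^{I'}$ and combining \cref{theorem for ror change inequality} with the monotonicity of $M_i$ and of $M_i-(k_i-\overline{\lambda}_i-1)$ from \cref{ctd inequality lower bound ie} --- your endgame is genuinely different. The paper fixes a single column index $j\in[J',J'']$, upgrades the diagonal hypothesis to the full rectangle via that same monotonicity, shows that the tracked position $-\overline{\lambda}^i_{-j}$ advances by exactly one at each of the $I''-I'$ steps, and then runs a counting argument over the $I''-I'+1$ distinct integers $-\overline{\lambda}^{I'}_{-j}$, $j=J',\dots,J''$, to force $\overline{\lambda}_{I'}<k_{I''}$. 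You instead re-index along the diagonal: the shift identity $-\overline{\lambda}^{i-1}_{-j}=-\overline{\lambda}^{i}_{-(j+1)}$, valid for $M_i-(k_i-\overline{\lambda}_i-1)\le j<M_i$ (and your corner-plus-monotonicity bookkeeping does verify this range at every telescope step), keeps the tracked position literally constant, and its common value $r=-\overline{\lambda}^{I''}_{-J''}$ is an explicit integer witness; your reformulation $c_\lambda(\ror{i}{j})=1\iff \overline{\lambda}_i\le-\overline{\lambda}^i_{-j}<k_i$ then gives $\overline{\lambda}_{I'}\le r<k_{I''}$ in one stroke, with the strict right-hand inequality (which the half-open intervals genuinely require) built in, so no pigeonhole count is needed. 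Two minor points of care: the shift identity is most safely derived not by ``resorting'' the diagram but directly, from $\overline{\lambda}^{i-1}=\overline{\lambda}^{i}+\sum_{j'}(\eps_i-\delta_{j'})$ (sum over those $j'$ with $c_\lambda(\ror{i}{j'})=1$) together with the consecutiveness of the occupied positions in $[\overline{\lambda}_i,k_i)$ guaranteed by parts (1)--(2) of \cref{theorem for ror change inequality}, since the resorting picture presupposes the strict monotonicity of the negative coordinates of $\overline{\lambda}^{i-1}$; and your last step uses that $k_i$ is strictly decreasing in $i$, which the paper also invokes (for instance in the proof of \cref{ctd inequality lower bound ie}) and which follows from the minimality in the definition of the arrow endpoints. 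With these details spelled out, your argument is a complete and slightly cleaner proof of the lemma.
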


\begin{proof}
Recall that $\overline{\lambda}_{I''}<\ldots <\overline{\lambda}_{I'}$ and by the definition of the arrow diagram, $k_{I''}<\ldots <k_{I'}$. Thus we only need to check that $k_{I''} >\overline{\lambda}_{I'} $.

Fix $J'\leq j\leq  J''$ and denote $t:=-\overline{\lambda}^{I''}_{-j}$. As we saw in the proof of \cref{lem:rectangle_in_CTD}, $$    M_{i}-(k_{i} - \overline{\lambda}_{i} - 1) \leq J' \leq j\leq J''\leq M_{i}$$ for any $i=I', \ldots, I''$.  

We use again Theorem \ref{theorem for ror change inequality} which implies that for any $i$,
\begin{enumerate}
    \item\label{itm:CTD_sq_1} 
$ -\overline{\lambda}^i_{-s} < -\overline{\lambda}^i_{-(s+1)}$ for any $s$,
    \item\label{itm:CTD_sq_2}  $\overline{\lambda}^i_i=\overline{\lambda}_i$,
    \item\label{itm:CTD_sq_3}  $M_i=\sharp\{s | -\overline{\lambda}^i_{-s}< k_{i}\}$ (the total number of symbols $<$ and $\times$ in $D^{\Sigma^{i}}_{\lambda}$ to the left of position $k_i$),
    \item\label{itm:CTD_sq_4}  $M_i-(k_i-\overline{\lambda}_i+1)=\sharp\{s | -\overline{\lambda}^i_{-s}< \overline{\lambda}_i\}$ (the total number of symbols $<$ and $\times$ in $D^{\Sigma^{i}}_{\lambda}$ to the left of position $\overline{\lambda}_i$).
\end{enumerate}

Due to the fact that $j\leq M_{I''}$, \eqref{itm:CTD_sq_3} implies: $t\leq k_{I''}$.

Next, by the same Theorem \ref{theorem for ror change inequality}, the reflection $r_{i, j}$ changes the diagrams $D_{\lambda}^{\Sigma^i}$ for $i=I'', I''-1, \ldots, I'+1$, so $$-\overline{\lambda}^{I'}_{-j}= t+I''-I'\leq k_{I''} + I''-I'.$$

Now, by \eqref{itm:CTD_sq_4} above we also have $$\sharp\{s | -\overline{\lambda}^{I'}_{-s}< \overline{\lambda}_{I'}\}\,=\,M_{
I'}-(k_{I'} - \overline{\lambda}_{I'} - 1)\, \leq \, j$$
hence $ \overline{\lambda}_{I'}= \overline{\lambda}^{I'}_{I'} \leq -\overline{\lambda}^{I'}_{-j}  $.

We conclude that for any $j=J', \ldots, J''=J'+I''-I'$, we have:
$ \overline{\lambda}_{I'}\leq -\overline{\lambda}^{I'}_{-j} \leq k_{I''} + I''-I'$. But by \eqref{itm:CTD_sq_1}, $\overline{\lambda}^{I'}_{-j}$ for $j=J', \ldots, J''=J'+I''-I'$ are $I''-I'+1$ distinct integers, so $ \overline{\lambda}_{I'}+I''-I'+1\leq k_{I''} + I''-I'$ and thus $\overline{\lambda}_{I'}\leq k_{I''}$ as required.
\end{proof}

\begin{cor}\label{cor:iso_set_in_CTD_vs_arrows_over}
Let $\lambda\in \distHW$, and let $S\subset C_{\lambda}$. Then there exists $r\in \mathbb{Z}$ such that $$\sharp S\,\leq \, \sharp\{i\in \{1,\dots,m\}\,\rvert\, \overline{\lambda}_i \leq r < k_i\}. $$
In other words, there exists a position in $D_{\lambda}$ which has at least $\sharp S$ arrows going over it.
\end{cor}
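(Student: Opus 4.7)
The plan is to combine Lemma \ref{lem:rectangle_in_CTD} with Lemma \ref{lem:max_arrows_over_point_and_rectangle}: the former produces, around any incomparable subset $S$ of $C_\lambda$, a whole rectangle of CTD-active right odd roots, while the latter converts a square sub-rectangle of such roots into an integer point lying under many arrows of the arrow diagram of $\lambda$.

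First I would observe that the statement has to be read with $S$ incomparable (the formulation otherwise fails whenever $\sharp S > m$, since the right-hand side is bounded by $m$); this matches the role of $S$ as an iso-set in $\mathcal{R}$ in the intended application. Under this reading, I apply Lemma \ref{lem:rectangle_in_CTD} to $S$ to get indices $1\leq I' \leq I'' \leq m$ and $1 \leq J' \leq J'' \leq n$ with
\[
S \;\subset\; \{\ror{i}{j} : I' \leq i \leq I'',\; J' \leq j \leq J''\} \;\subset\; C_\lambda.
\]
Because $S$ is an incomparable subset of a rectangle of dimensions $(I''-I'+1)\times(J''-J'+1)$, we have
\[
\sharp S \;\leq\; \min\bigl(I''-I'+1,\; J''-J'+1\bigr).
\]

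Next, set $K := \min(I''-I',\, J''-J')$. After possibly shrinking $I''$ down to $I'+K$ or $J''$ down to $J'+K$ (whichever of the two dimensions is larger), I obtain a square block still contained in $C_\lambda$; in particular the diagonal
\[
\{\ror{I'}{J'},\; \ror{I'+1}{J'+1},\; \ldots,\; \ror{I'+K}{J'+K}\} \;\subset\; C_\lambda
\]
has $(I'+K)-I' = (J'+K)-J' = K$, which is exactly the hypothesis of Lemma \ref{lem:max_arrows_over_point_and_rectangle}. That lemma furnishes an integer $r$ with $\overline{\lambda}_i \leq r < k_i$ for every $I'\leq i \leq I'+K$, and hence
\[
\sharp\{i \in \{1,\dots,m\} : \overline{\lambda}_i \leq r < k_i\} \;\geq\; K+1 \;\geq\; \sharp S,
\]
which is the desired inequality.

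I do not expect a genuine obstacle here: both main inputs are already proved, and the only subtlety is remembering that Lemma \ref{lem:max_arrows_over_point_and_rectangle} is stated for a square configuration ($I''-I' = J''-J'$), so one must reduce the rectangle of Lemma \ref{lem:rectangle_in_CTD} to a square before applying it. Conceptually, the passage from the CTD-rectangle to the square captures exactly the bound $\min(\text{width},\text{height})$ on the size of an incomparable subset, which is then matched by the number of arrows that survive over the point $r$.
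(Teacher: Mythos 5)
Your argument is correct and is essentially the paper's own proof: the paper also assumes $S$ incomparable (implicitly, as you correctly noted), passes from the rectangle of Lemma \ref{lem:rectangle_in_CTD} to a diagonal configuration $\{\ror{I'}{J'},\ldots,\ror{I''}{J''}\}$ with $I''-I'=J''-J'$ — packaged there as Remark \ref{rmk:rectangle_iso_set} rather than re-derived — and then applies Lemma \ref{lem:max_arrows_over_point_and_rectangle} to produce the point $r$ with at least $\sharp S$ arrows over it. The only cosmetic difference is that you invoke the rectangle lemma directly and bound $\sharp S$ by $\min(I''-I'+1,J''-J'+1)$ yourself, which is exactly the content of that remark.
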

\begin{proof}
By \cref{rmk:rectangle_iso_set}, we may replace $S$ by an incomparable subset $S' \subset C_{\lambda}$ of the same size which has the form $S'=\{\ror{I'}{J'}, \ror{I'+1}{J'+1}, \ldots, \ror{I''}{J''}\}$ for some $1\leq I'\leq I''\leq m$, $1\leq J'\leq J''\leq n$ such that $$I''-I'\,=\,J''-J' \,=\, \sharp S'-1\,=\,\sharp S-1.$$ 
By \cref{lem:max_arrows_over_point_and_rectangle}, we know that there exists $r\in \mathbb{Z}$ such that $$\forall I'\leq i\leq I'', \;  \overline{\lambda}_i\leq r < k_i $$ so we conclude that $$\sharp S\, =\, \sharp S' \, = \, I''-I'+1\, =\, \sharp\{I'\leq i\leq I''\} \, \leq \,\sharp\{i\in \{1,\dots,m\}\,\rvert\, \overline{\lambda}_i\leq r < k_i\}. $$

\end{proof}

\begin{lemma}\label{lem:stacked_x_vs_arrows_over_ineq}
    Let $\lambda\in \distHW$. Let $r\in \Z$, and let $$k:=\sharp\{i\in \{1,\dots,m\}\,\rvert\, \overline{\lambda}_i \le r < k_i\} $$ be the number of arrows going over position $r$ or starting at position $r$ in $D_{\lambda}$.
    
    Then there exists $\Sigma\in \mathbb{S}$ such that $D_{\lambda}^{\Sigma}$ has at least $k$ (stacked) symbols $\times$ in a single position.
    
\end{lemma}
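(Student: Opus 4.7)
The plan is to construct an explicit $\Sigma \in \mathbb{S}$ and verify directly that $D^\Sigma_\lambda$ has at least $k$ stacked $\times$ symbols at position $r+1$. Let $I = \{i_1 < i_2 < \ldots < i_k\}$ denote the indices with $\overline{\lambda}_{i_l} \le r < k_{i_l}$. Since $\overline{\lambda}_i$ is non-increasing and $k_i$ is non-increasing in $i$, the set $I$ is contiguous in $\{1,\ldots,m\}$ (if $i_{l} < i < i_{l+1}$ then $\overline{\lambda}_i \le \overline{\lambda}_{i_{l}} \le r$ and $k_i \ge k_{i_{l+1}} > r$, forcing $i\in I$). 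For each $l$, set
$$a_l := M_{i_l} - (k_{i_l} - \overline{\lambda}_{i_l} - 1), \qquad J^+_l := M_{i_l} - (k_{i_l} - r - 1),$$
so that $[a_l,J^+_l] \subseteq [J^{\min}_{i_l},M_{i_l}]$ and Theorem \ref{theorem for ror change inequality} gives $c_\lambda(\ror{i_l}{j})=1$ for all $j\in[a_l,J^+_l]$.

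First I will show that the intervals $[a_l, J^+_l]$ are nested with $l=1$ innermost and that $J^+_l$ is strictly increasing in $l$. The first statement follows from Lemma \ref{ctd inequality lower bound ie}; for strictness, the key ingredient is that the position $k_{i_l}$ itself carries a $\circ$ or $>$ in $D_\lambda$, forcing $\#\{j : k_{i_l} \le t_j < k_{i_{l-1}}\}$ to be strictly less than $k_{i_{l-1}}-k_{i_l}$, which gives $J^+_{l}-J^+_{l-1}>0$. I then define
$$\tilde B := \{\ror{i_l}{j}\,|\,1\le l\le k,\;a_l\le j\le J^+_l\},$$
and let $\Sigma\in\mathbb{S}$ be the unique base with $B_\Sigma$ equal to the downward closure of $\tilde B$ in $\mathcal R$ (existence and uniqueness via Theorem \ref{B Sigma}). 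By Corollary \ref{CTD formula}, $\overline{\lambda}_\Sigma=\overline{\lambda}+\sum_{\alpha\in B_\Sigma}c_\lambda(\alpha)\alpha$.

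The $\varepsilon$-side is straightforward: a direct case analysis shows that every "extra" element $\ror{i_l}{j'}\in B_\Sigma\setminus\tilde B$ with $i=i_l\in I$ has $j'<a_l$, and Theorem \ref{theorem for ror change inequality} then forces $c_\lambda=0$ on these. Hence the $\varepsilon_{i_l}$-contribution is exactly $|[a_l,J^+_l]|= r+1-\overline{\lambda}_{i_l}$, so $(\overline{\lambda}_\Sigma)_{i_l}=r+1$ for each $l=1,\ldots,k$, producing at least $k$ indices $i$ with $(\overline{\lambda}_\Sigma)_i = r+1$.

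The main obstacle is the $\delta$-side: I need at least $k$ values of $j$ with $-(\overline{\lambda}_\Sigma)_{-j}=r+1$, and my claim is that $j=J^+_1,\ldots,J^+_k$ all work (they are distinct by the strict monotonicity above). Here the extras in $B_\Sigma\setminus\tilde B$ with $i\notin I$ do contribute and must be tracked carefully. The key identity mirrors the proof of Theorem \ref{theorem for ror change inequality}: in $D^{\Sigma^{i_l}}_\lambda$ one reads off $t^{(i_l)}_{J^+_l}=r$, and the original coordinate is recovered by subtracting the number of reflections $r_{i,J^+_l}$ with $i>i_l$ that changed the diagram during $\distinguished\to\Sigma^{i_l}$, giving
$$t_{J^+_l} = r - \bigl[(k-l) + E_l\bigr],$$
where $k-l$ counts the contributions from $i=i_{l+1},\ldots,i_k\in I$ (here contiguity of $I$ is used: no $i\notin I$ lies in $(i_l,i_k]$, so the only non-$I$ contributions can come from $i>i_k$), and $E_l$ counts those from $i>i_k$. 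A parallel count using the explicit description of $B_\Sigma$ yields
$$\#\{i\,|\,\ror{i}{J^+_l}\in B_\Sigma,\;c_\lambda(\ror{i}{J^+_l})=1\} \;=\; (k-l+1)+E_l,$$
with the extra $+1$ coming from $i=i_l$ itself. Summing, the $E_l$-terms cancel and $-(\overline{\lambda}_\Sigma)_{-J^+_l} = t_{J^+_l}+(k-l+1)+E_l = r+1$. Combining this with the $k$ $\varepsilon$-contributions at position $r+1$ completes the proof.
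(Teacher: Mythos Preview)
Your proof is correct and uses essentially the same construction as the paper: both build $\Sigma$ from the incomparable set $\{\ror{i_l}{J^+_l}\}$ with $J^+_l = M_{i_l} - (k_{i_l} - r - 1)$ (the paper's $m_i$, up to the choice of $r$). The one notable difference is strategic: the paper first observes that replacing $r$ by $\overline{\lambda}_I$ (where $I = \max\{i : \overline{\lambda}_i \le r\}$) can only increase the arrow count, and then stacks the $\times$'s at position $\overline{\lambda}_I$; this reduction kills the ``$E_l$'' contributions from indices $i > i_k$ and makes the $\delta$-side essentially automatic. You instead work with a general $r$ and stack at $r+1$, which forces you to track the $E_l$ terms on both sides and observe their cancellation. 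Your route is slightly more laborious but shows something a bit finer (the stacking can be made to occur exactly at $r+1$, not merely somewhere), and your $\delta$-side verification via Corollary~\ref{CTD formula} and the identity $-(\overline{\lambda}^{i_l})_{-J^+_l}=r$ from Theorem~\ref{theorem for ror change inequality} is more explicit than the paper's somewhat terse ``computing $\overline{\lambda}_\Sigma$ we find\ldots''.
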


\begin{proof}

    Notice that for any $r$ such that $\sharp\{i\in \{1,\dots,m\}\,\rvert\, \overline{\lambda}_i \le r < k_i\} = 0$ there is nothing to prove. 
    
    Next, assume $r$ has at least one arrow going over it. 
    Observe that the intervals $[\overline{\lambda}_i,k_i)$ such that $r\in [\overline{\lambda}_i,k_i)$ must all intersect. Further, for $i> i'$ such that $ \overline{\lambda}_i \le r < k_i$ we have: $\overline{\lambda}_i < \overline{\lambda}_{i'} \le r < k_i < k_{i'}$. As such we deduce that for $$I=\max \{i\in \{1,\dots,m\}\,\rvert\, \overline{\lambda}_i \le r < k_i\}$$ we have $$\sharp \{i\in \{1,\dots,m\}\,\rvert\, \overline{\lambda}_i \le \overline{\lambda}_I < k_i\} \ge \sharp \{i\in \{1,\dots,m\}\,\rvert\, \overline{\lambda}_i \le r < k_i\}$$ and therefore we may assume $r = \overline{\lambda}_I$ for some $1\le I \le m$.

    Since $k_i < k_{i-1}$ and $\overline{\lambda}_i < \overline{\lambda}_{i-1}$, we deduce that there exists $I'$ such that for $I' \le i \le m$ we have both $\overline{\lambda}_i < \overline{\lambda}_I$ and $k_i \le \overline{\lambda}_I$. 
    For $I \le i < I'$ we have $\overline{\lambda}_i \leq \overline{\lambda}_I$ and $k_i < \overline{\lambda}_I$.
    Thus, $$k:=\sharp\{i\in \{1,\dots,m\}\,\rvert\, \overline{\lambda}_i \le \overline{\lambda}_I < k_i\}\,=\, I' - I\,\geq 1.$$

        For every $i$, denote $m_i:= M_i + 1 - (k_i - \overline{\lambda}_I)$. 
        
        For $I < i < i' < I'$ 
        we observe that $\overline{\lambda}_I < k_{i'} < k_i$ as such $ k_i - \overline{\lambda}_I > k_{i'} -\overline{\lambda}_I$. Since $M_i \ge M_{i'}$, we deduce that $m_i > m_{i'}$. Thus $\ror{i}{m_i}$ and $\ror{i'}{m_{i'}}$ are incomparable.

    Further, we observe that $$M_i + 1 - (k_i - \overline{\lambda}_i) \leq m_i= M_i + 1 - (k_i - \overline{\lambda}_I) \le M_i.$$ Hence by Theorem \ref{theorem for ror change inequality},$$\forall \, j, \;\; M_i + 1 - (k_i - \overline{\lambda}_i) \le j \le M_i + 1 - (k_i - \overline{\lambda}_I) \; \;\Longrightarrow \; \;c_\lambda (\ror{i}{j})=1.$$ 
    By the same theorem, for any $1\le j \le M_i + 1 - (k_i - \overline{\lambda}_i)$, we have: $c_\lambda(\ror{i}{j})=0$.

    Consider $\Sigma\in \mathbb{S}$ given by $\Sigma\cap \mathcal{R}=\{\ror{i}{m_i}\,|\,I < i < I'\}$ (as we saw above, this set is incomparable). 
    Then $$B_\Sigma = \{\ror{i}{j}\,|\, I < i<I',\; 1 \le j  <m_i\} \cup \{\ror{i}{j}|I' \le i \le m, \; 1\le j \le n\}$$
    
    Computing $\overline{\lambda}_\Sigma$ we find that for $m \le i \le I'$ we have $(\overline{\lambda}_\Sigma)_i = k_i$ and for $I < i < I'$ we have $(\overline{\lambda}_\Sigma)_i = (\overline{\lambda}_\Sigma)_I = \overline{\lambda}_I$.

 We now consider 2 cases: when $D_\lambda(\overline{\lambda}_I)$ is $>$ and when $D_\lambda(\overline{\lambda}_I)= \times$.

    
    If $D_\lambda(\overline{\lambda}_I)$ is $>$ then $k_{I'}=\overline{\lambda}_I$. So $(\overline{\lambda}_\Sigma)_{I'} = (\overline{\lambda}_\Sigma)_I$, and thus $k$ symbols $\times$ (for every $I < i \le I'$) have been moved to position $\overline{\lambda}_I$. Hence there must be at least $k$ $\times$ symbols in that position. 

    If $D_\lambda(\overline{\lambda}_I)= \times$ we find that $k_{I'}<\overline{\lambda}_I$. Hence precisely $k-1$ symbols $\times$ (for $I'<i<I$) were moved to position $\overline{\lambda}_I$, that already had a $\times$ symbol in it. 
    
    Thus $D^{\Sigma}_\lambda(\overline{\lambda}_I)$ has at $k$ symbols $\times$ in both cases, as required.

\end{proof}

\begin{example}
    Consider $\lambda \in \Lambda^+_{4|4}$ given by $\overline{\lambda}=(3\, 2\, 1\, 0|0\, -2\, -3\, -4)$.
    Below is the arrow diagram of $\lambda$:

    \begin{equation*}
    \begin{tikzpicture}
    
        \draw[thick, -] (-6.5 ,0)--(6.5, 0);
        
        \draw[thick, fill=white] (-6,0) circle [radius=0.25cm];
        \node[] at (-6,-0.5) {-1};
        
        \draw[thick, -] (-4.5,0)--(-4.5 + 0.25 ,0.25);
        \draw[thick, -] (-4.5,0)--(-4.5 - 0.25 ,0.25);
        \draw[thick, -] (-4.5,0)--(-4.5 + 0.25 ,-0.25);
        \draw[thick, -] (-4.5,0)--(-4.5 - 0.25 ,-0.25);
        \node[] at (-4.5,-0.5) {0};
        
        \draw[thick, -] (-3 +0.25, 0)--(-3 - 0.25, 0.25);
        \draw[thick, -] (-3 +0.25, 0)--(-3 - 0.25, -0.25);
        \node[] at (-3,-0.5) {1};
        
        \draw[thick, -] (-1.5,0)--(-1.5 + 0.25 ,0.25);
        \draw[thick, -] (-1.5,0)--(-1.5 - 0.25 ,0.25);
        \draw[thick, -] (-1.5,0)--(-1.5 + 0.25 ,-0.25);
        \draw[thick, -] (-1.5,0)--(-1.5 - 0.25 ,-0.25);
        \node[] at (-1.5,-0.5) {2};
            
        \draw[thick, -] (0,0)--(0 + 0.25 ,0.25);
        \draw[thick, -] (0,0)--(0 - 0.25 ,0.25);
        \draw[thick, -] (0,0)--(0 + 0.25 ,-0.25);
        \draw[thick, -] (0,0)--(0 - 0.25 ,-0.25);
        \node[] at (0,-0.5) {3};
        
        \draw[thick, -] (1.5 -0.25, 0)--(1.5 + 0.25, 0.25);
        \draw[thick, -] (1.5 -0.25, 0)--(1.5 + 0.25, -0.25);
        \node[] at (1.5,-0.5) {4};
        
        \draw[thick, fill=white] (3,0) circle [radius=0.25cm];
        \node[] at (3,-0.5) {5};
        
        \draw[thick, fill=white] (4.5,0) circle [radius=0.25cm];
        \node[] at (4.5,-0.5) {6};
        
        \draw[thick, fill=white] (6,0) circle [radius=0.25cm];
        \node[] at (6,-0.5) {7};

        \draw[thick, ->] (-4.5 ,0.4) .. controls (-4.5 ,1.65) and (-3 -0.2, 1.65) .. (-3 -0.2,0.4);
        \draw[thick, ->] (-3 +0.2,0.4) .. controls (-3 +0.2, 1.65) and (3, 1.65) .. (3,0.4);
        \draw[thick, ->] (-1.5, 0.4) .. controls (-1.5, 1.65) and (4.5, 1.65) .. (4.5, 0.4);
        \draw[thick, ->] (0, 0.4) .. controls (0, 1.65) and (6, 1.65) .. (6, 0.4);
        
        
        
        
    \end{tikzpicture}
\end{equation*}

    Observe that position $p=4$ has $3$ arrows going over it: namely, the arrows starting at $\overline{\lambda}_3, \overline{\lambda}_2, \overline{\lambda}_1$. 
    
    As we state in the proof, there exists a position to the left of $p=4$ that is the start of some arrow and also has $3$ arrows going over it (or starting at it). In this case it is position $\overline{\lambda}_1=3$, so $I=1$.

    We observe that $M_4= 1, M_3=M_2=M_1= 4$ and $k_4= 1, k_3= 5, k_2= 6, k_1= 7$. Computing $m_i$ for $i=1,2,3$, as defined in the proof, we find that $m_3= 3, m_2 = 2$ and $m_1 = 1$. 
    The base $\Sigma$ defined by $\Sigma \cap \mathcal{R}=\{\ror{3}{3}, \ror{2}{2}, \ror{1}{1}\}$ should have $3$ stacked symbols $\times$ in position $3$. 

    Below is the diagram for $B_\Sigma$, overlaid with the CTD $c_\lambda$:

    \begin{equation*}
\begin{tikzpicture}[anchorbase,scale=1.1]
\node at (0,1.5) {$\bullet $};

\node at (0.5,1) {$\bullet $};
\node at (-0.5, 1) {$\bullet $};

\node at (1, 0.5) {$\bullet $};
\node at (0, 0.5) {$\bullet $};
\node at (-1, 0.5) {$\bullet $};

\node at (1.5, 0) {$\circ $};
\node at (0.5, 0) {$\bullet $};
\node at (-0.5, 0) {$\bullet $};
\node at (-1.5, 0) {$\bullet $};

\node at (1, -0.5) {$\circ $};
\node at (0, -0.5) {$\bullet $};
\node at (-1, -0.5) {$\bullet $};

\node at (0.5, -1) {$\circ $};
\node at (-0.5, -1) {$\bullet $};

\node at (0, -1.5) {$\bullet $};

\draw[thick, red] (-1.5,-0.5)--(-1, 0);
\draw[thick, red] (-0.5,-0.5)--(-1, 0);
\draw[thick, red] (-0.5,-0.5)--(0, 0);
\draw[thick, red] (0.5,-0.5)--(0, 0);
\draw[thick, red] (0.5,-0.5)--(1.5, 0.5);
\draw[thick, red] (2,0)--(1.5, 0.5);

\draw[thick, red] (2,0)--(0, -2);
\draw[thick, red] (-1.5,-0.5)--(0, -2);

\end{tikzpicture}
\end{equation*} 

    Computing $\overline{\lambda}_\Sigma$ using the above diagram, we find that $\overline{\lambda}_\Sigma = ( 3\, 3\, 3\, 1| -3\, -3\, -3\, -4)$ and the diagram $D^{\Sigma}_\lambda$ is given by 

    \begin{equation*}
    \begin{tikzpicture}
    
        \draw[thick, -] (-6.5 ,0)--(6.5, 0);
        
        \draw[thick, fill=white] (-6,0) circle [radius=0.25cm];
        \node[] at (-6,-0.5) {-1};
        
        \draw[thick, fill=white] (-4.5,0) circle [radius=0.25cm];
        \node[] at (-4.5,-0.5) {0};
        
        \draw[thick, -] (-3 +0.25, 0)--(-3 - 0.25, 0.25);
        \draw[thick, -] (-3 +0.25, 0)--(-3 - 0.25, -0.25);
        \node[] at (-3,-0.5) {1};
        
        \draw[thick, fill=white] (-1.5,0) circle [radius=0.25cm];
        \node[] at (-1.5,-0.5) {2};
            
        \draw[thick, -] (0,0)--(0 + 0.25 ,0.25);
        \draw[thick, -] (0,0)--(0 - 0.25 ,0.25);
        \draw[thick, -] (0,0)--(0 + 0.25 ,-0.25);
        \draw[thick, -] (0,0)--(0 - 0.25 ,-0.25);
        \draw[thick, -] (0,0 + 0.65)--(0 + 0.25 ,0.25 + 0.65);
        \draw[thick, -] (0,0 + 0.65)--(0 - 0.25 ,0.25 + 0.65);
        \draw[thick, -] (0,0 + 0.65)--(0 + 0.25 ,-0.25 + 0.65);
        \draw[thick, -] (0,0 + 0.65)--(0 - 0.25 ,-0.25 + 0.65);
        \draw[thick, -] (0,0 + 2* 0.65)--(0 + 0.25 ,0.25 + 2* 0.65);
        \draw[thick, -] (0,0 + 2* 0.65)--(0 - 0.25 ,0.25 + 2* 0.65);
        \draw[thick, -] (0,0 + 2* 0.65)--(0 + 0.25 ,-0.25 + 2* 0.65);
        \draw[thick, -] (0,0 + 2* 0.65)--(0 - 0.25 ,-0.25 + 2* 0.65);
        \node[] at (0,-0.5) {3};
        
        \draw[thick, -] (1.5 -0.25, 0)--(1.5 + 0.25, 0.25);
        \draw[thick, -] (1.5 -0.25, 0)--(1.5 + 0.25, -0.25);
        \node[] at (1.5,-0.5) {4};
        
        \draw[thick, fill=white] (3,0) circle [radius=0.25cm];
        \node[] at (3,-0.5) {5};
        
        \draw[thick, fill=white] (4.5,0) circle [radius=0.25cm];
        \node[] at (4.5,-0.5) {6};
        
        \draw[thick, fill=white] (6,0) circle [radius=0.25cm];
        \node[] at (6,-0.5) {7};
        
        
        
        
    \end{tikzpicture}
\end{equation*}
    
    Notice that in this case the base $\Sigma$ is in fact $\Sigma_\lambda$ and $\overline{\lambda}_\Sigma = \overline{\lambda^\dagger}$, and thus $tail(\lambda)=3$. In this example, $longtail(\lambda)=3$, too, as can be seen in the next theorem. The equality between these values of course need not hold in general, as was shown in \cref{tail conj counterexample}. 
\end{example}

We now prove an explicit formula for $longtail(\lambda)$ in terms of the arrow diagram of $\lambda$.
\begin{theorem}
\label{thrm:max tail arrow}
Let $\lambda\in \distHW$. Then $$longtail(\lambda)=\max_{r\in\mathbb{Z}}\sharp\{i\in \{1,\dots,m\}\,\rvert\, \overline{\lambda}_i\leq r < k_i\}. $$
In other words, $longtail(\lambda)$ is the maximal number of arrows going over or starting at a single point in the arrow diagram of $\lambda$.
\end{theorem}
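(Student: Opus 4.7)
The plan is to prove the two inequalities separately, and in fact almost every ingredient has already been established in the preceding subsections; the theorem is essentially an assembly of \cref{cor:CTD_vs_longtail_eq}, \cref{cor:iso_set_in_CTD_vs_arrows_over}, \cref{lem:stacked_x_vs_arrows_over_ineq}, and \cref{lem:longtail_ineq_number_x_stacked}.

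For the upper bound $longtail(\lambda)\leq \max_{r\in\mathbb{Z}}\sharp\{i\,|\,\overline{\lambda}_i\leq r<k_i\}$, I would start from \cref{cor:CTD_vs_longtail_eq}, which identifies $longtail(\lambda)$ with the maximal cardinality of an incomparable subset $S\subset C_\lambda$ of the CTD. Such an $S$ can be chosen to realize the maximum; then \cref{cor:iso_set_in_CTD_vs_arrows_over} provides an integer $r\in\mathbb{Z}$ such that $\sharp S\leq \sharp\{i\,|\,\overline{\lambda}_i\leq r<k_i\}$. Taking maxima on both sides yields the desired inequality.

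For the lower bound $longtail(\lambda)\geq \max_{r\in\mathbb{Z}}\sharp\{i\,|\,\overline{\lambda}_i\leq r<k_i\}$, I would fix an integer $r$ achieving the maximum on the right hand side, and set $k:=\sharp\{i\,|\,\overline{\lambda}_i\leq r<k_i\}$. Then \cref{lem:stacked_x_vs_arrows_over_ineq} supplies a base $\Sigma\in\mathbb{S}$ for which the diagram $D^\Sigma_\lambda$ has at least $k$ symbols $\times$ stacked in some single position. Invoking \cref{lem:longtail_ineq_number_x_stacked} immediately gives $s(\overline{\lambda}_\Sigma)\geq k$, and since $\overline{\lambda}_\Sigma\in\mathrm{Hwt}(\lambda)$ by definition we conclude $longtail(\lambda)\geq k$, as required.

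Combining the two inequalities finishes the proof. There is no real obstacle here: the hard content was already dispatched, with the nontrivial direction being the passage from an incomparable subset of $C_\lambda$ to a point covered by arrows (\cref{lem:rectangle_in_CTD}, \cref{lem:max_arrows_over_point_and_rectangle}, \cref{cor:iso_set_in_CTD_vs_arrows_over}), and the explicit construction of a base realizing stacked $\times$'s (\cref{lem:stacked_x_vs_arrows_over_ineq}). So the only thing to write is the short assembly, together with a sentence pointing out that the maximum on the right is finite and achieved (since outside a bounded interval the set is empty).
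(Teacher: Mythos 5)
Your proposal is correct and follows exactly the paper's own argument: the lower bound via \cref{lem:stacked_x_vs_arrows_over_ineq} combined with \cref{lem:longtail_ineq_number_x_stacked}, and the upper bound via \cref{cor:CTD_vs_longtail_eq} together with \cref{cor:iso_set_in_CTD_vs_arrows_over}. Nothing is missing; the assembly you describe is precisely the proof given in the paper.
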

\begin{proof}
 Combining \cref{lem:stacked_x_vs_arrows_over_ineq} and \cref{lem:longtail_ineq_number_x_stacked} we obtain:
$$longtail(\lambda) \geq \max_{r\in\mathbb{Z}}\sharp\{i\in \{1,\dots,m\}\,\rvert\, \overline{\lambda}_i \leq r < k_i\}.$$

Now, recall from \cref{cor:CTD_vs_longtail_eq} that $longtail(\lambda)$ is the maximal cardinality of an incomparable subset of $C_{\lambda}$. Consider an incomparable subset $S\subset C_{\lambda}$ of cardinality $longtail(\lambda)$. By \cref{cor:iso_set_in_CTD_vs_arrows_over}, we conclude that 
$$longtail(\lambda) =\sharp S\leq \max_{r\in\mathbb{Z}}\sharp\{i\in \{1,\dots,m\}\,\rvert\, \overline{\lambda}_i\leq r < k_i\}$$ and we are done.

\end{proof}

We now wish to give an explicit formula for the value $longtail(\lambda)$ in terms of the cap diagram of $\lambda$. 

Recall that $\times_s$ denotes the position of the $s$-th symbol $\times$ in $D_{\lambda}$, numbered from left to right. We will use the following lemma:

\begin{lemma}
    Let $\lambda\in \distHW$ and let $a:=atyp(\lambda)$.
 
 Denote by $k_1, \ldots, k_n$ the endpoints of the arrows in $D_{\lambda}$. For every $s=1, \ldots, a$, let $c_s$ be the right endpoint of the cap starting at $\times_s$. 

    Let $r\in \mathbb{Z}$ be a position in the weight diagram $D_{\lambda}$. Then 
    $$\sharp\{i\in \{1,\dots,n\}\,|\,\overline{\lambda}_i \leq r < k_i \}= \sharp\{i\in \{1,\dots,a\}\,|\, \times_i \leq r < c_i \}$$
    In other words, the number of arrows going over position $r$ is the number of caps going over position $r$.

\end{lemma}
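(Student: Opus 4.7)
The plan is to decompose both counts according to the $(\times-\circ)$ sequences introduced in \cref{cross o def} and use the bijection with caps given by \cref{x o seq cap end}. Recall that each $(\times-\circ)$ sequence of $D_\lambda$ is a maximal concatenation of arrows connected end-to-end, beginning at some $\times$ symbol and terminating at some $\circ$ symbol. By \cref{x o seq cap end}, the endings of these sequences are precisely the set $\{c_1,\ldots,c_a\}$ of right cap endpoints, and under this correspondence the $(\times-\circ)$ sequence associated to the cap $[\times_s, c_s]$ begins at position $\times_s$ and terminates at position $c_s$.

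First I would verify that every arrow in the arrow diagram belongs to some (necessarily unique) $(\times-\circ)$ sequence. An arrow starting at a position with a $\times$ symbol begins a new $(\times-\circ)$ sequence, so the issue is only with arrows starting at a $>$ position $\overline{\lambda}_i$; such a position needs to be the endpoint of some previously drawn arrow. A counting argument settles this: there are $m$ arrows in total, hence $m$ endpoints, and by \cref{x o seq cap end} exactly $a$ of these endpoints carry a $\circ$ symbol. On the other hand, the $m$ positions $\overline{\lambda}_1,\ldots,\overline{\lambda}_m$ are split into $a$ positions carrying a $\times$ symbol and $m-a$ carrying a $>$ symbol. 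Since arrow endpoints only sit in positions with $\circ$ or $>$, the remaining $m-a$ non-$\circ$ endpoints must land exactly on the $m-a$ positions with a $>$ symbol, so every $>$ position is the endpoint of some arrow. Consequently, every arrow belongs to a unique $(\times-\circ)$ sequence.

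Fix now a $(\times-\circ)$ sequence $(i_1,\ldots,i_l)$ associated to a cap $[\times_s, c_s]$, so that $\overline{\lambda}_{i_1}=\times_s$, $k_{i_l}=c_s$, and $k_{i_j}=\overline{\lambda}_{i_{j+1}}$ for $1\le j<l$. Writing $p_0:=\overline{\lambda}_{i_1}<p_1:=k_{i_1}<\ldots<p_{l-1}:=k_{i_{l-1}}<p_l:=k_{i_l}$, the arrows of this sequence cover the disjoint half-open intervals $[p_0,p_1),[p_1,p_2),\ldots,[p_{l-1},p_l)$, whose union is exactly $[\times_s, c_s)$. Thus for any $r\in\mathbb{Z}$, exactly one arrow of this $(\times-\circ)$ sequence goes over $r$ when $\times_s\le r<c_s$, and no arrow of the sequence goes over $r$ otherwise. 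In other words, the contribution of this sequence to the left-hand side equals $\mathbf{1}_{[\times_s, c_s)}(r)$, which is precisely its contribution to the right-hand side.

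Summing over all $(\times-\circ)$ sequences (equivalently, over all caps $s=1,\ldots,a$) yields the claimed equality. No step here is the real obstacle; the key is merely the reinterpretation, already essentially present in \cref{x o seq cap end}, that arrows and caps carry the same combinatorial information when organized into $(\times-\circ)$ sequences.
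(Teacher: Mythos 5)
Your decomposition of the arrow diagram into $(\times-\circ)$ sequences, and the observation that the arrows of a single sequence tile the half-open interval from its starting $\times$ to its terminal $\circ$, are both correct and useful. The gap is the matching claim: it is \emph{not} true that the $(\times-\circ)$ sequence beginning at $\times_s$ terminates at $c_s$, and \cref{x o seq cap end} does not give this — it only asserts equality of the \emph{sets} of sequence-ends and cap-endpoints, with no compatibility with the index $s$. Concretely, take $\lambda\in\Lambda^+_{2|2}$ whose diagram $D_\lambda$ has $\times$ at positions $0$ and $1$ and $\circ$ everywhere else relevant. The arrows (each a singleton sequence) are $[0,2)$ and $[1,3)$, i.e.\ they cross, while the caps are nested: $[\times_1,c_1]=[0,3]$ and $[\times_2,c_2]=[1,2]$. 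So the sequence starting at $\times_1$ ends at $c_2$, not $c_1$, and your per-sequence identity ``contribution to the arrow count $=\mathbf{1}_{[\times_s,c_s)}(r)$'' fails, e.g.\ at $r=2$: the sequence from $\times_1$ contributes $0$ arrows over $r=2$, whereas the cap $[0,3)$ contributes $1$. Only the totals agree, which is exactly the statement being proved, so as written the argument is circular at this step. (A smaller quibble: an arrow with $k_i=\overline{\lambda}_i$ at a $>$ position is the endpoint of itself and belongs to no sequence; this is harmless since it contributes nothing to either side, but your ``every arrow lies in a unique sequence'' needs that caveat.)

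The repair is short and turns your argument into essentially the paper's proof: for $p\le q$ one has $\mathbf{1}_{[p,q)}(r)=\mathbf{1}_{\{p\le r\}}-\mathbf{1}_{\{q\le r\}}$, so the sum $\sum \mathbf{1}_{[\mathrm{start},\mathrm{end})}(r)$ over your sequences depends only on the multiset of starts (the positions $\times_1,\dots,\times_a$) and the multiset of ends (which equals $\{c_1,\dots,c_a\}$ by \cref{x o seq cap end}), and not on which start is paired with which end; the same expression with the caps' pairing computes the right-hand side, so the two counts coincide. The paper phrases this directly: it counts (starts $\le r$) minus (ends $\le r$) for arrows and for caps, and shows the two discrepancies cancel because both $A'\setminus A$ and $E'\setminus E$ are the set of $>$ positions $\le r$ — your counting argument that every $>$ symbol is an arrow endpoint is the same ingredient. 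So the idea is salvageable, but the index-matched identification of sequences with caps must be dropped and replaced by this matching-independence (or by the paper's start/end counting).
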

\begin{proof}

  Denote: $$A:=\{\times_i\, | \, 1\leq i\leq a,\,\times_i \leq r\}, \;\; A':=\{\overline{\lambda}_i\, | \, 1\leq i\leq n,\, \overline{\lambda}_i \leq r\}$$ and $$ E:=\{c_i\,|\, 1\leq i\leq a,\,c_i \leq r\}, \;\; E':=\{k_i\,|\, 1\leq i\leq n,\,k_i \leq r\}.$$

  The number of caps going over position $r$ is $$\sharp\{i\,|\, \times_i \leq r < c_i \} = \sharp A- \sharp E $$
  (we consider the number of caps starting to the left of $r$ or at $r$ and subtract the number of caps ending to the left of $r$ or at $r$).

  The number of arrows going over position $r$ is $$\sharp\{i\,|\, \overline{\lambda}_i \leq r \leq k_i \} = \sharp A' - \sharp E' $$
  (we consider the number of arrows starting to the left of $r$ or at $r$ and subtract the number of arrows ending to the left of $r$ or at $r$).
  
  Clearly we have: $A\subset A'$. Recall from Lemma \ref{x o seq cap end} that, $$\{c_1, \ldots, c_a\} = \{k_i\, |\, 1\leq i\leq n, \; D_{\lambda}(k_i)=\circ\}$$ Hence $E\subset E'$ and $$E'\setminus E = \{k_i \leq r\, |\, 1\leq i\leq n, \; D_{\lambda}(k_i)  \text{ is } > \} = \{\overline{\lambda}_i \leq r\, |\, 1\leq i\leq n, \; D_{\lambda}(\overline{\lambda}_i)  \text{ is } > \}$$ 
  The last equality is due to the fact that every symbol $>$ in $D_{\lambda}$ is the endpoint of some arrow. On the other hand, $$\{\overline{\lambda}_i<r\, |\, 1\leq i\leq n, \; D_{\lambda}(\overline{\lambda}_i)  \text{ is } > \}=A'\setminus A$$
  so $E'\setminus E=A'\setminus A$. This proves that $ \sharp A - \sharp E = \sharp A' - \sharp E' $ as required.
\end{proof}

The next statement is a direct corollary of \cref{thrm:max tail arrow}:
\begin{cor}\label{thrm:longtail_and_cap_diagrams_formula}

    Let $\lambda \in \distHW$ and let $a:=atyp(\lambda)$. Let $\{c_1,\dots,c_a\}$ be the right endpoints of the caps in the cap diagram of $\lambda$. Then $$longtail(\lambda)=\max_{r\in\mathbb{Z}}\sharp\{i\in \{1,\dots,a\}|\times_i \leq r < c_i \} $$ That is to say, $longtail(\lambda)$ is given by the maximal number of caps going over (strictly above) a single point in the cap diagram of $\lambda$.
\end{cor}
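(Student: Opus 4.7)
The plan is to observe that this corollary is essentially a direct substitution, combining Theorem \ref{thrm:max tail arrow} with the preceding lemma. By Theorem \ref{thrm:max tail arrow}, we already have
\[
longtail(\lambda)=\max_{r\in\mathbb{Z}}\sharp\{i\in \{1,\dots,m\}\mid \overline{\lambda}_i\leq r < k_i\},
\]
so it suffices to replace the arrow count at each position $r$ by the cap count at the same position. But this is exactly the content of the lemma immediately preceding this corollary, which asserts
\[
\sharp\{i\in \{1,\dots,m\}\mid \overline{\lambda}_i\leq r < k_i\}=\sharp\{i\in \{1,\dots,a\}\mid \times_i\leq r < c_i\}
\]
for every $r\in\mathbb{Z}$. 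Taking the maximum over $r$ on both sides and chaining through Theorem \ref{thrm:max tail arrow} yields the claimed identity.

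Concretely, I would write: ``By Theorem \ref{thrm:max tail arrow}, $longtail(\lambda)$ equals the maximum over $r\in\mathbb{Z}$ of the number of arrows in the arrow diagram going over position $r$. By the preceding lemma, for each $r$ this number is equal to the number of caps in the cap diagram going over position $r$. Taking the maximum over $r$ gives the stated formula.'' There is no real obstacle here: all the combinatorial content has already been absorbed into the preceding lemma (the bijection between arrow endpoints and $(\times\text{-}\circ)$-sequence endpoints from Lemma \ref{x o seq cap end}) and into Theorem \ref{thrm:max tail arrow} (which packages the CTD/rectangle argument via Lemmas \ref{lem:rectangle_in_CTD}, \ref{lem:max_arrows_over_point_and_rectangle}, and \ref{lem:stacked_x_vs_arrows_over_ineq}). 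So the corollary is genuinely a one-line consequence, and the proof proposal is simply to state this chain of equalities.
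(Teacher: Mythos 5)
Your proposal is correct and is exactly the paper's argument: the corollary is stated there as a direct consequence of Theorem \ref{thrm:max tail arrow} together with the immediately preceding lemma equating, for each position $r$, the number of arrows over $r$ with the number of caps over $r$, and taking the maximum over $r$. Nothing further is needed.
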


\bibliographystyle{amsalpha}

\begin{thebibliography}{99999999}
\bibitem[B14]{glmnrep}
J. Brundan, \emph{ Representations of the general linear Lie superalgebra in the BGG category $\mathcal{O}$.} In Developments and Retrospectives in Lie Theory: Algebraic Methods (pp. 71-98). Cham: Springer International Publishing 2014.

\bibitem[BS12]{weightdiagraminvetion}
J. Brundan, C. Stroppel, \emph{Highest weight categories arising from Khovanov’s diagram algebra. IV: the general linear
supergroup}, J. Eur. Math. Soc. (JEMS), 14, 2, (2012)

\bibitem[CW12]{cheng2012dualities}
S.-J. Cheng, W. Wang, \emph{Dualities and representations of {Lie} superalgebras}, American Mathematical Soc., 2012.

\bibitem[GH20]{gorelik2020semisimplicity}
M. Gorelik, Th. Heidersdorf, \emph{Semisimplicity of the $DS$ functor for the orthosymplectic Lie superalgebra}, 2020.

\bibitem[GH23]{tailconjpaper}
M. Gorelik, Th. Heidersdorf, \emph{Gruson-Serganova character formulas and the Duflo-Serganova cohomology functor}, to appear in Crelles Journal.

\bibitem[GS10]{serganovaweightdiagrams}
C. Gruson, V. Serganova \emph{Cohomology of generalized supergrassmanians and character formulae for basic classical
Lie superalgebras, Proc. London Math. Soc., (3), 101 852–892}, (2010).

\bibitem[H72]{intro1972liealgebras}
J. E. Humphreys, \emph{Introduction to Lie algebras and representation theory}, Springer, 1972.

\bibitem[K77]{kacsuperalgdef}
V. G. Kac, \emph{Lie superalgebras}, Adv. Math. 26 8–96 (1977).

\bibitem[K78]{kacclassicalreptheory}
V. G. Kac, \emph{Representations of classical Lie superalgebras}. Differential geometrical
methods in mathematical physics, II (Proc. Conf., Univ. Bonn, Bonn, 1977), 597–626, Lecture Notes in Math., 676, Springer, Berlin, 1978.

\bibitem[LSS86]{LSS86}
D. A. Leites, M. V. Saveliev and V. Serganova, \emph{ Embeddings of osp(N/2) and the associated nonlinear supersymmetric equations,}. Group theoretical methods in physics,
Vol. I (Yurmala, 1985), 255–297, VNU Sci. Press, Utrecht, 1986.

\bibitem[M12]{mussonbook}
I. M. Musson, \emph{Lie Superalgebras and Enveloping Algebras}, Graduate Studies in Mathematics, 131. AMS, Providence, RI, 2012.

\bibitem[MS11]{MS11}
I. M. Musson , V. Serganova, \emph{ Combinatorics of character formulas for the lie superalgebra}. Transformation groups, 16(2), 555-578 (2011).

\bibitem[S17]{repsofsuperalgebras}
V. Serganova, \emph{ Representations of Lie Superalgebras}, Perspectives in Lie theory. Vol. 19. pages 125-177, Springer, 2017.

\end{thebibliography}

\end{document}